\documentclass[a4paper,10pt]{article}

\usepackage[francais,english]{babel}
\usepackage{enumerate}
\usepackage[latin1]{inputenc}   
\usepackage{amsmath,amsthm}
\usepackage{enumerate}
\usepackage{mathdots}
\usepackage{mathtools}

\usepackage[dvips]{graphicx}
\usepackage{amsfonts,amssymb}
\usepackage{geometry}
\geometry{a4paper,hmargin=3.1cm, vmargin=3.5cm }
\usepackage{hyperref}
\usepackage{stmaryrd}
\usepackage{fancyhdr}
\usepackage{tikz}
\usepackage{url}
\usepackage{dsfont}
\newtheorem*{conj*}{Conjecture}

\newtheorem*{thm*}{Theorem}

\newtheorem{prop}{Proposition}[section]

\newtheorem{LM}{Lemma}[section]

\newtheorem{thm}{Theorem}[section]
\newtheorem{df}{Definition}[section]
\newtheorem{cor}{Corollary}[section]

\newtheoremstyle{pourlesremarques}{\topsep}{\topsep}{\normalfont}{}{\bfseries}{.}{ }{}
\theoremstyle{pourlesremarques}
\newtheorem{rem}{Remark}[section]

\newtheorem*{rem*}{Remark}
\newtheoremstyle{pourlesexemples}{\topsep}{\topsep}{\normalfont}{}{\bfseries}{.}{ }{}
\theoremstyle{pourlesexemples}

\renewcommand{\a}{\alpha}

\newcommand{\ind}{\mathrm{ind}}
\newcommand{\SL}{\mathrm{SL}}
\newcommand{\Ind}{\mathrm{Ind}}
\newcommand{\Inv}{\mathrm{Inv}}
\newcommand{\Lie}{\mathrm{Lie}}
\newcommand{\La}{\mathrm{L}}
\newcommand{\diag}{\mathrm{diag}}

\newcommand{\w}{\varpi}
\renewcommand{\d}{\delta}
\newcommand{\e}{\epsilon}
\newcommand{\R}{\mathbb{R}}
\renewcommand{\Re}{\mathrm{Re}}
\newcommand{\Rep}{R(P\backslash G/ H)}
\newcommand{\Hom}{\mathrm{Hom}}
\newcommand{\A}{\mathbb{A}}
\newcommand{\GL}{\mathrm{GL}}
\renewcommand{\l}{\lambda}
\newcommand{\C}{\mathbb{C}}
\newcommand{\s}{\underline{s}}
\renewcommand{\u}{\underline{u}}
\renewcommand{\k}{\kappa}
\renewcommand{\S}{\mathfrak{S}}
\newcommand{\F}{\mathcal{F}}

\newcommand{\m}{\overline{m}}
\newcommand{\n}{\overline{n}}
\newcommand{\Q}{\mathbb{Q}}
\newcommand{\K}{\mathbb{K}}
\newcommand{\N}{\mathbb{N}}
\newcommand{\Nrd}{\mathrm{Nrd}}
\newcommand{\M}{\mathcal{M}}

\newcommand{\Z}{\mathbb{Z}}
\newcommand{\Ze}{\mathrm{Z}}
\renewcommand{\L}{\Lambda}
\newcommand{\JL}{\mathrm{JL}}
\newcommand{\St}{\mathrm{St}}

\newcommand{\1}{\mathbf{1}}

\newcommand{\sm}{\mathcal{C}^\infty}
\newcommand{\D}{\Delta}
\newcommand{\G}{\mathfrak{D}}

\title {\textbf{Gamma factors of intertwining periods and distinction for inner forms of $\GL(n)$}}
\author{Nadir MATRINGE\footnote{Nadir Matringe, Universit\'e de Poitiers, Laboratoire de Math\'ematiques et Applications,
T\'el\'eport 2 - BP 30179, Boulevard Marie et Pierre Curie, 86962, Futuroscope Chasseneuil Cedex. Email: Nadir.Matringe@math.univ-poitiers.fr}}

\begin{document}
\maketitle

\begin{abstract}
Let $F$ be a $p$-adic field, $E$ be a quadratic extension of $F$, $D$ be an $F$-central division algebra of odd index and let 
$\theta$ be the Galois involution attached to $E/F$. Set $H=\GL(m,D)$, $G=\GL(m,D\otimes_F E)$, and let $P=MU$ be a standard parabolic subgroup of $G$. Let $w$ be a Weyl 
involution stabilizing $M$ and $M^{\theta_w}$ be the subgroup of $M$ fixed by the involution 
$\theta_w:m\mapsto \theta(wmw)$. We denote by $X(M)^{w,-}$ the complex torus of $w$-anti-invariant unramified characters of $M$. Following the global methods of \cite{JLR}, we associate to a finite length representation $\sigma$ of $M$ and to a 
linear form $L\in \Hom_{M^{\theta_w}}(\sigma,\C)$ a family of $H$-invariant linear forms called intertwining periods on 
$\Ind_P^G(\chi \sigma)$ for $\chi \in X(M)^{w,-}$, which is meromorphic in the variable $\chi$. 
Then we give sufficient conditions for some of these intertwining periods, namely the open intertwining periods studied in \cite{BD08}, to have 
singularities. By a local/global method, we also compute in terms of Asai gamma factors the proportionality constants involved in their functional 
equations with respect to certain intertwining operators. 
As a consequence, we classify distinguished unitary and ladder representations of $G$, extending respectively the results of 
\cite{M14} and \cite{G15} for $D=F$, which both relied at some crucial step on the theory of Bernstein-Zelevinsky derivatives. We make use of one 
of the main results of \cite{BP18} which in the case of the group $G$ asserts
 that the Jacquet-Langlands correspondence preserves distinction. Such a result is for essentially square-integrable representations, but our
 method in fact allows us to use it only for cuspidal representations of $G$.
\end{abstract}

\newpage

\tableofcontents
\newpage

\section{Introduction}

Let $\mathbf{H}$ be a reductive group defined over some number field $k$, and let $\A$ be the ring of adeles of $k$. Let $l$ be a quadratic extension of $k$, and let $\mathbf{G}=\mathrm{Res}_{l/k}(\mathbf{H})$. Let $\theta$ be the non trivial element of 
$\mathrm{Gal}_k(l)$ and suppose that it stabilizes a minimal parabolic subgroup $\mathbf{P_0}$ of 
$\mathbf{G}$. Let $\mathbf{P}=\mathbf{M}\mathbf{U}$ be a standard parabolic subgroup of $\mathbf{G}$ with Levi subgroup $\mathbf{M}$, and suppose for simplicity that $\theta(\mathbf{M})=\mathbf{M}$. We denote by $\mathbf{Z_M}$ the connected center of $\mathbf{M}$, and identify the complexification of the character group of $\mathbf{Z_M}$ with $\C^t$ for some $t$. If $w$ is a Weyl involution stabilizing $\mathbf{M}(\A)$, we denote by $\C^t(w,-1)$ the space of $w$-anti-invariant vectors in $\C^t$. Let $\sigma$ be a cuspidal 
automorphic representation of $\mathbf{M}(\A)$ which we suppose, for simplification in this introduction, has a trivial central character. Then to $\phi$ in $\mathrm{Ind}_{\mathbf{P}(\A)}^{\mathbf{G}(\A)}(\sigma)$ and certain Weyl involutions $w$ stabilizing $\mathbf{M}$, Jacquet, Lapid and Rogawsky (\cite{JLR} and \cite{LR}) attached the intertwining period $J(w,\phi,\s)$, which is a meromorphic function of the variable $\s\in \C^t(w,-1)$. This intertwining period appears naturally in \cite{JLR} and then \cite{LR}, in the expression of the regularized $\mathbf{H}(\A)$-periods of the Eisenstein series $E(.,\phi,\s)$ on $\mathbf{G}(\A).$ In particular, writing $\sigma[\s]$ for the twist of $\sigma$ by the unramified character of 
$\mathbf{M}(\A)$ attached to $\s$, it defines for almost all $\s$ 
an $\mathbf{H}(\A)$-invariant linear form on the space $\mathrm{Ind}_{\mathbf{P}(\A)}^{\mathbf{G}(\A)}(\sigma[\s])$ and moreover satisfies functional equations with respect to certain standard intertwining operators. Intertwining periods are powerful tools to study distinction globally, for example Offen used them in \cite{O06} and \cite{O06-2} to determine the residual spectrum of $\GL(2n)$ distinguished by the symplectic group. We note that their local version (at least when $w$ corresponds to an open or closed $(P,H)$-orbit) play a prominent role in the local results of 
\cite{FLO} on unitary periods.\\
Here we are interested in local distinction for a very specific Galois pair. We consider $E/F$ a quadratic extension 
of $p$-adic fields, $D$ a division algebra of odd index over its center $F$, and set $H=\GL(m,D)$ and $G=\GL(m,D\otimes_F E)$. The group $H$ is the subgroup of $G$ fixed by the Galois involution $\theta$ of $E/F$.
We classify $H$-distinguished ladder and unitary representations of $G$, thus extending the results of \cite{G15}  
and \cite{M14} obtained for $D=F$. Both papers make use, at some crucial steps of the theory of Bernstein-Zelevinsky derivatives, which is not developed for non split $G$. In fact even if it was, it would probably not give as much information on distinction for the pair $(G,H)$ that it gives in the case of split $G$. To circumvent this difficulty, we use the local version 
of the intertwining periods defined above. Let's be more specific and give the main point of our method, which already 
appears in a very elementary form in \cite{M17}. Let $P=MN$ be a standard parabolic subgroup of $G$, attached to a partition of the form $\m=(m_1,\dots,m_t)$ of $m$, such that 
$m_{t+1-i}=m_i$ for all $i\in \{1,\dots,t\}$, and let $w$ be the element of $\S_t$ defined by $w(i)=t+1-i$, it acts on 
$M$ as follows: $w.\diag(g_1,\dots,g_t)=\diag(g_{w(1)},\dots,g_{w(t)})$. Setting 
$\theta_w=\theta \circ w$, if 
\[\sigma=\d_1\otimes \dots \otimes \d_t\] is an essentially square-integrable representation of $M$ such that $w(\sigma)^\vee=\sigma^\theta$, to $L\in \Hom_{M^{\theta_w}}(\sigma,\C)-\{0\}$, Blanc and Delorme attach in \cite{BD08} a non zero $H$-invariant linear form 
$J_\sigma(w,.,\s,L)$ on $\Ind_P^G(\sigma[\s])$ which depends meromorphically on the variable $\s\in \C^t(w,-1)$. For 
$\s$ in general position, the space $\Hom_H(\Ind_P^G(\sigma[\s]),\C)$ is of dimension $1$, hence for any $\tau\in \S_t$ 
commuting with $w$, if $A(\tau,\s)$ is the standard intertwining operator from $\Ind_P^G(\sigma[\s])$ to 
$\Ind_Q^G(\tau(\sigma)[\tau(\s)])$ (where $Q$ is the appropriate standard parabolic subgroup of $G$), there is a meromorphic function $\alpha(\s)$ such that 
\[J_{\tau(\sigma)}(w,A(\tau,\s).,\tau(\s),L)=\alpha(\s)J_\sigma(w,.,\s,L).\] 
Our first main result (Theorem \ref{alpha}) is to compute this function in terms of Asai gamma factors, when $t=2r$ and $\tau=\tau_r:=(r \ r+1)$. This is done using the functional equation of the global intertwining period of \cite{JLR} and \cite{LR}, and an unramified computation which follows the one done in \cite{JLR} (Section \ref{unramified computations}). Along the way we develop in Section \ref{Section p-adic intertwining periods properties} a theory of $p$-adic intertwining periods more general than those considered in \cite{BD08} (i.e. associated to any Weyl involution stabilizing $M$)
which culminates in Theorem \ref{theorem convergence and meromorphy of admissible periods}. We compute $\alpha$ 
only in the case $t=2r$ and $\tau=\tau_r=(r \ r+1)$, but a similar argument would give a similar expression for any $t\in \N-\{0\}$ and any 
$\tau$ commuting with $w$.\\
Then when $\Ind_P^G(\sigma)$ is a standard module, there is up to scalar 
at most one non zero $H$-invariant linear form $\Lambda$ on  $\Ind_P^G(\sigma)$ (see \cite{G15} for the proof of this fact which still holds in the non split context). This implies that the irreducible quotient $\La$ of $\Ind_P^G(\sigma)$ is distinguished if and only if $\Lambda$ 
descends to $\La$. Now suppose moreover that $\La$ is a ladder representation. When $t=2r+1$ is odd then $\Lambda$ descends to $\La$ if and only if $\d_{r+1}$ is distinguished, and the proof 
for split $G$ given in \cite{G15} is valid. When $t=2r$, as $\Lambda=J_\sigma(w,.,\underline{0},L)$ ($J_\sigma(w,.,\underline{s},L)$ is 
holomorphic at $\s=\underline{0}$ whenever $\Ind_P^G(\sigma)$'s irreducible quotient is a ladder), the part of the proof for split $G$
which is still valid shows that $\La$ is distinguished if and only if $J_\sigma(w,.,\underline{0},L)$ 
vanishes on the image of the regularized standard intertwining operator $A'(\tau_r,-a_r)$ from $\Ind_P^G(\tau_r(\sigma))$ to 
$\Ind_P^G(\sigma)$.
Our second main result (Theorem \ref{sufficient pole open periods ladders}) gives a sufficient condition for the intertwining period $J(w,.,\s,L)$ to have a singularity at $(0,\dots,0,-a_r,a_r,0,\dots,0)$. Combining this with 
our formula for $\alpha$ as well as the knowledge of analytic properties of standard intertwining operators, we show in our third main result (Theorem \ref{even ladder}) that $\La$ is distinguished if and only if the essentially square-integrable representation $\d$ corresponding to the segment which is the union of 
those corresponding to $\d_r$ and $\d_{r+1}$ is $\eta$-distinguished (here $\eta$ is the quadratic character attached to the extension $E/F$).\\
 Notice that we use \cite[Theorem 1]{BP18}, which asserts that an essentially square-integrable representation of $G$ is distinguished if and only if 
its Jacquet-Langlands transfer to the split form is distinguished. In particular, from what is known for the split form, 
the classification of proper ladders that we get is in terms of cuspidal distinguished representations (see Proposition 
\ref{distinction of essentially square-integrable representations}). However we show in Section \ref{reduction discrete cuspidal}, using the ideas of our paper, that we only need Beuzart-Plessis' result in the cuspidal case.\\
Finally, we classify in Section \ref{unitary and ladders} the distinguished ladder (not necessarily proper) and unitary 
representations of $G$.\\

It will be clear to the reader that the ideas of the paper also work when the index of $D$ over $F$ is even, however 
the double cosets $P\backslash G/H$ ($P$ a standard parabolic subgroup of $G$) are different, and the results will not take the same form. \\

\textbf{Acknowledgements}. We thank Erez Lapid for a very useful explanation/clarification concerning the paper 
\cite{JLR}. We thank Ioan Badulescu for correcting a misconception that the author had concerning the global Jacquet-Langlands correspondence. We thank Raphaël Beuzart-Plessis for a useful comment concerning infinite products of $\gamma$ factors. Most importantly we thank the referees for their extremely precise and useful comments and corrections. They in particular lead to the section on admissible intertwining periods (Section \ref{section admissible intertwining periods}) and important simplifications of many proofs. We thank the organizers of the conference "New Developments in Representation Theory" which took place in March 2016 at the National University of Singapore for their invitation, some of the ideas developed here first occurred there to the author. This work benefited from financial support of the grant ANR-13-BS01-0012FERPLAY. 
\section{Notations and preliminaries}

\subsection{Notations}\label{notations}

We denote by $\mathfrak{S}_n$ the symmetric group of degree $n$. For $w\in \S_n$, we denote by 
$\Inv(w)$ the set of inversions of $w$, i.e. the set couples $(i,j)$ of 
$\{1,\dots,n\}\times \{1,\dots,n\}$ such that $i<j$ but $w(i)>w(j)$. By definition $l(w)$ is the cardinality of $\rm{Inv}(w)$, it is known to be the length of $w$ with respect to the set of generators of $\S_n$ given by the transpositions $(i,i+1)$. We denote by 
$w_n$ the element of $\mathfrak{S}_n$ of maximal length, which sends $i$ to $n+1-i$.\\

If $V$ is a complex vector space, and $v$ and $w$ are two nonzero elements of $V$, we write $v\sim w$ if they span the same line. More generally if $M$ is an $R$-module over some ring $R$, and $m$ and $n$ are two nonzero elements 
of $M$, we write $m\underset{R^\times}{\sim} n$ if they are equal up to an invertible element of $R$.\\

If $G$ is a group, we denote by $Z(G)$ or $Z_G$ its center. If $G$ acts on $X$, we denote by $X^G$ the set of points of $X$ fixed by 
$G$. If $A$ is a ring (commutative or not, but always unital), we denote by $\M_{n,m}(A)$ the space of $n\times m$-matrices with coefficients in $A$, and set 
$\M_n(A)=\M_{n,n}(A)$. We denote by $G_n(A)$ the group of invertible elements in $\M_n(A)$.  We will often consider 
$\mathfrak{S}_n$ as a subgroup of $G_n(A)$ via the permutation matrices. If we denote by $A_n(A)$ the diagonal subgroup of $G_n(A)$, and by $T_n(A)$ the center of $A_n(A)$, we will denote by $\a_i$ the simple root of $T_n(A)$ defined by $\a_i(a)=a_i/a_{i+1}$.\\

 If $\n=(n_1,\dots,n_t)$ is a partition of $n$ (i.e. $n=n_1+\dots+n_t$ with the $n_i$ positive), we denote by $P_{\n}(A)=P_{(n_1,\dots,n_t)}(A)$ the subgroup of 
 matrices of the form $\begin{pmatrix} g_1 & \star &  \star \\ & \ddots & \star \\ & & g_t \end{pmatrix}$ with 
$g_i\in G_{n_i}(A)$, and call $P_{\n}(A)$ a standard parabolic subgroup of $G_n(A)$. We denote by 
$M_{\n}(A)$ the standard Levi subgroup of $P_{\n}(A)$, the elements of which are the matrices 
of the form $\diag(g_1,\dots,g_t)$ in $P_{\n}(A)$, and we denote by $N_{\n}(A)$ the subgroup 
of matrices $\begin{pmatrix} I_{n_1} & \star &  \star \\ & \ddots & \star \\ & & I_{n_t} \end{pmatrix}$ in 
$P_{\n}(A)$. If $\n=(1,\dots,1)$, we will write $B_n(A)$ for $P_{\n}(A)$, and $N_n(A)$ for $N_{\n}(A)$.\\

 Notice that $M_{\n}(A)$ being a product of linear groups, we can define in a similar fashion the standard 
parabolic subgroups of $M_{\n}(A)$, and they correspond to sub-partitions of $\n$. To $\n$, we also associate an injection $w\mapsto w^{\n}$ of the set $\S_t$ into $\S_n$. Writing 
$[1,n]=[I_1,\dots,I_t]$ where $I_k$ is an interval on integers of length $n_k$, the permutation 
$w^{\n}$ just permutes the intervals $I_k$ without changing the order inside such an interval. If the context allows it, 
we will most of the time remove the exponent $\n$ of $w^{\n}$. We say that a partition $\n=(n_1,\dots,n_t)$ is self-dual if $n_{w_t(i)}=n_i$ for all $i$ (remember that $w_t(i)=t+1-i$), in which case we say that the standard parabolic subgroup $P_{\n}(A)$ is self-dual as well.\\

We will use the letter $\K$ to denote the fields $\R$ or $\C$, and we set $|.|_\K$ for the absolute value on $\K$, which is 
the usual one on $\R$, and defined by $|z|_\C=z\overline{z}$ on $\C$. We will use the letter $F$ to denote a $p$-adic field (a finite extension of $\Q_p$), and the letter $E$ to denote a 
quadratic extension of $F$. We denote by $\w_E$ (or simply $\w$) a uniformizer of $E$, by $\w_F$ a uniformizer of $F$, by $v_E$ (or just $v$) and $v_F$ the valuations on $E$ and $F$, and by 
$|.|_E$ (or just $|.|$) and $|.|_F$ the normalized absolute values. We denote by $O_E$ (or just $O$) and $O_F$ the respective integer rings, by $q$ or $q_E$ the residual cardinality $|O/\w O|$ of $E$, and by $q_F$ that of $F$. We use the letter $k$ to denote a number field, and the letter $l$ to denote a quadratic extension of $k$. In both cases we denote by $\theta$ the corresponding Galois involution. We denote by $D_F$ a division algebra of center $F$, and of {\bf{\textit{odd index}}} over $F$(the index being the integer which is the square root of the dimension of a division algebra over its center), in which case $D_E:=D_F\otimes_F E$ (which we will also 
denote by $D$) is a division algebra with center $E$, and same index as $D_F$. We denote by $\Nrd_E$ (or $\Nrd$) and $\Nrd_F$ the reduced norms on $\M_m(D_E)$ and $\M_m(D_F)$. We denote by $\nu_E$ (or $\nu$) the map $|.|_E\circ \Nrd_E$, and set $\nu_F=|.|_F\circ \Nrd_F$, notice that $\nu_F=((\nu_E)_{|\M_m(D_F)})^{1/2}$. We also denote by $O_{D_E}$ the maximal order of $D_E$, and by 
$O_{D_F}$ that of $D_F$. Let $N_{E/F}$ be the norm map from $E$ to $F$, and $\eta_{E/F}$ be the quadratic character of $F^\times$ the kernel of which is $N_{E/F}(E^\times)$, we write $\eta$ for the character of $\mathrm{GL}(m,D_F)$ equal to $\eta_{E/F}\circ Nrd_F$. We shall sometimes denote by $\eta$ again an extension of $\eta$ to $\mathrm{GL}(m,D_E)$. We denote by $\A_k$ the ring of adeles of $k$, and by $\A_l$ that of $l$. We recall that we can view $\A_l$ as the restricted product $\prod' l_v$ over the places $v$ of $k$, of the the algebras 
$l_v:=l\otimes_k k_v$ ($k_v$ being the completion of $k$ with respect to $v$), and that either $l_v$ is a field (hence a quadratic extension of $k_v$) if $v$ remains nonsplit in $l$, or $l_v\simeq k_v\times k_v$ if $v$ splits in $l$.

\subsection{Haar measures}

If $G$ is a locally compact topological group, we denote by $\d_G$ its modulus character, defined by the fact that  
$\d_G d_{G,l} g$ is a right invariant Haar measure on $G$, if $d_{G,l} g$ is a left invariant one (some authors define the modulus character of $G$ to be $\d_G^{-1}$). We set $d_G$ to be a right invariant Haar measure on $G$, and most of the time omit the index $G$. 
More generally, if $H$ is a closed subgroup of $G$, then there is up to scalar a unique nonzero right invariant linear form on 
the space \[\mathcal{C}_c(H\backslash G,\d_G^{-1}\d_H)=\]
 \[\{f:G\rightarrow \C, \mathrm{continuous \ with \ support \ compact \ mod}\ H,\
f(hg)=\d_G^{-1}(h)\d_H(h)f(g),h\in H,g\in G\},\] which we will denote by $d_{H\backslash G} g$, or just by $dg$ again.\\

We recall that if $K<H<G$ is a chain of closed subgroups of $G$, then for 
$f\in \mathcal{C}_c(K\backslash G,\d_G^{-1}\d_K)$ and $g\in G$, the map 
$h\mapsto f(hg)\d_G(h)\d_H(h)^{-1}$ belongs to $\mathcal{C}_c(K\backslash H,\d_H^{-1}\d_K),$ and the map 
$f^{H,\d_G\d_H^{-1}}$ defined (up to the choice of $d_H h$) by the equality
 \[f^{H,\d_G\d_H^{-1}}(g)=\int_{K\backslash H}f(hg)\d_G(h)\d_H(h)^{-1}d_{K\backslash H} h\] belongs to 
 $\mathcal{C}_c(H\backslash G,\d_G^{-1}\d_H).$ We then have the formula (up to compatible normalizations):
 
 \[ \int_{H\backslash G}f^{H,\d_G\d_H^{-1}}(g)dg= \int_{K\backslash G}f(g)dg,\] which we can also write 
  \[ \int_{H\backslash G}(\int_{K\backslash H}f(hg)\d_G(h)\d_H(h)^{-1}d_H h)dg= \int_{K\backslash G}f(g)dg.\]

The above formula will tacitly be used a lot. In what follows, the Haar measures on the different subgroups of the general linear groups ($p$-adic, real, adelic) involved will be normalized by giving volume $1$ to maximal compact subgroups. In particular the usual integration formulas with respect to Iwasawa decomposition (for example) will be valid. 

\section{Representations of real, $p$-adic, and adelic groups}

If $\pi$ is a representation of a group or of an algebra, we will write $V_\pi$ for its underlying vector space, or even $\pi$. 
Moreover we will write $c_\pi$ for its central character whenever it has one.

\subsection{Representations of $p$-adic groups}\label{p-adic}

In this subsection we fix some notations in the $p$-adic setting, and recall some generalities about smooth representations. We set $d$ to be the index of $D$, $G=G_m=G_m(D_E)$, and $K=K_m=G_m(O_{D_E})$. We will only consider smooth complex representations of $G$ and its closed subgroups. We will denote by 
$\Ind$ and $\ind$ normalized smooth and compact smooth induction respectively. If $M$ is a standard Levi subgroup of $G$, and $M'$ is a standard Levi subgroup of $M$, we will denote by $r_{M',M}$ the normalized Jacquet functor.\\

 If $\pi$ is a representation of a closed subgroup $L$ of $G$, we call matrix coefficient of $\pi$ a function on $L$ of the form $l\mapsto <\pi(l)v,v^\vee>$ for $v\in \pi$, and $v^\vee$ in the smooth dual $\pi^\vee$ of $\pi$. If $\pi$ is irreducible and has a coefficient the absolute value of which belongs to $L^2(L/Z(L))$, we say that $\pi$ is square-integrable. If $\chi\otimes \pi$ is square-integrable for some character of $L$, we say that $\pi$ is an essentially square-integrable representation of $L$.\\

If $\overline{m}=(m_1,\dots,m_t)$ is a partition of $m$ and $\sigma=\sigma_1\otimes \dots \otimes \sigma_t$ is a representation of $M=M_{\m}=P_{\m}/N_{\m}$, we set \[\sigma_1\times \dots \times \sigma_t= 
\Ind_{P_{\m}}^{G_m}(\sigma)=\ind_{P_{\m}}^{G_m}(\sigma) .\]

If $\d$ is an essentially square-integrable representation of $G$, we denote by $e(\d)$ the unique real number such that 
$\nu^{-e(\d)}\otimes \d$ is square-integrable. If $\d_1,\dots,\d_t$ are essentially square-integrable representations of $G_{m_i}$ such that $e(\d_i)\geq e(\d_{i+1})$, then the representation $\d_1\times \dots\times \d_t$ is called a standard module. 
By \cite{S78}, it has a unique 
irreducible quotient $\La(\d_1,\dots,\d_t)$ (its Langlands' quotient). The representation $\pi=\La(\d_1,\dots,\d_t)$ determines the set $\{\d_1,\dots,\d_t\}$ uniquely, it is called the essentially square-integrable support of $\pi$. Any irreducible 
representation $\pi$ of $G$ is obtained in that manner. If $D=F$, we say that $\pi=\La(\d_1,\dots,\d_t)$ is generic if 
$\pi=\d_1\times \dots\times \d_t$, in which case the product is necessarily commutative. We shall give a more usual 
definition of generic representations in Section \ref{asai}.\\

\subsection{Representations of real groups}\label{real}

In this paragraph, $G=G_n$ will stand for $\GL(n,\K)$, for $\K=\R$ or $\C$. Let $\n=(n_1,\dots,n_t)$ be a partition of 
$n$, and write $P_{\n}=P_{\n}(\K)$, $M_{\n}=M_{\n}(\K)$... If $\K=\R$, $K=K_n$ will denote the compact orthogonal group $\rm{O}(n,\R)$, whereas it will denote the unitary group $\rm{U}(n,\C/\R)$ if $\K=\C$. We will consider 
finitely generated admissible (which is the same as finite length) $(\Lie(G),K)$-modules, as in Section 3.3 of \cite{W88}, or more generally finitely generated admissible $(\Lie(M),K\cap M)$-modules for standard Levi subgroups $M$ of $G$. We will 
call such modules Harish-Chandra modules of $M$. We will also need to consider finitely generated smooth admissible Fréchet modules of moderate growth (see \cite[Section 1]{W88} and 
\cite[Chapter 11]{W92} for the definition) of $M$, we will call such modules Casselman-Wallach representations. By the Casselman-Wallach globalization theorem (\cite{C89}, \cite{W83}, \cite[Chapter 11]{W92}, \cite{BK14}), it is known that if $\pi$ is a Harish-Chandra module of $M$, there is up to isomorphism a unique Casselman-Wallach representation $\overline{\pi}^\infty$, such that $\pi$ is the 
subspace $\overline{\pi}_{K\cap M}$ of $K\cap M$-finite vectors in $\pi$, and that the map $\pi\mapsto \overline{\pi}^\infty$ is 
an equivalence between the categories of $M$-Harish-Chandra modules and Casselman-Wallach representations of $M$, the quasi-inverse of which is given by $\tau\mapsto \tau_{K\cap M}$. It is shown in these references that one can always realize 
$\overline{\pi}^\infty$ as the dense subspace of smooth vectors in some Hilbert representation $\overline{\pi}$ (a continuous representation of $M$ in a Hilbert space), and in fact the space of smooth vectors in 
any Hilbert completion of $\pi$ provides a model for $\overline{\pi}^\infty$.\\

As Casselman-Wallach representations are nuclear Fréchet spaces, the injective and projective completed tensor products of two such representations $V_1$ and $V_2$ are the same, and we denote it by $V_1\widehat{\otimes}V_2$. If $\overline{\sigma}^\infty=\overline{\sigma_1}^\infty\widehat{\otimes}\dots\widehat{\otimes}\overline{\sigma_t}^\infty$ is a Casselman-Wallach representation of $M=M_{\n}=P_{\n}/N_{\n}$, we set 
\[\overline{\sigma_1}^\infty\times\dots\times\overline{\sigma_t}^\infty=
\Ind_{P_{\n}}^{G_n}(\overline{\sigma}^\infty),\] where 
here $\Ind_{P_{\n}}^{G_n}$ stands for normalized smooth parabolic induction as in Section 2.4 of \cite{AGS15}. If $\sigma=\sigma_1\otimes\dots \otimes \sigma_t$ is an $M$-Harish-Chandra module, we then set 
\[\Ind_{P_{\n}}^G(\sigma)=\sigma_1\times\dots \times \sigma_t:=(\overline{\sigma_1}^\infty\times\dots\times\overline{\sigma_t}^\infty)_K.\]

\subsection{Automorphic representations}\label{automorphic}

We recall that the notations $k$, $l$, $\theta$ and others have been introduced in Section \ref{notations}. We fix $\G$ a division algebra with center $k$ of \textit{\textbf{odd index}} $d$, so that the $l$-central simple algebra $\G_l=\G\otimes_k l$ is again a division algebra. For each place $v$ of $k$, we set 
$D_v=\G\otimes_k k_v$, and we say that $\G$ is split at $v$ if $D_v\simeq \M(d,k_v)$, it is in fact split at all places 
except a finite number. Note that $\G$ is split at the infinite places of $k$ because $d$ is odd. In general, $D_v$ is of the form $\M(d',\mathcal{D}_v)$ for $\mathcal{D}_v$ a $k_v$-division algebra, and with our conventions from Section \ref{notations}, one has 
$G_m(D_v)=\M(m,D_v)^\times=\M(md',\mathcal{D}_v)^\times$. With this identification we 
set $O_{D_v}$ to be $\M(d',O_{\mathcal{D}_v})$ for $O_{\mathcal{D}_v}$ the ring of integers of $\mathcal{D}_v$.
We recall that $l_v=k_v\otimes_k l$, and we set $D'_v=D_v\otimes_k l=\G\otimes_k l_v$. If $v$ splits in 
$l$, then $l_v\simeq k_v\times k_v$ and we identify  
$O_{l_v}$ with $O_{k_v}\times O_{k_v}$. We then define $O_{D'_v}$ by the equality $O_{D'_v}=O_{D_v}\otimes_{O_{k_v}} O_{l_v}$. We denote by $\A$ the ring of adeles of $k$. In this context, by definition 
\[G=G_m:=G_m(\G \otimes_k\A_l)=G_m(\G_l \otimes_l\A_l)\] is the restricted direct product of the groups $G_v=G_{m,v}=G_m(D'_v)$ (with respect to the compact open subgroups 
$K_v=K_{m,v}=G_m(O_{D'_v})$). More generally, if $S$ is a subset of $G$, we set $S_v=S\cap G_v$. Extending $\theta$ to $G$ in the natural manner, the subgroup 
$H=G^\theta=G_m(\G \otimes_k\A_k)$ is thus restricted direct product of the groups $H_v=H_{m,v}=G_m(D_v)$ (with respect to the compact open subgroups $K_v^\theta=G_m(O_{D_v})$). We set $G_\infty=\prod_{v|\infty} G_v$, and we identify each
 $G_v$ in the product to $G_{md}(\K)$ for $\K=\R\times \R$ or $\C$, depending of whether $v$ splits or not. We set $K_\infty$ 
to be the corresponding product of the maximal compact subgroups $O(md,\R)\times O(md,\R)$ or $U(md,\C/\R)$ of $G_{md}(\K)$.
 We denote by $K$ the product of $K_\infty$ with the product over $v$ finite of the groups $G_m(O_{D'_v})$. On each 
$G_v$, the reduced norm $\Nrd_{G_v}$ (with values in $l_v$) gives birth to 
$\Nrd_{G}=\prod'_v \Nrd_{G_v}$ with values in $\A_l^\times$, and we set $\nu=\nu_{\A_l}:g\mapsto |\Nrd_{G}(g)|_{\A_l}$.\\

If $M=M_{m_1,\dots,m_t}(\G \otimes_k\A_l)$ is a standard Levi subgroup of $G$, then $Z_M\simeq (\A_l^\times)^t$. We denote by $M^1$ the kernel of the homomorphism 
\[\diag(g_1,\dots,g_t)\in M\mapsto (\nu(g_1),\dots,\nu(g_t))\in (\R_{>0})^t.\] We recall that by definition $l_\infty=k_\infty\otimes_k l=\R\otimes_\Q l$, and denote by 
$A_M$ the subgroup of $Z_{M,\infty}$ corresponding to $(\R_{>0} \otimes_\Q 1)^t$ through the isomorphism above, in particular 
$M=A_M\times M^1 $.\\

To stick with the framework of \cite{JLR} and \cite{LR}, that we shall refer to a lot, we now consider $\sigma$ a $M\cap K$-finite cuspidal automorphic representation of $M$ (\cite[4.6]{BJ}). However we shall need to use other results which are written for smooth or $L^2$ automorphic representations. We denote by $\overline{V_\sigma}^\infty$ its smooth completion in the space of smooth cuspidal automorphic forms, and if $\sigma$ is unitary, we denote by $\overline{V_\sigma}$ its completion in 
the space of $L^2$-cuspidal automorphic forms. We denote by $\overline{\sigma}^\infty$ and by $\overline{\sigma}$ the corresponding representations of $M$. If $P=MN$ is the standard parabolic subgroup of $G$ with standard Levi $M$, we define $\Ind_{P}^{G}(\overline{\sigma}^\infty)$ to be the space of smooth functions from $G$ to $V_{\overline{\sigma}^\infty}$ satisfying:
\[f(mng)(\ . \ )=\d_P^{1/2}(m)f(g)(\ . \ m).\] 
We denote by $\Ind_{P}^{G}(\sigma)$ the subspace of $K$-finite vectors inside $\Ind_{P}^{G}(\overline{\sigma}^\infty)$.
For $f\in \Ind_{P}^{G}(\overline{\sigma}^\infty)$, we denote by $\widetilde{f}$ the map from 
$G$ to $\C$ defined by the equality: 
\[\forall g\in G,\ \widetilde{f}(g)=f(g)(I_n).\]
The map $f\mapsto \tilde{f}$ is injective from $\Ind_{P}^{G}(\overline{\sigma}^\infty)$ to its image, and we identify $\Ind_{P}^{G}(\overline{\sigma}^\infty)$ with this image, as well as  $\Ind_{P}^{G}(\sigma)$. Denoting by $\mathcal{A}_P(G)_{\sigma}$ the space of functions $\phi$ from $M(k) N(\A)\backslash G(\A)$ to $\C$, such that for all $k\in K$, the map $m\mapsto \phi(mk)$ belongs to $\d_P^{1/2}V_{\sigma}$, then the vector space $\Ind_{P}^{G}(\sigma)$ is a subspace of $\mathcal{A}_P(G)_\sigma$.\\

If $\sigma$ decomposes as a module of the global Hecke algebra of $M$: 
\[\sigma\simeq \otimes'_v \sigma_v,\] 
then \[\Ind_{P}^{G}(\sigma)\simeq \otimes'_v \Ind_{P_v}^{G_v}(\sigma_v).\] 
If $\sigma=\sigma_1\otimes \dots \otimes \sigma_t$, we will again use the notation 
 \[\Ind_{P}^{G}(\sigma)=\sigma_1\times \dots \times \sigma_t.\] 
 
 If $\sigma$ is unitary, there is also a natural definition for
 \[\Ind_{P}^{G}(\overline{\sigma})=\overline{\sigma_1}\times \dots \times \overline{\sigma_t},\] its subspace of 
 smooth vectors being $\Ind_{P}^{G}(\overline{\sigma}^\infty)$, and its subspace of $K$-finite vectors being $\Ind_{P}^{G}(\sigma)$.

\section{Local and global Jacquet Langlands correspondence}

We state here results from \cite{Z80}, \cite{DKV84}, \cite{T90}, \cite{B07} and \cite{B08} about essentially square-integrable representations of $G$ 
in the $p$-adic and adelic case, and the Jacquet-Langlands correspondence. Notice that we will use the letter $G$ for the 
possibly non split forms of $\GL(n)$, and $G'$ for the split form, which is the opposite convention to that used in 
\cite{B08} for example.

\subsection{The local correspondence}\label{local JL}

The results here are extracted from \cite{Z80}, \cite{DKV84}, and \cite{T90}, we refer to \cite{DKV84} for the
 definition of the Jacquet-Langlands correspondence. Here $G_m=\GL(m,D_E)$ is as in Section \ref{p-adic}, and we set $n=md$, and $G'_n=\GL(n,E)$. If $\rho'$ is a cuspidal representation of $G'_n$, $a\leq b$ are two real numbers equal modulo $\Z$, and if $\D'$ is the cuspidal segment $[a,b]_{\rho'}=\{\nu^a\rho',\dots,\nu^b\rho'\}$, we denote by $\d'=\La(\D')$ the unique irreducible quotient of the induced representation $\nu^a\rho'\times \dots \times \nu^b\rho'$. 
If $a=\frac{1-k}{2}$, and $b=\frac{k-1}{2}$, we will also write $\St_k(\rho')$ for $\La(\D')$. If $\rho'$ is unitary, then 
$\St_k(\rho')$ is a square-integrable representation, and all square-integrable representations of $G'_n$ are obtained in this manner. Now if $\rho$ is a cuspidal representation of $G_m$, then its Jacquet-Langlands transfer $\JL(\rho)$ to $G'_n$ is of the form 
$\St_l(\rho')$ for a unique $l\in \N_{>0}$ and a unique cuspidal representation of $G'_{\frac{n}{l}}$, and we set $l=l_\rho$ (it is known that $l_\rho$ divides $d$ and is coprime to $m$). This allows to extend the notion of cuspidal segment to $G_m$: if $\rho$ is a cuspidal representation of $G_m$ with $l=l_\rho$, and $c\leq d$ two real numbers equal modulo $\Z$, by definition the cuspidal segment 
$\D=[c,d]_\rho$ is the set $\{\nu^{lc}\rho,\dots,\nu^{ld}\rho\}$. The induced representation $\nu^{lc}\rho\times \dots \times \nu^{ld}\rho$ has a unique irreducible quotient $\d=\La(\D)$, we set $l_{\d}=l=l_\rho$. If $c=\frac{1-k}{2}$, and $d=\frac{k-1}{2}$, we will also write $\St_k(\rho)$ for $\La(\D)$. If $\rho$ is unitary, then 
$\St_k(\rho)$ is a square-integrable representation, and all square-integrable representations of $G_m$ are obtained in this manner. 
If $\rho$ is a cuspidal representation of $G_m$ such that $\JL(\rho)=\St_l(\rho')$, then for all $r\in \N_{>0}$, one has 
$\JL(\St_r(\rho))=\St_{rl}(\rho')$.

\subsection{The global correspondence}\label{global JL}

 We now recall a particular case of the main result of \cite{B08}. Denote by $\mathcal{P}(k)$ the set of places of $k$, and 
by $\mathcal{P}(l)$ that of $l$. We set $G=G_m=G_m(\G_l \otimes_l\A_l) =\prod'_{v\in \mathcal{P}(k)} G_v$ is as in Section \ref{automorphic}. Here it will in fact be more convenient to write $G=\prod'_{w\in \mathcal{P}(l)} G_w$, as the fact that $l$ is a quadratic extension of 
$k$ plays no role. We also set $G'=G'_{n}=G_{n}(\A_l)=\prod'_{w\in \mathcal{P}(l)} G'_w$. Notice that we allow the case $G=G'$ in what follows. For $c$ 
a unitary character of $\A_l^\times/l^\times$, we denote by $L^2(\A_l^\times G_m(\G_l)\backslash G,c)$ the space of functions 
$f$ on $G_{m}(\G_l)\backslash G$, transforming by $c$ under the center of $G$, and such that $|f|^2$ is integrable on 
$\A_l^\times G_{m}(\G_l)\backslash G$. We call $\tau$ a square-integrable representation of $G$ if it is an irreducible subspace (in the topological sense) of $L^2(\A_l^\times G_{m}(\G_l)\backslash G,c)$ for some such unitary character $c$. If $\rho$ is a unitary cuspidal representation 
of $G$ (in the $K$-finite sense), then $\overline{\rho}$ is square-integrable.\\

For $\pi$ a unitary cuspidal representation of $G$, by \cite[Theorem 5.1]{B08}, there exists a square-integrable representation $\JL(\overline{\pi})$ of $G'$ such that for all places $w$ where $G_w$ is split (i.e. $\G_l$ is split), then 
$(\overline{\pi})_w=\overline{\pi_w}=\JL(\overline{\pi})_w$. In fact, if $W$ is the finite set of finite places such that 
$\G_w=\G_l\otimes_l l_w$ is non split, by \cite[Proposition 5.5]{B08} together with \cite[Theorem 5.1]{B08}, a unitary cuspidal representation $\pi'$ of $G'$ is such that $\overline{\pi'}$ is equal to 
$\JL(\overline{\pi})$ for a (unique) unitary (necessarily) cuspidal representation $\pi$ of $G$, if and only if for all $w\in W$, the representation $\pi_w'$ is $d_w$-compatible (see \cite[Section 2.7]{B08}). In this case we set 
$\pi'=\JL(\pi)$, notice that $\pi'$ is the space of $K'$-finite vectors in $\JL(\overline{\pi})$. We don't recall the definition of $d_w$-compatible here but if $\pi_w'$ is square-integrable, then it is $d_w$-compatible; that is all that we need to know. Hence suppose that for all $w\in W$, the representation $\pi'_w$ is square-integrable, then \cite[Theorem 5.1]{B08} tells us that for all places $w\in W$, $\pi_w$ is square-integrable, and for all such places (hence for all $w\in \mathcal{P}(l)$ if we set $\JL$ to be the identity 
for places outside $W$), one has $\JL(\pi)_w=\JL(\pi_w)=\pi_w'$. 


\subsection{A globalization result}\label{globalization of essentially square-integrable representations}

In this section, we explain how to globalize a discrete series of an inner form of $\GL(n)$ as a local component of a cuspidal automorphic representation with cuspidal Jacquet-Langlands transfer.

\begin{LM}\label{lemma globalization of characters}
Let $v$ be a finite place of $l$ and $\chi_v$ be a unitary character of $l_v^\times$. 
Then there is a unitary character $\mu$ of $l^\times \backslash \A_l^\times$ such that $\mu_v=\chi_v$.
\end{LM}
\begin{proof}
Because $O_{l_v}^\times$ is a compact subgroup of $l^\times \backslash \A_l^\times$ the restriction ${\chi_v}_{|O_{l_v}^\times}$ extends to a unitary Hecke character $\mu$ of $\A_l^\times$ by Pontryagin duality. We then adjust $\mu$ if necessary by twisting it by a (necessarily unitary because $\chi_v$ is) power of the adelic norm of $\A_l^\times$.
\end{proof}

We now recall a special case of a result of Shin.

\begin{prop}{[\cite[Theorem 5.13]{Shin12}, special case].}\label{proposition Shin}
Let $S$ be a finite set of places of $l$, and for every $w\in S$ let $\d_w$ be a square-integrable representation of $\SL(n,l_w)$. Then there is a cuspidal automorphic representation $\pi$ of $\SL(n,\A_l)$ such that $\pi_w=\d_w$ for every $w\in S$.
\end{prop}
\begin{proof}
First note that the proof of \cite[Theorem 5.13]{Shin12} is a consequence of \cite[Theorem 4.8]{Shin12} which only assumes the last of the three assumptions of \cite[Section 4]{Shin12}. Moreover this latter assumption is satisfied by $\SL(n)$ thanks to 
\cite[Remark 4.1]{Shin12}. Note that we can always suppose that $S$ contains the set of infinite places, which is what we do in order to apply Shin's result (it is assumed as a hypothesis of \cite[Theorem 4.8]{Shin12}). Finally, thanks to \cite[Example 5.6]{Shin12}, the set $\widehat{U}$ of the statement of 
\cite[Theorem 5.13]{Shin12} can be chosen to be the singleton $\{\otimes_{w\in S}\d_w\}$, and the result follows.
\end{proof}

The result of Shin for $\SL(n)$ translates as follows for $\GL(n)$, as stated in \cite[Section 4]{C86}.

\begin{prop}\label{globalize AC}
Let $S$ be a finite set of places of $l$, and for every $w\in S$ let $\d_w$ be a square-integrable representation of $\GL(n,l_w)$. Then there exist a cuspidal automorphic representation $\pi$ of $\GL(n,\A_l)$ and unitary characters $\chi_w$ of $l_w^\times$ for every $w\in S$, such that $\pi_w=\chi_w\otimes \d_w$ for every $w\in S$. 
\end{prop}
\begin{proof}
For each $w\in S$, the restriction of $\d_w$ to $\SL(n,l_w)$ is a direct sum of finitely many irreducible representations (see the appropriate sections of \cite{HS12}), and 
we choose $\d_w'$ to be one of them. It is a well-known of consequence of Clifford theory that any irreducible representation of $\GL_n(l_w)$ containing $\d_w'$ in its restriction to $\SL_n(l_w)$ is of the form $\chi_w\otimes \d_w$ for $\chi_w$ a character of $l_w^\times$ (see \cite{HS12} again). According to Proposition \ref{proposition Shin}, there is a cuspidal automorphic representation $\pi'$ of 
$\SL(n,\A_l)$ such that $\pi'_w=\d'_w$ for all $w\in S$. It is then a consequence of \cite[Theorem 4.13]{HS12} that there is 
a unitary cuspidal automorphic representation $\pi$ of $\GL(n,\A_l)$ such that for any place $v$ of $l$, the local component 
$\pi_v$ contains $\pi'_v$ in its restriction to $\SL(n,l_v)$. In particular for all $w\in S$, there is a unitary character $\chi_w$ of $l_w^\times$ such that $\pi_w=\chi_w\otimes \d_w$. 
\end{proof}

We will use the following corollary of the result above.

\begin{cor}\label{globalize NM}
With the notations as above, if $\d_{w_0}$ is an essentially square-integrable representation of $G_{w_0}$ for $w_0$ a finite place of $l$, then there is a cuspidal automorphic representation $\pi$ of $G$, such that $\pi_{w_0}\simeq \d_{w_0}$, $\JL(\pi)$ is cuspidal, and $\JL(\pi)_{w_0}\simeq\JL(\d_{w_0})$.
\end{cor}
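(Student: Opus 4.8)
The plan is to transport $\d_{w_0}$ to the split group by the local Jacquet--Langlands correspondence, to globalize the result by Proposition \ref{globalize AC} applied \emph{simultaneously} at $w_0$ and at every place where $\G_l$ is non-split, and then to descend the resulting cuspidal automorphic representation of $G'$ back to $G$ using the global correspondence recalled in Section \ref{global JL}.

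First I would dispose of a harmless twist. By the classification in Section \ref{local JL} I can write $\d_{w_0}=\nu_{w_0}^{s_0}\otimes\d_{w_0}^u$ with $s_0\in\R$ and $\d_{w_0}^u$ a unitary discrete series of $G_{w_0}$; since the local correspondence commutes with twisting by $\nu$ and since $\nu$ is an automorphic character of $G$, it is enough to find a cuspidal automorphic representation $\pi^u$ of $G$ with $\pi^u_{w_0}\simeq\d_{w_0}^u$, with $\JL(\pi^u)$ cuspidal and with $\JL(\pi^u)_{w_0}\simeq\JL(\d_{w_0}^u)$, for then $\pi=\nu^{s_0}\otimes\pi^u$ works once one extends the notation by $\JL(\nu^{s_0}\otimes\pi^u)=\nu^{s_0}\otimes\JL(\pi^u)$. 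So I may assume $\d_{w_0}$ unitary, and I set $\d'_{w_0}=\JL(\d_{w_0})$, a unitary discrete series of $G'_{w_0}=\GL(n,l_{w_0})$ with some central character $c_{w_0}$. Next I would choose a unitary Hecke character $c$ of $l^\times\backslash\A_l^\times$ with component $c_{w_0}$ at $w_0$ (such an extension always exists), let $W$ be the finite set of finite places at which $\G_l$ is non-split, put $S=W\cup\{w_0\}$, and at each $w\in W\setminus\{w_0\}$ choose a supercuspidal representation $\d'_w$ of $G'_w=\GL(n,l_w)$ with central character $c_w$. Supercuspidal representations are square integrable, so Proposition \ref{globalize AC} applied to $c$, $S$ and the family $(\d'_w)_{w\in S}$ produces a cuspidal automorphic representation $\pi'$ of $G'$ with $\pi'_w\simeq\d'_w$ for all $w\in S$.

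Then I would descend $\pi'$ to $G$. For every $w\in W$ the component $\pi'_w\simeq\d'_w$ is square integrable, hence $d$-compatible, so by the global Jacquet--Langlands correspondence of Section \ref{global JL} there is a unitary cuspidal automorphic representation $\pi$ of $G$ with $\JL(\pi)=\pi'$ and with $\JL(\pi)_w=\JL(\pi_w)=\pi'_w$ at every place $w$. At $w_0$ this reads $\JL(\pi_{w_0})=\pi'_{w_0}=\d'_{w_0}=\JL(\d_{w_0})$, and since the local correspondence is injective on discrete series, $\pi_{w_0}\simeq\d_{w_0}$; if $w_0\notin W$ then $G_{w_0}=G'_{w_0}$ and the local transfer at $w_0$ is the identity, giving the same conclusion directly. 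Finally $\JL(\pi)=\pi'$ is cuspidal and $\JL(\pi)_{w_0}=\d'_{w_0}=\JL(\d_{w_0})$, which is the assertion; reinstating the $\nu^{s_0}$-twist finishes the argument.

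The substance of the proof is Proposition \ref{globalize AC}, which is available. The one genuinely new idea, and the step I would be careful with, is that the globalization must be carried out with discrete series prescribed at $w_0$ \emph{and} at all non-split places of $\G_l$ at once, so that $\pi'$ is $d$-compatible everywhere and hence lies in the image of the global correspondence; the rest (extending $c_{w_0}$ to a Hecke character, the choice of auxiliary supercuspidals, injectivity of the local correspondence, and the automorphic twist by $\nu^{s_0}$) is routine bookkeeping and presents no real obstacle.
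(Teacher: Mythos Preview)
Your proof is correct and follows essentially the same approach as the paper's own proof: reduce to the unitary case by twisting, extend the local central character to a global Hecke character, globalize on $G'$ via Proposition~\ref{globalize AC} with discrete series prescribed at $w_0$ and at all places where $\G_l$ is non-split, and then descend to $G$ by the global Jacquet--Langlands correspondence of Section~\ref{global JL}. The only cosmetic differences are that the paper phrases the initial reduction as ``up to torsion by an unramified character'' and chooses arbitrary discrete series at the auxiliary places in $W$ rather than supercuspidals, but this changes nothing.
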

\begin{proof}
Up to torsion by an unramified character (as unramified characters extend to Hecke characters), we can assume that $\d_{w_0}$ is unitary. We set $\d'_{w_0}=\JL(\d_{w_0})$. Now we select for $w\in W$ (different from $w_0$ if $w_0\in W$) a square-integrable representation $\d'_w$ and 
apply Proposition \ref{globalize AC} to the family of representations $\d'_w$ for $w\in W\cup \{w_0\}$, to get a cuspidal representation $\pi'$ of $G'$ such that $\pi_w'\simeq \chi'_w\otimes \d'_w$ for $\chi'_w$ a unitary character of $l_w^\times$ for all $w\in W\cup \{w_0\}$. Then we extend $\chi'_{w_0}$ to a unitary Hecke character $\chi'$ of $\A_l^\times$ thanks to Lemma \ref{lemma globalization of characters} and set $\pi''=\chi'^{-1}\otimes \pi'$. The cuspidal automorphic representation $\pi''$ is of the form $\JL(\pi)$ for some cuspidal representation $\pi$ of $G$ according to Section \ref{global JL}, 
and moreover $\JL(\pi)_w=\JL(\pi_w)=\pi''_w$ for all places $w$ according to [ibid.]. In particular, 
$\JL(\pi_{w_0})=\JL(\d_{w_0})$, hence $\pi_{w_0}=\d_{w_0}$.
\end{proof}

\section{Basic results on the pair $(G,H)$}

\subsection{Multiplicity one and distinguished essentially square-integrable representations}

In this section, $G$ is as in Section \ref{p-adic}, and $H=G^\theta$ the subgroup of $G$ fixed by $\theta$. If $\pi$ is a representation of $G$, 
we say that it is distinguished (we will also say $H$-distinguished, or $\theta$-distinguished) if $\Hom_H(\pi,\C)$ is non zero. 
More generally if $\chi$ is a character of $H$, we say that $\pi$ is $\chi$-distinguished (we will also say $(H,\chi)$-distinguished, or $(\theta,\chi)$-distinguished) if 
$\Hom_H(\pi,\chi)$ is non zero. A pleasant property of the pair $(G,H)$ is that it affords multiplicity one, as it has been proved by Flicker when $G$ is split, and his proof has been extended to non split $G$ by Conliglio.

\begin{prop}[\cite{F91} Proposition 11, \cite{Con14} Appendix]\label{mult1}
Let $\pi$ be an irreducible representation of $G$, then $\Hom_H(\pi,\C)$ is of dimension at most $1$.
\end{prop}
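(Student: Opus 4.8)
The natural route is the Gelfand--Kazhdan method. Recall the criterion: if $\tau$ is an anti-automorphism of $G$ with $\tau(H)=H$ such that every $(H\times H)$-invariant distribution on $G$ is fixed by the transformation induced by $\tau$, then for every irreducible smooth representation $\pi$ of $G$ one has $\dim\Hom_H(\pi,\C)\cdot\dim\Hom_H(\pi^\vee,\C)\le 1$. If in addition $\pi^\vee\simeq\pi^\tau$, where $\pi^\tau(g)=\pi(\tau(g)^{-1})$, then since $\tau(H)=H$ one gets $\Hom_H(\pi^\vee,\C)\simeq\Hom_H(\pi^\tau,\C)=\Hom_H(\pi,\C)$, and the inequality becomes $\dim\Hom_H(\pi,\C)\le 1$, which is the assertion.

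So the first step would be to exhibit a suitable $\tau$. Let $\iota$ be an anti-involution of the division algebra $D_F$ fixing $F$ pointwise (one exists because over a local field $D_F$ is isomorphic to its opposite algebra), and extend $\iota$ to $D_E=D_F\otimes_F E$ by $E$-linearity; then $\iota$ commutes with $\theta$ by construction. Set $\tau(g)={}^t(g^\iota)$ on $G=\GL(m,D_E)$, that is, apply $\iota$ to every entry and transpose the matrix. This is an anti-automorphism of $G$ commuting with $\theta$, hence $\tau(H)=H$; and the identification $\pi^\vee\simeq\pi^\tau$ for inner forms of $\GL$ is the analogue of the classical transpose--contragredient identity for $\GL(n,E)$ (it can also be deduced from the classification together with compatibility of $\JL$ with duals). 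It then remains to prove the distributional statement.

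This last point is the core of the argument. One lets $H\times H$ act on $G$ by $(h_1,h_2)\cdot x=h_1xh_2^{-1}$; then $\tau$ sends the orbit of $x$ to the orbit of $\tau(x)$ with the two factors interchanged, so the first task is to check that every orbit is $\tau$-stable, i.e. $\tau(x)\in HxH$ for all $x\in G$. After passing to $H\backslash G/H$ this becomes a Cartan-type statement for the symmetric pair $(G,H)$, and it ultimately rests on the fact that in $\GL(m,D)$ every element is conjugate to its $\iota$-transpose, which follows from the classification of conjugacy classes by elementary divisors. Granting orbit-wise $\tau$-invariance, one invokes Bernstein's localization principle: stratify $G$ by $(H\times H)$-orbits, and on each stratum analyze the stabilizer and the normal bundle, descending by Frobenius reciprocity (in the style of Aizenbud--Gourevitch) to the same multiplicity-one problem for smaller groups of the shape $\GL(m',D')\supset\GL(m',D'_0)$, supplemented by vanishing statements for the distributions supported on the nilpotent pieces, which are controlled because $\tau$ acts trivially on the relevant character data.

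The main obstacle is precisely the control of the distributions on the non-open strata and the inductive descent in the non-split setting. For $D=F$ this is Flicker's theorem \cite{F91}; in the division-algebra case the orbit geometry and the list of Levi-type subgroups appearing in the descent differ, and supplying these details is exactly the content of Coniglio's appendix \cite{Con14}. I would therefore present the Gelfand--Kazhdan reduction and the orbit analysis uniformly, and isolate the division-algebra-specific descent as the single technical input to be imported.
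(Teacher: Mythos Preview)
The paper does not supply its own proof: it simply attributes the statement to Flicker (split case) and Coniglio (inner forms), so the relevant comparison is with the strategy in those references. Your outline via the Gelfand--Kazhdan criterion is indeed the shape of Flicker's argument, and the reduction you describe (orbit-wise $\tau$-stability plus Bernstein localization and descent) is the right picture. However, there is a genuine error at the very first step of your construction in the non-split case.

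You assert that an $F$-linear anti-involution $\iota$ of $D_F$ exists ``because over a local field $D_F$ is isomorphic to its opposite algebra''. This is false precisely in the regime of the paper. Over a $p$-adic field, $\mathrm{Br}(F)\simeq\Q/\Z$ via the Hasse invariant, and $\mathrm{inv}(D_F^{op})=-\mathrm{inv}(D_F)$; hence $D_F\simeq D_F^{op}$ if and only if $2\,\mathrm{inv}(D_F)=0$, i.e.\ $D_F=F$ or $D_F$ is the quaternion algebra. The paper assumes $D_F$ has \emph{odd} index $d$, so for $d>1$ one has $D_F\not\simeq D_F^{op}$ and no $F$-linear anti-automorphism of $D_F$ exists at all. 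The same obstruction persists for $D_E=D_F\otimes_F E$, whose invariant is $2\,\mathrm{inv}(D_F)$ and still has odd order $d$. Consequently your anti-automorphism $\tau(g)={}^t(g^\iota)$ of $G=\GL(m,D_E)$ simply does not exist, the identification $\pi^\vee\simeq\pi^\tau$ is vacuous, and the entire Gelfand--Kazhdan machine has no engine. This is exactly the difficulty that separates the inner-form case from Flicker's split argument; Coniglio's contribution is not merely ``supplying the descent details'' for an existing $\tau$, but rather finding a workable substitute for the transpose in a situation where the naive candidate is unavailable. You should revisit that reference to see what replaces $\tau$.
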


Another classical result of Flicker (\cite[Proposition 12]{F91}) when $G$ is split is that if $\pi$ is irreducible distinguished, then it is conjugate self-dual, i.e. $\pi^\vee\simeq \pi^\theta$. It is maybe possible to extend Flicker's proof 
to the non split case, but we shall obtain it as a corollary of the classification of distinguished standard modules.\\

For essentially square-integrable representations of split $G$ with central character trivial on the $Z(G)^\theta$, some kind of converse statement is also true and is a result of Kable (\cite[Theorem 7]{K04}). The paper \cite{K04} has had a great influence on many of the author's works, and in particular it uses a local-global argument to obtain an equality of local factors, as we shall do later here. The result below is not original, it is a combination of various 
results of different authors (see the immediate proof for the references), including the very recent \cite[Theorem 1]{BP18} of Beuzart-Plessis already mentioned in the introduction. In fact we shall see in Section \ref{reduction discrete cuspidal} that we only need this result for cuspidal representations of $G$, as the technique developed here allows to reduce the study of distinction 
of essentially square-integrable representations to cuspidal representations. However for the moment, in order to already state the following result for essentially square-integrable representations, we shall use it for essentially square-integrable representations.

\begin{prop}\label{distinction of essentially square-integrable representations}
Let $\d=St_r(\rho)$ be an essentially square-integrable representation of $G$, and $l=l_\rho$. 
\begin{enumerate}
\item The representation $\d$ is distinguished if and only if the cuspidal representation $\rho$ is $\eta^{l(r+1)}$-distinguished. 
\item One has $\d^\vee\simeq \d^\theta$ if and only if either $\d$ is distinguished, or $\d$ is $\eta$-distinguished. The representation $\d$ cannot be both distinguished and $\eta$-distinguished at the same time. 
\end{enumerate}
\end{prop}
\begin{proof}
We recall that if $\JL(\rho)=\St_l(\rho')$, then $\JL(\d)=\St_{lr}(\rho')$. By \cite[Theorem 1]{BP18}, the representation $\d$ is distinguished if and only if $\JL(\d)$ is distinguished, but by Corollary 4.2 of \cite{M09} (see also the last section of \cite{AR05}), this is the case if and only if 
$\rho'$ is $\eta^{lr-1}$-distinguished, so by the same result, if and only if $\St_l(\rho')$ is 
$\eta^{l(r+1)}$-distinguished, i.e. if and only if $\rho$ is $\eta^{l(r+1)}$-distinguished by
 \cite[Theorem 1]{BP18} again. The second statement is also a consequence of \cite[Theorem 1]{BP18}, \cite[Theorem 7]{K04} and 
 \cite[Corollary 1.6]{AKT04}. 
\end{proof}

\subsection{Double cosets $P\backslash G/H$ and the geometric lemma}\label{Double cosets}

We denote by $f$ a field of characteristic different from $2$, and by $e$ a quadratic extension of $f$. We choose $\d_{e/f}$ an element of $e-f$, such that $\d_{e/f}^2\in f$. We denote by $D_f$ 
a central division algebra of odd index over $f$, and by $D_e$ the division algebra $D_f\otimes_f e$. We denote by 
$\theta$ the Galois involution of $e$ over $f$ and its various natural extensions. We denote by $G$ the group $G_m(D_e)$, 
by $H$ the group $G_m(D_f)$, and for $\m=(m_1,\dots,m_t)$ a partition of $m$, we set $P=P_{\m}(D_e)$, with its standard Levi decomposition $P=MN$. We denote by $I(\overline{m})$ the set of symmetric matrices of size $t$ with coefficients in $\N$, such that 
the sum of the $i$-th row is equal to $m_i$. In particular if $a$ belongs to $I(\m)$, the sequence of \textit{non zero coefficients} 
from the upper left corner to the bottom right corner of $a$ form a subpartition 
\[\m_a=(m_{1,1},m_{1,2},\dots,m_{t,t-1},m_{t,t})\] of $\m$. We denote by $P_{\m_a}$ the associated standard parabolic subgroup of $G$, it is contained in $P$. To each $a\in I(\m)$, we associate the element 
$u_a$ of $G$ defined as follows: the block $m_{i,j}\times m_{k,l}$ of $u_a$ is equal to $0_{m_{i,j},m_{k,l}}$ if $\{k,l\}\neq \{i,j\}$, the block $m_{i,i}\times m_{i,i}$ is equal to $I_{m_{i,i}}$, and if $i<j$, the block 
$(m_{i,j}\cup m_{j,i})\times (m_{i,j}\cup m_{j,i})$ (we hope that the reader finds this intuitive notation clear enough, we recall in passing that $m_{i,j}=m_{j,i}$) 
is equal to \[\begin{pmatrix} I_{m_{i,j}} & -\d_{e/f} I_{m_{i,j}}\\ I_{m_{i,j}} & \d_{e/f} I_{m_{i,j}}\end{pmatrix}.\]

The proof of Propositions 3.7 and 3.9 of \cite{M11} is valid in the generality in which we state the following result, to which we add obvious observations.

\begin{prop}\label{representatives}
\begin{itemize}
\item The matrices $u_a$, when $a$ varies in $I(\m)$ form a set of representatives $R(P\backslash G/H)$ of 
the double cosets $P\backslash G/H$. 
\item The element $w_a=u_au_a^{-\theta}$ is a permutation matrix of order $2$, and if one writes $[1,n]=[I_{1,1},I_{1,2},\dots,I_{t,t-1},I_{t,t}]$ with $I_{i,j}$ of length $m_{i,j}$, then $w_a$ fixes $I_{i,i}$, its restriction to any $I_{i,j}$ is order preserving and it exchanges the intervals 
$I_{i,j}$ and $I_{j,i}$. Moreover the map $u_a\mapsto w_a$ is injective.
\item If one sets $\theta_{w_a}:g\in G\mapsto w_a\theta(g)w_a^{-1}$, then $G^{\theta_{w_a}}=u_a H u_a^{-1}$, and for 
\[m=diag(g_{1,1},g_{1,2},\dots,g_{t,t-1},g_{t,t})\in M_{\m_a},\] 
$\theta_{w_a}(m)$ is the element \[diag(g'_{1,1},g'_{1,2},\dots,g'_{t,t-1},g'_{t,t})\in M_{\m_a},\] where 
$g'_{i,j}=\theta(g_{j,i})$.
\end{itemize}
\end{prop}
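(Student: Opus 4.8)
The plan is to follow the strategy of \cite[Propositions 3.7 and 3.9]{M11}, which the paper invokes, and to verify the extra combinatorial observations by direct matrix computation. First I would recall that $u_a \in G = G_m(D_e)$ is built blockwise from copies of $I$ and copies of $\begin{pmatrix} I & -\d_{e/f} I \\ I & \d_{e/f} I \end{pmatrix}$, and that $\theta$ acts entry-wise through the Galois action of $e/f$ (fixing $D_f$). For the first bullet, the argument is the classical reduction: $P\backslash G/H$ is parametrized by $\theta$-twisted conjugacy classes, and decomposing $e$-vector spaces (or $D_e$-modules) with respect to the $f$-structure gives, for a flag of type $\m$, the invariants recorded by a symmetric $\N$-matrix $a \in I(\m)$ whose row sums are the $m_i$ — this is the non-split/division-algebra analogue of the $\GL_n$ computation, and the key point is that $D_f$ being of odd index ensures $D_e = D_f\otimes_f e$ stays a division algebra so the module-theoretic decomposition behaves like the vector-space one. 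The representative $u_a$ is the explicit base-change matrix realizing this decomposition; that $\{u_a : a \in I(\m)\}$ is a complete, irredundant set of double coset representatives is exactly what \cite{M11}'s proof gives.

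For the second bullet I would compute $w_a = u_a u_a^{-\theta}$ directly on each block. On a diagonal block $I_{i,i}$ the relevant piece of $u_a$ is $I_{m_{i,i}}$, which is $\theta$-fixed, so $w_a$ restricts to the identity there. On a pair of off-diagonal blocks $(I_{i,j}\cup I_{j,i})$ with $i<j$, writing $\delta = \d_{e/f}$ so that $\theta(\delta) = -\delta$, one has the $2\times2$-block identity
\[
\begin{pmatrix} I & -\delta I \\ I & \delta I \end{pmatrix}
\begin{pmatrix} I & \delta I \\ I & -\delta I \end{pmatrix}^{-1}
= \begin{pmatrix} 0 & I \\ I & 0 \end{pmatrix},
\]
(after a short inversion computation using $\begin{pmatrix} I & \delta I \\ I & -\delta I \end{pmatrix}^{-1} = \tfrac{1}{2}\begin{pmatrix} I & I \\ \delta^{-1}I & -\delta^{-1}I \end{pmatrix}$), which is the permutation matrix exchanging $I_{i,j}$ and $I_{j,i}$ order-preservingly. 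Hence $w_a$ is a permutation matrix of order $2$ with the stated action on the intervals. Injectivity of $u_a \mapsto w_a$ then follows because $w_a$ determines the bipartite incidence pattern of the intervals $I_{i,j}$, hence the sizes $m_{i,j}$, hence $a$ itself; alternatively it follows from injectivity of $a\mapsto u_a$ together with the first bullet (distinct $a$ give distinct double cosets, and one checks $w_a = w_{a'} \Rightarrow$ same double coset).

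For the third bullet, $\theta_{w_a}(g) = w_a\theta(g)w_a^{-1}$; using $w_a = u_a u_a^{-\theta}$ one gets $w_a\theta(g)w_a^{-1} = u_a \theta(u_a^{-1} g u_a) u_a^{-1}$, so $g \in G^{\theta_{w_a}}$ iff $u_a^{-1} g u_a$ is $\theta$-fixed, i.e. iff $g \in u_a H u_a^{-1}$; this gives $G^{\theta_{w_a}} = u_a H u_a^{-1}$. For the explicit action on $M_{\m_a}$, I would restrict to each $2\times 2$ block: conjugating $\diag(g_{i,j}, g_{j,i})$ by $\begin{pmatrix} 0 & I \\ I & 0\end{pmatrix}$ after applying $\theta$ swaps the two diagonal entries and applies $\theta$, yielding $\diag(\theta(g_{j,i}), \theta(g_{i,j}))$, i.e. $g'_{i,j} = \theta(g_{j,i})$; on the diagonal block $I_{i,i}$, $w_a$ acts trivially so $g'_{i,i} = \theta(g_{i,i})$, consistent with the formula. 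I expect the main obstacle to be not any single computation but checking carefully that the decomposition argument of \cite{M11} really does go through verbatim over the division algebra $D_e$ — specifically that the relevant classification of $f$-forms of $D_e$-modules equipped with a flag is governed by the same symmetric-matrix invariant, which is where the odd-index hypothesis (keeping $D_e$ a division algebra, so that all modules are free and ``dimension'' is well-defined) is doing its work; once that is granted, the rest is the elementary block algebra sketched above.
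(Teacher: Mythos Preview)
Your proposal is correct and follows essentially the same approach as the paper, which simply invokes \cite[Propositions 3.7 and 3.9]{M11} for the first bullet and calls the remaining assertions ``obvious observations.'' You have in fact supplied more detail than the paper does: the explicit $2\times 2$-block computation of $w_a$ (using that $\d_{e/f}$ is central in $D_e$ and $\theta(\d_{e/f})=-\d_{e/f}$), the identity $\theta_{w_a}(g)=u_a\theta(u_a^{-1}gu_a)u_a^{-1}$ giving $G^{\theta_{w_a}}=u_aHu_a^{-1}$, and the recovery of $a$ from $w_a$ via $m_{i,j}=|w_a(I_i)\cap I_j|$ are exactly the ``obvious'' verifications the paper leaves to the reader.
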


We shall as well write $w_{u_a}$ instead of $w_a$. We will often write $P_a=M_aN_a$ or $P_{u_a}=M_{u_a}N_{u_a}$ for $P_{\m_a}$. For $X\subset G$ and $u\in\Rep$, we will sometimes write $X^u$ or $X^{w_u}$ for $X^{\theta_{w_u}}$, and $X(u)$ for $u^{-1} X^u u$.\\

Following \cite[Definition 1]{JLR} or \cite[Definition 3.1.2]{LR} with $\sigma=I_d$, we say that $u\in \Rep$ is $P$-admissible if $w_uMw_u^{-1}=M$. In particular any $u_a\in \Rep$ is 
$P_{\m_a}$-\textit{admissible}. If $u$ is $P$-admissible, then $P^u=M^uN^u$, hence $P(u)=M(u)N(u)$. Take $u\in \Rep$, an easy consequence of the inclusion $P\cap w_uPw_u^{-1}\subset P_u$ is the equality (see \cite[Proposition 4.1]{M11})
\[P^u=P_u^u=M_u^u N_u^u.\]


Now $f=F$ and $e=E$. We denote by $\sigma$ a smooth representation of $M$. Then according to the discussion 
before \cite[Lemma 5.5]{M11} (see more generally \cite[Theorem 1.1 and Corollary 6.9]{O17}), one has the following consequence of \cite[Theorem 5.2]{BZ77}.

\begin{prop}\label{Geometric lemma}
\begin{itemize}
\item There is a vector space injection of $\Hom_H(ind_P^G(\sigma),\C)$ into 
\[\prod_{u\in \Rep} Hom_{P_u^u}(\frac{\d_P^{1/2}}{\d_{P_u^u}}\sigma,\C).\]
\item \[Hom_{P_u^u}(\frac{\d_P^{1/2}}{\d_{P_u^u}}\sigma,\C)\simeq Hom_{M_u^u}(r_{M_u,M}(\sigma),\C)\]
\end{itemize}
\end{prop}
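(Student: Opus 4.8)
The plan is to deduce this from the geometric lemma of Bernstein--Zelevinsky, specialized to the double cosets of Proposition \ref{representatives}. First I would recall the setup: we wish to understand $\Hom_H(\ind_P^G(\sigma),\C)$, and by Frobenius reciprocity (or rather its version for $H$-invariant functionals, i.e.\ the identity $\Hom_H(\ind_P^G(\sigma),\C)\simeq \Hom_H(\sigma, \mathcal{S}(P\backslash G))$ suitably interpreted) we are led to study the $H$-module $\ind_P^G(\sigma)$ via its restriction to $H$. Since $P\backslash G/H$ is finite with representatives $\{u_a\}_{a\in I(\m)}=\Rep$ by the first bullet of Proposition \ref{representatives}, the space $\ind_P^G(\sigma)|_H$ carries a finite filtration by $H$-submodules whose successive quotients are indexed by the double cosets $u\in \Rep$, the quotient attached to $u$ being (up to the modulus twist that accounts for the difference between $\delta_P$, $\delta_H$ and the stabilizer) $\ind_{H\cap u^{-1}Pu}^H\big(\delta_P^{1/2}\,{}^u\sigma\big)$ or rather its contragredient-flavored version relevant for invariant functionals. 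Applying $\Hom_H(-,\C)$ to this filtration is left-exact, which immediately yields the claimed \emph{injection}
\[\Hom_H(\ind_P^G(\sigma),\C)\hookrightarrow \prod_{u\in\Rep}\Hom_{P_u^u}\Big(\tfrac{\d_P^{1/2}}{\d_{P_u^u}}\sigma,\C\Big),\]
where the appearance of $P_u^u$ rather than $H\cap u^{-1}Pu$ comes from transporting the stabilizer group $u H u^{-1}\cap P = P^u$ back through $u$, together with the identity $P^u = P_u^u = M_u^u N_u^u$ recalled just before the statement (a consequence of $P\cap w_uPw_u^{-1}=P_u$ and $P$-admissibility of $u$). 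This is exactly the content of \cite[Theorem 5.2]{BZ77} as used in \cite{M11}, and I would cite that directly rather than redo the filtration argument.

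For the second bullet, the point is to compute the contribution of a single double coset. Here I would use that $P_u^u = M_u^u N_u^u$ is the parabolic subgroup of $P_u^u$'s ambient Levi with unipotent radical $N_u^u$; applying the adjunction between (unnormalized) induction/restriction along $N_u^u$ and passing from $\sigma$ restricted to $P_u\supset M_u$ to its $N_u$-coinvariants, one gets
\[\Hom_{P_u^u}\Big(\tfrac{\d_P^{1/2}}{\d_{P_u^u}}\sigma,\C\Big)\simeq \Hom_{M_u^u}\big(r_{M_u,M}(\sigma),\C\big),\]
where the normalized Jacquet functor $r_{M_u,M}$ appears precisely because the twist $\d_P^{1/2}/\d_{P_u^u}$ is engineered to convert the unnormalized $N_u$-coinvariants of $\sigma$ into the normalized ones $r_{M_u,M}(\sigma)$ — this is a bookkeeping exercise with modulus characters, using $\delta_{P_u} = \delta_{P_u^u}$-type relations coming from the fact that $\theta_{w_u}$ acts on $M_u$ by the explicit block-transpose-and-conjugate formula of Proposition \ref{representatives}, so that $N_u$ and $N_u^u$ are "matched up" correctly.

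The main obstacle, and the only place real care is needed, is tracking all the modulus characters: one must verify that the twist in the first bullet is genuinely $\d_P^{1/2}/\d_{P_u^u}$ (and not, say, $\d_P^{1/2}\d_H^{-1}\d_{P^u}$ in disguise) and that after the Jacquet-functor adjunction in the second bullet the leftover twist is exactly the normalization factor built into $r_{M_u,M}$, leaving no stray character on $M_u^u$. This is precisely the computation carried out in \cite[Proposition 4.1, Lemma 5.5]{M11}, and since the present $(G,H)$ with $D$ of odd index has the same combinatorial structure of double cosets (Proposition \ref{representatives}) and the same admissibility properties, that proof applies verbatim; more general versions are in \cite[Theorem 1.1, Corollary 6.9]{O17}. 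I would therefore present the proof as: invoke \cite[Theorem 5.2]{BZ77} for the filtration and left-exactness of $\Hom_H(-,\C)$ to get the injection, then quote the modulus-character computation of \cite{M11} (valid here without change) for the identification of each local term, noting that the key inputs — finiteness of $\Rep$, $P^u=M_u^uN_u^u$, and the description of $\theta_{w_u}$ on $M_u$ — are all supplied by Proposition \ref{representatives}.
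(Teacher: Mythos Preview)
Your proposal is correct and matches the paper's approach exactly: the paper does not give a standalone proof but records the statement as a consequence of \cite[Theorem 5.2]{BZ77} via the discussion before \cite[Lemma 5.5]{M11} (and more generally \cite[Theorem 1.1 and Corollary 6.9]{O17}), which is precisely the filtration-plus-modulus-bookkeeping argument you outline. Your identification of the key inputs (finiteness of $\Rep$, the decomposition $P^u=M_u^uN_u^u$, and the explicit action of $\theta_{w_u}$ on $M_u$ from Proposition \ref{representatives}) is exactly what makes the split-case proof of \cite{M11} go through unchanged here.
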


\begin{rem}\label{pure tensor} The following useful observation is \cite[Lemma 2.1]{G15}. If $u=u_a\in \Rep$ corresponds to $a\in I(\m)$ (setting $\m_a=(m_{1,1},\dots,m_{t,t})$), $\sigma$ is an admissible representation of $M$, and the Jacquet module $r_{M_u,M}(\sigma)$ is a pure tensor 
$\otimes_{i,j} \sigma_{i,j}$ with $\sigma_{i,j}$ a representation of $G_{m_{i,j}}$, then 
\[Hom_{M_u^u}(r_{M_u,M}(\sigma),\C)\simeq \otimes_{i=1}^t 
Hom_{H_{m_{i,i}}}(\sigma_{i,i},\C)\otimes_{1\leq i<j\leq t} Hom_{G_{m_{i,j}}}(\sigma_{j,i},(\sigma_{i,j}^{\theta})^{\vee}).\]
If $(\l,\dots,\l)$ is a partition of $m$, $\rho$ is a cuspidal representation of $G_\l$, and $a$ and $b$ are integers with 
$b+1-a=m/\l$. We set $\D=[a,b]_\rho$. We say that a partition $\m=(m_1,\dots,m_t)$ of $m$ is $\rho$-adapted if each $m_i$ is a multiple of $\l$. If $\m$ is such a partition, by convention, we write $\D=[\D_1,\dots,\D_t]$ with 
$\D_{t+1-i}=[a+\frac{m_1+\dots+m_{i-1}}{\l}, a+\frac{m_1+\dots+m_i}{\l}-1]_\rho,$ and if $\d=L(\D)$, we set 
$\d_i=L(\D_i)$. We recall that by \cite[Proposition 7.16]{MS14}, \[r_{M_{\m},G}(\d)=0\] if $\m$ is not $\rho$-adapted, and 
\[r_{M_{\m},G}(\d)=\d_1\otimes \dots \otimes \d_t\] if $\m$ is $\rho$-adapted. In particular this remark applies to essentially square-integrable representations.
\end{rem}

\section{Rankin-Selberg and Asai $L$-functions}\label{asai}

\textit{Notice that for simpler notations, in this section and the rest of the paper, our definition of $\gamma$ and $\e$-factors might differ from that of the usual sources by a sign}.\\

Let $L$ be a $p$-adic field, and $\mu$ a non trivial character of $L$. Then we denote by $\mu$ again the non degenerate character of 
$N_n(L)$ defined by $\mu(u)=\mu(\sum_{i=1}^{n-1} u_{i,i+1})$. By \cite[Theorem 9.7]{Z80}, it is equivalent to say that a representation $\pi$ of $G_n(L)$ is generic or that $\Hom_{N_n(L)}(\pi,\mu)\neq \{0\}$, and in this case, the dimension of 
$\Hom_{N_n(L)}(\pi,\mu)$ is one by \cite{GK75}. This allows to embed in a unique way (up to scaling) a generic representation $\pi$ of $G_n(L)$ in $\Ind_{N_n(L)}^{G_n(L)}(\mu)$, in which case we denote by $W(\pi,\mu)$ the image of this embedding, that we call the Whittaker model of $\pi$. For $W\in W(\pi,\mu)$, the map $\widetilde{W}:g\mapsto W(w_n{}^t\!g^{-1})$ belongs to 
$W(\pi^\vee,\mu^{-1})$. The space $\mathcal{C}_c^\infty(L^n)$ is by definition that of smooth functions on $L^n$ 
with compact support. 

\subsection{The $p$-adic Asai $L$-factor}

Let $\psi$ be a non trivial character of $E$ trivial on $F$, it is of the form $z\mapsto \psi'(\d_{E/F}(z-\theta(z)))$ 
for a unique non trivial character $\psi'$ of $F$. We set $\e=\e_n=(0,\dots,0,1)$ in $\M_{1,n}(\Z)$. If $\pi$ is a generic representation of $G_n(E)$, for $W\in W(\pi,\psi)$, $\phi \in \sm_c(F^n)$, and $s\in \C$, we define the Asai integrals 
\[I^+(s,W,\phi)=\int_{N_n(F)\backslash G_n(F)} W(h)\phi (\e h)\nu_F(h)^sdh,\]
and 
\[I^-(s,W,\phi)=\int_{N_n(F)\backslash G_n(F)} W(h)\phi (\e h)\eta(h)\nu_F(h)^sdh.\]

By the appendix of \cite{F93}, there is $r_\pi\in \R$ such that for $\Re(s)\geq r_\pi$, all integrals $I^+(s,W,\phi)$ 
(resp. $I^-(s,W,\phi)$) converge absolutely. They in fact extend to elements of $\C(q_F^{-s})$, and the vector space they span, as $W$ and $\phi$ vary, is a fractional ideal of $\C[q_F^{-s},q_F^s]$ with a unique generator normalized by the fact that it is the inverse of a 
polynomial in $q_F^{-s}$ with constant term $1$, which we denote by \[L^+(s,\pi) \ ({\rm{resp.}}\ L^-(s,\pi))\] and call the even (resp. the odd) Asai $L$-function of $\pi$.\\

The following result describes when the Asai $L$-factor attached to a cuspidal  
representation has a pole. It is a consequence of \cite[Corollary 1.5]{AKT04} (see \cite[Proposition 3.6]{M10} for a different approach).

\begin{prop}\label{poles Asai L factor}
Let $\pi$ be a cuspidal representation of $G_n(E)$. Then the Asai $L$-factor $L^+(s,\pi)$ (resp. $L^-(s,\pi)$) has a pole at $s_0$  if and only if $\pi$ is $\nu_F^{-s_0}$-distinguished (resp. $\eta \nu_F^{-s_0}$-distinguished), and such a pole is always simple.
\end{prop}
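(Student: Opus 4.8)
The statement we must prove is Proposition \ref{poles Asai L factor}: for $\pi$ cuspidal on $G_n(E)$, the factor $L^+(s,\pi)$ (resp. $L^-(s,\pi)$) has a pole at $s_0$ if and only if $\pi$ is $\nu_F^{-s_0}$-distinguished (resp. $\eta\nu_F^{-s_0}$-distinguished), and the pole is simple. Since the stated sources are \cite[Corollary 1.5]{AKT04} and the alternative \cite[Proposition 3.6]{M10}, the plan is to derive the result by the Rankin--Selberg/zeta-integral method: relate the poles of $L^\pm(s,\pi)$ to the existence of $G_n(F)$-invariant (resp. $\eta$-twisted) functionals on $\nu_F^{s_0}\pi$.

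First I would recall that $L^+(s,\pi)^{-1}$ (resp. $L^-(s,\pi)^{-1}$) is, by the definition given just above the statement, the normalized generator of the fractional ideal of $\C[q_F^{-s},q_F^s]$ spanned by the integrals $I^+(s,W,\phi)$ (resp. $I^-(s,W,\phi)$) as $W$ ranges over $W(\pi,\psi)$ and $\phi$ over $\sm_c(F^n)$. Thus $L^+(s,\pi)$ has a pole at $s_0$ if and only if some $I^+(s,W,\phi)$ has a pole there. The key analytic input is that, because $\pi$ is cuspidal, the Asai integral $I^+(s,W,\phi)$ extends to a rational function of $q_F^{-s}$ whose only possible poles are simple: one uses the asymptotic expansion of Whittaker functions on $N_n(F)\backslash G_n(F)$ (they are controlled by the exponents of $\pi$ along the mirabolic, and cuspidality forces a single leading exponent up to the character lattice), so the integral behaves like a geometric series in $q_F^{-s}$ near any critical point, giving at most a simple pole. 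This simultaneously establishes the simplicity claim.

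Next, for the equivalence with distinction: taking a residue at $s=s_0$ of $I^+(s,W,\phi)$ produces a nonzero bilinear functional $(W,\phi)\mapsto \mathrm{Res}_{s=s_0}I^+(s,W,\phi)$ which, by the change of variables $h\mapsto h'h$ in the integral and the quasi-invariance of $W$ and $\phi$ under $G_n(F)$ (with $\phi(\e h)$ picking up $\nu_F(h)$-factors absorbed by the shift by $s_0$), is $G_n(F)$-invariant up to the twist $\nu_F^{s_0}$. Restricting to $\phi$ supported near a suitable point and using the explicit $\nu_F$-homogeneity, one reads off a nonzero element of $\Hom_{G_n(F)}(\pi, \nu_F^{-s_0})$, i.e. $\nu_F^{-s_0}$-distinction of $\pi$. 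Conversely, given a nonzero such functional $\ell$, one builds from it, via the standard averaging/integration of Whittaker functions against $\ell$-valued kernels, a family of integrals realizing a pole of $L^+$ at $s_0$; here one invokes exactly the local functional-equation machinery and the fact that for cuspidal $\pi$ the period integral against $\ell$ is (essentially) the residue, so nonvanishing of $\ell$ forces a genuine pole. The odd case $L^-$ and $\eta$-distinction is identical, replacing $\phi(\e h)\nu_F(h)^s$ by $\phi(\e h)\eta(h)\nu_F(h)^s$ throughout, using $\eta$-quasi-invariance.

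The main obstacle will be the ``only if'' direction at the level of rigor, namely showing that nonexistence of a pole at $s_0$ is incompatible with $\nu_F^{-s_0}$-distinction: this requires showing that \emph{every} distinguished functional actually arises as a residue of some Asai integral, which is where one must use the classification-type results of \cite{AKT04} (the multiplicity-one statement for the pair $(G_n(E),G_n(F))$ forcing uniqueness, together with the explicit realization of the period as a residue). I expect this step to hide the real content; the remaining bookkeeping — tracking the $\nu_F$-twists under $h\mapsto h'h$, the reduction to a geometric series for the simplicity of the pole, and the parallel treatment of the $\eta$-twisted integral — is routine and mirrors the cited references \cite{F93}, \cite{AKT04}, \cite{M10}.
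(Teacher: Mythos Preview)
The paper does not supply its own proof here; it simply records the statement as a consequence of \cite[Corollary~1.5]{AKT04}, with \cite[Proposition~3.6]{M10} cited as an alternative. Your sketch is a reasonable outline of the zeta-integral argument carried out in those references and is correct in its broad strokes.

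Two places where your description is looser than the actual proofs. First, the simplicity of the pole for cuspidal $\pi$ is obtained not from a general ``single leading exponent'' heuristic but from the compact support (modulo $N_{n-1}$) of cuspidal Whittaker functions restricted to $G_{n-1}$, which makes the relevant inner period a Laurent polynomial in $q_F^{-s}$ and isolates the possible pole in a single Tate-type factor attached to the central character restricted to $F^\times$. Second, the passage from the bilinear residue $\Lambda(W,\phi)$ to a functional on $\pi$ alone is not done by ``restricting $\phi$ near a point'' but by showing that the residue factors as a nonzero constant times $\widehat{\phi}(0)$ times an explicit period in $W$; the nonvanishing of this period is then equivalent, via multiplicity one (Proposition~\ref{mult1} in the case $D=F$), to $\nu_F^{-s_0}$-distinction. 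In the language of \cite{M10} this is the statement that for cuspidal $\pi$ every pole of $L^+(s,\pi)$ is \emph{exceptional}. This is exactly the step you correctly flag as the substantive one, and with these refinements your outline matches the cited arguments.
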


Finally, we recall the local functional equation of the local Asai $L$-factor, which can again be found in the appendix of 
\cite{F93}. We denote by $\widehat{\phi}$ the Fourier transform of $\phi$ with respect to $\psi'$-self dual Haar measure 
on $F^n$.

\begin{prop}\label{functional equation local Asai}
Let $\pi$ be a generic representation of $G_n(E)$, and $\mathfrak{e}\in\{+,-\}$, there is a unit $\e^\mathfrak{e}(s,\pi,\psi)$ of $\C[q_F^{\pm s}]$ such that if one sets 
\[\gamma^\mathfrak{e}(s,\pi,\psi)=\e^\mathfrak{e}(s,\pi,\psi)
\frac{L^\mathfrak{e}(1-s,\pi^\vee)}{L^\mathfrak{e}(s,\pi)}, \] then for any $W\in W(\pi,\psi)$ and $\phi \in \sm_c(F^n)$, one has 
\[I^\mathfrak{e}(1-s,\widetilde{W},\widehat{\phi })= \gamma^\mathfrak{e}(s,\pi,\psi)
I^\mathfrak{e}(s,W,\phi).\]
\end{prop}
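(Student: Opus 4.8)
The plan is to follow the Rankin--Selberg method, in the form used by Flicker in the appendix of \cite{F93}: I would realise both members of the desired identity as elements of a space of Asai functionals which is one-dimensional for $s$ in general position, deduce from this the existence of a proportionality factor $\gamma^{\mathfrak e}(s,\pi,\psi)$, and then read off its shape from the description of the integrals $I^{\mathfrak e}(s,W,\phi)$ as generators of fractional ideals recalled just above. We may and do assume $\pi$ irreducible.

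First I would record the equivariance of the zeta integral. Let $G_n(F)$ act on row vectors $x\in F^n$ by $\rho(g)\phi(x)=\phi(xg)$, so that $F^n\setminus\{0\}=\e_n\,G_n(F)$ is the open orbit, with stabiliser of $\e_n$ the mirabolic subgroup $S_n=\{g\in G_n(F):\e_n g=\e_n\}$. The substitution $h\mapsto hg^{-1}$ shows that $(W,\phi)\mapsto I^{\mathfrak e}(s,W,\phi)$ lies in $\Hom_{G_n(F)}\big(\pi\otimes\sm_c(F^n),\,\eta^{\mathfrak e}\nu_F^{-s}\big)$, where $\eta^{+}=\1$ and $\eta^{-}=\eta$ and $G_n(F)$ acts diagonally. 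Since $\psi^{-1}$ is again a nontrivial character of $E$ trivial on $F$, since $\widetilde W\in W(\pi^\vee,\psi^{-1})$ and $\widehat\phi\in\sm_c(F^n)$, the quantity $I^{\mathfrak e}(1-s,\widetilde W,\widehat\phi)$ is an Asai integral attached to $(\pi^\vee,\psi^{-1})$ at the point $1-s$; in particular it converges for $\Re(s)$ very negative and, by the results recalled above applied to $\pi^\vee$, extends to a rational function of $q_F^{-s}$ which, as $W$ and $\phi$ vary, spans $L^{\mathfrak e}(1-s,\pi^\vee)\,\C[q_F^{\pm s}]$. Using the identities $\widetilde{\pi(g)W}=\pi^\vee({}^tg^{-1})\widetilde W$ and $\widehat{\rho(g)\phi}=\nu_F(g)^{-1}\rho({}^tg^{-1})\widehat\phi$, together with the $G_n(F)$-equivariance of the Asai integral of $\pi^\vee$, a short computation carried out on the meromorphic continuations shows that $(W,\phi)\mapsto I^{\mathfrak e}(1-s,\widetilde W,\widehat\phi)$ belongs to the same space $\Hom_{G_n(F)}\big(\pi\otimes\sm_c(F^n),\,\eta^{\mathfrak e}\nu_F^{-s}\big)$.

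The heart of the argument, and the step I expect to be the main obstacle, is the uniqueness statement: for all but finitely many $s$ modulo $\tfrac{2\pi i}{\log q_F}\Z$, $\dim\Hom_{G_n(F)}\big(\pi\otimes\sm_c(F^n),\,\eta^{\mathfrak e}\nu_F^{-s}\big)\le 1$. I would obtain it by applying $\Hom_{G_n(F)}(\pi\otimes-,\eta^{\mathfrak e}\nu_F^{-s})$ to the $G_n(F)$-equivariant short exact sequence
\[0\longrightarrow\sm_c(F^n\setminus\{0\})\longrightarrow\sm_c(F^n)\stackrel{\phi\mapsto\phi(0)}{\longrightarrow}\C\longrightarrow 0\]
(trivial action on the quotient). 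The quotient contributes $\Hom_{G_n(F)}(\pi,\eta^{\mathfrak e}\nu_F^{-s})$, which is at most one-dimensional by \cite{F91} and which, by comparison of central characters, is nonzero for only finitely many $s$ modulo $\tfrac{2\pi i}{\log q_F}\Z$. The subobject, being the compact induction from $S_n$ of the trivial character, contributes via the projection formula and Frobenius reciprocity a space isomorphic to $\Hom_{S_n}(\pi,\chi_s)$ for a character $\chi_s$ of $S_n$ trivial on its unipotent radical $U_n$; writing $S_n=G_{n-1}(F)\ltimes U_n$ this is $\Hom_{G_{n-1}(F)}(\pi_{U_n},\mu_s)$ with $\mu_s$ the character of $G_{n-1}(F)$ induced by $\chi_s$, and a Mackey-theoretic / geometric-lemma analysis (using that $\GL(n,E)$ is split, so that the Bernstein--Zelevinsky derivative calculus is available) shows this is one-dimensional for $s$ outside a finite set. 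The delicate points are that restriction of $\pi$ from $G_n(E)$ to the non-open subgroup $G_n(F)$ need not be admissible, so the coinvariant module $\pi_{U_n}$ must be handled with care, and that the Fourier transform does not preserve the above filtration, so that the proportionality must be established directly at the level of the meromorphically continued functionals rather than term by term. One may alternatively reduce, via multiplicativity of Asai $\gamma$-factors, to cuspidal $\pi$, for which the mirabolic restriction is the Kirillov model and the uniqueness becomes transparent.

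Granting the bound, I would fix $W_0$ and $\phi_0$ with $I^{\mathfrak e}(s,W_0,\phi_0)$ not identically zero. For $s$ in general position, $(W,\phi)\mapsto I^{\mathfrak e}(s,W,\phi)$ and $(W,\phi)\mapsto I^{\mathfrak e}(1-s,\widetilde W,\widehat\phi)$ are two nonzero elements of a one-dimensional space, hence proportional, and
\[\gamma^{\mathfrak e}(s,\pi,\psi):=\frac{I^{\mathfrak e}(1-s,\widetilde W_0,\widehat\phi_0)}{I^{\mathfrak e}(s,W_0,\phi_0)}\]
is then independent of $W$ and $\phi$, is a ratio of rational functions of $q_F^{-s}$, and satisfies $I^{\mathfrak e}(1-s,\widetilde W,\widehat\phi)=\gamma^{\mathfrak e}(s,\pi,\psi)\,I^{\mathfrak e}(s,W,\phi)$ for all $W$, $\phi$ and all $s$ by meromorphic continuation. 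Finally, since the integrals $I^{\mathfrak e}(s,W,\phi)$ span $L^{\mathfrak e}(s,\pi)\,\C[q_F^{\pm s}]$ while the integrals $I^{\mathfrak e}(1-s,\widetilde W,\widehat\phi)$ span $L^{\mathfrak e}(1-s,\pi^\vee)\,\C[q_F^{\pm s}]$, the functional equation forces $\e^{\mathfrak e}(s,\pi,\psi):=\gamma^{\mathfrak e}(s,\pi,\psi)\,L^{\mathfrak e}(s,\pi)/L^{\mathfrak e}(1-s,\pi^\vee)$ to carry $\C[q_F^{\pm s}]$ onto itself; running the same argument for $\pi^\vee$ at $1-s$ shows that its inverse does as well, so $\e^{\mathfrak e}(s,\pi,\psi)$ is a unit of $\C[q_F^{\pm s}]$, which is the assertion.
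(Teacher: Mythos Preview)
The paper does not give its own proof of this proposition: it simply recalls the statement from the appendix of \cite{F93} (see the sentence immediately preceding the proposition). Your proposal is therefore not to be compared against a proof in the paper but rather against Flicker's original argument, and indeed you explicitly say you are following his method. Your outline is the standard one and is essentially correct: realise both sides as $G_n(F)$-equivariant functionals on $\pi\otimes\sm_c(F^n)$, establish generic one-dimensionality of the relevant $\Hom$-space via the filtration $0\to\sm_c(F^n\setminus\{0\})\to\sm_c(F^n)\to\C\to 0$, deduce the proportionality constant $\gamma^{\mathfrak e}$, and read off the shape of $\e^{\mathfrak e}$ from the fractional-ideal description.

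One small caution: your remark that one may ``alternatively reduce, via multiplicativity of Asai $\gamma$-factors, to cuspidal $\pi$'' is circular as stated, since multiplicativity is normally proved \emph{after} the functional equation (or at least simultaneously with it). The direct argument via the mirabolic filtration that you sketch first is the right one and does not need this reduction. Otherwise your proposal is a faithful expansion of what the paper leaves to the cited reference.
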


Finally, we will need the following consequence of the inductivity relation of Asai $L$-factors of essentially square-integrable representations. First notice that we also denote by $\eta$ (see Section \ref{notations}) any extension of the character $\eta_{E/F}\circ \det$ of $G_n(F)$ to $G_n(E)$. 

\begin{prop}\label{asai gamma poles}
Let $\St_k(\rho)$ be a conjugate self-dual and essentially square-integrable representation of $G_n(E)$. Then \[\gamma^+(-s,\St_k(\rho),\psi)^{-1}\gamma^-(s,\St_k(\rho),\psi)^{-1}\underset{\C[q^{\pm s}]^\times}{\sim}
\frac{L^+(s,\eta^k\rho)}{L^+(s+k,\rho)}\frac{L^+(-s,\eta^{k+1}\rho)}{L^+(-s+k,\eta\rho)} .\]
\end{prop}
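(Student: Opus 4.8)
The plan is to reduce the claimed identity to the inductivity relations for Asai $\gamma$-factors of discrete series, expressed via the Jacquet--Langlands correspondence and the known factorization of Asai $L$-factors of Steinberg-type representations into Asai and Rankin--Selberg $L$-factors of the cuspidal support. Concretely, write $\d=\St_k(\rho)$ and recall that $\JL(\rho)=\St_l(\rho')$ with $\rho'$ cuspidal on $G_{n/l}(E)$, so that $\JL(\d)=\St_{lr}(\rho')$ after renaming, and the Asai $\gamma$-factor of $\St_k(\rho)$ agrees with that of its Jacquet--Langlands transfer (this is the standard compatibility of Asai factors with $\JL$, which one should cite, e.g. from the references used in Proposition~\ref{poles Asai L factor} and the appendix of \cite{F93}, or deduce from the fact that $\gamma$-factors only depend on the $L$-parameter). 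This moves the whole computation into the split group $\GL(n,E)$, where the multiplicativity/inductivity of the local Asai $\gamma$-factor along a segment is available.

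First I would record the multiplicativity of the Asai $\gamma$-factor for a Steinberg representation $\St_k(\rho')$ built from a cuspidal $\rho'$: up to units in $\C[q^{\pm s}]^\times$ one has a product formula of the shape
\[
\gamma^{\mathfrak e}(s,\St_k(\rho'),\psi)\underset{\C[q^{\pm s}]^\times}{\sim}\prod_{\text{pairs }i\le j}\gamma^{\mathfrak e}(s+c_{ij},\rho'\times\theta(\rho'),\psi')\cdot\prod_i \gamma^{\mathfrak e}(s+c_i,\rho',\psi),
\]
where the exponents $c_i,c_{ij}$ run over the integers appearing in the segment $[\tfrac{1-k}{2},\tfrac{k-1}{2}]$; the clean way to get this is to use that $\St_k(\rho')$ sits inside the induced representation $\nu^{(1-k)/2}\rho'\times\cdots\times\nu^{(k-1)/2}\rho'$ and that Asai $\gamma$-factors are inductive (this is exactly the "inductivity relation of Asai $L$-factors of discrete series" the statement alludes to). Then, since $\d$ is assumed conjugate self-dual, I would use Proposition~\ref{distinction of discrete series}(2) (or directly $\St_k(\rho)^\vee\simeq\St_k(\rho)^\theta$ forcing $\theta(\rho')\simeq(\rho')^\vee$ up to the appropriate shift) to replace the Rankin--Selberg factors $\gamma^{\mathfrak e}(\cdot,\rho'\times\theta(\rho'),\cdot)$ by $\gamma^{\mathfrak e}(\cdot,\rho'\times(\rho')^\vee,\cdot)$, and finally convert $\gamma$ back to $L$-factors via Proposition~\ref{functional equation local Asai} (the $\e$-factor being a unit). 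After collecting terms, all the intermediate cross-terms in the product over $i<j$ should telescope, leaving only the boundary contributions, which, re-expressed through $\JL$ back in terms of $\rho$ and regrouped, give exactly $\dfrac{L^+(s,\eta^k\rho)}{L^+(s+k,\rho)}\dfrac{L^+(-s,\eta^{k+1}\rho)}{L^+(-s+k,\eta\rho)}$ on the right-hand side; the product $\gamma^+(-s,\St_k(\rho),\psi)^{-1}\gamma^-(s,\St_k(\rho),\psi)^{-1}$ is designed so that the parity-dependent signs and the $\eta$-twists alternate correctly between the two factors, which is why only even Asai $L$-factors $L^+$ survive in the answer.

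The main obstacle I expect is the bookkeeping of the telescoping: one must carefully track how the segment indices $\tfrac{1-k}{2},\dots,\tfrac{k-1}{2}$ interact with the shift $s\mapsto s+k$, with the $\eta$-twists (which toggle parity $+\leftrightarrow-$ when one passes from $\rho'\times\theta(\rho')$ cross-terms to the ``diagonal'' Asai terms, since $\eta$ restricted to $\GL_n(F)$ has order $2$), and with the passage $\rho'\rightsquigarrow\rho$ under $\JL$ which multiplies the variable by $l_\rho$ and replaces $\St_l(\rho')$-type combinations by $\rho$; it is easy to be off by a twist or a shift of $1$. A secondary, more conceptual point to be careful about is justifying that the identity holds only up to $\C[q^{\pm s}]^\times$ and not on the nose — this is exactly why the statement is phrased with $\underset{\C[q^{\pm s}]^\times}{\sim}$, so the $\e$-factors and the unit ambiguities in the fractional-ideal normalization of the Asai $L$-factors never need to be pinned down. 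I would also double-check the edge case $k=1$ (where $\St_1(\rho)=\rho$ is cuspidal and the product over $i<j$ is empty) as a sanity check that the formula degenerates to the expected identity relating $\gamma^+$ and $\gamma^-$ of a conjugate self-dual cuspidal.
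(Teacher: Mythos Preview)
Your overall strategy — inductivity plus telescoping plus conjugate self-duality — is the right one, and is essentially what the paper does. However, there are two points where you have overcomplicated or misread the situation.

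First, the Jacquet--Langlands step is spurious. In Section~\ref{asai} the group $G_n(E)$ is the split group $\GL_n(E)$ (Asai $L$-factors are only defined there, via Whittaker models), so $\rho$ is already a cuspidal representation of $\GL_m(E)$ and there is nothing to transfer. Your writing $\JL(\rho)=\St_l(\rho')$ suggests you are confusing the $\rho$ of this proposition with the cuspidal representations of the inner form appearing elsewhere in the paper.

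Second, the paper does not unfold the Asai $\gamma$-factor into a mixed product of Rankin--Selberg cross-terms and diagonal Asai terms as you propose. Instead it invokes directly the much cleaner formula from \cite[Corollary~4.2]{M09},
\[
L^{\mathfrak e}(s,\St_k(\rho))=\prod_{i=0}^{k-1}L^{\mathfrak e}(s+i,\eta^{k-1-i}\rho),
\]
valid for cuspidal $\rho$ on $\GL_m(E)$. Writing out $L^-(s,\St_k(\rho))/L^+(1+s,\St_k(\rho))$ with this formula and using the tautology $L^-(s+i,\eta^{k-1-i}\rho)=L^+(s+i,\eta^{k-i}\rho)$ gives an immediate one-line telescoping to $L^+(s,\eta^k\rho)/L^+(s+k,\rho)$; the companion ratio follows by the substitution $s\mapsto -s$ and an extra $\eta$-twist. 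Conjugate self-duality is used exactly as you say, to replace $\St_k(\rho)^\vee$ by $\St_k(\rho)$ in the $\gamma$-factor. So the paper's argument is the same telescoping you anticipate, but carried out on the $L$-factor side with a single-index product, which avoids the bookkeeping of pair indices and the passage through Rankin--Selberg factors that you flagged as the main obstacle.
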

\begin{proof}
We recall that according to \cite[Corollary 4.2]{M09}, one has the relation 
\[L^\mathfrak{e}(s,\St_k(\rho))=\prod_{i=0}^{k-1}L^\mathfrak{e}(s+i,\eta^{k-1-i}\rho).\]
In particular: 
\[\frac{L^-(s,\St_k(\rho))}{L^+(1+s,\St_k(\rho))}=\frac{\prod_{i=0}^{k-1}L^-(s+i,\eta^{k-1-i}\rho)}
{\prod_{i=0}^{k-1}L^+(s+i+1,\eta^{k-1-i}\rho)}=\frac{\prod_{i=0}^{k-1}L^-(s+i,\eta^{k-1-i}\rho)}
{\prod_{i=1}^{k}L^+(s+i,\eta^{k-i}\rho)}\]
\[=\frac{\prod_{i=0}^{k-1}L^+(s+i,\eta^{k-i}\rho)}
{\prod_{i=1}^{k}L^+(s+i,\eta^{k-i}\rho)}= \frac{L^+(s,\eta^k\rho)}{L^+(s+k,\rho)}.\]
This implies that \[\frac{L^+(-s,\St_k(\rho))}{L^-(1-s,\St_k(\rho))}=\frac{L^+(-s,\eta^{k+1}\rho)}{L^+(-s+k,\eta\rho)}.\] 
Note that because $\St_k(\rho)$ is conjugate self-dual, one has $L^\e(s,\St_k(\rho)^\vee)=L^\e(s,\St_k(\rho)^\sigma)=L^\e(s,\St_k(\rho))$ for $\e\in\{\pm 1\}$, hence we deduce the relation 
\[\gamma^+(-s,\St_k(\rho),\psi)^{-1}\gamma^-(s,\St_k(\rho),\psi)^{-1}\underset{\C[q^{\pm s}]^\times}{\sim}
\frac{L^+(s,\eta^k\rho)}{L^+(s+k,\rho)}\frac{L^+(-s,\eta^{k+1}\rho)}{L^+(-s+k,\eta\rho)} .\]
\end{proof}

\subsection{The $p$-adic Rankin-Selberg $L$-factor}

Let $\psi$ be a non trivial character of $F$, and $\pi$ and $\pi'$ be generic representations of $G_n(F)$. For 
$W\in W(\pi,\psi)$, $W'\in W(\pi',\psi^{-1})$, $\phi\in \sm(F^n)$, and $s\in \C$, we 
define the the Rankin-Selberg integral 
\[I(s,W,W',\phi)=\int_{N_n(F)\backslash G_n(F)} W(h)W'(h)\phi(\e h)\nu_F(h)^sdh.\]

By \cite{JPSS83}, there is $r_{\pi,\pi'}\in \R$ such that for $\Re(s)\geq r_{\pi,\pi'}$, all integrals 
$I(s,W,W',\phi)$ converge absolutely. They in fact extend to elements of $\C(q_F^{-s})$, and the vector space they span, as $W$ $W'$, and $\phi$ vary, is a fractional ideal of $\C[q_F^{-s},q_F^s]$ with a unique generator normalized by the fact that it is the inverse of a polynomial in $q_F^{-s}$ with constant term $1$, which we denote by $L(s,\pi,\pi')$, and call the Rankin-Selberg $L$-factor of $(\pi,\pi')$.\\

The poles of Rankin-Selberg $L$-factors attached to a pair of cuspidal representations are described in \cite[Proposition 8.1]{JPSS83}, the result is as follows.

\begin{prop}\label{poles Rankin-Selberg L factor}
Let $\pi$ and $\pi'$ be cuspidal representations of $G_n(F)$, then the Rankin-Selberg $L$-factor $L(s,\pi,\pi')$ has a pole at $s_0$ if and only if $\pi'\simeq \nu_F^{-s_0}\pi^\vee$. Such a pole is always simple.
\end{prop}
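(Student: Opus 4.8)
The plan is to follow the classical Rankin--Selberg analysis of \cite{JPSS83}, specialised to the pair of cuspidal representations $(\pi,\pi')$ of $G_n(F)$. First I would recall that the fractional ideal generated by the integrals $I(s,W,W',\phi)$ is the full ideal $L(s,\pi,\pi')\,\C[q_F^{-s},q_F^s]$; hence $L(s,\pi,\pi')$ has a pole at $s_0$ exactly when some $I(s,W,W',\phi)$ fails to be holomorphic at $s_0$, equivalently when the family of holomorphic data cannot clear the pole. So the task is to detect poles of the individual zeta integrals. The standard device is to expand $W$ and $W'$ along the mirabolic subgroup and to insert the Schwartz function $\phi$ so that, after unfolding the $\e h$ translation, the integral over $N_n(F)\backslash G_n(F)$ breaks as an integral over $G_{n-1}(F)$ (the mirabolic piece) against a boundary term coming from $\phi(0)$. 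Cuspidality of $\pi$ and $\pi'$ forces the ``interior'' contribution to be entire, so the only possible pole is carried by the $\phi(0)$-term, which after the usual manipulation is (a shift of) the Tate-type integral $\int_{F^\times} c(a)\,|a|^{s}\,\nu_F$-measure, where $c(a)$ is built from the matrix coefficient $a\mapsto \langle\pi(\diag(a,I_{n-1}))v,v^\vee\rangle\langle\pi'(\cdots)v',v'^{\vee}\rangle$ integrated against the two Whittaker functions. By the theory of Godement--Jacquet / the asymptotics of Whittaker functions of cuspidal representations, this last integral has a pole precisely when the central-character-type datum matches, i.e. when $\pi'\simeq \nu_F^{-s_0}\pi^\vee$, and then the pole is simple because it reduces to a single geometric series $1/(1-q_F^{-(s-s_0)})$.

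More concretely, the key steps in order are: (1) reduce to showing each $I(s,W,W',\phi)$ is holomorphic away from the locus $\{s_0 : \pi'\simeq\nu_F^{-s_0}\pi^\vee\}$, and that at such an $s_0$ a suitable choice of $(W,W',\phi)$ produces a genuine simple pole; (2) use the Iwasawa/mirabolic decomposition $G_n(F)=N_n(F)\,A_n(F)\,K$ together with the Kirillov-model description of $W|_{P_n(F)}$ to rewrite $I(s,W,W',\phi)$ as an integral over $P_n(F)\backslash G_n(F)\times F^\times$; (3) invoke cuspidality: the Whittaker function of a cuspidal representation is compactly supported modulo $N_n(F)Z_n(F)$ when restricted in the ``non-central'' torus directions, so the only non-compact direction is the central one $\diag(a,\dots,a)$, and there $W$ transforms by $c_\pi$; (4) conclude that $I(s,W,W',\phi)$ equals a finite sum of terms each of the shape $(\text{entire})\times\int_{F^\times}(c_\pi c_{\pi'})(a)\,|a|_F^{s'}\,d^\times a$ plus an entire remainder, and such a Tate integral converges for $\Re(s')$ large and meromorphically continues with at most a simple pole, located exactly where the integrand's character is trivial; (5) translate ``the character is trivial'' into the isomorphism $\pi'\simeq\nu_F^{-s_0}\pi^\vee$, using that a cuspidal representation is determined by its supercuspidal support (here, itself) and that twisting by $\nu_F^{-s_0}$ shifts central characters accordingly; finally show by an explicit choice of data (a matrix coefficient supported near the identity, $\phi$ with $\phi(0)\ne 0$) that the pole is actually attained.

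The main obstacle is step (3)--(4): making precise, with uniformity in $s$, the claim that cuspidality confines the zeta integral to the central direction and that the resulting one-variable integral is literally a Tate integral with a controllable pole. One has to handle the contribution of the compactly supported (mirabolic) directions carefully to see it is entire, and to verify that the pole contributed by the central part is simple and not cancelled by the other terms for a good choice of test data; this is exactly where the argument of \cite[Section 8]{JPSS83} does the real work, and I would cite or adapt it rather than redo the estimates. Everything else --- the reduction to individual integrals, the Iwasawa decomposition, the identification of the pole locus with $\pi'\simeq\nu_F^{-s_0}\pi^\vee$ --- is formal once that analytic input is in hand. Since the statement is quoted verbatim as \cite[Proposition 8.1]{JPSS83}, in the paper itself the ``proof'' is really just this citation; the sketch above is how one would reconstruct it.
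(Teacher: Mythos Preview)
Your proposal is correct, and as you yourself note at the end, the paper does not prove this proposition at all: it simply cites \cite[Proposition 8.1]{JPSS83} and states the result. Your sketch of the Rankin--Selberg argument (reduction to Tate-type integrals via compact support of cuspidal Whittaker functions modulo the center) is the standard route and is what lies behind the cited reference, so there is nothing to compare.
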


To state the functional equation of the $p$-adic Rankin-Selberg $L$-factor, for $\phi\in \sm(F^n)$, we denote by $\widehat{\phi}$ its Fourier transform with respect to a $\psi$-self dual Haar measure on $F^n$. Then by \cite[Theorem 2.7]{JPSS83}, one has:

\begin{prop}\label{functional equation p-adic rankin-selberg}
Let $\pi$ and $\pi'$ be generic representations of $G_n(F)$, there is a unit $\e(s,\pi,\pi',\psi)$ of $\C[q_F^{\pm s}]$ such that if one sets \[\gamma(s,\pi,\pi',\psi)=\e(s,\pi,\pi',\psi)
\frac{L(1-s,\pi^\vee,{\pi'}^\vee)}{L(s,\pi,\pi')},\] then for any $W\in W(\pi,\psi)$, 
$W'\in W(\pi',\psi^{-1})$ and $\phi \in \sm(F^n)$, one has 
\[I(1-s,\widetilde{W},\widetilde{W'},\widehat{\phi })= \gamma(s,\pi,\pi',\psi)
I(s,W,W',\phi).\]
\end{prop}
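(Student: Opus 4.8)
The plan is to deduce this from the theory of Rankin--Selberg integrals for $\GL_n\times\GL_n$ of \cite{JPSS83}; indeed the statement is \cite[Theorem 2.7]{JPSS83}, and I would recall the architecture of that proof. First I would record that, as already noted above, for $\Re(s)$ large each $I(s,W,W',\phi)$ converges absolutely, and then --- using the asymptotic expansion of Whittaker functions of $G_n(F)$ along the diagonal torus --- that each such integral is a rational function of $q_F^{-s}$ and that the $\C$-span of the $I(s,W,W',\phi)$, as $W\in W(\pi,\psi)$, $W'\in W(\pi',\psi^{-1})$ and $\phi\in\sm(F^n)$ vary, is a fractional ideal of $\C[q_F^{-s},q_F^s]$ containing $1$, whose normalised generator is by definition $L(s,\pi,\pi')$. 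Applying the same remarks to the triple $(\pi^\vee,{\pi'}^\vee,\psi^{-1})$ shows that $I(1-s,\widetilde W,\widetilde{W'},\widehat\phi)$ is likewise a rational function of $q_F^{-s}$, since $\widetilde W\in W(\pi^\vee,\psi^{-1})$, $\widetilde{W'}\in W({\pi'}^\vee,\psi)$ and $\widehat\phi\in\sm(F^n)$.

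The core step is to prove that the ratio $I(1-s,\widetilde W,\widetilde{W'},\widehat\phi)/I(s,W,W',\phi)$ does not depend on the data $(W,W',\phi)$; I would call it $\gamma(s,\pi,\pi',\psi)$. The mechanism is the ``basic identity'' of \cite[\S 2]{JPSS83}: one integrates $I(s,W,W',\phi)$ first over the mirabolic subgroup of $G_n(F)$ and then over the remaining rank-one variable, and one shows that the inner mirabolic integral is unchanged when $(W,W',\phi)$ is replaced by $(\widetilde W,\widetilde{W'},\text{partial Fourier transform of }\phi)$, the outer integral then converting this into the reflection $s\mapsto 1-s$ together with $\phi\mapsto\widehat\phi$. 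Equivalently, for $s$ in general position both $(W,W',\phi)\mapsto I(s,W,W',\phi)$ and $(W,W',\phi)\mapsto I(1-s,\widetilde W,\widetilde{W'},\widehat\phi)$ lie in a common space of trilinear forms on $W(\pi,\psi)\otimes W(\pi',\psi^{-1})\otimes\sm(F^n)$ subject to the appropriate quasi-invariance under $N_n(F)$ and the mirabolic subgroup, and this space is one-dimensional by uniqueness of the Whittaker model (\cite{GK75}, already used above); the scalar relating the two forms is the desired $\gamma(s,\pi,\pi',\psi)\in\C(q_F^{-s})$.

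Finally I would set $\e(s,\pi,\pi',\psi)=\gamma(s,\pi,\pi',\psi)L(s,\pi,\pi')/L(1-s,\pi^\vee,{\pi'}^\vee)$ and check that it is a unit of $\C[q_F^{\pm s}]$: dividing the relation $I(1-s,\widetilde W,\widetilde{W'},\widehat\phi)=\gamma(s,\pi,\pi',\psi)I(s,W,W',\phi)$ through by $L(1-s,\pi^\vee,{\pi'}^\vee)$ on the left and by $L(s,\pi,\pi')$ on the right, the two sides become families of normalised integrals whose $\C[q_F^{\pm s}]$-spans are both all of $\C[q_F^{\pm s}]$, so $\e(s,\pi,\pi',\psi)$ generates $\C[q_F^{\pm s}]$ as a fractional ideal and is therefore a unit. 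Applying the functional equation twice, for $(\pi,\pi',\psi)$ and for $(\pi^\vee,{\pi'}^\vee,\psi^{-1})$, and using $\widetilde{\widetilde W}=W$ together with $\widehat{\widehat\phi}(x)=\phi(-x)$, yields in addition the consistency relation $\gamma(s,\pi,\pi',\psi)\gamma(1-s,\pi^\vee,{\pi'}^\vee,\psi^{-1})=1$. The only genuinely substantial point is the data-independence of the ratio in the second step --- the basic identity together with the one-dimensionality of the relevant space of trilinear forms --- while the convergence and rationality of the integrals and the fractional-ideal bookkeeping are routine.
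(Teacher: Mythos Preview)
Your proposal is correct and matches the paper's own treatment: the paper does not give an independent proof but simply cites \cite[Theorem 2.7]{JPSS83}, which is exactly the reference you identify and whose argument you sketch. Your outline of the JPSS proof (convergence and rationality, the basic identity yielding data-independence of the ratio via one-dimensionality of the relevant space of trilinear forms, and the fractional-ideal bookkeeping showing $\e$ is a unit) is accurate and more detailed than what the paper provides.
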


\subsection{Archimedean Rankin-Selberg gamma factors}

Let $\K$ be $\R$ or $\C$, we recall some results from \cite{J09}, and refer the reader to the references therein 
for the original bibliography on the subject, which in any case is due to the author of \cite{J09} and his collaborators. We denote by $\psi$ again a non trivial character of $\K$, as well as its extension to $N_n(\K)$ as before. If $\pi$ is an irreducible Casselman-Wallach representation of $G_n(\K)$, such that there is a nonzero continuous linear form in 
$\Hom_{N_n(\K)}(\pi,\psi)$, we will call $\pi$ a generic representation. If $\pi$ is unitary, it is known that such a linear form is unique by \cite{Shal74}, and for generic 
$\pi$ (and more generally when $\pi$ is parabolically induced from discrete series representations), this fact still holds as explained in 
\cite[p. 4]{J09}. If $\pi$ is generic, we denote by $W(\pi,\psi)$ its Whittaker model (the space of functions 
$g\in G_n(\K) \mapsto \l(\pi(g)v)$ for $v\in V_\pi$ and $\l\in \Hom_{N_n(\K)}(\pi,\psi)$). Again $\widetilde{W}:g\mapsto 
W(w_n{}^t\!g^{-1})$ belongs to $W(\pi^\vee,\psi^{-1})$ if $W$ belongs to $W(\pi,\psi)$.\\

We denote by $\mathcal{S}(\K^n)$ the space of Schwartz functions on $\K^n$. For $\phi\in \mathcal{S}(\K^n)$, we denote by 
$\widehat{\phi}$ its Fourier transform with respect to the $\psi$-self dual Haar measure on $\K^n$. For $\pi$ and $\pi'$ two generic representations of $G_n(\K)$, $W\in W(\pi,\psi)$, $W'\in W(\pi',\psi^{-1})$, and 
$\phi\in \mathcal{S}(\K^n)$, the definition of the archimedean Rankin-Selberg integral $I(s,W,W',\phi)$ is the same as in the $p$-adic case. The following assertions are a consequence of \cite[Theorem 2.1]{J09} and its proof. 

\begin{prop}
\begin{itemize}
Let $\pi$ and $\pi'$ two generic representations of $G_n(\K)$.
\item  There is $r_{\pi,\pi'}\in \R$ such that for $\Re(s)\geq r_{\pi,\pi'}$, all integrals 
$I(s,W,W',\phi)$ converge absolutely for $W\in W(\pi,\psi)$, $W'\in W(\pi',\psi^{-1})$, and $\phi$ in 
$\mathcal{S}(\K^n)$, and they extend to meromorphic functions on $\C$.
\item There is a meromorphic function $\gamma(s,\pi,\pi',\psi)$ such that for all $W\in W(\pi,\psi)$, $W'\in W(\pi',\psi^{-1})$, and $\phi$ in 
$\mathcal{S}(\K^n)$, the following equality holds:
\[I(1-s,\widetilde{W},\widetilde{W'},\widehat{\phi})=\gamma(s,\pi,\pi',\psi)I(s,W,W',\phi).\]
\end{itemize}
\end{prop}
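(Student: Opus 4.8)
The plan is to follow the method of \cite{J09}. For convergence I would use the Iwasawa decomposition $g=uak$ with $u\in N_n(\K)$, $a\in A_n(\K)$, $k\in K_n$, rewriting $I(s,W,W',\phi)$ as an integral over $A_n(\K)\times K_n$ of $W(ak)W'(ak)\phi(\e ak)\nu_F(a)^s\d_{B_n}(a)^{-1}$ against the invariant measures. On the compact factor the integrand is bounded, and on $A_n(\K)$ one invokes the standard gauge estimates for archimedean Whittaker functions: $W(a)$ and $W'(a)$ decay rapidly as any simple root $\a_i(a)\to 0$ and grow at most polynomially, with degrees controlled by the Langlands exponents of $\pi$ and $\pi'$, as $\a_i(a)\to\infty$. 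Combined with the rapid decay of $\phi$ in its last coordinate (which is what $\phi(\e a)$ sees), this gives absolute convergence for $\Re(s)\geq r_{\pi,\pi'}$, with $r_{\pi,\pi'}$ depending only on the exponents of $\pi$ and $\pi'$.

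For the meromorphic continuation I would reduce to the case of full principal series. By the Casselman--Wallach theory and the description of generic representations, after an unramified twist every generic $\pi$ embeds continuously into some $\Ind_{B_n(\K)}^{G_n(\K)}(\chi)$, compatibly with the Whittaker functionals, so the integral for $\pi,\pi'$ is obtained by restriction from the one attached to the two principal series. For principal series the Whittaker function is given by a Jacquet integral, and unfolding $I(s,W,W',\phi)$ along the mirabolic subgroup expresses it, up to a finite sum, as an iterated integral of $\GL(1)$ Tate type; each such factor is a product of Gamma functions times an entire function of $s$, so the integral continues meromorphically to all of $\C$. Continuity of the maps involved, in the Fr\'echet topologies of the Casselman--Wallach representations and of $\mathcal{S}(\K^n)$, transports this back to arbitrary generic $\pi,\pi'$.

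For the functional equation I would argue by uniqueness of the Rankin--Selberg trilinear form. For $s$ outside a discrete set, the space of continuous trilinear forms on $W(\pi,\psi)\times W(\pi',\psi^{-1})\times\mathcal{S}(\K^n)$ that transform under the diagonal right action of $G_n(\K)$ (with $G_n(\K)$ acting on $\mathcal{S}(\K^n)$ by $\phi(x)\mapsto\phi(xg)$) through the character $g\mapsto\nu_F(g)^{-s}$ is one-dimensional; this follows from the uniqueness of the Whittaker functionals together with a Bruhat-type analysis of the relevant $G_n(\K)$-orbits, the non-open orbits contributing nothing for generic $s$. Since $W\mapsto\widetilde W$, $\phi\mapsto\widehat\phi$ and $s\mapsto 1-s$ interchange $(\pi,\pi')$ with $(\pi^\vee,{\pi'}^\vee)$, both $(W,W',\phi)\mapsto I(s,W,W',\phi)$ and $(W,W',\phi)\mapsto I(1-s,\widetilde W,\widetilde{W'},\widehat\phi)$ lie in this one-dimensional space, hence are proportional by a factor $\gamma(s,\pi,\pi',\psi)$ independent of $W,W',\phi$. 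Writing it as a ratio of two meromorphic families that are not identically zero (or invoking the Gelfand--Kazhdan/Bernstein principle of meromorphic continuation) shows $s\mapsto\gamma(s,\pi,\pi',\psi)$ is meromorphic; if one wants the sharper shape $\gamma(s,\pi,\pi',\psi)=\e(s,\pi,\pi',\psi)L(1-s,\pi^\vee,{\pi'}^\vee)/L(s,\pi,\pi')$ with an archimedean $L$-factor, this is recovered from the principal series computation via Tate's local functional equation.

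The hard part is the archimedean bookkeeping rather than any one conceptual step: unlike the $p$-adic situation there is no finiteness over a polynomial ring, so the meromorphic continuation has to be produced by hand from principal series and Tate integrals, and one must control the Fr\'echet topologies (continuity of the integrals, density of smooth vectors, compatibility with the Casselman--Wallach globalisation) as well as the orbit analysis underlying the one-dimensionality of the trilinear form. All of this is precisely the content of \cite[Theorem 2.1]{J09} and its proof, which I would ultimately cite.
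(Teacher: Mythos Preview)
Your proposal is correct and aligns with the paper's own treatment: the paper gives no argument beyond stating that the proposition is a consequence of \cite[Theorem 2.1]{J09} and its proof, which is exactly the reference you ultimately invoke. Your sketch of convergence via gauge estimates, meromorphic continuation via principal series and Tate integrals, and the functional equation via uniqueness of the trilinear form is a fair summary of what lies behind that citation, but the paper itself does not reproduce any of it.
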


\subsection{The functional equation of the global Asai and Rankin-Selberg $L$-functions}\label{functional equation global asai}

First we need to set up a convention. Let $F$ be a $p$-adic field, $\R$ or $\C$, and $\psi$ be a non trivial character of $F$. Let $\pi$ and $\pi'$ be generic representations of $G_n(F)$, so that $\pi\otimes \pi'$ is a smooth representation of $G_n(F)\times G_n(F)$ if $F$ is $p$-adic, and $\pi\widehat{\otimes}\pi'$ is a Casselman-Wallach representation of $G_n(F)\times G_n(F)$ if $F=\R$ or $\C$. Then by definition, we set \[L^+(s,\pi\otimes \pi')=L^-(s,\pi\otimes \pi')=L(s,\pi,\pi'),\] \[\e^+(s,\pi\otimes \pi',\psi\otimes \psi^{-1})=\e^-(s,\pi\otimes \pi',\psi \otimes \psi^{-1})=
\e(s,\pi,\pi',\psi),\] and \[\gamma^+(s,\pi\otimes \pi',\psi\otimes \psi^{-1})=\gamma^-(s,\pi\otimes \pi',\psi\otimes \psi^{-1})=
\gamma(s,\pi,\pi',\psi),\] when $F$ is $p$-adic, and 
 \[\gamma^+(s,\pi\widehat{\otimes} \pi',\psi\otimes \psi^{-1})=\gamma^-(s,\pi\widehat{\otimes} \pi',\psi\otimes \psi^{-1})=
\gamma(s,\pi,\pi',\psi)\] when $F=\R$ or $\C$. In fact, if $F=\R$ or $\C$, and $\pi$ and $\pi'$ are Harish-Chandra modules of $G_n(F)$ such that $\overline{\pi}^\infty$ and $\overline{\pi'}^\infty$ are generic, we set 
\[\gamma^\mathfrak{e}(s,\pi \otimes \pi',\psi\otimes \psi^{-1})=\gamma^\mathfrak{e}(s,\overline{\pi}^\infty\widehat{\otimes} \overline{\pi'}^\infty,\psi\otimes \psi^{-1})\] for $\mathfrak{e}\in \{+,-\}$. \\

We now suppose that the number fields $k$ and $l$ are such that all infinite places of $k$ split in $l$. We denote by $\pi$ a cuspidal representation of $G_n(\A_l)=\prod'_{v\in \mathcal{P}(k)} G_v$, so that $\pi=\otimes'_{v\in \mathcal{P}(k)} \pi_v$. Finally we 
take $\psi$ a non trivial character of $\A_l$, trivial on $l+\A_k$, so that $\psi=\otimes'_{v\in \mathcal{P}(k)}\psi_v$, 
where $\psi_v$ is a character of $l_v$ trivial on $k_v$. The functional equation of the global Asai integrals is proved in 
\cite{F88} but not explicitly stated there, so we refer to \cite[Propositions 5 and 6]{K04} for it. 

\begin{prop}\label{Asai global equation proposition}
Take $\pi$ and $\psi$ as above, and $\mathfrak{e}\in  \{+,-\}$. Let $S\subset \mathcal{P}(k)$ be a finite set which contains all archimedean and ramified places (by ramified we mean that either the representation, or the additive character, or the quadratic extension is ramified). The product $\prod_{v\notin S} L^\mathfrak{e}(s,\pi_v)$ is convergent for $\Re(s)$ large enough, and it extends to a meromorphic function 
$L^{S,\mathfrak{e}}(s,\pi).$ The global functional equation of the Asai $L$-function is:
\begin{equation} \label{Asai global equation} \prod_{v\in S} \gamma^\mathfrak{e}(s,\pi_v,\psi_v) L^{S,\mathfrak{e}}(1-s,\pi^\vee)=L^{S,\mathfrak{e}}(s,\pi).\end{equation}
\end{prop}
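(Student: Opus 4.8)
The plan is to follow Flicker's method from \cite{F88}: realize $L^{S,\mathfrak{e}}(s,\pi)$ through a global Rankin--Selberg type integral and unfold it into an Euler product of the local Asai integrals of Section \ref{asai}. Recall that $\pi=\otimes'_v\pi_v$ is cuspidal on $G_n(\A_l)=\prod'_v G_v$; for $\mathfrak{e}=-$ write $\eta$ also for the quadratic Hecke character of $k^\times\backslash\A_k^\times$ cut out by $l/k$, composed with $\Nrd$, and read $\eta^+$ as trivial. First I would fix a cusp form $\phi$ in the space of $\pi$ and a Schwartz--Bruhat function $\Phi$ on $\A_k^n$, form the degenerate Eisenstein series $E^{\mathfrak{e}}(g,\Phi,s)$ on $G_n(\A_k)$ attached to $\Phi$ (as in Godement--Jacquet, Jacquet--Shalika) and twisted by $\eta^{\mathfrak{e}}$, and consider, after matching central characters by a suitable unramified twist,
\[Z^{\mathfrak{e}}(s,\phi,\Phi)=\int_{Z_{\A_k}G_n(k)\backslash G_n(\A_k)}\phi(g)\,E^{\mathfrak{e}}(g,\Phi,s)\,dg,\]
which converges for $\Re(s)$ large since $\phi$ is rapidly decreasing and $E^{\mathfrak{e}}$ is of moderate growth.

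Then I would unfold: replacing $\phi$ by its Whittaker--Fourier expansion along $N_n$ and unfolding the Eisenstein series, the integral becomes, for $\Re(s)\gg 0$, the Euler product $\prod_v I^{\mathfrak{e}}(s,W_{\phi,v},\Phi_v)$, where $W_\phi=\prod_v W_{\phi,v}$ is the global $\psi$-Whittaker function of $\phi$; here the hypothesis that $\psi$ is trivial on $l+\A_k$ is exactly what makes each $\psi_v$ trivial on $k_v$, so that the $I^{\mathfrak{e}}(s,W_{\phi,v},\Phi_v)$ are the local Asai integrals of Section \ref{asai}. Choosing, for $v\notin S$, $\phi_v$ spherical and $\Phi_v$ the characteristic function of $O_{k_v}^n$ (normalized at the identity), the unramified computation gives $I^{\mathfrak{e}}(s,W_{\phi,v},\Phi_v)=L^{\mathfrak{e}}(s,\pi_v)$; combined with the Jacquet--Shalika bounds on the Satake parameters of $\pi_v$, this yields the claimed absolute convergence of $\prod_{v\notin S}L^{\mathfrak{e}}(s,\pi_v)$ for $\Re(s)$ large, and meromorphic continuation of $L^{S,\mathfrak{e}}(s,\pi)$ will follow from that of $E^{\mathfrak{e}}(g,\Phi,s)$, hence of $Z^{\mathfrak{e}}$.

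Next comes the global functional equation. The Eisenstein series satisfies $E^{\mathfrak{e}}(g,\Phi,s)=E^{\mathfrak{e}}({}^tg^{-1},\widehat{\Phi},1-s)$ by Poisson summation (Tate's thesis for $\eta^{\mathfrak{e}}$); since $g\mapsto{}^tg^{-1}$ is a $k$-automorphism of $G_n$ carrying the space of $\pi$ to that of $\pi^\vee$, the change of variables $g\mapsto{}^tg^{-1}$ gives $Z^{\mathfrak{e}}(s,\phi,\Phi)=Z^{\mathfrak{e}}(1-s,\phi^\vee,\widehat{\Phi})$ with $\phi^\vee$ in the space of $\pi^\vee$, and the same unfolding produces $\prod_v I^{\mathfrak{e}}(1-s,\widetilde{W_{\phi,v}},\widehat{\Phi_v})$ (with $\widetilde{W_{\phi,v}}$ the local Whittaker function of $\phi^\vee$ as in Section \ref{asai}). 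Equating the two Euler products and applying the local functional equation of Proposition \ref{functional equation local Asai} at $v\in S$ (where $I^{\mathfrak{e}}(1-s,\widetilde{W_{\phi,v}},\widehat{\Phi_v})=\gamma^{\mathfrak{e}}(s,\pi_v,\psi_v)\,I^{\mathfrak{e}}(s,W_{\phi,v},\Phi_v)$) while using $I^{\mathfrak{e}}(s,W_{\phi,v},\Phi_v)=L^{\mathfrak{e}}(s,\pi_v)$ and $I^{\mathfrak{e}}(1-s,\widetilde{W_{\phi,v}},\widehat{\Phi_v})=L^{\mathfrak{e}}(1-s,\pi_v^\vee)$ at $v\notin S$, one obtains \eqref{Asai global equation} after cancelling the factors $I^{\mathfrak{e}}(s,W_{\phi,v},\Phi_v)$ for $v\in S$, which can be made simultaneously nonzero by a suitable choice of $\phi$ and $\Phi$.

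The main obstacle is the unfolding step — verifying that the global integral is Eulerian and identifying the local factors with the integrals of Section \ref{asai} with precisely the right normalizations — together with threading the quadratic Hecke character $\eta$ consistently through the Eisenstein series, the local integrals, and Poisson summation in the case $\mathfrak{e}=-$. For both of these the details are carried out in \cite{F88}, and the explicit form of the functional equation stated here is recorded in \cite[Propositions 5 and 6]{K04}, which I would simply cite.
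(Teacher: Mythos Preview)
Your proposal is correct and aligns with the paper's own treatment: the paper does not give an independent proof but simply cites \cite{F88} for the method and \cite[Propositions 5 and 6]{K04} for the explicit statement, which is exactly what you conclude with. Your sketch of Flicker's unfolding argument is accurate and in fact more detailed than what the paper provides.
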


We shall also need the following basic result on the partial Rankin-Selberg $L$-function which can be extracted from Section 4 of \cite{JS81}. We will use the following convention: 
for $v\in \mathcal{P}(k)$, we will write $L(s,\pi_v,\pi'_v)$ for $L(s,\pi_w,\pi'_w)$ if $v$ does not split in $l$ and $w$ is the unique place of $l$ dividing $v$, or for the product $L(s,\pi_{w_1},\pi'_{w_2})L(s,\pi_{w_2},\pi'_{w_2})$ if $v$ splits in $l$ into the places $w_1$ and $w_2$. 

\begin{prop}\label{RS global equation proposition}
Take $\pi$, $\psi$ and $S$ as above, and $\pi'$ another cuspidal automorphic representation of $G_n(\A_l)$. The product $\prod_{v\notin S} L(s,\pi_v,\pi'_v)$ is convergent for $\Re(s)$ large enough and it extends to a meromorphic function 
$L^S(s,\pi,\pi').$ 
\end{prop}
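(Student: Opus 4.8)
The plan is to realize the partial Euler product as a quotient of a global Rankin--Selberg integral by finitely many local factors, exactly as in Section~4 of \cite{JS81}, so that the meromorphic continuation is inherited from the theory of Eisenstein series. First I would fix factorizable data: cusp forms $\phi=\otimes_v\phi_v\in V_\pi$ and $\phi'=\otimes_v\phi'_v\in V_{\pi'}$, and a factorizable Schwartz--Bruhat function $\Phi=\otimes_v\Phi_v\in\mathcal{S}(\A_l^n)$, and form the mirabolic Eisenstein series
\[E(g,s;\Phi)=|\det g|_{\A_l}^{s}\sum_{\gamma\in P_{(n-1,1)}(l)\backslash G_n(l)}\Phi(\e_n\gamma g)\,|\det\gamma|_{\A_l}^{s},\]
together with the global integral $Z(s,\phi,\phi',\Phi)=\int_{G_n(l)\backslash G_n(\A_l)^{1}}\phi(g)\phi'(g)E(g,s;\Phi)\,dg$. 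Since $\phi$ and $\phi'$ are rapidly decreasing and $E(\,\cdot\,,s;\Phi)$ has moderate growth and, by Langlands' theory, admits meromorphic continuation in $s$ to all of $\C$ with controlled poles, the integral $Z(s,\phi,\phi',\Phi)$ converges for $s$ away from the poles of the Eisenstein series and defines a meromorphic function of $s$.

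Next I would unfold: for $\Re(s)$ large, inserting the Fourier expansion of $\phi$ along $N_n$ and unwinding the sum over $P_{(n-1,1)}(l)\backslash G_n(l)$ gives
\[Z(s,\phi,\phi',\Phi)=\int_{N_n(\A_l)\backslash G_n(\A_l)}W_\phi(g)\,W_{\phi'}(g)\,\Phi(\e_n g)\,|\det g|_{\A_l}^{s}\,dg,\]
with $W_\phi,W_{\phi'}$ the attached Whittaker functions; absolute convergence here for $\Re(s)\gg 0$ follows from the rapid decay of Whittaker functions of cusp forms in the torus directions. Since the data are factorizable, this global integral factors as an Euler product $\prod_{v\in\mathcal{P}(k)}I_v(s,W_{\phi,v},W_{\phi',v},\Phi_v)$, where for $v$ inert $I_v$ is the local Rankin--Selberg integral of Section~\ref{asai} for $G_n(l_v)$ (Proposition~\ref{functional equation p-adic rankin-selberg} and its archimedean analogue), and for $v$ splitting into $w_1,w_2$ it is the product of the two $G_n(k_v)$ integrals; the product converges for $\Re(s)\gg 0$ by the bounds on the Satake parameters of $\pi_v$ and $\pi'_v$. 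Finally, at every finite place $v\notin S$, with the normalized spherical choices $W_{\phi,v}^\circ,W_{\phi',v}^\circ,\Phi_v^\circ$, the classical unramified computation of \cite{JS81} gives $I_v(s,W_{\phi,v}^\circ,W_{\phi',v}^\circ,\Phi_v^\circ)=L(s,\pi_v,\pi'_v)$ (in the sense of the split-place convention of the statement).

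Hence, for such a choice of global data, one gets the identity
\[\prod_{v\notin S}L(s,\pi_v,\pi'_v)=\frac{Z(s,\phi,\phi',\Phi)}{\prod_{v\in S}I_v(s,W_{\phi,v},W_{\phi',v},\Phi_v)},\]
with the left-hand product absolutely convergent for $\Re(s)\gg 0$. Since $Z$ is meromorphic on $\C$ and each $I_v$ with $v\in S$ is a rational function of $q_v^{-s}$ (resp.\ a meromorphic function of $s$, at archimedean $v$), it remains only to choose the local data at the finitely many places $v\in S$ so that $\prod_{v\in S}I_v$ is not identically zero. This is possible by the local theory recalled earlier, since each local integral spans a fractional ideal containing the nonzero local $L$-factor; one then reads off the meromorphic continuation $L^S(s,\pi,\pi')$ of $\prod_{v\notin S}L(s,\pi_v,\pi'_v)$ from the displayed quotient.

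The only genuinely non-elementary input is Langlands' meromorphic continuation of the Eisenstein series $E(g,s;\Phi)$, which drives the whole argument; the remaining steps — the unfolding, the unramified computation, the split-place bookkeeping needed so that the Euler product over $\mathcal{P}(k)$ matches the Jacquet--Shalika Euler product over $\mathcal{P}(l)$, and the nonvanishing of the finite product of bad local integrals — are routine and already contained in \cite{JS81}.
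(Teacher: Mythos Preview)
Your proposal is correct and is precisely the argument the paper has in mind: the paper gives no proof at all for this proposition, simply stating that it ``can be extracted from Section~4 of \cite{JS81}'', and what you have written is an accurate sketch of that extraction. One minor quibble: in your formula for $E(g,s;\Phi)$ the factor $|\det\gamma|_{\A_l}^{s}$ is identically $1$ by the product formula since $\gamma\in G_n(l)$, and the actual Epstein--Eisenstein series in \cite{JS81} involves an integral over $l^\times\backslash\A_l^\times$ against the product of central characters; but you explicitly defer to \cite{JS81} for the precise construction, so this is harmless.
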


\section{Standard intertwining operators}

\subsection{Generalities}\label{generalities on intertwining operators}

Here we consider $G$ as in one of the paragraphs \ref{p-adic}, \ref{real}, or \ref{automorphic}. We take 
$M=M_{\m}$ a standard Levi subgroup of $G$ for $\m=(m_1,\dots,m_t)$, $P=P_{\m}$, and $N=N_{\m}$. Let 
$\sigma$ be a representation of $M$. If $G$ is $p$-adic $\sigma$ is smooth of finite length, if $G$ is real $\sigma$ is a Harish-Chandra module (of finite length by definition), and if $G$ 
is adelic $\sigma$ is $K$-finite cuspidal automorphic representation.\\

For $\s=(s_1,\dots,s_t)$ in $\C^t$, we write $\sigma[\s]=(\nu^{s_1}\otimes \dots \otimes \nu^{s_t})\sigma,$
and $\Ind_P^G(\sigma,\s)=\Ind_P^G(\sigma[\s])$. If $\pi=\Ind_P^G(\sigma)$, we write $\pi_{\s}$ for $\Ind_P^G(\sigma,\s)$. If we write $m\in M$ as 
$m=\diag(g_1,\dots,g_t)$, then in terms of the Iwasawa decomposition of $G$, one defines 
\[\eta_{\s}(umk)=\prod_{i=1}^t \nu(g_i)^{s_i}.\]
We then define, for $f$ in $\Ind_P^G(\sigma)$ and $\s\in \C^t$, the map $f_{\s}=\eta_{\s}f$ which belongs to $\Ind_P^G(\sigma,\s)$. We call 
$f_{\s}$ a flat section (which means that the restriction of $f_{\s}$ to $K$ is independant of $\s$). We denote by 
$\F(\sigma)$ the space of flat sections of $\Ind_P^G(\sigma,\s)$. We shall need the following lemma concerning flat 
sections in the next section.

\begin{LM}\label{surjective restriction}
Take $G$ as in Section \ref{p-adic} and suppose that 
$\sigma$ is of the form $\sigma=\sigma_1\otimes \dots \otimes \sigma_t$. Suppose that $t=2r$ is even, and that $m_{t+1-i}=m_i$ for $i\in [1,t]$, then the map 
\[R:f_{\s}\mapsto [g\mapsto f_{\s}(\diag(I_{m_1},\dots, I_{m_{r-1}}, g , I_{m_{r+2}},\dots,I_{m_t}))]\] defines 
a surjection from the space $\F(\sigma)$ to the space 
\[V_{\sigma_1}\otimes\dots \otimes V_{\sigma_{r-1}} \otimes \F(\sigma_r\otimes \sigma_{r+1})
\otimes V_{\sigma_{r+2}}\otimes \dots \otimes V_{\sigma_t}.\]
\end{LM}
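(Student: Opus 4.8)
The plan is to exhibit explicit pre-images under $R$. Fix a partition $\m = (m_1,\dots,m_t)$ with $t=2r$ and $m_{t+1-i}=m_i$, and write $P_r = P_{(m_r,m_{r+1})}$ for the relevant maximal parabolic of $G_{m_r+m_{r+1}}$. A flat section in $\F(\sigma_r\otimes\sigma_{r+1})$ is determined by its restriction to $K_{m_r+m_{r+1}} = G_{m_r+m_{r+1}}(O_{D_E})$, where it is an arbitrary element of $\mathrm{Ind}_{P_r\cap K}^{K}(\sigma_r\otimes\sigma_{r+1})$; similarly a flat section in $\F(\sigma)$ is an arbitrary element of $\mathrm{Ind}_{P\cap K}^{K}(\sigma)$ extended by $\eta_{\s}$. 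So the statement is really about the restriction map between these two spaces of $K$-valued functions, and the variable $\s$ plays no role once we fix flat representatives. First I would record the block decomposition: writing $K = G_m(O_{D_E})$ and $L = G_{m_r+m_{r+1}}(O_{D_E})$ embedded in $K$ in the central block (positions $r,r+1$), the embedding $\iota\colon v_1\otimes\cdots\otimes v_{r-1}\otimes w\otimes v_{r+2}\otimes\cdots\otimes v_t \mapsto (\text{a function on }K)$ should be built from the Iwasawa decomposition of $K$ with respect to $P\cap K$.

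The key steps, in order, are as follows. (1) Reduce to the level of $K$: by the Iwasawa decomposition $G = PK$ and the definition of flat sections, $R$ factors as restriction of $K$-functions, so it suffices to show the restriction-to-$L$ map $\mathrm{Ind}_{P\cap K}^{K}(\sigma) \to V_{\sigma_1}\otimes\cdots\otimes\mathrm{Ind}_{P_r\cap K}^{L}(\sigma_r\otimes\sigma_{r+1})\otimes\cdots\otimes V_{\sigma_t}$ is surjective. (2) Construct a section: given $v_i \in V_{\sigma_i}$ for $i\neq r,r+1$ and a function $\varphi \in \mathrm{Ind}_{P_r\cap K}^{L}(\sigma_r\otimes\sigma_{r+1})$, define $f$ on $K$ by choosing, for each $k\in K$, an Iwasawa factorization $k = p\,\kappa$ that refines through the intermediate parabolic $P_{(m_1,\dots,m_{r-1},\,m_r+m_{r+1},\,m_{r+2},\dots,m_t)}$; set $f(k)$ to be $\delta_{P}^{1/2}(\text{diag part})$ times $\sigma_1(g_1)v_1\otimes\cdots\otimes\sigma_{r-1}(g_{r-1})v_{r-1}\otimes \varphi(\ell)\otimes\sigma_{r+2}(g_{r+2})v_{r+2}\otimes\cdots$, where $\ell\in L$ is the central block of $\kappa$. (3) Check well-definedness: this is the routine verification that the formula is independent of the choice of Iwasawa factorization, using the left $(P\cap K)$-equivariance of $\varphi$ under $P_r\cap K$ and the product formula for $\delta_P^{1/2}$ along the chain $P\cap K \subset (P_{(m_1,\dots,m_r+m_{r+1},\dots)})\cap K \subset K$. (4) Observe $R\circ(\text{this section})$ is the identity by plugging in $k = \mathrm{diag}(I_{m_1},\dots,g,\dots,I_{m_t})$ with $g\in L$: the outer blocks are trivial, the central block is $g$, and $f$ returns $v_1\otimes\cdots\otimes\varphi(g)\otimes\cdots\otimes v_t$ as desired.

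I expect the main obstacle to be the well-definedness check in step (3): one must be careful that the "partial Iwasawa decomposition" through the intermediate parabolic behaves compatibly with the finer one through $P\cap K$, i.e. that the central $L$-block $\ell$ is only well-defined up to $P_r\cap K = (P\cap K)\cap L$ on the left, and that $\varphi$'s equivariance exactly absorbs this ambiguity together with the modulus-character normalization. Concretely, if $k = p\kappa = p'\kappa'$ are two factorizations, then $\kappa' = u\kappa$ for $u\in P\cap K$, and one needs $\delta_P^{1/2}$ to split as a product of the modulus character of the intermediate parabolic and the modulus character of $P_r$ inside $L$ — this is exactly the transitivity of modulus characters along a chain of parabolics, which is standard but must be invoked correctly. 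Everything else — surjectivity once the section is built, the $\eta_{\s}$ bookkeeping, the finiteness of function values — is formal. Alternatively, one can bypass an explicit formula: since $\mathrm{Ind}_{P\cap K}^{K} = \mathrm{Ind}_{(\text{intermediate})\cap K}^{K}\bigl(\mathrm{Ind}_{P\cap K}^{(\text{intermediate})\cap K}\bigr)$ by induction in stages, and the inner induction is literally $V_{\sigma_1}\otimes\cdots\otimes\mathrm{Ind}_{P_r\cap K}^{L}(\sigma_r\otimes\sigma_{r+1})\otimes\cdots$ as an $((\text{intermediate})\cap K)$-module, the map $R$ is precisely restriction of an induced function to a point-fiber over the base $(\text{intermediate})\cap K\backslash K$, which is surjective because induced functions take arbitrary prescribed values on a fundamental domain; this is the cleaner route and I would present it that way if the bookkeeping permits.
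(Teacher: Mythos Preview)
Your proposal is correct and shares with the paper the key structural idea: factor through the intermediate parabolic $Q=P_{(m_1,\dots,m_{r-1},m_r+m_{r+1},m_{r+2},\dots,m_t)}$. The difference is in how the preimage is built. The paper does not attempt a global formula on $K$; instead it fixes a congruence subgroup $K_m(c)$ small enough to stabilize the given $v_i$'s and $h_{(s_r,s_{r+1})}$, uses the Iwahori decomposition of $K_m(c)$ with respect to $Q$, and defines the section to be supported on $QK_m(c)=LUK_m(c)$ by the obvious formula. This sidesteps entirely the well-definedness worry you flag in step~(3): on $LUK_m(c)$ the factorization $luk$ is unique up to elements that fix everything by construction, so there is nothing to check. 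Your induction-in-stages alternative is equally valid and arguably more conceptual, but the paper's compactly-supported construction is shorter to write down and avoids any bookkeeping with coset representatives.

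One point you pass over that the paper makes explicit: before proving surjectivity one must know that $R$ actually lands in $V_{\sigma_1}\otimes\cdots\otimes\F(\sigma_r\otimes\sigma_{r+1})\otimes\cdots$, i.e.\ that $R(f_{\s})$ has the correct $P_{(m_r,m_{r+1})}$-equivariance. This requires $\delta_P^{1/2}$ to restrict to $\delta_{P_{(m_r,m_{r+1})}}^{1/2}$ along the embedded central block, and this is exactly where the self-duality hypothesis $m_{t+1-i}=m_i$ is used. Your reduction to the $K$-level hides this because modulus characters are trivial on compact groups, but the identification of flat sections on $G$ with $K$-functions and the compatibility of $R$ with that identification still needs it; you should say so.
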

\begin{proof}
The assumptions on the $m_i$'s guarantee that the modulus character of $P$ restricts to 
$P_{(m_r,m_{r+1})}$ as its modulus character, so the image of $R$ is indeed a subspace of  \[V_{\sigma_1}\otimes\dots \otimes V_{\sigma_{r-1}} \otimes \F(\sigma_r\otimes \sigma_{r+1})
\otimes V_{\sigma_{r+2}}\otimes \dots \otimes V_{\sigma_t}.\]
Take now $h_{(s_r,s_{r+1})}$ an element of $\F(\sigma_r\otimes \sigma_{r+1})$, and $v_i\in V_{\sigma_i}$ for $i\notin \{r,r+1\}$, to show the surjectivity of $R$, it is enough to find a flat section $f_{\s}$ in $\F(\sigma)$ 
such that \[R(f_{\s})
=v_1\otimes \dots \otimes v_{r-1} \otimes h_{(s_1,s_2)}\otimes v_{r+2}\otimes \dots \otimes v_t.\] 
Let's denote by $L$ the standard Levi subgroup 
$M_{(m_1,\dots,m_{r-1},2m_r,m_{r+2},\dots,m_t)}$, and by $Q=LU$ the associated standard parabolic subgroup. Fix $c\geq 1$ large 
enough for $K_{m_i}(c)=I_{m_i}+\w^c\M_{m_i}(O_E)$ to fix $v_i$ for $i\notin \{r,r+1\}$, and for 
$K_{2m_r}(c)$ to fix $h_{(s_r,s_{r+1})}$. Then, because $K_m(c)$ has an Iwahori decomposition with respect to $Q$, the map with support in $LUK_m(c)$, defined on this set by the equality 
\[f_s(luk)=\d_Q(l)^{\frac{1}{2}}\sigma_1(g_1)v_1\otimes \dots \otimes \sigma_d(g_{r-1})v_{r-1} \otimes h_{(s_r,s_{r+1})}(g)\otimes 
\sigma_{r+2}(g_{r+2})v_{r+2} \dots \otimes \sigma_t(g_t)v_t\]
for $l=\diag(g_1,\dots,g_{r-1},g,g_{r+2},\dots,g_t)$ is well defined and does the job.
\end{proof}

For $w\in \S_t$, we set $w(\m)=(m_{w^{-1}(1)},\dots,m_{w^{-1}(t)})$, hence 
\[w(M):=M_{w(\m)}=wMw^{-1},\]
$w(\sigma)$ for the representation $\sigma(w^{-1}\ . \ w)$ of $w(M)$. When $\sigma$ is of the form $\sigma=\sigma_1\otimes \dots \otimes \sigma_t$ then 
\[w(\sigma)= \sigma_{w^{-1}(1)}\otimes \dots \otimes \sigma_{w^{-1}(t)}.\] 
Let $Q=P_{w(\m)}$, $L=M_{w(\m)}$ and $U=N_{w(\m)}$. For $r\in \R$, and $w\in \S_t$, we set 
\[D(w,r)=\{\s\in \C^t, \ \forall \ (i,j)\in \Inv(w),\ \Re(s_i-s_j)>r.\}\]
It is proved for example in \cite[Section 2]{Sh81}, \cite[Chapter 10]{W92}, \cite[II.1.6]{MW94}, that there is 
 $r=r_{\sigma}\in \R$, such that for $\s\in D(w,r_\sigma)$, the following integral is absolutely convergent for all 
 $f_{\s}$ in $\Ind_P^G(\sigma,\s)$, and all $g\in G$:
 \[A_\sigma(w,\s)f_{\s}(g)=\int_{wNw^{-1}\cap U\backslash U} f_{\s}(w^{-1}ng)dn.\]
 
 In both the $p$-adic and real case, a way to give a meaning to the absolute convergence of the integral above 
 is to realize (it is always possible) the space $V_\sigma$ of $\sigma$ as a dense subset 
 of a Hilbert space $\overline{V_\sigma}$, such that $\sigma$ extends to a continuous representation of $M$ on this space ($V_\sigma$ is the space of smooth vectors in $\overline{V_\sigma}$ in the $p$-adic case, and $M\cap K$-finite vectors 
 in the real case). In the $p$-adic case, it is equivalent to say that for all $v^\vee\in V_\sigma^\vee$ the integral 
 \[\int_{wNw^{-1}\cap U\backslash U} <f_{\s}(w^{-1}ng),v^\vee> dn\] is absolutely convergent, in which case $\int_{wNw^{-1}\cap U\backslash U} f_{\s}(w^{-1}ng) dn$ is the only element of $V_\sigma$ such that 
 \[<\int_{wNw^{-1}\cap U\backslash U} f_{\s}(w^{-1}ng) dn,v^\vee>=\int_{wNw^{-1}\cap U\backslash U} <f_{\s}(w^{-1}ng),v^\vee> dn\] 
 for all $v^\vee\in V_\sigma^\vee$.
We set \[D_\sigma^A(w)=D(w,r_\sigma).\] 
 
\begin{LM}\label{lemma absolute convergence p-adic}
Suppose that we are in the $p$-adic case and take $\lambda \in V_\sigma^*$ the algebraic dual of $V_\sigma$. Suppose that 
for fixed $\s\in D_\sigma^A(w)$, $g\in G$, and $f_{\s}\in \Ind_P^G(\sigma[\s])$ the integral $\int_{wNw^{-1}\cap U\backslash U} \lambda(f_{\s}(w^{-1}ng)) dn$ is absolutely convergent. Then 
\[\lambda(\int_{wNw^{-1}\cap U\backslash U} f_{\s}(w^{-1}ng) dn)=\int_{wNw^{-1}\cap U\backslash U} \lambda(f_{\s}(w^{-1}ng)) dn\] 
\end{LM} 
\begin{proof} Take a small enough open subgroup $M_0$ of $M$ with Iwahori decomposition such that 
 $g^{-1}w(M_0) g$ fixes $f_{\s}$ on the right, and $U'$ any compact open subgroup of $U$ stable under conjugation by $w(M_0)\subset L$, then for 
 $m\in M_0$:
 \[\sigma(m)(\int_{wNw^{-1}\cap U'\backslash U'} f_{\s}(w^{-1}ng)dn)=\int_{wNw^{-1}\cap U'\backslash U'} \sigma(m)(f_{\s}(w^{-1}ng))dn\]
 \[=\int_{wNw^{-1}\cap U'\backslash U'} f_{\s}(mw^{-1}ng)dn= \int_{wNw^{-1}\cap U'\backslash U'} f_{\s}(w^{-1}w(m)ng)dn\]
 \[= \int_{wNw^{-1}\cap U'\backslash U'} f_{\s}(w^{-1}nw(m)g)dn= \int_{wNw^{-1}\cap U'\backslash U'} f_{\s}(w^{-1}ng)dn.\] 
 Applying any linear form in $V_{\sigma}^\vee$ to the above equality and considering the increasing limit over such groups $U'$ we deduce that $M_0$ fixes 
 the vector $\int_{wNw^{-1}\cap U\backslash U} f_{\s}(w^{-1}ng)dn$. 
In particular if we set  
\[\lambda^\infty=\int_{M_0}\l\circ \sigma(m)dm\in V_\sigma^\vee\] for $dm$ giving volume one to $M_0$, we get  
\[\lambda^\infty(\int_{wNw^{-1}\cap U\backslash U} f_{\s}(w^{-1}ng)dn)=\lambda(\int_{wNw^{-1}\cap U\backslash U} f_{\s}(w^{-1}ng)dn).\]

Now take $(U_k)_{k\in \N}$ an increasing sequence of compact open subgroups of $U$ the union of which is equal to $U$, and 
suppose moreover that all of the subgroups $U_k$ are stable under conjugation by $w(M_0)\subset L$, then 
\[\int_{wNw^{-1}\cap U_k\backslash U_k} <f_{\s}(w^{-1}ng),\lambda^\infty>dn= \int_{wNw^{-1}\cap U_k\backslash U_k}\lambda(f_{\s}(w^{-1}ng))dn\] and both sides being absolutely convergent we deduce the equality: 
\[\lambda^\infty(\int_{wNw^{-1}\cap U\backslash U} f_{\s}(w^{-1}ng)dn)=\int_{wNw^{-1}\cap U \backslash U}\lambda(f_{\s}(w^{-1}ng))dn\] where the left hand side is equal to what we look for as already observed.
\end{proof}

 It is shown in the same references (\cite[IV.1]{MW94} for the adelic case) that if $f_{\s}$ is a flat section, then for $g\in G$, the integral $A_\sigma(w,\s)f_{\s}(g)$ 
 extends to a meromorphic function of $\s$. Whenever $A_\sigma(w,\s)$ is holomorphic at $\s_0$ (meaning that $A_\sigma(w,\s)f_{\s}(g)$ is holomorphic at $\s=\s_0$ for all $f$ and $g$), which is the case for $\s_0$ in a dense open subset of $\C^t$, then 
 \[A_\sigma(w,\s_0)f_{\s_0}\in \Ind_Q^G(w(\sigma),w(\s_0)).\]
   Moreover, for each $\s_0\in \C$, there is a nonzero polynomial $P_{\s_0}(\s)$, such 
 that $P_{\s_0}(\s)A_\sigma(w,\s)f_{\s}(g)$ is holomorphic at $\s_0$ for all $f$ and $g$. In the $p$-adic case, one can in fact choose $P\in \C[q^{-\s}]-\{0\}$, such that $P(q^{-\s})A_\sigma(w,\s)f_{\s}(g)$ belongs to 
 $\C[q^{-\s}]\otimes V_\sigma$ for all $f$ and $g$.\\
 Finally, in all cases, if $w_1$ and $w_2$ are elements of $\S_t$ such that $\ell(w_1\circ w_2)=\ell(w_1)+\ell(w_2)$, then 
 \[A_\sigma(w_1\circ w_2,\s)= A_{w_2(\sigma)}(w_1,w_2(\s))\circ A_\sigma(w_2,\s).\]

\subsection{Poles of certain $p$-adic intertwining operators}

In this section $G$ is as in Section \ref{p-adic}. We will mainly recall some results from \cite{MW89}, and explain why they hold for inner forms of $\GL(n)$ as well. We say that a cuspidal segment $\D$ precedes 
a cuspidal segment $\D'$ and we write $\D \prec \D'$, if $\D$ is of the form $[b,e]_\rho$, $\D'$ of the form $[b',e']_\rho$ for $\rho$ cuspidal, with $b'-b\in \Z$ and $b\leq b'-1\leq e \leq e'-1$. We say that $\D$ and $\D'$ are linked if either $\D$ precedes $\D'$ or $\D'$ precedes $\D$. 
We say that $\D$ and $\D'$ are juxtaposed if they are linked, and either $b'=e+1$ or $b=e'+1$. We consider $M=M_{(m_1,\dots,m_t)}$, with $t=2r$ an even number. We denote by 
$\tau_r$ the transposition $(r\ r+1)$ in $\S_t$. We consider an essentially square-integrable representation $\d_1\otimes\dots \otimes \d_t$ of $M$, such that $\d_r=\d[s_r]$ and $\d_{r+1}=\mu[s_{r+1}]$ with $\d$ and $\mu$ unitary, $s_r$ and $s_{r+1}\in \R$, and moreover 
$\d=\St_{k_1}(\rho)$ and $\mu=\St_{k_2}(\rho)$ for $\rho$ the same cuspidal representation. Let's write 
\[\sigma=\d_1\otimes\dots \d_{r-1} \otimes \d \otimes \mu \otimes \d_{r+2}\otimes \dots \otimes \d_t \]

 For \[\s=(\underbrace{0,\dots,0,s}_{r},\underbrace{s_r+s_{r+1}-s,0,\dots,0}_{r})\in \C^t,\] we set 
\[A_\sigma(\tau_r,s)=A_\sigma(\tau_r,\s).\] 
 For each $\d_i$, we recall that $\d_i=\La(\D_i)$ for a cuspidal segment $\D_i$. 

\begin{prop}\label{intertwining-poles}
Suppose that the situation is as above and set $l=l_\rho$. If $s_r-s_{r+1}< -\frac{l|k_1-k_2|}{2}$, then the standard intertwining operator $A_\sigma(\tau_r,s)$ has a (necessarily simple) pole at $s=s_r$ if and only if $\D_r\prec\D_{r+1}$ but $\D_r$ and $\D_{r+1}$ are not juxtaposed, otherwise we recall that it is automatically (holomorphic and) nonzero. If $s_r>s_{r+1}$, then $A_\sigma(\tau_r,s)$ is defined by absolutely convergent integrals at $s=s_r$ and is in particular holomorphic at this point.
\end{prop}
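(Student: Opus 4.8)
The statement is essentially a transcription, to the inner-form setting, of the classical analysis of rank-one intertwining operators for $\GL(n)$ via Jacquet modules and the Zelevinsky/M\oe glin--Waldspurger combinatorics of segments. The first reduction I would make is to note that $A_\sigma(\tau_r,s)$ only involves the $r$-th and $(r+1)$-th factors of $M$: since $\tau_r=(r\ r+1)$, the associated parabolic $wN w^{-1}\cap U\backslash U$ is the unipotent radical of the $(2m_r)$-block, so the whole operator is induced (via the chain rule $A_\sigma(w_1\circ w_2,\s)=A_{w_2(\sigma)}(w_1,w_2(\s))\circ A_\sigma(w_2,\s)$, applied in the degenerate way that leaves the outer factors untouched) from the rank-one operator on $\St_{k_1}(\rho)\times\St_{k_2}(\rho)[\,\cdot\,]$ inside $G_{2m_r}$. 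Concretely, holomorphy/nonvanishing of $A_\sigma(\tau_r,s)$ at $s=s_r$ is equivalent to holomorphy/nonvanishing of the operator $A(\nu^{x}\St_{k_1}(\rho),\nu^{y}\St_{k_2}(\rho))$ at the relevant point, with $x-y = s_r-s_{r+1}$ (this uses that $\d_P$ restricts to $\d_{P_{(m_r,m_{r+1})}}$ correctly, cf. Lemma \ref{surjective restriction}, and that the outer $\d_i$ contribute an isomorphism in that range). So the entire problem is the two-segment case.

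**The two-segment case.** For $\d'=\nu^x\St_{k_1}(\rho')$ and $\mu'=\nu^y\St_{k_2}(\rho')$ over the split form $\GL(n,E)$ — here I transfer via Jacquet--Langlands, which is harmless for this purely analytic question since $\JL$ matches the $L$- and $\gamma$-factors and hence the normalized operators, or one argues directly — the normalized intertwining operator $\d'\times\mu'\to\mu'\times\d'$ is known (M\oe glin--Waldspurger, \cite{MW89}) to be holomorphic and nonzero except when the two segments are linked, in which case the ratio of normalizing factors $\frac{L(\cdots)}{L(\cdots)}$ contributes exactly one simple pole, located at the point where the segments ``just begin to overlap'', i.e. $b' = e+1$ would make them juxtaposed (no pole, the operator is an isomorphism onto the other standard module), whereas genuine overlap $b'\le e$ with $\D\prec\D'$ gives the simple pole. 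Reading off the location in terms of $s = s_r$: since $\d=\St_{k_1}(\rho)$, $\mu=\St_{k_2}(\rho)$ with $l=l_\rho$, the segments have lengths $k_1,k_2$ scaled by $l$, and the ``linked but not juxtaposed with $\D_r\prec\D_{r+1}$'' condition, evaluated along the line $\s=(0,\dots,0,s,s_r+s_{r+1}-s,0,\dots)$, places the (only possible) pole precisely at $s=s_r$; moreover in the chamber $s_r-s_{r+1}<-\frac{l|k_1-k_2|}{2}$ one checks that the twisted segments $\D_r,\D_{r+1}$ are in the configuration where $\D_r\prec\D_{r+1}$ is the only linking that can occur, so there is no competing pole or zero and the stated dichotomy is exhaustive. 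When $\D_r\not\prec\D_{r+1}$ (or they are juxtaposed) the normalized operator is an isomorphism and the normalizing $L$-ratio has neither zero nor pole at $s_r$, giving holomorphy and nonvanishing.

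**The convergence statement.** For the last sentence: when $s_r > s_{r+1}$ we are in the positive chamber $D(\tau_r,r_\sigma)$ — indeed $\Inv(\tau_r)$ consists of the single pair $(r,r+1)$ and the condition $\Re(s_r - s_{r+1})>r_\sigma$ holds for $r_\sigma$ small once $s_r>s_{r+1}$ is strict and we have already fixed the unitary data; more carefully, one invokes the Langlands/Shahidi convergence criterion (as recalled in \cite[Section 2]{Sh81}, \cite[Chapter 10]{W92}, \cite[II.1.6]{MW94} and cited in Section \ref{generalities on intertwining operators}) which gives absolute convergence of $A_\sigma(\tau_r,\s)f_\s$ on the set $D(\tau_r,r_\sigma)$, hence holomorphy there, and $s=s_r$ (with the remaining coordinate $s_r+s_{r+1}-s = s_{r+1}$) lies in that set precisely when $s_r>s_{r+1}$. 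So holomorphy is immediate and no segment combinatorics is even needed.

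**Main obstacle.** The real work — and the only genuinely new point relative to the split case — is justifying that the M\oe glin--Waldspurger results on poles and images of rank-one intertwining operators carry over verbatim to $G_{2m_r}(D_E)$. The excerpt signals this (``we will mainly recall some results from \cite{MW89}, and explain why they hold for inner forms''), so I would spell out that the \v{Z}elevinsky-type classification of the reducibility points of $\d'\times\mu'$, the identification of the image of $A_\sigma(\tau_r,s)$ at the pole with the relevant Langlands quotient, and the simplicity of the pole all follow from: (i) the classification of discrete series of $G_m(D_E)$ and their cuspidal supports via $\JL$ (Section \ref{local JL}); (ii) the computation of Jacquet modules of $\La(\D)$ recalled in Remark \ref{pure tensor} (from \cite{MS14}); and (iii) the fact that $r_{M_{(m_r,m_{r+1})},G_{2m_r}}$ detects the unique irreducible sub/quotient structure, so that the pole and its simplicity are forced by a one-dimensionality of the relevant $\Hom$-space exactly as in \cite{MW89}. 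The translation of the numerology — converting the segment inequalities $b\le b'-1\le e\le e'-1$ into the chamber condition $s_r-s_{r+1}<-\frac{l|k_1-k_2|}{2}$ and the pole location $s=s_r$ — is bookkeeping that I would do once and carefully, since the factor $l=l_\rho$ is the one place where the inner form differs from the split case.
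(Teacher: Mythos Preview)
Your approach is essentially the paper's: reduce to $t=2$ via Lemma~\ref{surjective restriction}, then analyze the rank-one operator through normalized intertwining operators and the segment combinatorics of \cite{MW89}. Two points where you are imprecise deserve mention.

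First, the convergence claim. Saying ``$r_\sigma$ small once $s_r>s_{r+1}$ is strict'' is not how it works: $r_\sigma$ is fixed by $\sigma$, not by $\s$. What you actually need is the sharp bound for square-integrable inducing data, namely \cite[Proposition~IV.2.1]{W03}, which gives absolute convergence of the rank-one operator on the whole open chamber $\Re(s_r-s_{r+1})>0$. The general references you cite only give \emph{some} half-space, which is not enough.

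Second, on the inner-form transfer. Your remark that ``$\JL$ matches the $L$- and $\gamma$-factors and hence the normalized operators'' conflates two things: $\JL$ matches the \emph{normalizing factors} (this is \cite[Lemma~2.1]{AC89}), but matching the pole structure of the \emph{normalized operators} themselves requires separately verifying that the axioms I.1(1)--(4) and hence Lemma~I.4 of \cite{MW89} hold over $D_E$. The paper does this by quoting \cite{AC89} for I.1(1), \cite{S09} (irreducibility of unitary parabolic induction) for I.1(3), \cite[IV.2.1]{W03} for the positive-chamber part of I.1(2), and observing that the proof of Lemma~I.4 ultimately reduces to Ol'shanski\u{\i}'s result \cite{O74} on poles of intertwining operators between two cuspidals, which is already stated for all inner forms. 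Your Jacquet-module sketch in the ``main obstacle'' paragraph could be made to work, but it is more labor than this route. Finally, the paper pins down the dichotomy by writing the normalizing factor explicitly as
\[
r_{\d'\otimes\mu'}(\tau,s)\underset{\C[q^{\pm s}]^\times}{\sim}
\frac{L\bigl(2s-(s_r+s_{r+1})+l\tfrac{|k_1-k_2|}{2},\rho',(\rho')^\vee\bigr)}
{L\bigl(2s-(s_r+s_{r+1})+l\tfrac{k_1+k_2}{2},\rho',(\rho')^\vee\bigr)},
\]
from which one reads off directly that, under the chamber condition $s_r-s_{r+1}<-\tfrac{l|k_1-k_2|}{2}$, this factor has no pole at $s_r$ and has a zero there precisely when $\D_r\prec\D_{r+1}$ are juxtaposed; combined with the simple pole of the normalized operator exactly when $\D_r\prec\D_{r+1}$, this gives the statement. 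You gesture at this computation but do not carry it out; it is worth doing explicitly, since the factor $l=l_\rho$ enters here and nowhere else.
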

\begin{proof}
We use the notations of Lemma \ref{surjective restriction}. If \[R(f_{\s})=v_1\otimes \dots \otimes v_{r-1}\otimes h_{\s}\otimes v_{r+2}\otimes \dots \otimes v_t,\] then 
\[R(A_\sigma(\tau_r,s)f_{s})=v_1\otimes \dots \otimes v_{r-1}\otimes
 A_{\d[s]\otimes \mu[s_r+s_{r+1}-s]}((1\ 2),0)h_{s}\otimes v_{r+2}\otimes \dots \otimes v_t\] hence it is enough to treat the case where $t=2$ thanks to Lemma \ref{surjective restriction}. The second assertion follows from \cite[Proposition IV.2.1.]{W03}. Let's justify the first. 
 We want to use Lemma I.4 of \cite{MW89}, in the context of inner forms of $\GL(n)$. We claim that it is still valid. It is proved in \cite[Lemma 2.1]{AC89} that the normalization factors of normalized intertwining operators can be taken to be the Langlands-Shahidi normalizing factors of the Jacquet-Langlands lifts, in order for the expected properties stated in 
\cite[I.1]{MW89} to be satisfied. More precisely Properties I.1(1) follows from \cite[Lemma 2.1]{AC89}, I.1(3) follows from the references stated in \cite{MW89}, as well as from the fact that parabolically induced representations from irreducible unitary representations remain irreducible, a result which is also true for inner forms of $\GL(n)$ thanks to \cite{S09}. The proof of  I.1(2) and (4) then holds without modification (notice that the case of (2) where $\rm{Re}(s_i-s_j)>0$ is true for general reductive groups by \cite[Proposition IV.2.1.]{W03}). This implies that I.2(1), hence Lemma I.2 (ii) hold 
too. Finally, replacing the reference to Zelevinsky by the reference to \cite{T90}, the proof of Lemma I.4 in \cite{MW89} is reduced to the classical result of \cite{O74} concerning poles of standard intertwining operators between representations induced by two cuspidal ones, and this result is for all inner forms of $\GL(n)$. Now set $\tau=(1\ 2)$, we recall that $\JL(\rho)=\St_{l}(\rho')$ for $l=l_\rho$. We set 
$\d'=\JL(\d)=\St_{k_1l}(\rho')$, $\mu'=\JL(\mu)=\St_{k_2l}(\rho')$ and denote by $r_{\d\otimes \mu}(\tau,s)$ the normalizing factor of $A_{\d\otimes\mu}(\tau,s)$. 
By definition, and using the equality of the Langlands-Shahidi factors and the factors defined in \cite{JPSS83} (see 
\cite{Sh84}), we have 
\[r_{\d\otimes \mu}(\tau,s)=r_{\d'\otimes \mu'}(\tau,s).\] By a reformulation of \cite[Lemma I.4]{MW89}, the normalized intertwining operator 
\[r_{\d'\otimes \mu'}(\tau,s)^{-1}A_{\d'\otimes \mu'}(\tau,s)\] has a simple pole at $s=s_r$ if and only if 
$\D_r\prec \D_{r+1}$. By [ibid.] again, one has:
\[r_{\d'\otimes \mu'}(\tau,s)\underset{\C[q^{\pm s}]^\times}{\sim}\]
\[ L(2s-(s_r+s_{r+1})+l\frac{|k_1-k_2|}{2},\rho',(\rho')^\vee) 
L(2s-(s_r+s_{r+1})+l\frac{(k_1+k_2)}{2},\rho',(\rho')^\vee)^{-1}.\]
Hence $r_{\d'\otimes \mu'}(\tau,s)$ has no pole at $s_r$ as $s_r-s_{r+1}<-l\frac{|k_1-k_2|}{2}$, and it has a 
zero at $s_r$ if and only if $\D_r\prec \D_{r+1}$ and $\D_r$ and $\D_{r+1}$ are juxtaposed. The statement follows.
\end{proof}

\section{Intertwining periods}\label{intertwining-periods} 

The intertwining periods appear naturally in the study of the relative trace formula investigated in \cite{JLR} and \cite{LR}, and more precisely in the formula which computes the period integral of truncated Eisenstein series. We will recall results from \cite{LR} in this context. However, we notice that \cite{LR} extends the results of \cite{JLR} from the pair 
$(G_n(\A_l),G_n(\A_k))$ to general reductive Galois pairs. Here the set of double cosets $P_{\overline{m}}(\G_l)\backslash G_m(\G_l)/G_m(\G_k)$ canonically identifies with $P_{\overline{m}}(l)\backslash G_m( l)/G_m(k)$, which is the case considered in \cite{JLR}. Hence, the results of \cite[Chapter VII]{JLR} as well as their proofs hold without modification for the non split case, so we could most of the time refer directly to \cite{JLR}.\\

If $w$ is a involution in $\S_t$, we set $\C^t(w,-1)=\{\s\in \C^t,\ w(\s)=-\s\}.$ 

\subsection{Global open intertwining periods}\label{global-periods}

Here $G$ is as in Section \ref{automorphic}, we take $t=2r$ with $r\in \N-\{0\}$, and $\m=(m_1,\dots,m_t)$ a self-dual partition of $m$. We set $P=P_{\m}=P_{(m_1,\dots,m_t)}=MN$ as before. For $S$ a subgroup of $G$, we will write 
$S(l)$ for $S\cap G(l)$.\\

We set \[u=\begin{pmatrix} I_{m_1} &  &  & & & & & -\d_{l/k} I_{m_1} \\
  & I_{m_2} &  & & &  & -\d_{l/k} I_{m_2} &  \\
  &  &  \ddots  & & &  \iddots &  &  \\
   &  &   & I_{m_r} & -\d_{l/k} I_{m_{r}} &    &  &  \\ 
   &  &   & I_{m_r} & \d_{l/k} I_{m_{r}} &    &  &  \\    
  &  &  \iddots  & & &  \ddots &  &  \\  
  & I_{m_2} &  &  &  &  & \d_{l/k} I_{m_2}  &  \\ 
I_{m_1}  &  &   &   &  &  &  & \d_{l/k} I_{m_1} \end{pmatrix}.\]

It is the representative $u\in R(P_{\overline{m}}(\G_l)\backslash G_m(\G_l)/G_m(\G_k))$ such that $u\theta(u)^{-1}$ is 
the longest Weyl element $w=w_t$ of $\S_t$ (associated to the partition $\m$). It is 
$P$-admissible. For any subgroup $S$ of $G$, we denote by 
$S^u$ the subgroup of fixed points of the involution $g\mapsto w\theta(g)w^{-1}$ of $G$ in 
$S$, and we denote by $S(u)$ the group $u^{-1}S^u u$, in particular $S(u)\subset H=G^\theta$.\\

 Let $\sigma$ be a cuspidal automorphic representation of $M$, the space of which is included in $L^2(A_M M(l)\backslash M)$, i.e. $\sigma$ is unitary, and its central character is trivial on $A_M$. We moreover suppose that 
 \[\sigma=\sigma_1\otimes \dots \otimes \sigma_t\] with $\sigma_{t+1-i}^\theta=\sigma_i^\vee$ for all $i$. To $f$ in $\Ind_P^G(\sigma)$ and 
 $\s\in \C^t(w,-1)$, one can attach (\cite[Chapter VII]{JLR} where the parabolic induction is not normalized, or \cite[Definition 5.1]{LR}) 
 the intertwining period (note that $P(u)=M(u)$):

\[J_\sigma(w,f_{\s},\s)=\int_{P(u)\backslash H}\int_{A_M^uM(l)^u\backslash M^u}f_{\s}(muh)dm dh.\]

Notice that our definition is the same as that of \cite{JLR}, as $\eta_{\s}$ is left invariant under $M^u$, but the notation slightly differs as we write $J_\sigma(w,f_{\s},\s)$ instead of $J(w,f,\s)$.\\

By \cite[Theorem 31]{JLR}, \cite[Proposition 5.2.1 and Theorem 10.2.1]{LR}, there is $q_\sigma\in \R$ such that 
all such integrals are absolutely convergent for \[\s\in D_\sigma^J(w)=\C^t(w,-1)\cap D(w,q_\sigma)\] 
and extends to a meromorphic function on $\C^t(w,-1)$. Moreover, by [ibid.], it satisfies a series of functional equations with respect to standard intertwining operators associated to certain Weyl elements. Here we will 
only be interested with the Weyl element $\tau_r=(r \ r+1)\in \S_t$.
Clearly $w$ commutes with $\tau_r$, hence, in order to state the functional equation of the intertwining periods, we only need to check that 
$\tau_r$ belongs to the set denoted by $W(w,w)$ in Section 3.4 of \cite{LR} ($\Omega(w,w)$ in \cite[Chapter VII]{JLR}). 
But noticing that there is no positive root of the center of $M$ which is fixed by $w$, the first part of \cite[Proposition 3.4.1]{LR} tells us that $\tau_r$ indeed belongs to $W(w,w)$. Then, for $\s \in \C^t(w,-1),$ according to \cite[Theorem 10.2.1]{LR}, the intertwining period satisfies the functional equation:

\begin{equation}\label{global equation} J_\sigma(w,A_\sigma(\tau_r,\s)f_{\s},\tau_r({\s}))=J_\sigma(w,f_{\s},\s).\end{equation}

\subsection{Local open periods and their functional equation}\label{open-periods}

\subsubsection{The non archimedean non-split case}\label{non-archi non-split}
Here $G=G_m$ and the other notations are as in Section \ref{p-adic}. We take $t=2r$ with $r\in \N-\{0\}$, and $\m=(m_1,\dots,m_t)$ a self-dual partition of $m$. We set $P=P_{\m}=P_{(m_1,\dots,m_t)}=MN$. The element $u$ is as in Section \ref{global-periods}, but with $\d_{E/F}$ instead of $\d_{l/k}$, and for $S$ a subgroup of $G$, the definitions of $S^u$ and $S(u)$ are the same. Moreover, in this situation, for $w=u\theta(u)^{-1}$, the 
involution $\theta^w$ sends $P$ to $P^-={}^t\!P$. In particular the double coset $PuH$ is open in $G$.\\

For each $i$, let $\sigma_i$ be a finite length representation of $G_{m_i}$, and assume that $\sigma_{t+1-i}^\theta=\sigma_i^\vee$ for all $i$. We set 
$\sigma=\sigma_1\otimes \dots \otimes \sigma_t$ as usual. The representation $\sigma$ is $M^u$-distinguished, and we denote by $L$ the nonzero $M_u$-invariant linear form on $V_\sigma$ defined by 
\[L:v_1\otimes \dots \otimes v_t\mapsto \prod_{i=1}^r <v_i,v_{t+1-i}>\] where $<\ , \ >$ is the natural pairing between a representation and its contragredient.
More generally for $\s=(s_1,\dots,s_t)\in \C^n(w,-1),$ the linear form $L$ belongs to 
$\Hom_{M^u}(\sigma[\s],\C)$.  For $f_{\s}$ a flat section in the space of $\pi_{\s}=\Ind_P^G(\sigma[\s])$, 
we define: 
\[J_\sigma(w, f_{\s},\s, L)=\int_{P(u)\backslash H} L(f_{\s}(uh)) dh.\]

This is the situation studied in \cite{BD08}. Here we summarize how their results apply in our particular situation. The following theorem follows from \cite[Theorems 2.8 and 2.16]{BD08}, and the fact that the condition
on $\eta$ in Theorem 2.16 of [ibid.] is always satisfied by Theorem 4(i) of \cite{L08}. 

\begin{thm}\label{BD}
There is $q_\sigma\in \R$, such that the integral $J_\sigma(w, f_{\s},\s, L)$ is absolutely convergent for 
\[s\in D_\sigma^{J,L}(w)=\C^t(w,-1)\cap D(w,q_\sigma).\] 
For $\s\in D_\sigma^{J,L}(w)$, the map $f_{\s}\mapsto J_\sigma(w, f_{\s},\s, L)$ defines 
a nonzero $H$-invariant linear form on $V_{\pi_{\s}}$. Moreover, there is a nonzero Laurent polynomial $P$ such that 
$P(q^{\pm {\s}})J_\sigma(w, f_{\s},\s, L)$ belongs to $\C[q^{\pm {\s}}]$ for all $f$.
\end{thm}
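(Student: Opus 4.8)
The plan is to deduce Theorem \ref{BD} essentially as a transcription of \cite[Theorems 2.8 and 2.16]{BD08} into the concrete situation of an open $P$-orbit on $G/H$, checking that the two hypotheses required there are met. The first observation is that, since $\theta^w$ sends $P$ to the opposite parabolic ${}^t\!P$, the double coset $PuH$ is open in $G$, so we are in the ``open orbit'' case of \cite{BD08}; the associated subgroup $P(u)=M(u)$ is reductive, and the integral defining $J_\sigma(w,f_{\s},\s,L)$ is the prototypical Blanc--Delorme intertwining period. First I would recall that $L$ as defined is a nonzero element of $\Hom_{M^u}(\sigma[\s],\C)$ for every $\s\in\C^t(w,-1)$: this is immediate from $\sigma_{t+1-i}^\theta=\sigma_i^\vee$ together with the fact that $\eta_{\s}$ is left $M^u$-invariant when $w(\s)=-\s$, so that the twist by $\s$ does not disturb the $M^u$-invariance of the pairing.

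Next I would invoke \cite[Theorem 2.8]{BD08} to get a cone of absolute convergence: there is $q_\sigma\in\R$ such that for $\s$ with $\Re(s_i-s_j)>q_\sigma$ for all $(i,j)\in\Inv(w)$ — i.e. $\s\in D(w,q_\sigma)$, intersected with $\C^t(w,-1)$ — the integral $\int_{P(u)\backslash H}L(f_{\s}(uh))\,dh$ converges absolutely; this uses the asymptotics of matrix coefficients of $\sigma$ and the estimate on $f_{\s}$ on $P(u)\backslash H$ exactly as in \cite{BD08}, and for us the relevant uniformity in $\Re(\s)$ can be fed in via Lemma \ref{asymptotics}. For such $\s$ the linear form $f_{\s}\mapsto J_\sigma(w,f_{\s},\s,L)$ is visibly $H$-invariant (right translation by $H$ preserves the measure $dh$ on $P(u)\backslash H$ and permutes the integrand), and it is nonzero: here I would cite \cite[Theorem 2.16]{BD08}, whose hypothesis on $\eta$ (in the notation of [ibid.]) is automatically satisfied in our case by \cite[Theorem 4(i)]{L08}. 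The meromorphic continuation and the statement that $P(q^{\pm\s})J_\sigma(w,f_{\s},\s,L)\in\C[q^{\pm\s}]$ for a single nonzero Laurent polynomial $P$ independent of $f$ is then the rationality part of \cite[Theorem 2.8]{BD08}: the period is obtained by integrating a section whose restriction to $K$ is $\s$-independent against a rational family of distributions, and the Bernstein-type argument there produces a common denominator.

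The main obstacle, as I see it, is not any single hard estimate but making sure the general framework of \cite{BD08} genuinely applies to the pair $(G,H)$ with $G$ an inner form rather than the split $\GL$; concretely, one must check that the hypotheses of their Theorems 2.8 and 2.16 — reductivity of the stabilizer $M(u)=M^u$ (equivalently $P$-admissibility of $u$, which holds since $w$ stabilizes $M$ and $\theta^w(M)=M$), the required behaviour of the modulus characters along the open orbit, and the non-vanishing criterion phrased via \cite{L08} — are all insensitive to the division algebra $D$. Once that verification is in place, everything else is a direct citation: the convergence cone, the $H$-invariance, the non-vanishing, and the rationality all transfer verbatim. I would therefore devote the bulk of the write-up to spelling out why these hypotheses hold here, and keep the rest to a minimum, referring the reader to \cite{BD08} and \cite{L08} for the analytic core.
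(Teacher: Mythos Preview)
Your proposal is correct and takes essentially the same approach as the paper: the theorem is simply a direct application of \cite[Theorems 2.8 and 2.16]{BD08}, with the hypothesis on $\eta$ in Theorem 2.16 verified via \cite[Theorem 4(i)]{L08}. Two minor remarks: the framework of \cite{BD08} is already set up for a general reductive $p$-adic group with an involution, so no special verification for inner forms of $\GL(n)$ is needed; and the reference to Lemma~\ref{asymptotics} is superfluous here, since the convergence estimate is internal to \cite{BD08} (that lemma is used later in the paper for a different purpose, namely uniformity in an auxiliary parameter in Section~\ref{basic open}).
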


\begin{rem}\label{nonvanishing-of-global-periods}
Notice that for some well chosen $f=f_{\underline{0}}$ in the subspace $\mathcal{C}_c^\infty(P\backslash PuH,\d_P^{1/2}\sigma) \subset \pi=\pi_{\underline{0}}$, the local intertwining/open period $J_\sigma(w, f_{\s},\s, L)$ is identically equal to $1$. 
\end{rem}

We will say that $J_\sigma(w, . ,\s, L)$ has a singularity (a pole if $\s$ consists of one complex variable) at $\s=\s_0$ if for some flat section $f_{\s}$, the function $J_\sigma(w,f_{\s},\s, L)$ has a singularity at $\s=\s_0$. Otherwise we will say that 
$J_\sigma(w,.,\s,L)$ is holomorphic or regular at $\s=\s_0$. By Remark \ref{nonvanishing-of-global-periods}, if 
$J_\sigma(w,.,\s,L)$ is holomorphic or regular at $\s=\s_0$, then $J_\sigma(w,.,\s_0,L)$ is nonzero. \\

We now suppose that $\sigma$ is irreducible. By Theorem \ref{mult1}, and because $\pi_{\s}$ is irreducible for all $\s$ in a dense open subset of $\C^t(w,-1)$, we deduce the following functional equation.

\begin{prop}\label{non-archi-functional-equation} If $\sigma$ is irreducible, there exists an element $\alpha_\sigma$ (depending on $w$, $\tau_r$ and $L$...) of $\C(q^{-\s})$, such that for any flat section $f_{\s}$ in $\pi_{\s}$:
\[J_{\tau_r(\sigma)}(w,A_\sigma(\tau_r,\s)f_{\s},\tau_r(\s),L)=\alpha_{\sigma}(\s)J_{\sigma}(w,f_{\s},\s,L).\]
\end{prop}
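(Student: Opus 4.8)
The plan is to deduce this local functional equation purely formally from the uniqueness of $H$-invariant functionals (Proposition \ref{mult1}), together with the analytic properties of both the local open periods (Theorem \ref{BD}) and the standard intertwining operators (Section \ref{generalities on intertwining operators}). First I would fix a Zariski open subset $\Omega$ of $\C^t(w,-1)$ on which simultaneously: $\pi_{\s}=\Ind_P^G(\sigma[\s])$ is irreducible; $A_\sigma(\tau_r,\s)$ is holomorphic and is an isomorphism onto $\Ind_Q^G(\tau_r(\sigma)[\tau_r(\s)])$ (using that $\ell(\tau_r)=1$ and that by \cite{S09} induction from irreducible unitary data stays irreducible, so the intertwining operator between two irreducible representations is an isomorphism away from a proper subvariety); $J_\sigma(w,.,\s,L)$ is holomorphic and nonzero; and $J_{\tau_r(\sigma)}(w,.,\tau_r(\s),L)$ is holomorphic and nonzero. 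Each of these conditions fails only on a proper Zariski-closed subset, so $\Omega$ is nonempty and Zariski dense.

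For $\s\in\Omega$, both $f_{\s}\mapsto J_{\tau_r(\sigma)}(w,A_\sigma(\tau_r,\s)f_{\s},\tau_r(\s),L)$ and $f_{\s}\mapsto J_\sigma(w,f_{\s},\s,L)$ are $H$-invariant linear forms on the irreducible representation $\pi_{\s}$: the second by Theorem \ref{BD}, the first because $A_\sigma(\tau_r,\s)$ is a $G$-intertwining map from $\pi_{\s}$ to $\Ind_Q^G(\tau_r(\sigma),\tau_r(\s))$ and $J_{\tau_r(\sigma)}(w,.,\tau_r(\s),L)$ is $H$-invariant on the target (note $w$ commutes with $\tau_r$, so $\tau_r(\sigma)$ still satisfies the self-duality hypothesis $\sigma_{t+1-i}^{\theta}=\sigma_i^\vee$ that Theorem \ref{BD} requires). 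By Proposition \ref{mult1} the space $\Hom_H(\pi_{\s},\C)$ is one-dimensional, and since $J_\sigma(w,.,\s,L)$ is nonzero it spans this space; hence there is a unique scalar $\alpha_\sigma(\s)$ with
\[J_{\tau_r(\sigma)}(w,A_\sigma(\tau_r,\s)f_{\s},\tau_r(\s),L)=\alpha_\sigma(\s)J_\sigma(w,f_{\s},\s,L)\]
for all flat sections $f_{\s}$. This defines $\alpha_\sigma$ as a function on $\Omega$.

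It remains to check that $\alpha_\sigma$ extends to an element of $\C(q^{-\s})$ and that the identity then holds for all flat sections and all $\s$ by meromorphic continuation. For this I would pick, as in Remark \ref{nonvanishing-of-global-periods}, a fixed flat section $f^0_{\s}$ (coming from $\mathcal{C}_c^\infty(P\backslash PuH,\d_P^{1/2}\sigma)$) for which $J_\sigma(w,f^0_{\s},\s,L)$ is identically $1$; then $\alpha_\sigma(\s)=J_{\tau_r(\sigma)}(w,A_\sigma(\tau_r,\s)f^0_{\s},\tau_r(\s),L)$ on $\Omega$. The right-hand side is a composite of the rational-in-$q^{\pm\s}$ map $A_\sigma(\tau_r,\s)$ (Section \ref{generalities on intertwining operators}: there is $P_1\in\C[q^{-\s}]\setminus\{0\}$ clearing its denominators on flat sections) with the meromorphic-in-$q^{\pm\s}$ linear form $J_{\tau_r(\sigma)}(w,.,\tau_r(\cdot),L)$ (Theorem \ref{BD}: there is a Laurent polynomial $P_2$ clearing its denominators); applied to the fixed flat section $f^0_{\s}$ this yields an element of $\C(q^{-\s})$. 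So $\alpha_\sigma\in\C(q^{-\s})$, and both sides of the asserted equation are, for each fixed $f$ and $g$, meromorphic functions of $\s$ agreeing on the Zariski-dense set $\Omega$, hence agree identically. The only genuinely delicate point is the bookkeeping needed to see that $A_\sigma(\tau_r,\s)$ maps flat sections to (meromorphic families of) flat sections in a way compatible with the flat-section formalism underlying Theorem \ref{BD}, so that feeding $A_\sigma(\tau_r,\s)f_{\s}$ into $J_{\tau_r(\sigma)}$ indeed produces a rational function of $q^{-\s}$; this is where I would be most careful, but it is exactly the kind of statement already recorded in Section \ref{generalities on intertwining operators} and in \cite{BD08}.
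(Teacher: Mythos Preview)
Your proposal is correct and follows essentially the same approach as the paper: the paper's proof is the single sentence preceding the statement, invoking Proposition~\ref{mult1} together with the generic irreducibility of $\pi_{\s}$ on a Zariski open subset of $\C^t(w,-1)$. You have simply filled in the details the paper leaves implicit, in particular the argument via Remark~\ref{nonvanishing-of-global-periods} and Theorem~\ref{BD} that $\alpha_\sigma$ lies in $\C(q^{-\s})$.
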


\subsubsection{The non-Archimedean split case}\label{non-archi-split}

As above, the reference for the results of this paragraph can be chosen to be \cite{BD08}, but the results also follow 
from the usual properties of intertwining operators. The field $F$ is $p$-adic and 
$E=F\times F$. The involution $\theta$ of $E$ is just $\theta:(x,y)\mapsto (y,x)$. The group 
$G$ is $G_m(\G_E)=G_m(\G_F)\times G_m(\G_F)$, and $H=G^\theta$ is just $G_m(\G_F)$ embedded diagonally inside 
$G$. We take $t=2r$ with $r\in \N-\{0\}$, and $\m=(m_1,\dots,m_t)$ a self-dual partition of $m$. We set 
\[P=P_{\m}(\G_F)\times P_{\m}(\G_F)=MN.\] We consider $\k_i$ a finite length representation of $G_{m_i}(\G_F)$ 
for all $i$ from $1$ to $t$, and set 
\[\kappa=\k_1\otimes \dots \otimes \k_t.\] We denote by $w$ the element $w_t$ of $\S_t$, then set \[w(\k)^\vee=\k_t^\vee \otimes \dots \otimes \k_1^\vee,\] 
and define the representation $\sigma$ of $M=M_{\m}(\G_F)\times M_{\m}(\G_F)$ to be the tensor product 
\[\sigma=\k\otimes w(\k)^\vee.\]

We then set $u=(I_n,w)\in G$, hence $u\theta(u)^{-1}=(w,w)$, or just $w$ seen as an element inside $H$. The definitions of 
$\theta^w$, $S^u$ and $S(u)$ for $S$ a subgroup of $G$ are as before. For $\s\in \C^t$, we set  
\[\sigma[\s]= \k[\s]\otimes w(\k[\s])^\vee,\] and 
\[\pi_{\s}=Ind_P^G(\sigma[\s])=Ind_{P_{\m}(\G_F)}^{G_m(\G_F)}
(\k[\s])\otimes Ind_{P_{\m}(\G_F)}^{G_m(\G_F)}(w(\k[\s])^\vee).\]
We define the linear form 
$L\in \Hom_{M^u}(\sigma_{\s},\C)-\{0\}$ by the formula:
\[L:(\otimes_{i=1}^t v_i,\otimes_{i=1}^t w_i)\mapsto \prod_{i=1}^t <v_i,w_{t+1-i}>.\]

With these notations the definition of the intertwining period $J_\sigma(w,f_{\s},\s,L)$ is the same as above for 
$f_{\s}$ a flat section of $\pi_{\s}$. In fact, if $f_{\s}$ is a pure tensor $h_{\s}\otimes g_{\s}$, the intertwining period
is related to the standard operator associated to $w$ by the following identity:

\begin{equation}\label{split period vs intertwining} J_\sigma(w,h_{\s}\otimes g_{\s},\s,L)=<A_{\sigma}(w,\s)h_{\s},g_{\s}>,\end{equation} where 
$<\ , \ >$ is the natural pairing between a representation and its contragredient.

The immediate analogues of Theorems \ref{BD}, Remark \ref{nonvanishing-of-global-periods}, and Proposition 
\ref{non-archi-functional-equation} are then valid, and we don't state them (in fact we state their exact analogues in the archimedean split case hereunder), they either follow from \cite{BD08}, or from the usual properties of standard intertwining operators.

\subsubsection{The Archimedean split case}\label{archi-split}
Now we recall the similar results  in the archimedean situation. They can either be deduced from \cite{BD92} and \cite{CD94} or from 
the usual properties of standard intertwining operators in the archimedean situation. The only reason why we need to consider archimedean intertwining periods being for our local-global application, we thus restrict to the split case (in the sense that the group and the quadratic algebra are split). Hence $\K=\R$ 
or $\C$ and we consider the involution of $\K\times \K$ given by $\theta(x,y)=(y,x)$. We set $G=G_n(\K\times \K)$, and define $H$, and $P=MU$ (associated to  self dual partition $\n=(n_1,\dots,n_t)$ of $n$, 
with $t=2r$ an even positive integer), $u$, $w$, $S^u$ and $S(u)$ for 
$S$ a closed subgroup of $G$ as in Section \ref{non-archi-split}. 
We consider for each $i$ from $1$ to $t$ a Harish-Chandra module $\k_i$ of $G_{n_i}(\K)$, and set 
\[\k= \k_1\otimes \dots \otimes \k_t.\]
For $\s\in \C^t$, we define $\k[\s]$ and $w(\k[\s])^\vee$ as in Section \ref{non-archi-split}, and set 
\[\sigma[\s]=\k[\s]\otimes w(\k[\s])^\vee\] together with its 
Casselman-Wallach completion $\overline{\sigma[\s]}^\infty.$ We define 
$L\in \Hom_{M^u}(\sigma[\s],\C)-\{0\}$ as in Section \ref{non-archi-split}. We then set 
\[\overline{\pi_{\s}}^\infty=\Ind_P^G(\overline{\sigma[\s]}^\infty),\] and 
$\pi_{\s}$ the Harish-Chandra subspace of $K$-finite vectors in $\overline{\pi_{\s}}^\infty$. For $f_{\s}$ a flat section in the normalized smooth induced representation $\overline{\pi_{\s}}^\infty$, we 
set \[J_\sigma(w,f_{\s},\s,L)=\int_{P(u)\backslash H} L(f_{\s}(uh))dh.\] 
Because of Equation (\ref{split period vs intertwining}), the following result can be seen either as a consequence of the basic properties of standard intertwining operators, 
or of \cite[Theorem 3]{CD94}. Its statement will be enough for our purpose. 

\begin{thm}
There is $q_\sigma\in \R$, such that the integral $J_\sigma(w, f_{\s},\s,L)$ is absolutely convergent for 
\[\s\in D_\sigma^{J,L}(w)= D(w,q_\sigma).\] 
For $\s\in D_\sigma^{J,L}(w)$, the map $f_{\s}\mapsto J_\sigma(w, f_{\s},\s,L)$ defines 
a nonzero $H$-invariant linear form on $V_{\overline{\pi_{\s}}^\infty}$, which is moreover continuous.
\end{thm}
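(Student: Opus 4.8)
The strategy is to deduce all four assertions from the identity (\ref{split period vs intertwining}), which in the split situation rewrites $J_\sigma(w,h_\s\otimes g_\s,\s,L)$ as the pairing $\langle A_\sigma(w,\s)h_\s,g_\s\rangle$ on pure tensors and thereby transports the question to the standard intertwining operator $A_\sigma(w,\s)$ and the archimedean theory of such operators recalled in Section \ref{generalities on intertwining operators}; this is the route through the ``basic properties of standard intertwining operators'', while \cite[Theorem 3]{CD94} (equivalently \cite{BD92}) furnishes the alternative one. In either presentation a single analytic input is required --- the absolute convergence of a certain integral --- and the remaining points are formal, so I would argue as follows.

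I would first establish absolute convergence and continuity at once. Writing a flat section as $f_\s=\eta_\s f$ with $f=f_{\underline 0}$, decomposing $uh=m(uh)n(uh)k(uh)$ by the Iwasawa decomposition of $G$, and using the moderate growth bound for the Casselman--Wallach representation $\overline{\sigma[\s]}^\infty$, one bounds $|L(f_\s(uh))|$ by $C\,|\eta_\s(uh)|\,\d_P(m(uh))^{1/2}||m(uh)||^{A}\cdot||f_\s||_K$, where $||f_\s||_K=\sup_{k\in K}p(f_\s(k))$ for a fixed continuous seminorm $p$ on $V_{\overline{\sigma[\s]}^\infty}$ (so that $||\cdot||_K$ is a continuous seminorm on $V_{\overline{\pi_\s}^\infty}$), and $C$, $A\ge 0$ are locally bounded in $\s$. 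Hence $|J_\sigma(w,f_\s,\s,L)|\le I(\s)\,||f_\s||_K$ with \[I(\s)=\int_{P(u)\backslash H}|\eta_\s(uh)|\,\d_P(m(uh))^{1/2}\,||m(uh)||^{A}\,dh,\] and the whole statement reduces to the finiteness $I(\s)<\infty$ for $\s$ in a region $\C^t(w,-1)\cap D(w,q_\sigma)$, a suitable $q_\sigma\in\R$: this is precisely the convergence assertion of \cite{BD92}, \cite{CD94}, and via (\ref{split period vs intertwining}) it amounts to the absolute convergence of $A_\sigma(w,\s)$ on $D(w,r_\sigma)$ recalled in Section \ref{generalities on intertwining operators}, with $q_\sigma$ taken to be $r_\sigma$ up to a fixed shift absorbing the polynomial factor $||m(uh)||^{A}$. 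The same bound shows that $f_\s\mapsto J_\sigma(w,f_\s,\s,L)$ is a continuous linear form on $V_{\overline{\pi_\s}^\infty}$ for every such $\s$.

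Next, $H$-invariance follows from the right-$H$-invariance of the measure on $P(u)\backslash H$: for $h_0\in H$ the change of variable $h\mapsto hh_0$ turns $J_\sigma(w,\pi_\s(h_0)f_\s,\s,L)=\int_{P(u)\backslash H}L(f_\s(uhh_0))\,dh$ into $J_\sigma(w,f_\s,\s,L)$, the integrals being absolutely convergent by the previous step. For non-vanishing --- the archimedean analogue of Remark \ref{nonvanishing-of-global-periods} --- fix $\s_0$ in the region. Since $\theta^w$ carries $P$ to its opposite parabolic, $PuH$ is open in $G$, so $P\backslash PuH$ is open and dense in the compact space $P\backslash G$; as $L$ is a nonzero continuous functional, choose a flat section whose $f_{\underline 0}$ is a smooth section supported in a small neighborhood of $Pu$ in $P\backslash G$ with $L(f_{\underline 0}(uh))\ge 0$ and $L(f_{\underline 0}(u))=1$, so that \[J_\sigma(w,f_\s,\s_0,L)=\int_{P(u)\backslash H}\eta_{\s_0}(uh)\,L(f_{\underline 0}(uh))\,dh.\] Shrinking the support if necessary --- permissible since $\s_0$ is fixed --- continuity of $\eta_{\s_0}$ gives $\Re(\eta_{\s_0}(uh)/\eta_{\s_0}(u))>\frac{1}{2}$ on it, so after division by the nonzero scalar $\eta_{\s_0}(u)$ the integral has strictly positive real part; hence $J_\sigma(w,\cdot,\s_0,L)$ is a nonzero (continuous, $H$-invariant) form on $V_{\overline{\pi_{\s_0}}^\infty}$.

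The only genuinely non-formal step is the finiteness of $I(\s)$, i.e. the determination of the convergence region $\C^t(w,-1)\cap D(w,q_\sigma)$; I expect this to be the main obstacle, but it is available verbatim from \cite{BD92}, \cite{CD94}, or, through (\ref{split period vs intertwining}), from the standard convergence theory of archimedean intertwining operators quoted in Section \ref{generalities on intertwining operators}. The remaining parts --- continuity from the same majorant, $H$-invariance by change of variable, and non-vanishing by an open-orbit bump function --- are then elementary.
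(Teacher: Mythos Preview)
Your proposal is correct and takes essentially the same approach as the paper, which does not give a detailed proof but simply states that the result ``can be seen either as a consequence of the basic properties of standard intertwining operators, or of \cite[Theorem 3]{CD94}'' via the identity (\ref{split period vs intertwining}). You have elaborated on exactly this strategy: the convergence and continuity reduce to the convergence region of $A_\sigma(w,\s)$ through the identity (or equivalently to the Brylinski--Delorme/Carmona--Delorme estimates), while $H$-invariance is the change of variable and non-vanishing is the open-orbit argument.

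One small remark on your non-vanishing step: the condition ``$L(f_{\underline 0}(uh))\ge 0$'' is a little loose since $L$ is complex-valued, but the intent is clear (choose $v$ with $L(v)=1$ and build a section so that $h\mapsto L(f(uh))$ is a non-negative bump on $P(u)\backslash H$); alternatively, in the split case the non-vanishing is immediate from (\ref{split period vs intertwining}) and the fact that $A_\sigma(w,\s)$ is given by an absolutely convergent integral, hence nonzero, on $D(w,r_\sigma)$.
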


As the representation $\pi_{\s}$ is dense in $\overline{\pi_{\s}}^\infty$, because it is by definition the subspace of $K$-finite vectors in there, we deduce the following corollary.

\begin{cor}\label{archimedean non vanishing}
There is $f\in \pi=\pi_0$, such that the archimedean intertwining period $J_\sigma(w, f_{\s},\s,L)$ is nonzero.
\end{cor}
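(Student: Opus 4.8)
The plan is to deduce this at once from the preceding Theorem together with the density of the $K$-finite vectors in the Casselman--Wallach completion, in the same way one deduces non-vanishing of a continuous functional from its non-vanishing on a dense subspace.

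First I would fix a point $\s_0\in D_\sigma^{J,L}(w)=\C^t(w,-1)\cap D(w,q_\sigma)$, which is a nonempty open subset of $\C^t(w,-1)$ (for $w=w_t$ one may take, for $M$ large, the point with $\Re(s_i)=(r+\tfrac12-i)M$, which is $w$-anti-invariant and lies in $D(w,q_\sigma)$). By the Theorem, $f_{\s_0}\mapsto J_\sigma(w,f_{\s_0},\s_0,L)$ is a nonzero \emph{continuous} linear form on $V_{\overline{\pi_{\s_0}}^\infty}$. Since $\pi_{\s_0}$ is by definition the space of $K$-finite vectors in $\overline{\pi_{\s_0}}^\infty$, it is a dense subspace (see Section \ref{real}), and a nonzero continuous linear form cannot vanish on a dense subspace; hence there is a $K$-finite vector $v\in\pi_{\s_0}$ with $J_\sigma(w,v,\s_0,L)\neq 0$.

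It then remains to lift $v$ to a flat section. The restriction to $K$ of the induced space $\overline{\pi_{\s}}^\infty=\Ind_P^G(\overline{\sigma[\s]}^\infty)$ is $\Ind_{P\cap K}^K(\sigma|_{M\cap K})$, independent of $\s$, and a flat section $f_{\s}$ is determined by its common restriction to $K$; consequently $v$ equals $f_{\s_0}$ for a unique flat section with $K$-finite $f:=f_{\underline 0}$, and $f\in\pi_0=\pi$. For this $f$ the function $\s\mapsto J_\sigma(w,f_{\s},\s,L)$ is holomorphic on $D_\sigma^{J,L}(w)$ and takes the nonzero value $J_\sigma(w,f_{\s_0},\s_0,L)$ at $\s_0$, so it is not identically zero, which is the assertion. (Alternatively, in view of Equation (\ref{split period vs intertwining}) one could use that $A_\sigma(w,\s)$ is nonzero for $\s$ in general position to exhibit pure tensors $h_{\s}\otimes g_{\s}$ with $\langle A_\sigma(w,\s)h_{\s},g_{\s}\rangle\neq 0$; the density argument above is shorter.) There is no genuine obstacle here; the only point needing a word is the identification of $K$-finite vectors of $\pi_{\s_0}$ with restrictions of $K$-finite flat sections emanating from $\pi$, which is immediate because flat sections are characterized by their $\s$-independent restriction to $K$.
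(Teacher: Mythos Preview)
Your proof is correct and is essentially the same as the paper's: the paper simply notes that $\pi_{\s}$ is dense in $\overline{\pi_{\s}}^\infty$ as the subspace of $K$-finite vectors, and deduces the corollary from the continuity and nonvanishing of the linear form given by the preceding theorem. You have supplied a few more details (the existence of a point in $D_\sigma^{J,L}(w)$ and the identification of $K$-finite vectors with flat sections via restriction to $K$), but the argument is the same density argument.
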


  We suppose that $\sigma$ is irreducible, hence $\overline{\pi_{\s}}^\infty$ is irreducible for all $\s$ in a dense  
  open subset of $\C^t(w,-1)$, and so by Schur's lemma, there is up to scaling a unique nonzero $H$-invariant continuous linear form on $\overline{\pi_{\s}}^\infty$. This implies the following functional equation.

\begin{prop}\label{archi-functional}
If $\sigma$ is irreducible, there is a meromorphic function $\alpha_\sigma$ on $\C^t$, such that for all $f\in \overline{\pi}^\infty$, one has the equality:
\[J_{\tau_r(\sigma)}(w,A_\sigma(\tau_r,\s)f_{\s},\tau_r(\s),L)=\alpha_\sigma(\s)J_\sigma(w,f_{\s},\s,L).\]
\end{prop}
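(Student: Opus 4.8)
The plan is to run the argument of Proposition~\ref{non-archi-functional-equation} in the archimedean split setting, with Schur's lemma replacing Theorem~\ref{mult1}. First I would check that, for $\s$ outside a proper closed subset of $\C^t(w,-1)$, both members of the asserted identity are continuous $H$-invariant linear forms on $\overline{\pi_\s}^\infty$. For the right-hand member this is exactly the theorem of Section~\ref{archi-split}. For the left-hand member, recall that $A_\sigma(\tau_r,\s)$ is a continuous $G$-equivariant map from $\overline{\pi_\s}^\infty=\Ind_P^G(\overline{\sigma[\s]}^\infty)$ to $\Ind_Q^G(\overline{\tau_r(\sigma)[\tau_r(\s)]}^\infty)$, meromorphic in $\s$ and holomorphic off a proper closed set; since $\m$ is self-dual with $t=2r$ one has $Q=P$, the representation $\tau_r(\sigma)$ again satisfies the conjugate-self-duality hypothesis of Section~\ref{archi-split} (the only condition to recheck, for the pair $(r,r+1)$, being $\sigma_r^\theta=\sigma_{r+1}^\vee$, which already holds), and $\tau_r(\s)\in\C^t(w,-1)$ because $w$ commutes with $\tau_r$. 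Hence $f_\s\mapsto J_{\tau_r(\sigma)}(w,A_\sigma(\tau_r,\s)f_\s,\tau_r(\s),L)$ is the composition of $A_\sigma(\tau_r,\s)$ with the continuous $H$-invariant form attached by that same theorem to $(P,\tau_r(\sigma))$, so it is itself such a form on $\overline{\pi_\s}^\infty$.

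Next comes the uniqueness input. Since $\sigma$ is irreducible and $F=\R$ or $\C$, the Casselman--Wallach representation $\overline{\pi_\s}^\infty=\Ind_{P_\m(\G_F)}^{G_m(\G_F)}(\k[\s])\,\widehat{\otimes}\,\Ind_{P_\m(\G_F)}^{G_m(\G_F)}(w(\k[\s])^\vee)$ is irreducible for $\s$ in a Zariski open subset of $\C^t(w,-1)$. For such $\s$, writing $H=G_m(\G_F)$ diagonally, a continuous $H$-invariant linear form on $\overline{\pi_\s}^\infty$ is the same as an element of $\Hom_{G_m(\G_F)}(\Ind(\k[\s]),\Ind(w(\k[\s])^\vee)^\vee)$ between two irreducible Casselman--Wallach representations, so the space of such forms is at most one-dimensional by Schur's lemma. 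Together with Corollary~\ref{archimedean non vanishing}, which produces $f^0\in\pi_0$ with $J_\sigma(w,f^0_\s,\s,L)\not\equiv 0$, this yields for generic $\s$ a scalar $\alpha_\sigma(\s)=J_{\tau_r(\sigma)}(w,A_\sigma(\tau_r,\s)f^0_\s,\tau_r(\s),L)/J_\sigma(w,f^0_\s,\s,L)$, which by one-dimensionality does not depend on the choice of $f^0$ and realizes the proportionality for every $f$.

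Finally I would check meromorphy and conclude. The numerator above is meromorphic in $\s$, being the meromorphic family $A_\sigma(\tau_r,\s)$ composed with the meromorphic intertwining period attached to $(P,\tau_r(\sigma))$, and the denominator is meromorphic and not identically zero; hence $\alpha_\sigma$ extends to a meromorphic function on $\C^t(w,-1)$, and the displayed functional equation holds for all $f$ and all $\s$ by analytic continuation from the generic locus. The main obstacle is this uniqueness step: one needs the irreducibility of the parabolically induced modules for generic parameter (standard for $\GL$ over $\R$ and $\C$) and the form of Schur's lemma valid for continuous invariant functionals on irreducible Casselman--Wallach representations; an alternative would be to exploit Equation~(\ref{split period vs intertwining}) together with the cocycle relation for standard intertwining operators, at the price of extra bookkeeping with lengths and normalizing factors.
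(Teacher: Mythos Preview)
Your proposal is correct and follows exactly the approach of the paper: generic irreducibility of $\overline{\pi_{\s}}^\infty$ together with Schur's lemma forces the space of continuous $H$-invariant linear forms to be at most one-dimensional, whence the proportionality. The paper's own argument is the single sentence preceding the proposition, and you have simply fleshed out the details (including the check that $\tau_r(\sigma)$ again satisfies the hypotheses and that the ratio is meromorphic); the alternative route you mention via Equation~(\ref{split period vs intertwining}) and the cocycle relation is precisely what the paper uses next, in Proposition~\ref{archimdean-alpha}, to compute $\alpha_\sigma$ explicitly.
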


In fact, when we moreover suppose that $\sigma$ is generic unitary, which is the case when it is the local component of a cuspidal  automorphic representation, this functional equation can be obtained by a direct computation, which moreover gives a formula for $\alpha_\sigma(\s)$ that we shall use in our local-global application. 

\begin{prop}\label{archimdean-alpha}
Suppose that $\sigma$ is generic unitary, and that $\psi$ is a non trivial character of $\K$, then \[\alpha_\sigma(\s)\sim\gamma(2s_r,\k_r,\k_{r+1}^\vee,\psi)^{-1}\gamma(-2s_r,\k_r^\vee,\k_{r+1},\psi)^{-1}.\]
\end{prop}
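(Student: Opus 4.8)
The plan is to reduce the computation of $\alpha_\sigma(\s)$ in the archimedean split case to a statement about the standard intertwining operator $A_\sigma(\tau_r,\s)$ via the identity \eqref{split period vs intertwining}, and then to evaluate the relevant scalar by testing against explicit Whittaker functionals and Schwartz data, exactly as in the Rankin--Selberg local functional equation. First I would use Lemma \ref{surjective restriction} to reduce to the case $t=2$: since $w=w_t$ commutes with $\tau_r$ and $R$ intertwines $A_\sigma(\tau_r,\s)$ with $A_{\k_r[s_r]\otimes \k_{r+1}[s_{r+1}]}((1\ 2),\cdot)$ on the middle two factors while leaving the outer factors untouched, and since the linear form $L$ factors compatibly through the pairings $\langle v_i,w_{t+1-i}\rangle$, both sides of the functional equation collapse to the rank-two case with $\sigma=\k_r\otimes\k_{r+1}$ replaced by a generic unitary representation. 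So it suffices to treat $n_1=n_2$, $\sigma=\k\otimes\k'{}^\vee$, $u=(I_n,w_2)$, and $\s=(s,-s)$.

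In that case \eqref{split period vs intertwining} gives $J_\sigma(w,h_{\s}\otimes g_{\s},\s,L)=\langle A_{\sigma}(w,\s)h_{\s},g_{\s}\rangle$, and likewise $J_{\tau_r(\sigma)}(w,A_\sigma(\tau_r,\s)(h_{\s}\otimes g_{\s}),\tau_r(\s),L)=\langle A_{\tau_r(\sigma)}(w,\tau_r(\s))A_\sigma(\tau_r,\s)h_{\s},g_{\s}\rangle$. Using the cocycle relation $A_{\sigma}(w_1\circ w_2,\s)=A_{w_2(\sigma)}(w_1,w_2(\s))\circ A_\sigma(w_2,\s)$ for the rank-two Weyl group (where $w_2=w$ and $\tau_r$ generate the order-two group, but in $\S_2$ one has $w=\tau_1=(1\ 2)$, so $w\circ\tau_r$ must be handled by the length-additivity bookkeeping appropriate to the ambient $\S_t$), the quantity $\alpha_\sigma(\s)$ is identified with the proportionality scalar relating $A(w)\circ A(\tau_r)$ to $A(w)$ on a one-dimensional Hom space, which is a composition of rank-one intertwining operators between the two cuspidal (here: discrete series) constituents of the standard module lying over $\k$ and $\k'$. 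Then I would invoke the standard fact — Shahidi's archimedean theory, or equivalently the explicit Rankin--Selberg local functional equation of \cite{J09} used earlier — that such a composition is a product of $\gamma$-factors: concretely, testing the identity against a pair of Whittaker functions $W\in W(\k,\psi)$, $W'\in W(\k',\psi^{-1})$ and a Schwartz function $\phi$, one reads off $\alpha_\sigma(s,-s)\sim \gamma(2s,\k,\k'{}^\vee,\psi)^{-1}\gamma(-2s,\k^\vee,\k',\psi)^{-1}$, the two factors coming from the two rank-one steps (the variable is $2s=s_r-s_{r+1}$ because $\s\in\C^t(w,-1)$ forces $s_{r+1}=-s_r$). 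Unwinding the reduction, this yields the claimed formula with $s_r$ in place of $s$.

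The main obstacle I expect is not the rank-two $\gamma$-factor identity itself — that is essentially the content of \cite{J09} plus bookkeeping — but rather pinning down the \emph{normalization} ($\sim$ up to a unit, and the precise shift in the $\gamma$-factor argument) and making the reduction of the first paragraph fully rigorous at the level of flat sections rather than just abstract Hom spaces. In particular one must check that the flat section produced by Lemma \ref{surjective restriction} behaves correctly under both $A_\sigma(\tau_r,\s)$ and the open-period integral, and that the genericity/uniqueness of the Whittaker model for generically induced archimedean representations (as recalled from \cite[p.~4]{J09}) legitimately lets one separate the contributions of the two ends of the standard module. A secondary point is that the functional equation of Proposition \ref{archi-functional} a priori only gives a meromorphic $\alpha_\sigma$; to match it with the explicit product of $\gamma$-factors one compares the two on the open set of absolute convergence $D_\sigma^{J,L}(w)$ and then invokes meromorphic continuation, using Corollary \ref{archimedean non vanishing} to guarantee that the test data do not make both sides vanish identically.
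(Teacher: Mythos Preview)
Your overall instinct---reduce the split archimedean functional equation to a statement about standard intertwining operators via \eqref{split period vs intertwining}---is exactly right, and is also what the paper does. But the execution diverges in two places, and the first is a genuine gap.

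The reduction to $t=2$ via Lemma~\ref{surjective restriction} does not work for the open period. That lemma (even granting an archimedean analogue) controls $A_\sigma(\tau_r,\s)$, which only touches the middle two blocks; it says nothing about $J_\sigma(w,\cdot,\s,L)$, whose defining integral runs over $P(u)\backslash H$ and therefore entangles all blocks. In the split case this is transparent: by \eqref{split period vs intertwining} one has $J_\sigma(w,h_{\s}\otimes g_{\s},\s,L)=\langle A_\kappa(w,\s)h_{\s},g_{\s}\rangle$, and $A_\kappa(w,\s)$ with $w=w_t$ is a long intertwining operator permuting \emph{all} factors, not just $r$ and $r+1$. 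So while $L$ factors as a product of pairings, the period itself does not, and your claim that ``both sides of the functional equation collapse to the rank-two case'' is unjustified. The ratio $\alpha_\sigma(\s)$ does in the end depend only on $(\kappa_r,\kappa_{r+1},s_r)$, but proving that is essentially the whole content of the computation---you cannot assume it up front.

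The paper's proof is shorter and avoids both the reduction and the Whittaker detour. One writes $w=\tau_r\circ w''$ with $\ell(w)=\ell(\tau_r)+\ell(w'')$, applies \eqref{split period vs intertwining} to both sides, and then manipulates directly: the left-hand side becomes
\[
\langle A_\kappa(w'',\s)\,[A_{\tau_r(\kappa)}(\tau_r,\tau_r(\s))A_\kappa(\tau_r,\s)]\,h_{\s},\ A_{w(\kappa)^\vee}(\tau_r,-\s)g_{\s}\rangle.
\]
The bracketed composite is a scalar by Arthur \cite[Section~3]{A89}, namely $\sim\gamma(2s_r,\kappa_r,\kappa_{r+1}^\vee,\psi)^{-1}\gamma(-2s_r,\kappa_{r+1},\kappa_r^\vee,\psi)^{-1}$ (this is the known relation $A(\tau_r)^2=\gamma^{-1}\gamma^{-1}\,\mathrm{Id}$, available here because for some $\s$ the induced representation is tempered). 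One then moves $A_{w(\kappa)^\vee}(\tau_r,-\s)$ across the pairing using Arthur's adjointness property \cite[(J4)]{A89}, recombines $A_{w''(\kappa)}(\tau_r,w''(\s))\circ A_\kappa(w'',\s)=A_\kappa(w,\s)$, and recovers $\langle A_\kappa(w,\s)h_{\s},g_{\s}\rangle$ on the nose. No Whittaker models, no Schwartz functions, no Rankin--Selberg integrals---the $\gamma$-factor appears directly as the normalizing scalar for the square of a rank-one intertwining operator.
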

\begin{proof}
Take $f=h\otimes g\in \pi$, then it is shown in \cite[Section 4]{BD92} that 
\[J_\sigma(w,f_{\s},{\s},L)=<A_\k(w,\s)h_{\s},g_{\s}>,\] where \[<u,v>=\int_K <u(k),v(k)>dk,\] and 
\[<v_1\otimes \dots \otimes v_t,v_1^\vee\otimes \dots \otimes v_t^\vee>
=\prod_{i=1}^t<v_i,v_i^\vee>.\]
Set $w''=w\circ (\tau_r)^{-1}$, so that $\ell(w)=\ell(\tau_r)+\ell(w'')$. We first suppose that $\sigma$ (equivalently $\k$) is tempered. We have  
\[J_{\tau_r(\sigma)}(w,A_\k(\tau_r,\s)f_{\s},\tau_r(\s))=<A_{\tau_r(\k)}(w,\tau_r(\s))A_\k(\tau_r,\s)h_{\s},
A_{w(\k)^\vee}(\tau_r,-\s)g_{\s}>\]
\[=<A_\k(w'',\s)A_{\tau_r(\k)}(\tau_r,\tau_r(\s))A_\k(\tau_r,\s)h_{\s},A_{w(\k)^\vee}(\tau_r,-\s)g_{\s}>\]
\[\sim\gamma(2s_r,\k_r,\k_{r+1}^\vee,\psi)^{-1}\gamma(-2s_r,\k_{r+1},\k_r^\vee,\psi)^{-1}
<A_\k(w'',\s)h_{\s},A_{w(\k)^\vee}(\tau_r,-\s)g_{\s}>,\] 
where the relation \[A_{\tau_r(\k)}(\tau_r,\tau_r(\s))A_\k(\tau_r,\s)\sim\gamma(2s_r,\k_r,\k_{r+1}^\vee,\psi)^{-1}
\gamma(-2s_r,\k_{r+1},\k_r^\vee,\psi)^{-1}Id\] follows from \cite[Section 3]{A89} which applies here because 
the representation $\pi$ is tempered. Finally 
\[<A_\k(w'',\s)h_{\s},A_{w(\k)^\vee}(\tau_r,-\s)g_{\s}>=<A_{w''(\k)}(\tau_r,w''(\s))A_\k(w'',\s)h_{\s},g_{\s}>\] by 
\cite[(J4)]{A89}, which is in turn equal to $<A_\k(w,\s)h_{\s},g_{\s}>$, and this ends the computation when $\k$ is tempered, or more generally when $\k[\s]$ is tempered for some $\s$. In general one writes 
\[\kappa_i=\Ind_{P_i}^{G_{n_i}(\K)}(\mu_i[\u_i])\] with $P_i$ a standard parabolic subgroup of $G_{n_i}(\K)$, $\mu_i$ a tempered representation of its standard Levi subgroup $L_i$ and 
$\u_i$ an element of $\C^{a_i}$ for some appropriate $a_i\in \N^*$, so $\k_i=\k'_{i,\u_i}$ with $\k'_i=\Ind_{P_i}^{G_{n_i}(\K)}(\mu_i)$. We set 
\[\mu=\mu_1\otimes \dots \otimes \mu_t,\] it is a representation of $L=L_1\times \dots \times L_t$ which is itself the standard Levi subgroup of a standard parabolic subgroup $Q$ of $G_n(\K)$. 
The inclusion $Q\subset P_{(n_1,\dots,n_t)}(\K)$ gives a natural inclusion of $\S_t$ inside $\S_{t'}$, where $t'$ is the number of $\GL$-blocks of $L$. For any $w\in \S_t$, we denote by $P(w)$ the standard parabolic subgroup of 
$G_n(\K)$ with standard Levi subgroup $w(M_{(n_1,\dots,n_t)}(\K))$ and by $Q(w)$ that with standard Levi-subgroup $w(L)$. 
We denote by $f_{w(\u)}\mapsto F_{f_{w(\u)}}^w$ the inverse of the canonical isomorphism $F_{w(\u)}\mapsto F_{w(\u)}(I_{w^{-1}(n_1)},\dots,I_{w^{-1}(n_t)})$ from 
$\Ind_{P(w)}^{G_n(\K)}(w(\k'_{\u}))$ to $\Ind_{Q(w)}^{G_n(\K)}(w(\mu[\u]))$.
Seeing $\C^t$ as a subspace of $\C^{t'}$ (again thanks to the inclusion $L\subset M$), one checks with arguments similar to those in the proof of Proposition \ref{proposition compatibility of transitivity of parabolic induction and intertwing periods} that for any $w_1$ and $w_2$ in $\S_t$, one has 
\[[A_{w_1(\kappa'_{\u})}(w_2,w_1(\s))(F_{f_{w_1(\u)}}^{w_1})_{w_1(\s)}](I_{(w_2w_1)^{-1}(n_1)},\dots,I_{(w_2w_1)^{-1}(n_t)})=A_{w_1(\mu)}(w_2,w_1(\s+\u))f_{w_1(\s+\u)}.\]
This together with the multiplicativity relation of $\gamma$-factors (see \cite{J09}) implies that we could suppose that $\kappa[\s]$ is tempered for some $\s$ in 
the proof of the proposition (replacing $\k$ by $\mu[\u]$). 
\end{proof}

\subsection{Convergence and meromorphic continuation of non-Archimedean intertwining periods attached to admissible orbits}\label{Section p-adic intertwining periods properties}\label{section admissible intertwining periods}

The notations are again as in Section \ref{p-adic}. In this section we consider intertwining periods attached to admissible orbits which are not 
necessarily open in $G$. We first prove a non archimedean version of the very useful 
\cite[Proposition 33]{JLR}, the proof of which we follow.

\begin{prop}\label{proposition JLR key prop p-adic analogue}
Let $P=P_{(m_1,\dots,m_t)}=MU$ be a standard parabolic subgroup of $G$ and 
let $\sigma$ be an admissible representation of $M$. Take a $P$-admissible
$u\in \Rep$, set $w=w_u$, and suppose moreover that $\lambda\in \Hom_{M^{\theta_w}}(\sigma,\C)-\{0\}$ (in particular 
we assume that $\sigma$ is $M^{\theta_w}$-distinguished). Let $\tau=(i,i+1)$ be a transposition in $\S_t$ such that 
$w(i)>w(i+1)$ but $(w(i),w(i+1))\neq (i+1,i)$, set $w'=\tau w \tau^{-1}$, $u'=\tau u$ so that 
$u'u'^{-\theta}=w'$, $M'=\tau(M)$ and let $P'=M'U'$ be the standard parabolic subgroup of $G$ with Levi subgroup $M'$, so that 
$u'$ is a $P'$-admissible element of $R(P'\backslash G/H)$. 
Choose $r\in \R$ large enough such that for $\s\in D(\tau,r)$, the operator $A_{\sigma}(\tau,\s)$ is given by absolutely convergent integrals. If for fixed $\s\in \C(w,-1)\cap D(\tau,r)$ and 
$f_{\s} \in \Ind_P^G(\sigma[\s])$ the left hand side of the following equation is absolutely convergent, then so is the right hand side, in which case the equation is satisfied:
\[\int_{P(u)\backslash H} \lambda(f_{\s}(uh))dh=\int_{P'(u')\backslash H} \lambda(A_{\sigma}(\tau,\s)f_{\s}(u'h))dh\]
\end{prop} 
\begin{proof}
We denote by $\mathfrak{U'}_\alpha$ the subspace of the Lie algebra of $U'$ on which the center of $M'$ acts by the additively written character $\alpha=e_i-e_{i+1}$. The reader should not be misled by the notation $\mathfrak{U'}_\alpha$ and observe that this vector space might be of dimension greater than $1$. We set $U'_\alpha=I_n+\mathfrak{U'}_\alpha$, and denote by $R=L.V$ the standard parabolic subgroup of $G$ generated by 
$P'$ and ${}^t(U'_\alpha)=U'_{-\alpha}$ (the image of $U'_\alpha$ under the transpose map). We introduce $p:U'^{u'}\rightarrow L$, which to an element in $U'^{u'}\subset U'\subset R$, associates its $L$-part. By definition of $R$, one has $U'=U'_\alpha.V$, with $U'_\alpha\subset L$, hence $p$ is in fact a morphism from 
$U'^{u'}$ to $U'_\alpha$. Clearly, the kernel of $p$ is equal to $V^{u'}=V\cap U'^{u'}$.  Moreover we have $w'(U'_\alpha)=U'_{\tau(-w(\alpha))}\subset U'$ because $\alpha$ is the only positive root sent to a negative one by 
$\tau$, but $w(\alpha)\neq -\alpha$ by assumption. Now the map $i:x\mapsto I_n+x+\theta_{w'}(x)$ sends $\mathfrak{U'}_\alpha$ to $U'^{u'}$ and satisfies $p(i(x))=I_n+x$. Thus $p$ induces a continuous isomorphism still denoted $p$ from 
$V^{u'}\backslash U'^{u'}$ to $U'_\alpha$. By Lemma \ref{lemma absolute convergence p-adic}, one has (up to the convergence issue, which will follow from Fubini's theorem at the end of the proof) 
\[\int_{P'(u')\backslash H} \lambda(A(\tau,\s)f_{\s}(u' h))dh= 
\int_{h\in P'(u')\backslash H} \int_{n\in U'_{\alpha}}\lambda(f_{\s}(\tau^{-1}nu' h))dndh.\]
Notice that for fixed $g$, the map $n\in U'\mapsto \lambda(f_{\s}(\tau^{-1}ng))$ is left $V$-invariant, hence 
the above integral equals 
\[\int_{h\in P'(u')\backslash H} \int_{n\in V^{u'}\backslash U'^{u'}}\lambda(f_{\s}(\tau^{-1}nu' h))dndh= \int_{P'(u')\backslash H} \int_{V(u')\backslash U'(u')}\lambda(f_{\s}(\tau^{-1}u' nh))dndh\]
\begin{equation}\label{equation reloux} =
\int_{P'(u')\backslash H} \int_{V(u')\backslash U'(u')}\lambda(f_{\s}(u nh))dndh.\end{equation}

Before finishing our computation we take a break to show that $P(u)=M'(u')V(u')$ or what amounts to the same:
\[\tau P^u \tau^{-1}=M'^{u'}V^{u'}.\]
   Because $u$ is $P$-admissible we have 
 \[\tau P^{u} \tau^{-1}=(\tau M^u \tau^{-1})(\tau
 U^u \tau^{-1}).\] Now notice that for any subgroup $S$ of $G$ normalized by $\tau$ (for example $V$), one has 
 the equality $S^{u'}=\tau S^u \tau^{-1}$.
 As $U^u$ is the set of elements $n$ in $U$ such that 
 $\theta(n)=w^{-1} nw $, one has $U^u\subset U\cap w^{-1} Uw\subset V$ (the last inclusion because $w(\alpha)<0$) hence  
 \[\tau U^u \tau^{-1}= \tau( U^u\cap V) \tau^{-1}= 
\tau( U\cap G^u\cap V) \tau^{-1}=\tau(V^u) \tau^{-1}=V^{u'}.\] Finally 
\[m'\in \tau M^u \tau^{-1}\Leftrightarrow \tau^{-1}m'\tau \in M^u\Leftrightarrow \theta_w(\tau^{-1}m'\tau)=\tau^{-1}m'\tau
\Leftrightarrow w\theta(\tau^{-1}m'\tau)w^{-1}=\tau^{-1}m'\tau\]
\[\Leftrightarrow w\tau^{-1}\theta(m')\tau w^{-1}=\tau^{-1}m'\tau\Leftrightarrow  \theta_{w'}(m')=m',\] hence 
$\tau M^u \tau^{-1}=M'^{u'}$. So we can now use the equality $P(u)=M'(u')V(u')$. On the other hand as $u'$ is $P'$-admissible, one has $P'(u')=M'(u')U'(u')$. Now the map $n\in P'(u')\mapsto \lambda(f_{\s}(u nh))$ is left $\d_{P(u)}$-equivariant for all $h\in H$, hence the inner integral in Equation (\ref{equation reloux}) is equal to \[\int_{P(u)\backslash P'(u')}\d_{P'(u')}^{-1}(n)\lambda(f_{\s}(u nh))dn\]
and this finishes the proof by integration in stages.
\end{proof}

Let $P=MU$ be as in the statement of Proposition \ref{proposition JLR key prop p-adic analogue}. We recall that for $w\in \S_t$ and $x\in M$, we have defined $w(x)\in w(M)$. Following \cite{JLR} with slightly different notations, we set 
\[\Omega_2(M)=\{w\in \S_t,\ w^2=Id,\ w(M)=M\}.\]  In particular we have a bijection $u\mapsto uu^{-\theta}$ between $P$-admissible 
elements of $\Rep$ and $\Omega_2(M)$. For 
$w\in \Omega_2(M)$, we set $\ell_M(w)$ to be the number of positive roots of the center of $M$ acting on the 
Lie algebra of $U$ sent to negative roots by $w$, in particular seeing $w$ as an element of $\S_t$ one has 
$\ell_M(w)=\ell(w)$ where $\ell$ is the usual length counting the number of inversions. 

\begin{df}\label{definition maximal involution}
For $w$ an involution in $\S_t$, we say that $w$ is maximal if and only there is no simple root $\alpha\in \R^t$ such that  $w(\alpha)$ is positive but different from $\alpha$. 
\end{df}

Maximal involutions are easily characterized.

\begin{LM}\label{lemma characterization max involutions}
An involution $w\in \S_t$ is conjugate to a unique maximal involution. An involution is maximal if and only if it is the 
identity or of the form $(1,t)\circ (2,t-1)\circ \dots \circ (i,t+1-i)$ for $1\leq i \leq \lfloor t/2 \rfloor$. 
\end{LM}
\begin{proof}
It is clear that the involutions of the statement are maximal. Conversely, suppose that $w$ is a maximal involution in $\S_t$ different 
from $Id$. Let $i_0$ be the smallest integer such that $w(i_0)\neq i_0$, hence $w(i_0)>i_0$. 
If $1<i_0$ then considering the simple root $e_{i_0-1}-e_{i_0}$ gives a contradiction on the maximality of $w$, hence $i_0=1$. Now set $i=w(1)$, if $i$ 
was not equal to $t$, then considering the simple root $e_i-e_{i+1}$ would give a contradiction on the maximality of $w$ so $w=(1,t)\circ w'$ where $w'$ is an involution of $\{2,\dots,t-1\}$. Identifying the permutation group of $\{2,\dots,t-1\}$ with $\S_{t-2}$ by translating the indices by $-1$, an immediate induction shows that $w$ is of the expected form. This also shows that each conjugacy class of involution contains a unique maximal involution (an involution being a possibly empty product of disjoint transpositions). 
\end{proof}

Suppose that $w\in \S_t$ is an involution which is not maximal, then there is a simple root $\alpha$ such that $w(\alpha)$ is positive but different from $\alpha$. 
Then $w'=s_\alpha w s_\alpha$ where $s_\alpha=(i,i+1)$ if $\alpha=e_i-e_{i+1}$ is an involution such that $w'(\alpha)<0$ but different from $-\alpha$. By \cite[Lemma 28]{JLR} we deduce that:

\begin{LM}\label{lemma JLR reduction to minimal involutions} If $w\in \Omega_2(M)$ is not maximal, let $\alpha$ be a simple root such that $w(\alpha)>0$ but different 
from $\alpha$. Set $\tau=s_\alpha$, $w'=\tau w \tau^{-1}$ and $M'=\tau(M)$ so that $w'\in \Omega_2(M')$, then one has \[\ell_{M'}(w')=\ell_M(w)+2.\] In particular maximal involutions of 
$\S_t$ are those of maximal length in their conjugacy class.
\end{LM}

We fix a norm $||\ . \ ||$ on $G$ by embedding it via an algebraic monomorphism $\tau$ into $\GL(2md,F)$, namely we set 
$||g||=\max_{i,j}(|\tau(g)_{i,j}|,|\tau(g^{-1})_{i,j}|)$. We moreover request that $\tau$ sends $K$ inside $\GL_{2md}(O_F)$, and that if $P=MV$ is a standard Levi subgroup of $G$, then $\tau(P)$ lives inside a standard Levi subgroup $P'=M'V'$ of $\GL_{2md}(F)$ and that $\tau(M)\subset M'$ and $\tau(V)\subset V'$. It is well-known (see \cite[I,1]{W03}) that 
$||\ . \ ||$ is bi-K-invariant. Take $u$ a $P$-admissible element in $\Rep$, and $w=uu^{-\theta}$. Following \cite[Section 3]{L08}, we set 
$||M^{\theta_w} m||_{M^{\theta_w}\backslash M, \mathrm{Lag}}=||\theta_w(m)^{-1}m||$ on $M^{\theta_w}\backslash M$ whenever $M$ is a standard Levi subgroup of $G$. We recall that according to \cite[Proposition 7]{L08}, the norm $||\ . \ ||_{M^{\theta_w}\backslash M, \mathrm{Lag}}$ is equivalent to $||\ . \ ||_{M^{\theta_w}\backslash M, \mathrm{BD}}$ where 
$||\ . \ ||_{M^{\theta_w}\backslash M, \mathrm{BD}}$ is the norm on $M^{\theta_w}\backslash M$ defined in \cite[(2.26)]{BD08}. Note that $||\ . \ ||_{M^{\theta_w}\backslash M, \mathrm{Lag}}$ 
is right-$M\cap K$-invariant (both $\theta$ and $w$ stabilize $K$). The following two results will be used as steps 
in the proof of Theorem \ref{theorem convergence and meromorphy of admissible periods}.

\begin{LM}\label{lemma bounded coeff}
Let $M=M_{(m_1,\dots,m_t)}$ be a self-dual standard Levi subgroup of $G$, contained in the standard parabolic subgroup $P=MV$.  Set $L=M_{(m_1,\dots,m_{i_0},m',m_{t+1-i_0},\dots,m_{t})}$ where 
$m'=m_{i_0+1}+\dots+ m_{t-i_0}$, it is a self-dual standard Levi subgroup of $G$ contained in the standard parabolic subgroup 
$Q=LU$. Let $u\in \mathrm{R}(Q\backslash G/ H)$ be the element such that $QuH$ is open and set $w=uu^{-\theta}$. Let $\sigma$ be a finite length representation of $M$ and take $\lambda \in \Hom_{M^{\theta_w}}(\sigma,\C)$. Then there is $b>0$, such that for any $f\in \Ind_{P\cap L}^L(\sigma)$, there is a positive constant $C_f$ such that 
\[\int_{(P\cap L)^{\theta_w}\backslash L^{\theta_w}} |\lambda(f(hx))| dh\leq C_f||L^{\theta_w} x||_{ L^{\theta_w}\backslash L,\mathrm{Lag}}^b\] for all $x\in L$, and where $(P\cap L)^{\theta_w}\backslash L^{\theta_w}$ is compact.
\end{LM}
\begin{proof}
Note that if the absolute value was around the integral and not the integrand it would be an immediate consequence of \cite[Theorem 4, (i)]{L08}. In fact it is a consequence of Lagier's result. We denote by $\phi$ the spherical function in $\Ind_{P\cap L}^L(\1)$ equal to $1$ on 
$L\cap K$. For $h\in L^{\theta_w}$ and $x\in L$, write the Iwasawa decomposition $hx=v_{hx}m_{hx}k_{hx}$ with $v_{hx}\in V\cap L$, $m_{hx}\in M$ and 
$k_{h_x}\in K\cap L$. Then 
$|\lambda(f(hx))|=\phi(hx)|\lambda(\sigma(m_{hx})f(k_{hx}))|$ and $f(k_{hx})$ takes only a finite number of values $v_1,\dots,v_r$ in $V_\sigma$ when $h$ and $x$ vary. Hence $|\lambda(f(hx))|\leq \phi(hx)\max_{i=1,\dots,r} |\lambda(\sigma(m_{hx})v_i)|$. However by 
\cite[Theorem 4, (i)]{L08} there are positive constants $a, C_1,\dots, C_r$ such that \[|\lambda(\sigma(m_{hx})v_i)|\leq C_i||M^{\theta_w} m_{hx}||_{M^{\theta_w} \backslash M,\mathrm{Lag}}^a\] independently of $h$ and $x$. Now we observe that $\theta_w$ stabilizes $M$ and moreover that 
\[||M^{\theta_w} m_{hx}||_{M^{\theta_w}\backslash M,\mathrm{Lag}}\leq ||L^{\theta_w} x||_{L^{\theta_w}\backslash L,\mathrm{Lag}}.\] 
Indeed \[||L^{\theta_w} x||_{L^{\theta_w}\backslash L,\mathrm{Lag}}=
||L^{\theta_w} hx||_{L^{\theta_w}\backslash L,\mathrm{Lag}}=||L^{\theta_w} v_{hx}m_{hx}||_{L^{\theta_w}\backslash L,\mathrm{Lag}}\]
\[=|| \theta_w(m_{hx})^{-1}\theta_w(v_{hx})^{-1}v_{hx}m_{hx}||=|| \theta_w(m_{hx})^{-1}m_{hx}v||\]
for $v\in V\cap L$ because $V\cap L$ is stabilized by $\theta_w$ and normalized by $M$. Now \[|| \theta_w(m_{hx})^{-1}m_{hx}v||\geq || \theta_w(m_{hx})^{-1}m_{hx} ||=||M^{\theta_w} m_{hx}||_{M^{\theta_w}\backslash M,\mathrm{Lag}} \] which proves our claim.
Setting $A_f=\max_{i=1,\dots,r} C_i$ we thus get \[\int_{(P\cap L)^{\theta_w}\backslash L^{\theta_w}} |\lambda(f(hx))| dh\leq 
\int_{(P\cap L)^{\theta_w}\backslash L^{\theta_w}}\phi(hx) dh\times A_f ||L^{\theta_w} x||_{L^{\theta_w}\backslash L,\mathrm{Lag}}^a.\] Now 
by a more direct application of \cite[Theorem 4, (i)]{L08} there are $B>0$ and $\beta>0$ such that \[\int_{(P\cap L)^{\theta_w}\backslash L^{\theta_w}}\phi(hx) dh\leq 
 B ||L^{\theta_w} x||_{L^{\theta_w}\backslash L,\mathrm{Lag}}^\beta\] for all $x\in L$ and the result follows by taking 
 $C_f=BA_f$ and $b=a+\beta$. 
\end{proof}

The second result is a slight generalization of the proof of \cite[Theorem 2.16]{BD08} in our special context.

\begin{LM}\label{lemma proof BD}
Let $Q=LU$ be a self-dual standard parabolic subgroup of $G$, $u\in \mathrm{R}(Q\backslash G/H)$ the representative such that $QuH$ is open in $G$ and set $w=u\theta(u)^{-1}$. Let $\sigma_L$ be a finite length representation of $L$ and 
$\eta$ be a function from $V_{\sigma}$ to $\C$ such that $\eta(\sigma_L(l)v)=\eta(v)$ for all $l\in L^{\theta_w}$ and $v\in V_{\sigma_L}$. Suppose moreover that there exists $a\in \R$ such that for each $v\in V_{\sigma}$ there exists $C_v>0$ such 
$|\eta(\sigma_L(l)v)|\leq C_v ||L^{\theta_w} l||_{L^{\theta_w}\backslash L, \mathrm{Lag}}^a$ for all $l\in L$. Then denoting by $r$ the number of $\GL$-blocks of $L$, there exists $x\in \R$ 
such that for $\s\in \C^r(w,-1)\cap D(w,x)$, the function $h\mapsto \eta(f_{\s}(uh))$ is integrable on $Q(u)\backslash H$ for 
all $f_{\s}\in \Ind_Q^G(\sigma_L[\s])$.
\end{LM}
\begin{proof}
First note that by equivalence of $||\ . \ ||_{L^{\theta_w}\backslash L, \mathrm{Lag}}$ and 
$||\ . \ ||_{L^{\theta_w}\backslash L, \mathrm{BD}}$ one can prove the statement with the latter norm. Now replace the map $\epsilon$ of \cite[Theorem 2.16]{BD08} by the map $\epsilon_{\s}$ from 
$G$ to the set of functions on $V_{\sigma_L}$ which is by definition $0$ outside $QuHu^{-1}$ and defined by 
\[\epsilon_{\s}(qh')=\d_Q^{1/2}(q)\eta\circ \sigma_L[\s](q^{-1})\] for $q\in Q$ and $h'\in uHu^{-1}$. The proof of \cite[Theorem 2.16]{BD08} applies without modification to this map and gives the statement.
\end{proof}

We are now in position to prove a non-Archimedean analogue of \cite[Theorem 31, (1) and (2)]{JLR}.

\begin{thm}\label{theorem convergence and meromorphy of admissible periods}
Let $P=MV$ be a standard parabolic subgroup of $G$ associated to the partition $(m_1,\dots,m_t)$, let $u$ be a $P$-admissible element in $\Rep$ and set $w=uu^{-\theta}\in \Omega_2(M)$. Let $\sigma$ be a finite length representation of 
$M$, and take $\lambda \in \Hom_{M^{\theta_w}}(\sigma,\C)-\{0\}$. Then there is $q_\sigma\in \R$ such that 
for $\s\in D_\sigma^{J,\lambda}(w)=\C^t(w,-1)\cap D(w,q_\sigma)$, the integral 
\[J_{\sigma}(w,f_{\s},\s,\lambda)=\int_{P(u)\backslash H} \lambda(f_{\s}(uh))dh\] is absolutely convergent for all 
$f_{\s}\in \Ind_P^G(\sigma[\s])$. Moreover there is $R\in \C[q^{\pm \s}]-\{0\}$ ($\s\in\C^t(w,-1)$) such that if $f_{\s}$ is any flat section, one has 
$R(q^{\pm \s})J(w,f_{\s},\s,\lambda)\in \C[q^{\pm \s}]$ and $J(w,f_{\s},\s,\lambda)$ is a nonzero element of 
$\C(q^{\pm \s})$ for a well-chosen flat section.
\end{thm}
\begin{proof}
We do a decreasing induction on $\ell(w)$, so we first suppose that $w$ is maximal, i.e. either trivial or of the form 
$(1,t)\circ \dots \circ (i_0,t+1-i_0)$ for $1\leq i_0\leq \lfloor t/2 \rfloor$. The case $w=Id$ is obvious so we suppose that $w=(1,t)\circ \dots \circ (i_0,t+1-i_0)$. We write $M=M_{(m_1,\dots,m_t)}$ so that 
$m_i=m_{t+1-i}$ for $i=1,\dots,i_0$. We also introduce $L=M_{(m_1,\dots,m_{i_0},m',m_{t+1-i_0},\dots,m_{t})}$ where 
$m'=m_{i_0+1}+\dots+ m_{t-i_0}$ and set $\sigma_L=\Ind_{P\cap L}^L(\sigma)$. We denote by $Q=LU$ the standard parabolic subgroup of $G$ with $L$ as a standard Levi subgroup.

If $u\in R(Q\backslash G/H)$ is such that $u\theta(u)^{-1}=w$, then $\sigma_L$ is $L^{\theta_w}$-distinguished with distinguishing linear form 
\[\Lambda(f)= \int_{(L\cap P)^{\theta_w}\backslash L^{\theta_w}} \lambda(f(h))dh.\]

Now we consider the non-negative 
function $\eta$ on $V_{\sigma_L}$ defined by 
\[\eta(f)=
 \int_{(L\cap P)^{\theta_w}\backslash L^{\theta_w}} |\lambda(f(h))|dh.\] 
Then by Lemma \ref{lemma bounded coeff}, for each $f\in V_{\sigma_L}$ there is $C_f>0$ such that for all $l\in L$:
\[\eta(\sigma_L(l).f)\leq C_f||l||^a.\]
Let $r$ be the number of $\GL$-blocks of $L$, there is in particular $x\in \R$ such that for $\s\in \C^r(w,-1)\cap D(w,x)$ and for any $F_{\s}\in \Ind_Q^G(\sigma_L[\s])$ the integral $\int_{Q(u)\backslash H}\eta(F_s(uh))dh$ is 
absolutely convergent according to Lemma \ref{lemma proof BD}. Using the canonical isomorphism  $F\mapsto F(\ . \ )(I_{m_1},\dots,I_{m'},\dots, I_{m_t})$ from $\Ind_Q^G(\sigma_L[\s])$ to 
$\Ind_{P}^G(\sigma[\s])$ and an integration in stages, this exactly says that $\int_{P(u)\backslash H}\lambda (f_{\s}(uh))dh$ is absolutely convergent for 
any $\s\in \C^r(w,-1)\cap D(w,x)$ and any $f_{\s}\in \Ind_P^G(\sigma[\s])$, which proves our first assertion when $w$ is maximal. Once we know this it is now easy to prove the part on meromorphic continuation. 
Indeed for $f_{\s}$ a flat section 
of $\Ind_{P}^G(\sigma[\s])$ we denote by $F_{f_{\s}}$ the associated flat section in $\Ind_Q^G(\sigma_L[\s])$ via the inverse of the canonical isomorphism. Now up to taking $x$ larger if necessary, we have for 
$\s\in \C^t(w,-1)\cap D(w,x)$ and any flay section $f_{\s}\in \Ind_{P}^G(\sigma[\s])$, by Theorem \ref{BD} and integration in stages again 
the equality of absolutely convergent integrals 
\[J_{\sigma_L}(w,\s,F_{f_{\s}},\Lambda)=\int_{P(u)\backslash H}\lambda (f_s(uh))dh\] which gives the part on meromorphic continuation.

Once the result is known for $w$ maximal, the result in general follows by immediate descending induction on $\ell(w)$ thanks to Proposition \ref{proposition JLR key prop p-adic analogue}. 
Indeed, using temporarily the notations of Proposition \ref{proposition JLR key prop p-adic analogue}, the intertwining operator $A_{\sigma}(\tau,\s)$ is invertible for $\s\in D(w,r)$ when $r$ is chosen sufficiently large.
\end{proof}

We now observe a compatibility property of transitivity of parabolic induction with intertwining periods already used in a special case in the proof of Theorem \ref{theorem convergence and meromorphy of admissible periods}. The situation is the following: we are given two standard parabolic subgroups $P=P_{\m}$ and $Q=P_{\m'}$ of $G$ with $Q\subset P$. In particular if $\m$ is of length $t$ and $\m'$ of length $t'$, this gives an associated embedding of $\S_{t}$ inside $\S_{t'}$. We write the standard Levi decompositions $P=MN$ and 
$Q=LU$, and take $w\in \Omega^2(M)\subset \S_{t}\subset \S_{t'}$. Seen as an element of $\S_{t'}$, we assume that $w\in \Omega^2(L)$ as well. We consider $\sigma$ an $L^{\theta_w}$-distinguished representation of $L$ of finite length, and take $\ell\in \Hom_{L^{\theta_w}}(\sigma,\C)-\{0\}$. 
The linear form $\ell$ induces the closed intertwining period $L$ on $\Ind_{Q\cap M}^M(\sigma)$ defined as 
\[L:f\mapsto \int_{Q\cap M^{\theta_w}\backslash M^{\theta_w}} \ell(f(m))dm \] where the quotient $Q\cap M^{\theta_w}\backslash M^{\theta_w}$ is compact. 
The linear form $L$ is a nonzero element of 
$\Hom_{M^{\theta_w}}(\Ind_{Q\cap M}^M(\sigma),\C)-\{0\}$. We denote by $f\mapsto F_f$ the inverse of the canonical isomorphism 
$F\mapsto F(\ . \ )(I_{m_1},\dots, I_{m_t})$ from $\Ind_P^G(\Ind_{Q\cap M}^M(\sigma))$ to $\Ind_Q^G(\sigma)$.

\begin{prop}\label{proposition compatibility of transitivity of parabolic induction and intertwing periods}
In the situation described above, with $u\in \Rep$ such that $u\theta(u)^{-1}=w$. There exists $x\in \R$ such that for $\s\in \C^t(w,-1)\cap D(w,x)\subset \C^{t'}(w,-1)\cap D(w,x)$ and any flat section $f_{\s}$ of $\Ind_Q^G(\sigma[\s])$, both integrals $\int_{P(u)\backslash H}L(F_{f_{\s}}(uh))dh$ and $\int_{Q(u)\backslash H}\ell(f_{\s}(uh))dh$ converge absolutely and are equal. In particular the first integral has meromorphic continuation which we denote by $J_{\Ind_{Q\cap M}^M(\sigma)}(w, F_{f_{\s}},\s,L )$ and we have:
\[J_{\Ind_{Q\cap M}^M(\sigma)}(w, F_{f_{\s}},\s,L )=J_{\sigma}(w,f_{\s},\s,\ell).\]
\end{prop}
\begin{proof}
First we note that $F_{f_{\s}}$ is a flat section of $\Ind_P^G(\Ind_{Q\cap M}^M(\sigma)[\s])$. The appropriate $x$ exists for the second integral thanks to Theorem \ref{theorem convergence and meromorphy of admissible periods} hence the same $x$ works for the first integral and the equality follows by an integration in stages argument. 
\end{proof}

Here is another useful observation:

\begin{rem}\label{remark restatement of proposition JLR-key}
With notations and hypothesis as in Proposition \ref{proposition JLR key prop p-adic analogue}, the equality in Proposition \ref{proposition JLR key prop p-adic analogue} implies the following meromorphic identity on $\C^t(w,-1)$:
\begin{equation}\label{small functional equation} J_\sigma(w,f_{\s},\s,\lambda)=J_{\tau(\sigma)}(w',A_{\sigma}(\tau,\s)f_{\s},\tau(\s),\tau(\lambda)) \end{equation}
\end{rem}

For later use, we also observe an immediate consequence of the above equality.

\begin{prop}\label{proposition preparation to unramified computation}
Let $M_{(m_1,\dots,m_t)}$ be a standard Levi subgroup of $G$ with $t=2r$ even and $m_i=m_{t+1-i}$ for all $i$, and $P$ be the standard parabolic subgroup of $G$ with Levi subgroup $M$. See $\S_r$ as the subgroup of $\S_t$ fixing $\{r+1,\dots,t\}$ set $w_t'=w_r w_t w_r^{-1}$. Set $M'=w_r(M)=M_{(m_r,\dots,m_1,m_r,\dots,m_1)}$ and $P'$ the standard parabolic subgroup of 
$G$ with Levi subgroup $M'$, note that $u_t$ is $P$-admissible and that $u'_t=w_r u_t$ is $P'$-admissible. Let $\sigma$ be an $M^{{\theta_{w_t}}}$-distinguished representation of $M$ of finite length, and take 
$\lambda\in \Hom_{M^{{\theta_{w_t}}}}(\sigma,\C)-\{0\}$. Then one has for any flat section $f_{\s}$ of $\Ind_P^G(\sigma[\s])$ the meromorphic identity on $\C^t(w,-1)$:
\[J_\sigma(w_t,f_{\s},\s,\lambda)=J_{w_r(\sigma)}(w_t',A_{\sigma}(w_r,\s)f_{\s},w_r(\s),w_r(\lambda)).\]
\end{prop}
\begin{proof}
It suffices to notice that $\ell(w'_t)=\ell(w_t)-2\ell(w_r)$, and apply Equation (\ref{small functional equation}) in a row, using 
a reduced expression of $w_r$.
\end{proof}

\section{Computation of the local proportionality constants}

\subsection{Unramified computations}\label{unramified computations}

In this section, we take $D=F$ so that $D_E=E$. We now focus on the explicit computation of intertwining periods at the unramified places following 
\cite[Chapter VII, 20]{JLR}, so the notations will be those of Section \ref{p-adic}. Setting $B_n=B_n(E)$ , we recall that a generic unramified representation $\pi$ of $G_n$ can always be written as a commutative product 
\[\pi=\chi_1\times\dots \times \chi_n=\Ind_{B_n}^{G_n}(\chi_1\otimes \dots \otimes \chi_n),\] where 
the $\chi_i$'s are unramified characters of $G_1$, which don't differ from one another by 
$\nu^{\pm 1}$. We set $G=G_n$, we write $H$ or $H_n$ for $G_n^\theta$, $A=A_n$, $U=N_n(E)$ (or $N_n$), so $B=AU$. If $\pi$ is an unramified generic representation, we denote by $\phi_0$ the normalized spherical vector in $\pi_0=\pi$, hence $\phi_{\s}$ that in $\pi_{\s}$. Let's consider
 $\overline{n}^B= (1,\dots,1)$, and take $u=u_c$ for $c=(n_{i,j})\in I(\overline{n}^B)$. It is automatically $B$-admissible, and we denote by $\xi$ the involution $uu^{-\theta}$ in $\mathfrak{S}_n<G$. 
 Let \[\chi=\chi_1\otimes \dots \otimes \chi_n\] be an unramified character of $A$ which is distinguished 
by $A^u$ and $\s$ an element of 
$\C^n(\xi,-1)$, 
then $\chi[\s]$ is still $A^u$-distinguished. Set 
\[\pi_{\s}=\chi_1\nu^{s_1}\times \dots \times \chi_n \nu^{s_n},\] and suppose that 
$\pi$ is irreducible (i.e. unramified generic). For $f_{\s}$ a flat section for 
$\pi_{\s}$, we formally define the following integral:

\[J_\chi(\xi,f_{\s},\s)=\int_{B(u)\backslash H}f_{\s}(uh)dh.\]

It is absolutely convergent for $\s$ in a cone of the form 
\[D_{\chi}^J(\xi)=\C^n(\xi,-1)\cap D(\xi,q_\chi)\] for some
$q_\chi>0$ and extends to a rational function of $q^{-\s}$ for $\s\in \C^t(\xi,-1)$ thanks to Theorem \ref{theorem convergence and meromorphy of admissible periods}.

Now consider two generic representations of $G_n$ and $G_m$ respectively written as products 
\[\pi=\sigma_1\times \dots \times \sigma_t\] and \[\pi'=\sigma_1'\times\dots \times \sigma_t'\] of generic representations of smaller general linear groups. 
Then by the inductivity relation of the Rankin-Selberg $L$-factors proved in \cite[Proposition (9.4)]{JPSS83}, one has: \[L(s,\pi,\pi')=\prod_{i,j}L(s,\sigma_i,\sigma_j'),\] and by \cite[Theorem 5.3]{M11}, 
the Asai $L$-factors also satisfy a similar relation:
\[L^+(s,\pi)= \prod_{i<j}L(s,\sigma_i,\sigma_j^\theta)\prod_{k}L^+(s,\sigma_k),\] and 
\[L^-(s,\pi)= \prod_{i<j}L(s,\sigma_i,\sigma_j^\theta)\prod_{k}L^-(s,\sigma_k).\]

Now suppose that $\sigma_i=\chi_{i,1}\times\dots\times\chi_{i,n_i}$ is an unramified generic representation of $G_{n_i}$ for 
$i=1,\dots,t$. We identify $\pi=\sigma_1\times \dots \times \sigma_t$ with $\chi_{1,1}\times \dots \times \chi_{t,n_t}$ via the map \[f\mapsto [g\mapsto f(g)(I_{n_1},\dots,I_{n_t})].\] In particular we will talk of the normalized spherical vector 
$\phi\in \pi$, hence of the normalized spherical vector $\phi_{\s}$ in $\pi_{\s}$ for $\s\in \C^t$. The following lemma is a consequence of the Gindikin-Karpelevic formula of \cite{L71}.

\begin{LM}\label{intertwine-spherical}
Suppose that $n=n_1+\dots+n_t$, and let $\sigma=\sigma_1\otimes \dots \otimes \sigma_t$ be a generic unramified representation of $G_{n_1}\times \dots \times G_{n_t}$. We write $[1,n]=[I_1,\dots,I_t]$, with $I_i$ of length $n_i$, and for $1\leq j\leq t-1$  let $\tau\in \S_t$. If $\phi$ is the normalized spherical vector in 
$\pi=\sigma_1\times \dots\times \sigma_t$, one has 
\[A_\sigma(\tau,\s)\phi_{\s}=
\prod_{(i,j)\in \Inv(\tau)}\frac{L(s_i-s_j,\sigma_i,\sigma_{j}^\vee)}{L(s_i-s_{j}+1,\sigma_i,\sigma_{j}^\vee)}\phi_{\tau(\s)}.\]
\end{LM}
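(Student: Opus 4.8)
The plan is to reduce to the rank one case and invoke the Gindikin--Karpelevich formula. First I would realize $\pi_{\s}$ as a full principal series: writing $\sigma_i=\chi_{i,1}\times\dots\times\chi_{i,n_i}$, the representation $\pi=\sigma_1\times\dots\times\sigma_t$ is the one identified, just before the statement, with $\chi_{1,1}\times\dots\times\chi_{t,n_t}=\Ind_{B_n}^{G_n}(\chi_{1,1}\otimes\dots\otimes\chi_{t,n_t})$, and under this identification $\phi_{\s}$ becomes the normalized spherical vector of the principal series induced from $\nu^{s_1}\chi_{1,1}\otimes\dots\otimes\nu^{s_t}\chi_{t,n_t}$, the characters in the $i$-th block being twisted by $\nu^{s_i}$. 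Via induction in stages, the operator $A_\sigma(\tau_j,\s)$ corresponds to the intertwining operator attached to the block transposition $w$ of $\S_n$ which exchanges the interval $I_j$ with the interval $I_{j+1}$ while preserving the order inside each of them, acting on this full principal series.

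Next I would decompose $w$ into a reduced product of simple reflections $s_{\alpha}$. The key combinatorial point is that $w$ has length $n_j n_{j+1}$ and its set of inversions is exactly $I_j\times I_{j+1}$: no two positions lying in a common interval $I_i$ are ever interchanged, and each cross-block pair $(a,b)\in I_j\times I_{j+1}$ is crossed exactly once. Using the cocycle relation $A_\sigma(w_1w_2,\s)=A_{w_2(\sigma)}(w_1,w_2(\s))\circ A_\sigma(w_2,\s)$ for reduced decompositions recalled at the end of Section \ref{generalities on intertwining operators}, $A_\sigma(\tau_j,\s)\phi_{\s}$ is the composition of the corresponding rank one intertwining operators applied to the spherical vector. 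Each rank one operator sends a spherical vector to a spherical vector times the scalar given by the Gindikin--Karpelevich formula of \cite{L71}: crossing an adjacent pair of characters, with $\lambda$ to the left of $\lambda'$, contributes $\dfrac{L(0,\lambda(\lambda')^{-1})}{L(1,\lambda(\lambda')^{-1})}$, where $L(s,\mu)=(1-\mu(\w)q^{-s})^{-1}$ is the Tate $L$-factor of the unramified character $\mu$. Since $I_j$ precedes $I_{j+1}$, the left character of each crossed pair is $\nu^{s_j}\chi_{j,a}$ and the right one is $\nu^{s_{j+1}}\chi_{j+1,b}$; as $\nu^{s_j}\chi_{j,a}(\nu^{s_{j+1}}\chi_{j+1,b})^{-1}=\nu^{s_j-s_{j+1}}\chi_{j,a}\chi_{j+1,b}^{-1}$ and $L(0,\nu^{u}\mu)=L(u,\mu)$, this contribution equals $\dfrac{L(s_j-s_{j+1},\chi_{j,a},\chi_{j+1,b}^{-1})}{L(s_j-s_{j+1}+1,\chi_{j,a},\chi_{j+1,b}^{-1})}$, a $\GL_1\times\GL_1$ Rankin--Selberg factor. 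Collecting the scalars over all crossed pairs gives $A_\sigma(\tau_j,\s)\phi_{\s}=c(\s)\,\phi_{\tau_j(\s)}$ with
\[c(\s)=\prod_{a=1}^{n_j}\prod_{b=1}^{n_{j+1}}\frac{L(s_j-s_{j+1},\chi_{j,a},\chi_{j+1,b}^{-1})}{L(s_j-s_{j+1}+1,\chi_{j,a},\chi_{j+1,b}^{-1})}.\]

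Finally I would identify $c(\s)$ with the Rankin--Selberg factor. Since $\sigma_{j+1}^\vee=\chi_{j+1,1}^{-1}\times\dots\times\chi_{j+1,n_{j+1}}^{-1}$, the inductivity relation $L(s,\pi,\pi')=\prod_{i,j}L(s,\chi_i,\chi_j')$ of \cite[Proposition (9.4)]{JPSS83}, recalled above, turns the numerator of $c(\s)$ into $L(s_j-s_{j+1},\sigma_j,\sigma_{j+1}^\vee)$ and the denominator into $L(s_j-s_{j+1}+1,\sigma_j,\sigma_{j+1}^\vee)$, which is exactly the asserted formula. I expect the main (minor) obstacle to be purely bookkeeping: pinning down the precise normalization, i.e. the $\nu$-shift, in the rank one Gindikin--Karpelevich factor and checking that in the block transposition every cross-block pair of characters is crossed once and only once; both facts are standard.
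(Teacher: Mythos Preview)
Your proof is correct and follows exactly the route the paper indicates: the lemma is stated as an immediate consequence of the Gindikin--Karpelevich formula of \cite{L71}, and your argument is precisely the standard unpacking of that claim via induction in stages, reduced factorization of the block transposition, and the inductivity relation for Rankin--Selberg $L$-factors recalled just before the statement.
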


Before we state the main result of the section, we identify two intertwining periods seen as intertwining periods for the same representation, but induced from different Levi subgroups. We start with $\overline{n}=(n_1,\dots,n_t)$ a partition of $n$, and $P=P_{\overline{n}}=MN$ the corresponding parabolic subgroup of $G$. As in Section \ref{non-archi non-split}, we let $u\in \Rep$ be a $P$-admissible element, and $w=u\theta(u)^{-1}$ the associated permutation matrix. Moreover we suppose that $u$ is such that $w$ has no fixed points. We then consider $\sigma=\sigma_1\otimes \dots \otimes \sigma_t$ a generic unramified representation of $G_{n_1}\times \dots \times G_{n_t}$, with $\sigma_i=\chi_{i,1}\times\dots\times\chi_{i,n_i}$. We set $\chi_i=\chi_{i,1}\otimes\dots\otimes\chi_{i,n_i}$ and 
$\chi=\chi_1\otimes \dots \otimes \chi_t$, it is an unramified character of $A$. Note that 
$\sigma_i^\vee$ canonically identifies with $\chi_{i,1}^{-1}\times\dots\times\chi_{i,n_i}^{-1}$ via the pairing given 
by integrating on $B_{n_i}\backslash G_{n_i}$. We moreover suppose that $\sigma$ is $M^u$-distinguished, i.e. that 
$\sigma_{w(i)}=(\sigma_i^\theta)^\vee$ for all $i$, or equivalently $\sigma_{w(i)}=\sigma_i^\vee$ as the $\sigma_i$'s are unramified. With the chosen identifications, this is equivalent (up to an appropriate choice of ordering) to $\chi_{w(i),j}=\chi_{i,j}^{-1}$ for all $i$ and 
$j$. Note that for $\s\in \C^t(w,-1)$, the representation $\sigma[\s]$ is still $M^u$-distinguished, and in this situation there is a canonical $M^u$-invariant linear form on 
$\sigma[\s]$ given by:

\[L(\psi_{\s})=\int_{B\cap M^u\backslash M^u}\psi_{\s}(m)dm.\]

The corresponding open intertwining period attached to a flat section 
$f_{\s}:G\rightarrow V_\sigma$ is then 
\[J_\sigma(w,f_{\s},\s,L)=\int_{P(u)\backslash H}L(f_{\s})(h)dh.\]

As the linear form $L$ is canonical, we set \[J_\sigma(w,f_{\s},\s)=J_\sigma(w,f_{\s},\s,L)\] Now identify $\pi=\sigma_1\times \dots \times \sigma_t$ and $\chi_{1,1}\times\dots\times\chi_{t,n_t}$ as already explained, 
then by Proposition \ref{proposition compatibility of transitivity of parabolic induction and intertwing periods} (see also the computation before \cite[Theorem 36]{JLR}) we have:

\[J_\sigma(w,f_{\s},\s)=J_\chi(w,f_{\s},\s).\]

We can now state the main result of this section, which is the explicit computation of the open intertwining period 
of Section \ref{intertwining-periods} for spherical vectors.

\begin{thm} \label{spherical-period}
Let $r$ be a positive integer, $t=2r$, and $\overline{n}=(n_1,\dots,n_t)$ be a self-dual partition of $n$, and $w=w_t\in \S_t$. Let $\sigma_i$ be an unramified generic representation of $G_{n_i}$ for $i=1,\dots,t$, and suppose that $\sigma_{w_t(i)}^\vee=\sigma_i^\theta$. 
Write $[1,n]=[I_1,\dots,I_r,J_r,\dots,J_1]$, with each interval $I_k$ and $J_k$ of length $n_k$.
Set for $\s\in \C^t(w,-1)$, $\pi_{\s}=\nu^{s_1}\sigma_1\times \dots \times \nu^{s_t}\sigma_t$
 and $\phi_{\s}$ the normalized spherical vector in $\pi_{\s}$. Then 
\[J_\sigma(w,\phi_{\s},\s)=
\left[\prod_{1\leq i<j\leq r}\frac{L(s_i-s_j,(\sigma_j^\theta)^\vee, \sigma_i^\theta)L(s_i+s_j,\sigma_i, \sigma_j^\theta)}
{L(s_i-s_j+1,(\sigma_j^\theta)^\vee, \sigma_i^\theta)L(s_i+s_j+1,\sigma_i, \sigma_j^\theta)}\right]
\prod_{k=1}^r\frac{L^+(2s_k,\sigma_k)}{L^-(2s_k+1,\sigma_k)}.\]
\end{thm}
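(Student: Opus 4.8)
The plan is to follow the scheme of \cite[Chapter~VII, Section~20]{JLR}: reduce everything to the case of a character of the torus, then strip off, one simple reflection at a time, the pairwise interactions between the blocks of $\overline{n}$. Since all the open intertwining periods and all the intertwining operators in play are rational functions of $q^{-\s}$ (Theorem~\ref{BD} and Section~\ref{unramified computations}), it suffices to prove the displayed identity on the cone $D_\chi^J(w)$, where every integral written below converges absolutely and Fubini's theorem together with Proposition~\ref{JLR33} apply without restriction. Write $\sigma_i=\chi_{i,1}\times\dots\times\chi_{i,n_i}$, identify $\pi=\sigma_1\times\dots\times\sigma_t$ with $\chi_{1,1}\times\dots\times\chi_{t,n_t}$ and $\chi=\chi_{1,1}\otimes\dots\otimes\chi_{t,n_t}$ with an unramified character of $A_n$; the identification established just before the statement then gives $J_\sigma(w,\phi_{\s},\s)=J_\chi(w,\phi_{\s},\s)$, where $w=(w_t)^{\n}\in\S_n$ is now viewed as the involution of $[1,n]=[I_1,\dots,I_r,J_r,\dots,J_1]$ interchanging $I_k$ with $J_k$ order-preservingly. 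The hypothesis $\sigma_{w_t(i)}^\vee=\sigma_i^\theta$ becomes $\chi_{w_t(i),j}=\chi_{i,j}^{-1}$ (using $\sigma_i^\theta\cong\sigma_i$ for unramified $\sigma_i$), so $\chi$ satisfies the distinction condition of Section~\ref{unramified computations}; moreover, for $\s\in\C^t(w,-1)$, the interval $I_i$ carries $\chi_{i,1},\dots,\chi_{i,n_i}$ each twisted by $\nu^{s_i}$, while $J_i$ carries $\chi_{i,1}^{-1},\dots,\chi_{i,n_i}^{-1}$ each twisted by $\nu^{-s_i}$.

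The heart of the argument is an induction on the number of transpositions in the support of $w$. Using Lemma~\ref{AB} to commute an $A$-interval past a $B$-interval and Lemma~\ref{BB} to commute two $B$-intervals, one obtains a finite sequence of simple roots $\alpha$ along which the current involution $w'$ always satisfies $w'(\alpha)<0$, $w'(\alpha)\neq-\alpha$; Proposition~\ref{JLR33} then converts $J_{\chi'}(w',f_{\s},\s)$ into $J_{s_\alpha(\chi')}(s_\alpha w's_\alpha^{-1},A(s_\alpha,\s)f_{\s},s_\alpha(\s))$, and Lemma~\ref{intertwine-spherical} applied to the rank-one operator $A(s_\alpha,\s)$ multiplies the normalized spherical period by the Gindikin--Karpelevic ratio $L(a-b,\chi_c,\chi_d^{-1})/L(a-b+1,\chi_c,\chi_d^{-1})$ attached to the two characters (with exponents $a,b$) being crossed. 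Carrying this out so as to reach the interleaved arrangement $[I_1,J_1,\dots,I_r,J_r]$ — e.g.\ bringing $J_1$ past $J_r,\dots,J_2$ and then past $I_r,\dots,I_2$, and recursing — the crossings of $J_j$ past $J_i$ and of $I_i$ past $J_j$ (for $i<j$) contribute, after the inductivity relation $L(s,\tau,\tau')=\prod L(s,\chi,\chi')$ of \cite[Proposition~(9.4)]{JPSS83} (and the symmetry $L(s,\tau,\tau')=L(s,\tau',\tau)$ together with $\sigma_i^\theta\cong\sigma_i$ to match arguments), precisely
\[\prod_{1\le i<j\le r}\frac{L(s_i-s_j,(\sigma_j^\theta)^\vee,\sigma_i^\theta)\,L(s_i+s_j,\sigma_i,\sigma_j^\theta)}{L(s_i-s_j+1,(\sigma_j^\theta)^\vee,\sigma_i^\theta)\,L(s_i+s_j+1,\sigma_i,\sigma_j^\theta)}.\]
The remaining involution is block-diagonal, so Proposition~\ref{parabolic} factors the period as $\prod_{k=1}^r J_{\sigma_k\otimes(\sigma_k^\theta)^\vee}(\tau,\phi_{(s_k,-s_k)},(s_k,-s_k))$, where $\tau=(1\ 2)$ acts on the pair $(I_k,J_k)$.

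It remains to compute each leaf period. Reducing $\sigma_k$ to its characters and rerunning the same procedure inside the pair $(I_k,J_k)$, each off-diagonal crossing of $\chi_{k,a}$ past $\chi_{k,b}^{-1}$ contributes $L(2s_k,\chi_{k,a},\chi_{k,b})/L(2s_k+1,\chi_{k,a},\chi_{k,b})$, while the diagonal pairing of $\chi_{k,m}$ with $\chi_{k,m}^{-1}$ is exactly the excluded case $w'(\alpha)=-\alpha$ of Proposition~\ref{JLR33} and therefore survives as the rank-one $\GL_1$ Asai period, equal to $L(2s_k,{\chi_{k,m}}_{|F^\times})/L(2s_k+1,\eta_{E/F}{\chi_{k,m}}_{|F^\times})$ by the local computation of \cite[Chapter~VII]{JLR}. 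Collecting these and invoking the inductivity of the Asai $L$-factor (\cite[Theorem~5.3]{M11}: $L^+(s,\sigma_k)=\prod_{i<j}L(s,\chi_{k,i},\chi_{k,j})\prod_m L(s,{\chi_{k,m}}_{|F^\times})$, and likewise for $L^-$ with $\eta_{E/F}$) yields $J_{\sigma_k\otimes(\sigma_k^\theta)^\vee}(\tau,\phi,(s_k,-s_k))=L^+(2s_k,\sigma_k)/L^-(2s_k+1,\sigma_k)$. Multiplying the $r$ leaf contributions by the crossing product above gives the asserted formula.

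The main obstacle is entirely bookkeeping: verifying that the sign hypothesis of Proposition~\ref{JLR33} genuinely holds at each of the doubly-indexed steps — which is precisely the purpose of Lemmas~\ref{BB} and~\ref{AB} — and then checking that the accumulated Gindikin--Karpelevic ratios, once reorganized through the inductivity relations for the Rankin--Selberg and Asai $L$-factors, recombine with the correct shifts and the correct arguments (the $\sigma_i^\theta$, $(\sigma_j^\theta)^\vee$ versus $\sigma_i$); the symmetry of $L(s,\tau,\tau')$ and the unramified isomorphism $\sigma_i^\theta\cong\sigma_i$ are used only to reconcile conventions.
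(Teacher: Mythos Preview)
Your proposal is correct and follows essentially the same approach as the paper's proof: reduce to the torus via $J_\sigma=J_\chi$, repeatedly apply Proposition~\ref{JLR33} along a sequence of simple reflections justified by Lemmas~\ref{BB} and~\ref{AB}, pick up the Gindikin--Karpelevic ratios via Lemma~\ref{intertwine-spherical}, then split off the diagonal blocks using Proposition~\ref{parabolic} and invoke the $r=1$ case from \cite{JLR}. The paper organizes this as an explicit induction on $r$ (peeling off the pair $(I_1,J_1)$ at each step), while you describe the unrolled version aiming directly at the interleaved arrangement and spell out the leaf computation; these are the same argument.
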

\begin{proof}
Let $w_r\in \S_t$ be the involution defined in Proposition \ref{proposition preparation to unramified computation} and set $w'=w_r \circ w \circ w_r^{-1}$, then by this proposition we have \[J_\sigma(w,\phi_{\s},\s)=J_{w_r(\sigma)}(w',A(w_r,\s)\phi_{\s},w_r(\s)).\] The quantity $A(w_r,\s)\phi_{\s}$ is computed by Lemma \ref{intertwine-spherical} and is equal to 
\[A(w_r,\s)\phi_{\s}=\prod_{1\leq i<j \leq r}\frac{L(s_i-s_j,\sigma_i,\sigma_{j}^\vee)}{L(s_i-s_{j}+1,\sigma_i,\sigma_{j}^\vee)}\phi_{w_r(\s)}=\prod_{1\leq i<j\leq r}\frac{L(s_i-s_j,(\sigma_j^\theta)^\vee, \sigma_i^\theta)}
{L(s_i-s_j+1,(\sigma_j^\theta)^\vee, \sigma_i^\theta)}\phi_{w_r(\s)}\] because $\sigma_k$ is $\theta$-invariant 
and the $L$-factors are symmetric in their last two variables, hence 
\[J_\sigma(w,\phi_{\s},\s)=\prod_{1\leq i<j\leq r}\frac{L(s_i-s_j,(\sigma_j^\theta)^\vee, \sigma_i^\theta)}
{L(s_i-s_j+1,(\sigma_j^\theta)^\vee, \sigma_i^\theta)}J_\sigma(w',\phi_{w_r(\s)},w_r(\s)).\]
 On the other hand with notations of 
Proposition \ref{proposition preparation to unramified computation}, one has 
\[\Ind_{P'}^G(w_r(\sigma[\s]))=\pi'_{(s_r,\dots,s_1)}\times ({\pi'}_{(s_r,\dots,s_1)}^\theta)^\vee\] where 
\[\pi'=\sigma_r\times \dots \times \sigma_1.\] This implies that we can apply 
\cite[Proposition 39]{JLR} to obtain 
\[J_\sigma(w',\phi_{w_r(\s)},w_r(\s))=\frac{L^+(0,\pi'_{(s_r,\dots,s_1)})}{L^-(1,\pi'_{(s_r,\dots,s_1)})}\] 
which by the multiplicativity relation of the Asai $L$-factor is equal to 
\[\left[\prod_{1\leq i<j\leq r}\frac{L(s_i+s_j,\sigma_i, \sigma_j^\theta)}
{L(s_i+s_j+1,\sigma_i, \sigma_j^\theta)}\right]
\prod_{k=1}^r\frac{L^+(2s_k,\sigma_k)}{L^-(2s_k+1,\sigma_k)}.\]
\end{proof}

Finally, we just say a word about the easier case where $E=F\times F$, with $\theta(x,y)=(y,x)$ for $x$ and $y$ in $F$. In this situation the statement of Theorem \ref{spherical-period} 
is still valid, and can be proved in a similar but easier fashion. It is in fact a direct consequence of the Gindikin-Karpelevich 
formula thanks to Equality (\ref{split period vs intertwining}).

\subsection{Ramified proportionality constants}\label{proportionality}

In this paragraph, we will use a local-global method to obtain an explicit expression of the constant 
$\alpha_\sigma(s)$ of Proposition \ref{non-archi-functional-equation}, when the intertwining period is induced from an essentially square-integrable representation. We take $E/F$ a quadratic extension of $p$-adic fields, and $\G_F$ a division algebra of odd index $d$ over its center $F$.
Thanks to Lemma 5 and the proof of Theorem 6 in \cite{K04}, we can choose $l/k$ a quadratic extension of 
number fields, such that:\\
1) there is a unique place $v_0$ of $k$ lying over $p$ and $F\simeq k_{v_0}$.\\
2) $v_0$ is non split in $l$, and if $w_0$ is the place of $l$ dividing it, then $E\simeq l_{w_0}$.\\
3) every infinite place of $k$ splits in $l$.\\

\noindent Thanks to the Brauer-Hasse-Noether theorem (\cite[Theorem 1.12]{PR91}), we can also choose $\G$ a division algebra with center $k$, such that $\G_{v_0}=\G_F$: it automatically splits at every infinite place because its index is odd. We denote by $\theta$ the the involution associated to $l/k$, hence to $l_v/k_v$ for every place $v$ of $k$. The central simple $l$-algebra $\G_l=\G\otimes_k l$ is also a division algebra. Now the group $G_i$ is as in Section \ref{p-adic} for $i\in \N$, as well as the other associated subgroups such as $H_i=G_i^\theta$. 
For $t=2r$ a positive even integer, and $\m=(m_1,\dots,m_t)$ a self-dual partition of $m$, we consider for each 
$i$ between $1$ and $r$ a quasi-square-integrable representation $\d_i$ of $G_{m_i}$, and suppose that 
$\d_{t+1-i}=(\d_i^\theta)^\vee$ for all $i$ between $1$ and $t$. We set $w=w_t$ and $\A=\A_l$. Thanks to Corollary \ref{globalize NM}, for $i=1,\dots,r$, we can consider $\d_i$ as the component at the (only) place (dividing) $v_0$ of a cuspidal automorphic representation $\sigma_i$ of $G_{m_i}(\G_l\otimes_l\A)$ such that $\JL(\sigma_i)$ is also cuspidal, and 
$\JL(\sigma_i)_{v_0}=\JL(\sigma_i)_{w_0}=\JL(\d_i)$ (the first equality is our convention).
We then set 
\[\sigma=\sigma_1\otimes \dots \otimes \sigma_r \otimes (\sigma_r^\theta)^\vee \otimes \dots \otimes (\sigma_1^\theta)^\vee
= \sigma_1\otimes \dots \otimes \sigma_t.\]
It is a cuspidal automorphic representation of $M(\A)$, and there is a unique 
$\underline{r}\in \C^t(w,-1)$ such that $\sigma_{\underline{r}}$ is trivial on $A_M$, \textit{this observation will thus allow to consider the global intertwining period attached to $\sigma$ and $w$}. For $\s\in \C^t(w,-1)$, we define 
as usual $\pi_{\s}=\Ind_{P_{\m}}^{G_m}(\sigma[\s]).$ The linear form
\[L:\psi_{\s}\in \sigma[\s] \mapsto \int_{(A_M)^u M(l)^u\backslash M(\A)^u}\psi_{\s}(m)dm\] is defined by convergent integrals as the Petersson inner product of two cusp forms (see also \cite[Proposition 1]{AGR93}), and it provides a linear form $L_v$ (independant of $\s$) on each $\sigma_{v}[\s]$, which is $(M_v)^u\cap K_v$-invariant and cancelled by $\Lie((M_v)^u)$ when $v$ is infinite, and $(M_v)^u$-invariant when $v$ is finite. Notice that when $v$ is infinite, such a linear form automatically extends to a unique continuous $(M_v)^u$-invariant linear form on 
$\overline{\sigma_{v}[\s]}^\infty$ by \cite[Theorem 1]{BD92}. By local multiplicity one, this implies that the global intertwining period attached to 
a flat section $f_{\s}=\otimes'_{v\in \mathcal{P}(k)} f_{v,\s}$ in $\pi_{\s}$ decomposes for $\s\in \C^t(w,-1)\cap D(w,q_\sigma)$ with $q_\sigma$ large enough, as an infinite product of the intertwining periods studied in Section \ref{open-periods} for good choices of $L_v$:

\[J_\sigma(w,f_{\s},\s)=\prod_{v\in \mathcal{P}(k)} J_{\sigma_v}(w,f_{v,\s},\s,L_v).\]

Moreover, thanks to Theorem \ref{spherical-period}, Remark \ref{nonvanishing-of-global-periods} and its non inert analogue, and Corollary \ref{archimedean non vanishing}, for some good choice of decomposable $f$, we obtain the following result.

\begin{prop}
For some choice of decomposable $f$, the function $\s\mapsto J_\sigma(w,f_{\s},\s)$ is nonzero.
\end{prop}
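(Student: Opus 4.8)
The plan is to read off the non-vanishing from the product decomposition recalled just above, using the explicit unramified computation of Theorem \ref{spherical-period} together with the local non-vanishing statements at the ramified places. First I would fix a finite set $S$ of places of $k$ containing all archimedean places, the place $v_0$, and every place at which $\sigma$, $\G$ or $l/k$ ramifies, and then build a decomposable flat section $f_{\s}=\otimes'_{v}f_{v,\s}$ of $\pi_{\s}$ as follows. For $v\notin S$ the representation $\sigma_v$ is unramified generic and $\G$ is split at $v$, and I take $f_{v,\s}$ to be the normalized spherical flat section $\phi_{v,\s}$. For $v\in S$ non-archimedean I use Remark \ref{nonvanishing-of-global-periods} (or its split analogue when $v$ splits in $l$) to choose $f_{v,\s}$ with $J_{\sigma_v}(w,f_{v,\s},\s,L_v)\equiv 1$. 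For $v\in S$ archimedean --- all such places split in $l$ by our standing hypothesis --- I use Corollary \ref{archimedean non vanishing} to choose the archimedean component so that the associated archimedean intertwining period $J_{\sigma_v}(w,f_{v,\s},\s,L_v)$ is not identically zero.

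For $\s$ in the cone of absolute convergence $\C^t(w,-1)\cap D(w,q_\sigma)$ the displayed product formula then gives
\[J_\sigma(w,f_{\s},\s)=\Bigl(\prod_{v\in S}J_{\sigma_v}(w,f_{v,\s},\s,L_v)\Bigr)\prod_{v\notin S}J_{\sigma_v}(w,\phi_{v,\s},\s),\]
and by Theorem \ref{spherical-period} the factor $\prod_{v\notin S}J_{\sigma_v}(w,\phi_{v,\s},\s)$ equals the ratio of partial $L$-functions
\[\Bigl[\prod_{1\le i<j\le r}\frac{L^S(s_i-s_j,(\sigma_j^\theta)^\vee,\sigma_i^\theta)\,L^S(s_i+s_j,\sigma_i,\sigma_j^\theta)}{L^S(s_i-s_j+1,(\sigma_j^\theta)^\vee,\sigma_i^\theta)\,L^S(s_i+s_j+1,\sigma_i,\sigma_j^\theta)}\Bigr]\prod_{k=1}^r\frac{L^{S,+}(2s_k,\sigma_k)}{L^{S,-}(2s_k+1,\sigma_k)},\]
which extends to a meromorphic function on $\C^t(w,-1)$ by Propositions \ref{Asai global equation proposition} and \ref{RS global equation proposition}. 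Each of the partial $L$-functions occurring here is not identically zero --- for instance because, for the real part of its argument large enough, the defining Euler product converges absolutely to a value close to $1$ --- so the whole right-hand side is a nonzero meromorphic function of $\s$. Since each $S$-factor $J_{\sigma_v}(w,f_{v,\s},\s,L_v)$ is also a nonzero meromorphic function of $\s$ (identically $1$ for $v$ finite, and not identically zero for $v$ archimedean by Corollary \ref{archimedean non vanishing}), the product above is a nonzero meromorphic function, and therefore so is $J_\sigma(w,f_{\s},\s)$, which agrees with it on the open cone $\C^t(w,-1)\cap D(w,q_\sigma)$ and is already known to extend meromorphically to $\C^t(w,-1)$.

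The only step requiring genuine care is the identification of $\prod_{v\notin S}J_{\sigma_v}(w,\phi_{v,\s},\s)$ with the stated ratio of partial $L$-functions, i.e. checking that the local spherical formula of Theorem \ref{spherical-period} matches factor by factor the Euler products defining the $L^S$ and that passing to the meromorphic continuation is harmless; this is exactly what Propositions \ref{Asai global equation proposition} and \ref{RS global equation proposition} supply. Once this bookkeeping is done the non-vanishing is immediate, since a finite product of nonzero meromorphic functions multiplied by an Euler product that converges to a nonzero value on a nonempty open set cannot vanish identically; I do not anticipate any difficulty beyond this, the substantive inputs --- the explicit spherical period and the local non-vanishing at the ramified places --- being already available.
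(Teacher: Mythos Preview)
Your proposal is correct and follows essentially the same approach as the paper: both arguments invoke Theorem \ref{spherical-period} for the unramified factors, Remark \ref{nonvanishing-of-global-periods} (and its split analogue) at the finite places in $S$, and Corollary \ref{archimedean non vanishing} at the archimedean places, then conclude non-vanishing from the product decomposition. Your write-up simply spells out in more detail what the paper compresses into a one-line reference to these ingredients.
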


In fact, taking $S$ as in Propositions \ref{Asai global equation proposition} and \ref{RS global equation proposition}, the  $f_{\s}$ above can be taken such that $f_{v,\s}$ is spherical for $v\notin S$. 
We set \[J_{\sigma,S}(w,f_{\s},\s)=\prod_{v\in S} J_{\sigma_v}(w,f_{v,\s},\s,L_v)\] and \[J_{\sigma}^S(w,f_{\s},\s)=\frac{J_{\sigma}(w,f_{\s},\s)}{J_{\sigma,S}(w,f_{\s},\s)}.\] 
We enlarge $S$ if necessary, so that it contains all places $v$ of $k$ such that $\G$ is non split. Hence for $\s\in \C^t(w,-1)\cap D(w,q_\sigma)$ with $q_\sigma$ large enough, Theorem \ref{spherical-period} and its split analogue yields the following equality where both sides converge:

\[\prod_{1\leq i<j\leq r}
L^S(s_i-s_j+1,(\JL(\sigma_j)^\theta)^\vee, \JL(\sigma_i)^\theta)L^S(s_i+s_j+1,\JL(\sigma_i), \JL(\sigma_j)^\theta)\times\]
\[\prod_{k=1}^r L^{S,-}(2s_k+1,\JL(\sigma_k)) J_{\sigma}^S(w,f_{\s},\s)\]
\begin{equation}\label{global partial period equation 1}
=\prod_{1\leq i<j\leq r}L^S(s_i-s_j,(\JL(\sigma_j)^\theta)^\vee, \JL(\sigma_i)^\theta)L^S(s_i+s_j,\JL(\sigma_i), \JL(\sigma_j)^\theta)
\prod_{k=1}^r L^{S,+}(2s_k,\JL(\sigma_k)). 
\end{equation}

By extension of meromorphic identities Equality (\ref{global partial period equation 1}) is true everywhere. Using Lemma \ref{intertwine-spherical}, the relation 
$L^S(s,\JL(\sigma_r),\JL(\sigma_r)^\theta)=L^{S,+}(s,\JL(\sigma_r))L^{S,-}(s,\JL(\sigma_r))$ and after simplifications, we obtain the following equality of meromorphic functions:

\begin{equation}\label{partial proportionality constant equation -1}
\frac{J_{\tau_r(\sigma)}^S(w,A(\tau_r,\s)f_{\s},\tau_r(\s))}{J_\sigma^S(w,f_{\s},\s)}
=\frac{L^{S,-}(2s_r,\JL(\sigma_r))}{L^{S,-}(1-2s_r,\JL(\sigma_r)^\vee)}\frac{L^{S,+}(-2s_r,\JL(\sigma_r)^\vee)}{L^{S,+}(1+2s_r,\JL(\sigma_r))}.
\end{equation}

Applying Proposition \ref{Asai global equation proposition}, we deduce:

\begin{equation}\label{partial proportionality constant equation}
\frac{J_{\tau_r(\sigma)}^S(w,A(\tau_r,\s)f_{\s},\tau_r(\s))}{J_\sigma^S(w,f_{\s},\s)}
=\prod_{v\in S}\gamma^{-}(2s_r,\JL(\sigma_{r,v}),\psi_v)\gamma^{+}(-2s_r,\JL(\sigma_{r,v})^\vee,\psi_v).
\end{equation}

We will need the following elementary lemma.

\begin{LM}\label{elementaire}
Let $(p_1,\dots,p_l)$ be a finite family of prime numbers, and let $r$ belong to $\N^\times$, then the family 
of functions $(p_i^{-s_j})_{i=1,\dots,l,j=1,\dots,r}$ of the variable $s\in \C^r$ is algebraically independent over $\C$.
\end{LM}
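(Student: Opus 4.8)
The plan is to prove this by induction on $r$, isolating two classical ingredients at the outset. The \emph{first} is that $\log p_1,\dots,\log p_l$ are linearly independent over $\Q$: a nontrivial relation $\sum_i a_i\log p_i=0$ with $a_i\in\Z$ would give $\prod_i p_i^{a_i}=1$, which is impossible by unique factorization once one separates the factors with $a_i>0$ from those with $a_i<0$. The \emph{second} is the linear independence over $\C$ of the entire functions $z\mapsto e^{\mu z}$ attached to pairwise distinct complex numbers $\mu$; this is standard, provable for instance by a Vandermonde argument on the values of these functions and their derivatives at $0$, or by repeatedly applying the operator $\tfrac{d}{dz}-\mu$.

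Set $\lambda=(\log p_1,\dots,\log p_l)$. For the inductive step, suppose $Q\in\C[x_{i,j}:1\le i\le l,\ 1\le j\le r]$ satisfies $Q\big((p_i^{-s_j})_{i,j}\big)=0$ for every $s\in\C^r$. Regard $Q$ as a polynomial in the last block of variables $x_{1,r},\dots,x_{l,r}$, with coefficients $Q_\beta\in\C[x_{i,j}:j\le r-1]$ indexed by $\beta=(\beta_1,\dots,\beta_l)\in\N^l$:
\[
Q=\sum_{\beta\in\N^l}Q_\beta\cdot\prod_{i=1}^l x_{i,r}^{\beta_i}.
\]
Fix $s_1,\dots,s_{r-1}$ and set $c_\beta=Q_\beta\big((p_i^{-s_j})_{j\le r-1}\big)\in\C$. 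Since $\prod_i p_i^{-\beta_i s_r}=e^{-s_r(\beta\cdot\lambda)}$, the hypothesis becomes $\sum_{\beta}c_\beta\,e^{-s_r(\beta\cdot\lambda)}=0$ for all $s_r\in\C$. By the $\Q$-linear independence of the $\log p_i$, the map $\beta\mapsto\beta\cdot\lambda$ is injective on $\N^l$, so the exponents $\beta\cdot\lambda$ are pairwise distinct, and the second ingredient forces $c_\beta=0$ for every $\beta$. Thus each $Q_\beta$ vanishes identically under the substitution $x_{i,j}\mapsto p_i^{-s_j}$ on $\C^{r-1}$, so the induction hypothesis gives $Q_\beta=0$ for all $\beta$, hence $Q=0$. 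The base case $r=1$ is the same computation with the $Q_\beta$ already scalars (and $r=0$ is vacuous), so the induction is complete.

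The only real obstacle is bookkeeping: one must be sure that the substitution $x_{i,j}\mapsto p_i^{-s_j}$ turns $\prod_i x_{i,r}^{\beta_i}$ into the single exponential $e^{-s_r(\beta\cdot\lambda)}$, and that distinct multi-indices $\beta$ yield distinct exponents $\beta\cdot\lambda$ — both of which come down to the one arithmetic input that $\log p_1,\dots,\log p_l$ span an $l$-dimensional $\Q$-vector space. No analytic difficulty beyond the textbook linear independence of the characters $z\mapsto e^{\mu z}$ intervenes. Alternatively, one can avoid the induction altogether by expanding $Q\big((p_i^{-s_j})_{i,j}\big)$ directly as a finite $\C$-combination of the additive characters $s\mapsto e^{\langle a,s\rangle}$ of $\C^r$, $a\in\C^r$, observing that distinct monomials of $Q$ produce distinct $a$ by the $\Q$-independence of the $\log p_i$, and invoking Dedekind's linear independence of characters.
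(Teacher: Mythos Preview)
Your proof is correct and follows essentially the same route as the paper's: both argue by induction on $r$, expand the polynomial in the last block of variables $x_{1,r},\dots,x_{l,r}$, fix $s_1,\dots,s_{r-1}$, and use the $\Q$-linear independence of $\log p_1,\dots,\log p_l$ (from unique factorization) together with the linear independence of the exponentials $e^{\mu z}$ for distinct $\mu$ to kill the coefficients. The only cosmetic difference is that the paper first packages the single-variable case as the statement ``$e^{a_1 t},\dots,e^{a_l t}$ algebraically independent whenever $a_1,\dots,a_l$ are $\Z$-linearly independent'' before inducting, whereas you fold this directly into the inductive step.
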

\begin{proof}
We recall that if $u_1,\dots,u_m$ are different complex numbers, then the functions of the complex variable $t$: $e^{u_1t},\dots,e^{u_mt}$, are linearly independent over $\C$ (by a Vandermonde determinant argument for example). This implies at once that if $a_1,\dots,a_l$ are linearly independent over $\Z$, then the functions $e^{a_1t},\dots,e^{a_lt}$ are algebraically 
independent over $\C$. By the prime factorization theorem, this applies to the family $a_1=\ln(p_1),\dots,a_l=\ln(p_l)$, hence 
for all $i=1,\dots,r$, the functions $(p_1^{s_i},\dots,p_l^{s_i})$ are algebraically independent over $\C$. Now suppose 
that $P(p_1^{s_1},\dots,p_l^{s_1},\dots,p_1^{s_r},\dots,p_l^{s_r})=0$ for $P$ a polynomial with coefficients in $\C$. Writing this equality 
\[\sum_{i_1,\dots,i_l}A_{i_1,\dots,i_l}(p_1^{s_1},\dots,p_l^{s_1},\dots,p_1^{s_{r-1}},\dots,p_l^{s_{r-1}})
p_1^{i_1s_r}\dots p_l^{i_ls_r}=0,\] and fixing $s_1,\dots,s_{r-1}$, we deduce that $A_{i_1,\dots,i_l}(p_1^{s_1},\dots,p_l^{s_1},\dots,p_1^{s_{r-1}},\dots,p_l^{s_{r-1}})=0$ for all values of $s_1,\dots,s_{r-1}$. By induction we deduce that $P=0$, which proves the statement of the Lemma.
\end{proof}

It has the following corollary, which generalizes \cite[Lemma 3]{K04}.
 
\begin{LM}\label{unique-factors}
Let $\mathcal{F}$ be a finite set of primes, and for each prime $p\in \mathcal{F}$, let $R_p(s_1,\dots,s_r)$ be an element 
of $\C(p^{-s_1},\dots,p^{-s_r})$, such that \[\prod_{p\in \mathcal{F}} R_p(s_1,\dots,s_r)=1,\]
Then each $R_p \sim 1$.
\end{LM}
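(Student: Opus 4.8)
The plan is to reduce the assertion to a purely formal statement about rational functions in disjoint families of indeterminates, and then deduce that statement from Lemma~\ref{elementaire}. For $p\in\mathcal{F}$ and $1\le i\le r$ introduce indeterminates $X_{p,i}$, and let $\C(X_{p,i})$ denote the field of rational functions in the $|\mathcal{F}|\cdot r$ variables $X_{p,i}$. Lemma~\ref{elementaire} says exactly that the family $(p^{-s_i})_{p\in\mathcal{F},\,1\le i\le r}$ is algebraically independent over $\C$, so the $\C$-algebra map $X_{p,i}\mapsto p^{-s_i}$ from $\C[X_{p,i}]$ into the ring of meromorphic functions on $\C^r$ is injective, and extends to an isomorphism of $\C(X_{p,i})$ onto the subfield of meromorphic functions generated by the $p^{-s_i}$. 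Writing $R_p=\widetilde{R}_p(p^{-s_1},\dots,p^{-s_r})$ with $\widetilde{R}_p\in\C(X_{p,1},\dots,X_{p,r})$, the hypothesis $\prod_{p\in\mathcal{F}}R_p=1$ becomes, after applying the inverse isomorphism, the identity $\prod_{p\in\mathcal{F}}\widetilde{R}_p=1$ in $\C(X_{p,i})$. It therefore suffices to prove: if $\widetilde{R}_p\in\C(X_{p,1},\dots,X_{p,r})$ for each $p\in\mathcal{F}$ and $\prod_{p}\widetilde{R}_p=1$, then each $\widetilde{R}_p$ lies in $\C^\times$, which is what $\widetilde{R}_p\sim 1$ means here.

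For this I would argue by logarithmic differentiation, using that the characteristic is $0$. Fix $p_0\in\mathcal{F}$ and $j\in\{1,\dots,r\}$. Applying $\partial/\partial X_{p_0,j}$ to $\prod_{p}\widetilde{R}_p=1$ and dividing by $\prod_p\widetilde{R}_p$ gives $\sum_{p\in\mathcal{F}}\widetilde{R}_p^{-1}\,\partial\widetilde{R}_p/\partial X_{p_0,j}=0$. For $p\neq p_0$ the function $\widetilde{R}_p$ lies in $\C(X_{p,1},\dots,X_{p,r})$, which does not involve the variable $X_{p_0,j}$, so $\partial\widetilde{R}_p/\partial X_{p_0,j}=0$; hence $\partial\widetilde{R}_{p_0}/\partial X_{p_0,j}=0$. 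As this holds for every $j$ and the characteristic is $0$, $\widetilde{R}_{p_0}$ is constant, and it is nonzero since the product of the $\widetilde{R}_p$ equals $1$. Since $p_0$ was arbitrary, every $\widetilde{R}_p$---hence every $R_p$---is a nonzero constant, i.e. $R_p\sim 1$.

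I do not expect a genuine obstacle: all the substance sits in Lemma~\ref{elementaire}, and once the functions $p^{-s_i}$ are known to behave as independent indeterminates the conclusion is immediate; the only thing demanding a little care is spelling out that algebraic independence legitimizes passing to, and computing inside, the abstract field $\C(X_{p,i})$. As an alternative to the derivative argument one can clear denominators: writing $\widetilde{R}_p=A_p/B_p$ with $A_p,B_p\in\C[X_{p,1},\dots,X_{p,r}]$ coprime, the relation reads $\prod_pA_p=\prod_pB_p$ in the UFD $\C[X_{p,i}]$; comparing degrees variable by variable shows $A_{p_0}$ is coprime to each $B_p$ with $p\neq p_0$ (those lie in a polynomial ring in disjoint variables) and to $B_{p_0}$ by construction, so $A_{p_0}$ divides $\prod_pB_p=\prod_pA_p$ while being coprime to it, whence $A_{p_0}\in\C^\times$, and likewise $B_{p_0}\in\C^\times$.
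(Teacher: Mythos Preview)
Your proof is correct. Both you and the paper make the same reduction via Lemma~\ref{elementaire}, passing from functions of $\s$ to the abstract field $\C(X_{p,i})$ in disjoint families of indeterminates; the difference lies only in how the separation is concluded. The paper writes $R_p=A_p/B_p$, clears denominators to obtain $\prod_p A_p(X_p)=\prod_p B_p(X_p)$, and then specializes all $X_p$ for $p\neq p_0$ to points where $A_p$ does not vanish, which immediately gives $A_{p_0}\sim B_{p_0}$. Your logarithmic-differentiation argument (and your UFD alternative based on coprimality in disjoint variables) reaches the same conclusion by a different but equally short mechanism. The specialization trick is perhaps the most economical; your differentiation argument makes the ``disjoint variables'' structure most explicit and avoids any choice of representatives $A_p,B_p$.
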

\begin{proof}
Write $R_p=A_p/B_p$, with $A_p$ and $B_p$ in $\C[p^{-s_1},\dots,p^{-s_r}]-\{0\}$. This implies that 
\[\prod_{p\in \mathcal{F}} A_p(s_1,\dots,s_r)=\prod_{p\in \mathcal{F}} B_p(s_1,\dots,s_r).\] By Lemma \ref{elementaire}, we can read this 
\[\prod_{p\in \mathcal{F}} A_p(X_p)=\prod_{p\in \mathcal{F}} B_p(X_p),\] with the variables $X_{p,i}$ algebraically independent. 
 Hence we can specialize all $X_{p}$ except one (say $p_0$) to $x_p\in \C^{r}$ such that $A_p(x_p)\neq 0$, from which we get
 $A_{p_0} \sim B_{p_0}$. 
\end{proof}

Now considering for each place $v$ of $k$, the proportionality constants $\alpha_v=\alpha_{\sigma,v}$ defined in Propositions \ref{non-archi-functional-equation} and \ref{archi-functional}. Using the functional equation of global intertwining periods and global Asai $L$-functions with our unramified and Archimedean computations, we obtain the following formula.

\begin{thm}\label{alpha}
Let $\d=\d_1\otimes\dots \otimes \d_t$ be an essentially square-integrable representation of $M$ with $t=2r$ and $\d_{t+1-i}=(\d_i^\theta)^\vee$, let $\psi$ be a non trivial character of $E$, trivial on $F$. Then one has the up to scalar equality of meromorphic functions on $\C^{n}(w,-1)$:
\[\alpha_\d(\s)\sim\gamma^-(2s_r,\JL(\d_r),\psi)^{-1}\gamma^+(-2s_r,(\JL(\d_r))^{\vee},\psi)^{-1}.\]
\end{thm}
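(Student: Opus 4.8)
The plan is to run the local--global machine set up in this section, feeding in the global functional equation of the intertwining period together with the unramified and archimedean computations already carried out. I keep the global data of Section~\ref{proportionality}: the quadratic extension $l/k$ with distinguished place $v_0$ above the rational prime $p$, with $k_{v_0}=F$ and $l_{w_0}=E$; the division algebra $\G$ with $\G_{v_0}=\G_F$; and the cuspidal automorphic representation $\sigma=\sigma_1\otimes\dots\otimes\sigma_t$ with $(\sigma_i)_{v_0}=\d_i$, $\JL(\sigma_i)$ cuspidal and $\JL(\sigma_i)_{v_0}=\JL(\d_i)$. I fix a finite set $S$ as in Propositions~\ref{Asai global equation proposition} and \ref{RS global equation proposition}, containing $v_0$, all archimedean places, all ramified places, and all places where $\G$ is non-split.

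First I would turn (\ref{partial proportionality constant equation}) into a product identity over $S$ alone. Starting from the global functional equation (\ref{global equation}) applied to a decomposable flat section $f_{\s}=\otimes'_v f_{v,\s}$ with $f_{v,\s}$ spherical for $v\notin S$, writing $J_\sigma=J_{\sigma,S}\,J_\sigma^S$ (and likewise after applying $A(\tau_r,\s)$), substituting (\ref{partial proportionality constant equation}) for the ratio $J_{\tau_r(\sigma)}^S/J_\sigma^S$, and cancelling the nonzero meromorphic function $J_\sigma^S$, I would obtain
\[
 J_{\tau_r(\sigma),S}\big(w,A(\tau_r,\s)f_{\s},\tau_r(\s)\big)\ \prod_{v\in S}\gamma^-(2s_r,\JL(\sigma_{r,v}),\psi_v)\,\gamma^+\big(-2s_r,\JL(\sigma_{r,v})^\vee,\psi_v\big)=J_{\sigma,S}(w,f_{\s},\s).
\]
Both sides factor over $v\in S$ into the local open periods, and the local functional equations (Propositions~\ref{non-archi-functional-equation} and \ref{archi-functional}) replace $J_{\tau_r(\sigma_v)}(w,A_{\sigma_v}(\tau_r,\s)f_{v,\s},\tau_r(\s),L_v)$ by $\alpha_{\sigma,v}(\s)\,J_{\sigma_v}(w,f_{v,\s},\s,L_v)$. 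Choosing each $f_{v,\s}$ with $v\in S$ so that $J_{\sigma_v}(w,f_{v,\s},\s,L_v)$ is not identically zero --- possible by Remark~\ref{nonvanishing-of-global-periods}, Corollary~\ref{archimedean non vanishing} and its split analogue --- and cancelling, I would arrive at
\[
 \prod_{v\in S}\alpha_{\sigma,v}(\s)\ \prod_{v\in S}\gamma^-(2s_r,\JL(\sigma_{r,v}),\psi_v)\,\gamma^+\big(-2s_r,\JL(\sigma_{r,v})^\vee,\psi_v\big)=1 .
\]

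Next I would isolate the factor at $v_0$. At an archimedean $v\in S$ (necessarily split in $l$, so with trivial $\JL$, with $\sigma_{r,v}=\k_{r,v}\otimes\k_{r+1,v}^\vee$, and with Asai factors equal to Rankin--Selberg factors), Proposition~\ref{archimdean-alpha} gives $\alpha_{\sigma,v}(\s)\,\gamma^-(2s_r,\JL(\sigma_{r,v}),\psi_v)\,\gamma^+(-2s_r,\JL(\sigma_{r,v})^\vee,\psi_v)\sim 1$, a constant. For each finite $v\in S$ with $v\neq v_0$, the quantities $\alpha_{\sigma,v}(\s)$ (Proposition~\ref{non-archi-functional-equation}; or Theorem~\ref{spherical-period} when $\G$ is split and $\sigma_v$ unramified at $v$) and the corresponding $\gamma$-factors are rational functions in powers of the residue characteristic $p_v$ of $v$; whereas the $v_0$-contribution $\alpha_{\sigma,v_0}(\s)\,\gamma^-(2s_r,\JL(\d_r),\psi)\,\gamma^+(-2s_r,\JL(\d_r)^\vee,\psi)$ lies in $\C(q_E^{-\s})$ and involves only powers of $p$. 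By the construction of the global data --- the Brauer--Hasse--Noether theorem for $\G$ together with the choices recorded at the start of Section~\ref{proportionality} and in \cite{K04} --- I may assume every finite place of $S$ other than $v_0$ lies above a rational prime distinct from $p$. The last displayed identity then exhibits, up to a nonzero constant, a product of rational functions in disjoint families of prime-power variables equal to $1$, so Lemma~\ref{unique-factors} forces $\alpha_{\sigma,v_0}(\s)\,\gamma^-(2s_r,\JL(\d_r),\psi)\,\gamma^+(-2s_r,\JL(\d_r)^\vee,\psi)\sim 1$; since $\alpha_{\sigma,v_0}=\alpha_\d$ by construction, this is the claim.

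The hard part will not be the algebra but the simultaneous choice of global data: arranging $l/k$, $\psi$, $\G$ and the cuspidal globalisation of Corollary~\ref{globalize NM} so that $v_0$ is the only place of $S$ lying above $p$ (this is precisely where the constraints listed at the beginning of Section~\ref{proportionality}, hence Lemma~5 and the proof of Theorem~6 of \cite{K04}, come in), together with the bookkeeping of conventions --- Asai versus Rankin--Selberg $\gamma$-factors at split places, the behaviour of $\gamma$-factors under $\pi\mapsto\pi^\vee$ and under rescaling $\psi$ by units, and the passage through the global Asai functional equation of Proposition~\ref{Asai global equation proposition} that puts the contragredient of $\JL(\d_r)$ into the $\gamma^+$-slot. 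Everything else is quoted from results established earlier in the paper.
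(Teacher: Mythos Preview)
Your argument is correct and follows the paper's own proof essentially line for line: deriving $\prod_{v\in S}\alpha_{\sigma,v}(\s)=\prod_{v\in S}\gamma^-(2s_r,\JL(\sigma_{r,v}),\psi_v)^{-1}\gamma^+(-2s_r,\JL(\sigma_{r,v})^\vee,\psi_v)^{-1}$ from the global functional equation~(\ref{global equation}) together with~(\ref{partial proportionality constant equation}), eliminating the archimedean contributions via Proposition~\ref{archimdean-alpha}, and then separating the factor at $v_0$ using Lemma~\ref{unique-factors} and the choice that $v_0$ is the unique place of $k$ above $p$. Your closing remarks on the bookkeeping (split-place Asai/Rankin--Selberg identifications, the appearance of the contragredient) are exactly the points the paper handles implicitly.
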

\begin{proof}
We consider $l$,$k$, $\sigma$, $\pi$ etc. as in the beginning of the section, hence 
$\sigma_{v_0}=\d$. Note that if the statement of the theorem is valid for one non trivial character $\psi$ of $E/F$, it is valid for any of them. Hence we take $\prod_{v\in \mathcal{P}(k)}\psi_v$ a non trivial 
character of $\A_l$ trivial on $l+\A_k$, and set $\psi:=\psi_{v_0}$. We take $S$ and $f_{\s}$ with the same requirements as those before Equality (\ref{partial proportionality constant equation}), in particular $v_0\in S$. Thanks to the functional equation of $J_\sigma(w,f_{\s},\s)$ (Equation (\ref{global equation}) of Section \ref{global-periods}) and the local functional equations for $v\in S$, we have:
\[\prod_{v\in S}  \alpha_v(s)=\frac{J_{\tau_r(\sigma),S}(w,A(\tau_r,\s) f_{\s},\tau_r(\s))}{J_{\sigma,S}(w,f_{\s},\s)}=\frac{J_{\sigma}^S(w,f_{\s},\s)}{J_{\tau_r(\sigma)}^S(w,A(\tau_r,\s) f_{\s},\tau_r(\s))}\]
which is in turn equal to 
\[\prod_{v\in S} \gamma^-(2s_r,\JL(\sigma_{r,v}),\psi_v)^{-1}\gamma^+(-2s_r,(\JL(\sigma_{r,v})^\theta)^\vee,\psi_v)^{-1}\] thanks to Equation (\ref{partial proportionality constant equation}).
Then thanks to Proposition \ref{archimdean-alpha} and Lemma \ref{unique-factors}, using our assumption on $v_0$, we deduce as expected that 
\[\alpha_{v_0}(s)\sim\gamma^-(2s_r,\JL(\sigma_{r,v_0}),\psi_{v_0})^{-1}\gamma^+(-2s_r,(\JL(\sigma_{r,v_0})^\theta)^\vee,\psi_{v_0})^{-1}.\]
\end{proof}

\section{Distinguished representations of $\GL(m,D)$}\label{basic}

Following the authors of \cite{LM14}, we make the following definition.

\begin{df}\label{definition ladder}
\begin{itemize}
\item Let $(\D_1,\dots,\D_t)$ be a sequence of cuspidal segments, we say that they form a ladder if there is a cuspidal representation $\rho$ such that $\D_i=[a_i,b_i]_\rho$, with $a_1>\dots>a_t$ and $b_1>\dots>b_t$. We also say 
that the essentially square-integrable representation $\d_1=\La(\D_1),\dots,\d_t=\La(\D_t)$ form a ladder in this case. We say that 
the $\D_i$'s (or the $\d_i$'s) form an anti-ladder if the $\D_{w_t(i)}$ form a ladder.
\item We say that the ladder is proper if $\D_{i+1}\prec \D_i$ for all $i$, and that the anti-ladder is proper 
if $\D_i\prec \D_{i+1}$ for all $i$.
\item If $(\d_1,\dots,\d_t)$ is a ladder of essentially square-integrable representations, we call the Langlands' quotient $\La(\d_1,\dots,\d_t)$ of the standard module $\d_1\times \dots \times \d_t$ 
a ladder representation, or just a ladder. We say that it is proper if $(\D_1,\dots,\D_t)$ is proper.
\end{itemize}
\end{df}

In this section, we will use the functional equation of $p$-adic open periods to classify distinguised ladder representations. 
We will also show that thanks to our method, we in fact only need Beuzart-Plessis' result in the cuspidal case (see Section 
\ref{reduction discrete cuspidal} for the reduction to this case). The main 
step will be to understand the singularities of these open periods, this will be done in Section \ref{pole open}. Many proofs of known results for $G=\GL(m,E)$ are valid for $G=\GL(m,\G_E)$. \textit{Whenever we refer for non split $G$ to a result proved only in the split case, this means that the proof still holds for the inner form without modification}.

\subsection{Distinguished induced representations}

We complete the results of Paragraph \ref{Double cosets}, the notations are the same. Most of the results here 
are consequences of Proposition \ref{Geometric lemma}. We first recall Proposition \cite[Proposition 7.1]{O17}.

\begin{prop}\label{closed-orbit-contribution}
Let $\sigma$ be a representation of $M$ such that $\Hom_{M\cap H}(\sigma,\C)\neq 0$, then $\Ind_P^G(\sigma)$ is 
$H$-distinguished. Moreover for $\lambda\in \Hom_{M\cap H}(\sigma,\C)$, the linear map 
\[J_\sigma( . ,\lambda):f\mapsto J_\sigma(f,\lambda)=\int_{P\cap H\backslash H}\lambda(f(h))dh\] is $H$-invariant on $\Ind_P^G(\sigma)$, and the map $\l\mapsto J_\sigma( . ,\lambda)$ is injective.
\end{prop}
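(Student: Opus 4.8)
The plan is to use the Geometric Lemma (Proposition \ref{Geometric lemma}) together with the fact that the trivial double coset $P\cdot 1\cdot H$ is closed in $G$, so that the associated contribution is a direct summand rather than merely a subquotient. Concretely, the representative $u=1$ corresponds to the identity matrix $a=\mathrm{diag}(m_1,\dots,m_t)\in I(\m)$ (the diagonal element of $I(\m)$), for which $w_a=1$, $P_a=P$, $M_a=M$, $P_u^u=P\cap H$, $M_u^u=M\cap H$, and $\delta_{P_u^u}=\delta_{P\cap H}=(\delta_P)_{|P\cap H}^{1/2}$; this last equality is exactly the observation that makes $\delta_P^{1/2}/\delta_{P_u^u}$ trivial on $P\cap H$, so that $\mathrm{Hom}_{P\cap H}(\delta_P^{1/2}\delta_{P_u^u}^{-1}\sigma,\C)=\mathrm{Hom}_{M\cap H}(r_{M_u,M}(\sigma),\C)=\mathrm{Hom}_{M\cap H}(\sigma,\C)$. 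First I would check these identifications and note that $P\cdot H$ being closed means $\mathrm{ind}_P^G(\sigma)$ surjects $H$-equivariantly onto $\mathrm{ind}_{P\cap H}^H((\delta_P^{1/2}/\delta_{P\cap H})\sigma)=\mathrm{ind}_{P\cap H}^H(\sigma)$ by restriction of functions to $H$ (using that $P\cap H\backslash H\hookrightarrow P\backslash G$ is a closed embedding onto the closed orbit). Composing this surjection with the canonical $H$-invariant integration functional on $\mathrm{ind}_{P\cap H}^H(\sigma)$ attached to $\lambda\in\mathrm{Hom}_{M\cap H}(\sigma,\C)$ — which is $f\mapsto\int_{P\cap H\backslash H}\lambda(f(h))\,dh$, convergent because the support is compact mod $P\cap H$ — yields the $H$-invariant form $J_\sigma(\,\cdot\,,\lambda)$ on $\mathrm{Ind}_P^G(\sigma)$, and its nonvanishing (hence the distinction of $\mathrm{Ind}_P^G(\sigma)$) follows from the surjectivity together with $\lambda\neq 0$.

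For injectivity of $\lambda\mapsto J_\sigma(\,\cdot\,,\lambda)$: suppose $J_\sigma(\,\cdot\,,\lambda)=0$. Since the restriction map $\mathrm{ind}_P^G(\sigma)\to\mathrm{ind}_{P\cap H}^H(\sigma)$, $f\mapsto f_{|H}$, is surjective, for any $\varphi\in\mathrm{ind}_{P\cap H}^H(\sigma)$ there is $f$ with $f_{|H}=\varphi$, and then $\int_{P\cap H\backslash H}\lambda(\varphi(h))\,dh=J_\sigma(f,\lambda)=0$; choosing $\varphi$ supported on a small neighbourhood of $P\cap H$ with $\varphi(1)=v$ for arbitrary $v\in V_\sigma$ forces $\lambda(v)=0$, so $\lambda=0$. (Surjectivity of the restriction map is the standard fact that for a closed subgroup $H$ and a closed $H$-orbit $P\backslash P H$ in $P\backslash G$, sections of a $G$-sheaf extend from the closed orbit, using a smooth partition of unity / Bernstein–Zelevinsky's lemma on restriction of $\mathrm{ind}$ to locally closed subsets; alternatively it is already built into Proposition \ref{Geometric lemma}, whose injection is precisely dual to this restriction on the closed-orbit summand.)

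I expect the only genuine point requiring care to be the bookkeeping with modulus characters — verifying $\delta_{P\cap H}=(\delta_P)_{|P\cap H}^{1/2}$ so that the normalized induction $\mathrm{Ind}$ on $G$ restricts to normalized induction $\mathrm{Ind}$ on $H$ on the closed orbit with no spurious twist — and the convergence/well-definedness of the integral over $P\cap H\backslash H$, which is immediate from compactness of supports modulo $P\cap H$ (functions in $\mathrm{ind}$ have compact support mod $P$, hence their restrictions have compact support mod $P\cap H$ on the closed orbit). Everything else is formal once the closed-orbit summand is identified; since the statement only concerns the closed orbit, one does not even need to control the contributions of the other double cosets, which is what makes this the easy half of the distinction analysis and the natural starting point before tackling the open-orbit (intertwining period) contributions in the later sections.
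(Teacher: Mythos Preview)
Your proposal is correct and follows the standard approach: the paper itself does not give a proof but simply cites \cite[Proposition 7.1]{O17}, and your argument is essentially the content of that reference. The key points you identify---that $u=1$ corresponds to the closed orbit, that restriction to $H$ surjects onto $\ind_{P\cap H}^H(\sigma)$, that the modulus character identity $\delta_{P\cap H}=(\delta_P|_{P\cap H})^{1/2}$ holds (here immediate since $P\cap H$ is a parabolic of $H$ and $|\cdot|_E=|\cdot|_F^2$ on $F^\times$), and that convergence is automatic since $P\cap H\backslash H$ is compact---are exactly the ingredients, and the injectivity argument via bump functions is the standard one.
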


We now suppose that $P=P_{(m_1,\dots,m_t)}$ is self dual, and let $u$ be the element of $\Rep$ attached to $w=w_t$, with $t$ 
even or odd. Then the following is a consequence of Theorem \ref{BD} (see \cite[Proposition 2.3]{G15} and more generally \cite[Proposition 7.2]{O17}). The definition of $J_\sigma$ hereunder is straightforward when $t$ is odd (we only defined it when $t$ 
is even) and the properties of such a map are the same according to \cite{BD08}.

\begin{prop}\label{open-orbit-contribution}
With notations as above, and $\sigma=\sigma_1\otimes \dots \otimes \sigma_t$ is a finite length representation of $M$, such that $\Hom_{M^u}(\sigma,\C)\neq \{0\}$, then 
if $L\in \Hom_{M^u}(\sigma,\C)-\{0\}$, there is an element $\underline{a}\in \C^t(w,-1)-\{\underline{0}\}$, and $o\in \Z$ (and in fact necessarily in $\N$), such that 
\[\Gamma_L:f\mapsto \lim_{s\to 0} s^o J_\sigma(w,f_{s\underline{a}},s\underline{a},L)\] defines 
a nonzero $H$-invariant linear form on $\Ind_P^G(\sigma)$.
\end{prop}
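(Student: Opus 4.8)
The plan is to start from Theorem \ref{BD}, which guarantees that for $\s$ in the cone $D_\sigma^{J,L}(w)$ the integral $J_\sigma(w,f_{\s},\s,L)$ converges and extends to a meromorphic function on $\C^t(w,-1)$ with the property that $P(q^{\pm\s})J_\sigma(w,f_{\s},\s,L)$ lies in $\C[q^{\pm\s}]$ for a fixed nonzero Laurent polynomial $P$. The idea is to restrict this meromorphic family to a well-chosen complex line $\s = s\underline{a}$ through the origin, where $\underline{a}\in\C^t(w,-1)$, obtaining a meromorphic function of the single variable $s$ whose only possible singularities are poles; then $\Gamma_L$ is defined by clearing the pole at $s=0$, i.e. multiplying by $s^o$ where $o$ is the order of that pole (set $o=0$ if there is none), and taking the limit. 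The first thing I would do is fix such an $\underline{a}$: since $\C^t(w,-1)$ is a nonzero subspace (this is where $t$ being $\ge 2$, or the case-$t$-odd analogue, matters) and the cone $D(w,q_\sigma)$ is nonempty, one can choose $\underline{a}$ in $\C^t(w,-1)$ so that the ray $\R_{>0}\underline{a}$ eventually enters $D_\sigma^{J,L}(w)$ — concretely, take $\underline{a}$ with $\Re(a_i-a_j)>0$ for all $(i,j)\in\Inv(w)$, which is possible because the defining inequalities of $D(w,r)$ are compatible with the linear condition $w(\s)=-\s$. Along this ray, for $s$ real and large, $J_\sigma(w,f_{s\underline{a}},s\underline{a},L)$ is given by the convergent integral, hence is not identically zero by Remark \ref{nonvanishing-of-global-periods}.

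Next I would argue that $\Gamma_L$ is well-defined, nonzero, and $H$-invariant. Well-definedness and finiteness of the limit follow because $s\mapsto P(q^{\pm s\underline{a}})J_\sigma(w,f_{s\underline{a}},s\underline{a},L)$ is a Laurent polynomial in $q^{\pm s}$ for every flat section $f$, so $J_\sigma$ restricted to the line is a rational function of $q^{-s}$, and $s^o J_\sigma$ extends holomorphically across $s=0$ once $o$ is taken to be the maximal pole order among all $f\in\F(\sigma)$; finiteness of this maximum is exactly what the uniform Laurent-polynomial bound from Theorem \ref{BD} provides. Nonvanishing: by maximality of $o$ there is at least one flat section $f$ with $\lim_{s\to 0}s^o J_\sigma(w,f_{s\underline{a}},s\underline{a},L)\ne 0$. $H$-invariance passes to the limit: for each $s$ in $D_\sigma^{J,L}(w)$ the functional $f\mapsto J_\sigma(w,f_{s\underline{a}},s\underline{a},L)$ is $H$-invariant on $V_{\pi_{s\underline{a}}}$ by Theorem \ref{BD}, and the action of $H$ on flat sections (acting on the $K$-part, independently of $s$) is continuous/rational in $s$, so the relation $J_\sigma(w,(g\cdot f)_{s\underline{a}},s\underline{a},L)=J_\sigma(w,f_{s\underline{a}},s\underline{a},L)$ holds as an identity of meromorphic functions of $s$, and multiplying by $s^o$ and letting $s\to0$ gives $\Gamma_L(g\cdot f)=\Gamma_L(f)$ on $\Ind_P^G(\sigma)$ (using that $\pi_{\underline{0}}=\Ind_P^G(\sigma)$ and that the flat-section evaluation at $s=0$ recovers the original representation). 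Finally I would note $\underline{a}\ne\underline{0}$ by construction, and $o\in\N$ because $J_\sigma$ has at worst a pole (not an essential singularity) and, more specifically, one expects $o\ge 0$ simply because we are multiplying by a nonnegative power — if $J_\sigma$ were already holomorphic at $0$ we take $o=0$.

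The main obstacle I anticipate is making the $H$-invariance argument genuinely rigorous at the level of flat sections: one must be careful that the map $g\mapsto (g\text{ acting on }f_{\s})$ is compatible with the flat-section normalization (the restriction to $K$ is $\s$-independent but the $G$-action mixes the $\eta_{\s}$-twist), so the cleanest route is to phrase everything on the common space $\Ind_P^G(\sigma)$ via the isomorphisms $f\mapsto f_{\s}$ and check that $J_\sigma(w,\cdot,\s,L)\circ(\text{twist by }\s)$ is $H$-invariant as a family, which is precisely the content of "$f_{\s}\mapsto J_\sigma(w,f_{\s},\s,L)$ defines an $H$-invariant linear form on $V_{\pi_{\s}}$" in Theorem \ref{BD}. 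A secondary point is the uniform bound on the pole order $o$ over all flat sections: this is not automatic from pointwise meromorphy but does follow from the statement in Theorem \ref{BD} that a \emph{single} Laurent polynomial $P$ works for all $f$, so I would invoke that explicitly. The case $t$ odd is handled by the remark preceding the proposition that the definition and properties of $J_\sigma$ carry over from \cite{BD08}, so no separate argument is needed there.
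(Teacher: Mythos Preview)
Your proposal is correct and follows exactly the approach the paper indicates: the paper does not actually give a proof but merely says the proposition is ``a consequence of Theorem~\ref{BD} (see \cite[Proposition 2.3]{G15} and more generally \cite[Proposition 7.2]{O17})'', and your argument is precisely the natural elaboration of that reference --- choose $\underline{a}$ so the ray meets the convergence cone, use the uniform Laurent-polynomial bound from Theorem~\ref{BD} to get a finite maximal pole order $o$, and pass $H$-invariance to the limit. Your justification that $o\in\N$ via Remark~\ref{nonvanishing-of-global-periods} is exactly what the paper records separately in Remark~\ref{m=0}.
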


\begin{rem}\label{m=0}
The integer $o$ is indeed in $\N$ because of Remark \ref{nonvanishing-of-global-periods}.
\end{rem}

Let us state some consequences of the results stated above. We have the following result due to Gurevich.

\begin{prop}\label{standard}
A standard module $\d_1\times \dots \times \d_t$ of $G$ is distinguished if and only if there is 
$\e\in \S_t$ such that $\d_{\e(i)}=(\d_i^\theta)^\vee$ for all $i$, and $\d_{\e(i)}$ is moreover $\theta$-distinguished 
if $\e(i)=i$.
\end{prop}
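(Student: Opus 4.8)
The plan is to reduce the question to the geometric lemma, Proposition \ref{Geometric lemma}, together with Remark \ref{pure tensor}. First I would prove the \emph{if} direction, which is the easy half. Suppose $\e\in\S_t$ satisfies $\d_{\e(i)}=(\d_i^\theta)^\vee$ for all $i$, with $\d_i$ moreover $\theta$-distinguished whenever $\e(i)=i$. Up to reordering the inducing data (which does not change the standard module, since the $\Re(c_{\d_i})$ determine the order only up to permutations among equal values, and the relevant permutation respects that), one arranges the $\d_i$ so that $\e$ is the block-permutation $w$ attached to a self-dual partition $\m=(m_1,\dots,m_t)$ with a central fixed block when $t$ is odd. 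Then $\sigma=\d_1\otimes\dots\otimes\d_t$ is $M^u$-distinguished: the linear form $L$ pairing $v_i$ with $v_{t+1-i}$ for $i<\lceil t/2\rceil$ (using $\d_{t+1-i}=(\d_i^\theta)^\vee$) and using the $\theta$-invariant form on the central block lies in $\Hom_{M^u}(\sigma,\C)-\{0\}$. Now invoke Proposition \ref{open-orbit-contribution} (or directly Theorem \ref{BD} and Remark \ref{nonvanishing-of-global-periods}): the associated open-orbit intertwining period, suitably regularized, gives a nonzero $H$-invariant linear form on $\Ind_P^G(\sigma)=\d_1\times\dots\times\d_t$, so the standard module is distinguished.

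For the \emph{only if} direction I would argue as follows. Suppose $\pi=\d_1\times\dots\times\d_t$ is $H$-distinguished. By the first bullet of Proposition \ref{Geometric lemma}, there is some $u=u_a\in\Rep$, associated to a symmetric matrix $a\in I(\m)$ with $\m_a=(m_{i,j})$, such that $\Hom_{M_u^u}(r_{M_u,M}(\sigma),\C)\neq 0$. By Remark \ref{pure tensor}, since $\sigma$ is a tensor of discrete series, $r_{M_u,M}(\sigma)$ vanishes unless each $\m$-block is $\rho_i$-adapted for the relevant cuspidal support, in which case $r_{M_u,M}(\sigma)$ is a pure tensor $\otimes_{i,j}\sigma_{i,j}$ of discrete series obtained by cutting the segments $\D_i$, and
\[
\Hom_{M_u^u}(r_{M_u,M}(\sigma),\C)\simeq \bigotimes_{i=1}^t\Hom_{H_{m_{i,i}}}(\sigma_{i,i},\C)\otimes\bigotimes_{1\leq i<j\leq t}\Hom_{G_{m_{i,j}}}(\sigma_{j,i},(\sigma_{i,j}^\theta)^\vee).
\]
Non-vanishing of this space forces, for each off-diagonal pair, $\sigma_{j,i}\simeq(\sigma_{i,j}^\theta)^\vee$, and for each diagonal piece, $\sigma_{i,i}$ to be $\theta$-distinguished. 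The combinatorial heart of the argument is then to show that these local matchings of pieces of segments can only be realized if the \emph{whole} segments match up: i.e. that the $a$-indexed cutting is forced to be the trivial one refining nothing, so that $\{\d_1,\dots,\d_t\}$ is stable under $\delta\mapsto(\delta^\theta)^\vee$ via a permutation $\e$, with the diagonal (fixed) ones $\theta$-distinguished. This uses that a discrete series $\La(\D)$ has a unique essentially square-integrable support and that $(\La(\D)^\theta)^\vee=\La(\overline{\D}^\theta)$ where $\overline{\D}^\theta$ is the ``conjugate-dual'' segment, together with a counting/rigidity argument on how the intervals $\D_i$ can be partitioned and paired — essentially the same bookkeeping that appears in Gurevich's original proof for $D=F$, which (per the paper's convention) transfers verbatim to the inner form.

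The main obstacle is precisely this last combinatorial rigidity step: translating ``each piece $\sigma_{i,j}$ matches its mirror piece $\sigma_{j,i}$'' into ``each whole segment $\D_i$ matches a whole segment $\D_{\e(i)}$''. One has to rule out genuinely ``mixed'' matrices $a$ where a single segment $\D_i$ is split across several blocks that pair with several different segments; the way to do this is to track, for the cuspidal line generated by $\rho$, the multiset of exponents appearing with multiplicity in $r_{M_u,M}(\sigma)$ and observe that the pairing forces a bijection of segments compatible with $\theta$-duality, and that a segment can only be fixed by this bijection if an odd number of its pieces sits on the diagonal, which by the distinction constraint $\Hom_{H_{m_{i,i}}}(\sigma_{i,i},\C)\neq 0$ and Proposition \ref{distinction of discrete series} pins down $\sigma_{i,i}$ and hence the segment itself. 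Since the paper attributes this to Gurevich and declares split-case proofs valid for the inner form, I would cite \cite[Proposition 7.1 and its analogue]{O17} / the relevant statement in Gurevich's work for the combinatorics, and only spell out the pieces that genuinely use the inner-form input (namely Proposition \ref{Geometric lemma}, Remark \ref{pure tensor}, Proposition \ref{distinction of discrete series}, and Proposition \ref{open-orbit-contribution}).
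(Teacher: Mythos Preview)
Your ``only if'' direction follows the paper's approach (geometric lemma plus Gurevich's combinatorics via Remark~\ref{pure tensor}). One phrasing issue: it is not that the contributing $a$ is forced to satisfy $M_a=M$ (several $a$'s may contribute); what Gurevich actually shows is that from \emph{any} contributing $a$ one can read off the involution $\e$ on $\{1,\dots,t\}$ by tracking segment endpoints. Since you defer to \cite{G15} for this step anyway, this is minor.

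The ``if'' direction has a genuine gap. You claim that by reordering within blocks of equal $\Re(c_{\d_i})$ one can arrange $\e=w_t$, with at most a single central fixed block ``when $t$ is odd''. This fails as soon as $\e$ has two or more fixed points. Take $t=2$ with $\d_1,\d_2$ both $\theta$-distinguished and supported on distinct cuspidal lines, so that $\d_2\not\simeq(\d_1^\theta)^\vee$: then $\e=\mathrm{id}$ is the only admissible involution, there is nothing to reorder, and $\Hom_{M^{\theta_{w_2}}}(\d_1\otimes\d_2,\C)=0$, so Proposition~\ref{open-orbit-contribution} is simply unavailable. This is exactly why the paper invokes \emph{both} Propositions~\ref{closed-orbit-contribution} and~\ref{open-orbit-contribution}. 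The fix (which is essentially \cite[Proposition 2.9]{MO17}) is: reorder so that the fixed points of $\e$, which all satisfy $\Re(c_{\d_i})=0$, occupy a contiguous central block; their product $\mu=\d_{r+1}\times\dots\times\d_{t-r}$ is $H_{m'}$-distinguished by Proposition~\ref{closed-orbit-contribution}; then apply Proposition~\ref{open-orbit-contribution} to the coarser self-dual partition $(m_1,\dots,m_r,m',m_r,\dots,m_1)$ of length $2r+1$ with the finite-length representation $\mu$ sitting in the middle slot.
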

\begin{proof}
One direction is \cite[Proposition 3.4]{G15}, its proof is essentially based on Remark \ref{pure tensor}. The converse direction is explained in \cite[Proposition 2.9]{MO17}, and uses 
Propositions \ref{closed-orbit-contribution} and \ref{open-orbit-contribution}. 
\end{proof}

\begin{cor}\label{distinguished conjugate self-dual}
If $\pi$ is an irreducible representation of $G$, which is $H$-distinguished, then $\pi^\vee=\pi^\theta$.
\end{cor}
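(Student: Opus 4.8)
The plan is to reduce the statement to the classification of distinguished standard modules (Proposition~\ref{standard}), and then to read off the self-duality from the combinatorics of essentially square integrable supports. First I would write $\pi=\La(\d_1,\dots,\d_t)$ as the Langlands quotient of its standard module $\d_1\times\dots\times\d_t$, with $\Re(c_{\d_i})\geq\Re(c_{\d_{i+1}})$. Since the standard module surjects onto $\pi$, pulling back a nonzero element of $\Hom_H(\pi,\C)$ along this surjection gives a nonzero element of $\Hom_H(\d_1\times\dots\times\d_t,\C)$; hence the standard module $\d_1\times\dots\times\d_t$ is itself $H$-distinguished.

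Next I would apply the relevant direction of Proposition~\ref{standard} (Gurevich's result, \cite[Proposition 3.4]{G15}) to obtain a permutation $\e\in\S_t$ such that $\d_{\e(i)}=(\d_i^\theta)^\vee$ for all $i$. Taking contragredients this reads $\d_{\e(i)}^\vee=\d_i^\theta$, and since $\e$ is a bijection of $\{1,\dots,t\}$ we get the equality of multisets
\[\{\d_1^\vee,\dots,\d_t^\vee\}=\{\d_1^\theta,\dots,\d_t^\theta\}.\]

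Finally I would identify each side with an essentially square integrable support. Because $\theta$ is a field automorphism of $E$ and hence preserves $\nu$, one has $\Re(c_{\d_i^\theta})=\Re(c_{\d_i})$, so $\pi^\theta=\La(\d_1^\theta,\dots,\d_t^\theta)$, whose essentially square integrable support is $\{\d_i^\theta\}$. On the other hand $\Re(c_{\d_i^\vee})=-\Re(c_{\d_i})$, so $\d_t^\vee\times\dots\times\d_1^\vee$ is a standard module, and dualizing the surjection $\d_1\times\dots\times\d_t\twoheadrightarrow\pi$ exhibits $\pi^\vee$ as the unique irreducible submodule of $\d_t^\vee\times\dots\times\d_1^\vee$, i.e. $\pi^\vee=\La(\d_1^\vee,\dots,\d_t^\vee)$, with essentially square integrable support $\{\d_i^\vee\}$. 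Since an irreducible representation of $G$ is determined by its essentially square integrable support, the displayed equality of multisets forces $\pi^\vee\simeq\pi^\theta$. The only point requiring a little care is this last bookkeeping with the Langlands classification under contragredience; the reduction to standard modules and the combinatorial step are formal, so this is where I expect the (mild) main obstacle to lie.
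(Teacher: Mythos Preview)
Your argument is correct and follows the same route as the paper: pass to the standard module, invoke the classification of distinguished standard modules, and use uniqueness of the essentially square integrable support in the Langlands classification. One small remark: the statement of Proposition~\ref{standard} that you quote, namely $\d_{\e(i)}=(\d_i^\theta)^\vee$ \emph{for all $i$}, already uses at the fixed points of $\e$ the fact that a distinguished discrete series is conjugate self-dual (part~(2) of Proposition~\ref{distinction of discrete series}); the paper's proof makes this dependency explicit, whereas in your write-up it is hidden inside Proposition~\ref{standard}.
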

\begin{proof}
It is a consequence of the statement \cite[Proposition 2.9]{MO17}, the Langlands' quotient theorem, and (2) of Proposition 
\ref{distinction of essentially square-integrable representations}.
\end{proof}

Let's go back to the setting of Proposition \ref{open-orbit-contribution} and Remark \ref{m=0}. A favorable situation is when $o=0$. This is the case in the following situation. 

\begin{prop}\label{open-orbit-contribution+}
In the situation of Proposition \ref{open-orbit-contribution}, suppose that among all $u_i\in \Rep$, the only $M_{u_i}^{u_i}$-distinguished Jacquet module $r_{M_{u_i},M}(\sigma)$ is $r_{M_{u},M}(\sigma)=\sigma$, then $J_\sigma(w,\ . \ ,\s,L)$ is holomorphic at $\underline{0}$. Moreover, the map \[L \mapsto J_\sigma(w,\ . \ ,\underline{0},L)\] is an isomorphism between 
$\Hom_{M^u}(\sigma,\C)$ and $\Hom_{H}(\ind_P^G(\sigma),\C)$. The inverse map between $\Hom_{H}(\ind_P^G(\sigma),\C)$ 
and $\Hom_{M^u}(\sigma,\C)\simeq \Hom_{H^u}(\ind_{M^{u}}^{H^u}(\sigma),\C)\simeq \Hom_{H}(\mathcal{C}_c^\infty(P\backslash PuH,\d_P^{1/2}\sigma),\C) $ is given by the following isomorphism composed 
with the natural isomorphisms above 
\[J\mapsto J_{|\mathcal{C}_c^\infty(P\backslash PuH,\d_P^{1/2}\sigma)}.\]
\end{prop}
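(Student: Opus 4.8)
The plan is to combine the geometric lemma (Proposition \ref{Geometric lemma}) with the analytic input of Theorem \ref{BD} and Remark \ref{nonvanishing-of-global-periods}. First I would establish holomorphy at $\underline{0}$. By Theorem \ref{BD} there is a nonzero Laurent polynomial $P$ with $P(q^{\pm\s})J_\sigma(w,f_{\s},\s,L)\in\C[q^{\pm\s}]$ for all flat sections $f$; so $J_\sigma(w,\,\cdot\,,\s,L)$ has at worst a pole along a hyperplane through $\underline{0}$. Suppose for contradiction it is not holomorphic at $\underline{0}$. Then by Proposition \ref{open-orbit-contribution} the leading Laurent coefficient $\Gamma_L$ is a nonzero $H$-invariant linear form on $\Ind_P^G(\sigma)$; crucially, because $o\ge 1$ and $\Gamma_L$ is the coefficient of a \emph{strictly} negative power of $s$, the ordinary leading term $J_\sigma(w,\,\cdot\,,\underline{0},L)$ (the constant term, which would be the value) does not exist, and moreover $\Gamma_L$ \emph{vanishes} on the subspace $\mathcal{C}_c^\infty(P\backslash PuH,\d_P^{1/2}\sigma)$ — indeed on that subspace $J_\sigma(w,f_{\s},\s,L)$ is, by the computation behind Remark \ref{nonvanishing-of-global-periods}, (up to the flat-section normalization $\eta_{\s}$, which is holomorphic and nonvanishing at $\underline{0}$) a holomorphic function of $\s$, so multiplying by $s^o$ with $o\ge 1$ and letting $s\to 0$ kills it. Thus $\Gamma_L$ factors through the quotient $\Ind_P^G(\sigma)/\mathcal{C}_c^\infty(P\backslash PuH,\d_P^{1/2}\sigma)$, i.e. through the sum of the contributions of the \emph{non-open} double cosets $u_i\ne u$. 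But by the injection in Proposition \ref{Geometric lemma} together with the second isomorphism there, any $H$-invariant form supported away from the open orbit lands in $\prod_{u_i\ne u}\Hom_{M_{u_i}^{u_i}}(r_{M_{u_i},M}(\sigma),\C)$, and by hypothesis every such factor is zero. Hence $\Gamma_L=0$, a contradiction; so $J_\sigma(w,\,\cdot\,,\s,L)$ is holomorphic at $\underline{0}$, i.e. $o=0$.

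Next I would show $L\mapsto J_\sigma(w,\,\cdot\,,\underline{0},L)$ is an isomorphism $\Hom_{M^u}(\sigma,\C)\xrightarrow{\sim}\Hom_H(\ind_P^G(\sigma),\C)$. Linearity and well-definedness are clear from the holomorphy just proved and the linearity of $L\mapsto L(f_{\underline{0}}(uh))$. For injectivity: if $J_\sigma(w,\,\cdot\,,\underline{0},L)=0$, restrict it to $\mathcal{C}_c^\infty(P\backslash PuH,\d_P^{1/2}\sigma)$; there the period integral is, as above, essentially the tautological pairing of $L$ against $f(u\,\cdot\,)$ integrated over $P(u)\backslash H$, and restricting further to sections supported in a small neighbourhood of $u$ recovers $L$ itself via the chain of natural isomorphisms $\Hom_{M^u}(\sigma,\C)\simeq\Hom_{H^u}(\ind_{M^u}^{H^u}(\sigma),\C)\simeq\Hom_H(\mathcal{C}_c^\infty(P\backslash PuH,\d_P^{1/2}\sigma),\C)$ (Frobenius reciprocity and the fact that $PuH$ is open, so $\mathcal{C}_c^\infty(P\backslash PuH,\d_P^{1/2}\sigma)$ is a subrepresentation of $\ind_P^G(\sigma)$). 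So $L=0$. For surjectivity: given $J\in\Hom_H(\ind_P^G(\sigma),\C)$, its restriction $J_{|\mathcal{C}_c^\infty(P\backslash PuH,\d_P^{1/2}\sigma)}$ corresponds, through those same isomorphisms, to some $L\in\Hom_{M^u}(\sigma,\C)$, and then $J-J_\sigma(w,\,\cdot\,,\underline{0},L)$ is an $H$-invariant form vanishing on $\mathcal{C}_c^\infty(P\backslash PuH,\d_P^{1/2}\sigma)$, hence (again by Proposition \ref{Geometric lemma} and the hypothesis) it is zero. This simultaneously identifies the inverse map as $J\mapsto J_{|\mathcal{C}_c^\infty(P\backslash PuH,\d_P^{1/2}\sigma)}$ composed with the natural isomorphisms, which is the last assertion.

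The main obstacle I anticipate is making precise the claim that $J_\sigma(w,f_{\s},\s,L)$, restricted to $f\in\mathcal{C}_c^\infty(P\backslash PuH,\d_P^{1/2}\sigma)$, is holomorphic and recovers $L$ at $\underline{0}$: one must check that for such compactly-supported-mod-$P$ sections the defining integral $\int_{P(u)\backslash H}L(f_{\s}(uh))\,dh$ converges for all $\s$ (not just in the cone $D_\sigma^{J,L}(w)$), depends holomorphically on $\s$ through the factor $\eta_{\s}$ alone, and that at $\s=\underline{0}$ it equals the tautological functional pairing $L$ against $f(u\,\cdot\,)$ — this is exactly the content behind Remark \ref{nonvanishing-of-global-periods} and \cite[Theorems 2.8 and 2.16]{BD08}, so I would cite those rather than redo the estimate. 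The remaining bookkeeping — tracking the three natural isomorphisms and checking compatibility of the normalizations $\d_P^{1/2}$ versus $\d_P^{1/2}/\d_{P_u^u}$ appearing in Proposition \ref{Geometric lemma} (they agree since $u$ is $P$-admissible and $P^u=M_u^uN_u^u$, so the relevant modulus quotient is trivial on the open orbit) — is routine.
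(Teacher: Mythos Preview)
Your proposal is correct and follows essentially the same approach as the paper's own proof. The paper's argument is extremely terse: it cites Proposition \ref{Geometric lemma} for the injection $\Hom_H(\ind_P^G(\sigma),\C)\hookrightarrow\Hom_{M^u}(\sigma,\C)$ via restriction to the open orbit, cites \cite[Proposition 7.2]{O17} for the well-definedness and injectivity of $L\mapsto J_\sigma(w,\,\cdot\,,\underline{0},L)$ in the other direction, and then asserts the two maps are mutually inverse. Your argument unpacks exactly these steps --- in particular your holomorphy argument (the leading Laurent coefficient $\Gamma_L$ with $o\ge 1$ would vanish on the open-orbit subspace and hence be forced to zero by the geometric lemma and the hypothesis) is the natural proof of what the paper delegates to \cite{O17}, and your surjectivity/inverse argument is precisely the ``two maps can be shown to be inverse of each other'' that the paper leaves implicit.
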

\begin{proof}
It follows from Proposition \ref{Geometric lemma} that there is a natural injection of $\Hom_{H}(\ind_P^G(\sigma),\C)$ 
into $\Hom_{M^u}(\sigma,\C)$ in this situation. However \cite[Proposition 7.2]{O17} and its proof show that the map \[L \mapsto J_\sigma(w,\ . \ ,\underline{0},L)\] is an injection in the other direction. The two maps can be shown to be inverse of each other.
\end{proof}

The proposition above applies in the following situation.

\begin{prop}\label{holomorphic standard}
Let $\sigma=\d_1\otimes \dots \otimes \d_t$ be such that $Ind_P^G(\sigma)$ is a standard module, and 
suppose moreover that $\d_i$ and $\d_j$ are not isomorphic whenever $i\neq j$. If $\Hom_{M^u}(\sigma,\C)\neq 0$, then we are in the situation of Proposition \ref{open-orbit-contribution+}, and 
$\Hom_H(\Ind_P^G(\sigma),\C)$ is one dimensional.
\end{prop}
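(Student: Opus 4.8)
The goal is to verify that $\sigma=\d_1\otimes\dots\otimes\d_t$ satisfies the hypothesis of Proposition \ref{open-orbit-contribution+}: that among all $u_a\in\Rep$, the only one for which $r_{M_{u_a},M}(\sigma)$ is $M_{u_a}^{u_a}$-distinguished is the representative $u$ of the open orbit, attached to $w=w_t$ and the antidiagonal matrix $a_0$ (for which $M_{u}=M$ and $r_{M_u,M}(\sigma)=\sigma$). Granting this, Proposition \ref{open-orbit-contribution+} gives that $L\mapsto J_\sigma(w,\ .\ ,\underline{0},L)$ is an isomorphism from $\Hom_{M^u}(\sigma,\C)$ onto $\Hom_H(\Ind_P^G(\sigma),\C)$; and since $M^u\cong G_{m_1}\times\dots\times G_{m_r}$ (times $H_{m_{r+1}}$ when $t=2r+1$) with $\sigma=\d_1\otimes\dots\otimes\d_t$ and each $\d_i$ irreducible, the space $\Hom_{M^u}(\sigma,\C)$ is the tensor product of the spaces $\Hom_{G_{m_i}}(\d_i,(\d_{t+1-i}^\theta)^\vee)$ for $1\le i\le r$ and, if $t$ is odd, $\Hom_{H_{m_{r+1}}}(\d_{r+1},\C)$. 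Each of these has dimension at most one — by Schur's lemma (two irreducibles) for the first kind, and by Proposition \ref{mult1} for the last — and is non-zero since $\Hom_{M^u}(\sigma,\C)\neq 0$ by hypothesis. Hence $\Hom_{M^u}(\sigma,\C)$ is one-dimensional, and so is $\Hom_H(\Ind_P^G(\sigma),\C)$.

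It remains to establish the vanishing of $\Hom_{M_{u_a}^{u_a}}(r_{M_{u_a},M}(\sigma),\C)$ for $u_a\neq u$. Since $\sigma$ is a tensor product of discrete series, the last part of Remark \ref{pure tensor} (i.e.\ \cite[Proposition 7.16]{MS14} applied in each block $G_{m_i}$) shows that $r_{M_{u_a},M}(\sigma)$ is zero unless every row $(m_{i,1},\dots,m_{i,t})$ of $a$ is $\rho_i$-adapted, $\rho_i$ being the cuspidal representation on whose unramified twists $\d_i$ is supported; and when it is non-zero it is a pure tensor $\otimes_{i,j}\sigma_{i,j}$, with $\sigma_{i,j}$ the discrete series carved out of $\d_i=\La(\D_i)$ by the $j$-th piece of the corresponding slicing of $\D_i$ (the trivial representation of $G_0$ when $m_{i,j}=0$). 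By Remark \ref{pure tensor} the space $\Hom_{M_{u_a}^{u_a}}(r_{M_{u_a},M}(\sigma),\C)$ is then a tensor product of spaces $\Hom_{H_{m_{i,i}}}(\sigma_{i,i},\C)$ and $\Hom_{G_{m_{i,j}}}(\sigma_{j,i},(\sigma_{i,j}^\theta)^\vee)$ with $i<j$, so it can be non-zero only if $\sigma_{i,i}$ is $\theta$-distinguished for every $i$ and $\sigma_{j,i}\cong(\sigma_{i,j}^\theta)^\vee$ for every $i<j$.

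The core point, and what I expect to be the only real obstacle, is that these constraints force $a=a_0$; this is the analogue for non-split $G$ of the verification in \cite{G15}, whose argument applies verbatim. The ingredients are: the standard-module ordering $\Re(c_{\d_1})\geq\dots\geq\Re(c_{\d_t})$; the identity $\d_{t+1-i}\cong(\d_i^\theta)^\vee$ forced by $\Hom_{M^u}(\sigma,\C)\neq 0$, which gives $\Re(c_{\d_{t+1-i}})=-\Re(c_{\d_i})$ and hence $\Re(c_{\d_i})\geq 0$ for $i\leq t/2$; the elementary fact that a proper top (resp.\ bottom) sub-segment of a cuspidal segment has strictly larger (resp.\ smaller) real central character than the whole segment; the fact that a $\theta$-distinguished discrete series is conjugate-self-dual by Proposition \ref{distinction of discrete series}, hence has real central character $0$, together with the control of distinction of $(\delta^\theta)^\vee$ in terms of that of $\delta$ furnished by the same proposition; and the pairwise non-isomorphism of the $\d_i$, which excludes the coincidences $\d_j\cong\d_{t+1-i}$ with $j\neq t+1-i$. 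Concretely I would argue on $E^{*}=\max_{i,j}\Re(c_{\sigma_{i,j}})$: when $E^{*}\leq 0$ one shows no $\d_i$ is cut, so $a$ is a permutation-type symmetric matrix which, by non-isomorphism, must be $a_0$; when $E^{*}>0$, a maximising slice is a top slice of some $\d_{i^{*}}$ and its conjugate-dual partner is a bottom slice of some $\d_{j^{*}}$, and one peels off the resulting antidiagonal pair to pass to a strictly smaller self-dual partition, the base case $t=2$ being done by hand (any non-open orbit is then either the closed one, which would force two of the $\d_i$ to be distinguished and hence equal, or an intermediate one, which would force a distinguished proper top slice carrying both a positive and a zero real central character). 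This yields $a=a_0$ and completes the verification.
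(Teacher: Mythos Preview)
Your proof is correct and follows exactly the route the paper takes: the paper's own proof is the single sentence ``This is the content of the proof of \cite[Proposition 3.6]{G15},'' and you have unpacked precisely that argument, correctly identifying the ingredients (Jacquet modules of discrete series via Remark~\ref{pure tensor}, the central-character ordering of a standard module, conjugate self-duality forcing $\Re=0$, and the non-isomorphism hypothesis to rule out coincidences). One small remark: your inductive sketch via $E^{*}=\max_{i,j}\Re(c_{\sigma_{i,j}})$ is a slight reframing of Gurevich's argument, which more directly examines the first row of $a$ (the top slice of $\d_1$) and shows it must pair with the full $\d_t$; your version is equivalent but requires a line or two more to see that the maximising pair is in fact uncut and sits on the anti-diagonal --- since you already defer to \cite{G15} for the details, this is harmless.
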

\begin{proof}
This is the content of the proof of \cite[Proposition 3.6]{G15}.
\end{proof}

We will also need to apply Proposition \ref{open-orbit-contribution+} to the following extra cases.

\begin{prop}\label{holomorphic anti-standard}
Let $\d_1=\La(\D_i),\dots,\d_t=\La(\D_t)$ be an anti-ladder of essentially square-integrable representations, such that there is a cuspidal unitary representation $\rho$, with $\D_i=[a_i,b_i]_\rho$, and $a_{t+1-i}=-b_i$ for all $i$. Suppose moreover that 
the $\D_i$ and $\D_j$ are juxtaposed if they are linked for $1\leq i\neq j\leq t$. Set \[\sigma=\d_1\otimes \dots \otimes \d_t.\] 
Then the condition 
$\Hom_{M^u}(\sigma,\C)\neq 0$ is equivalent to the condition $\Hom_H(\Ind_P^G(\sigma),\C)\neq 0$.
 If $\Hom_{M^u}(\sigma,\C)\neq 0$, then we are in the situation of Proposition \ref{open-orbit-contribution+} (and 
$\Hom_H(\Ind_P^G(\sigma),\C)$ is of dimension $1$). 
\end{prop}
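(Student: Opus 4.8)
The plan is to combine the geometric lemma (Proposition~\ref{Geometric lemma}) with Proposition~\ref{open-orbit-contribution+}: the entire content of the argument is to verify the hypothesis of the latter, namely that among all $u_a\in\Rep$ the only one for which $r_{M_{u_a},M}(\sigma)$ is $M_{u_a}^{u_a}$-distinguished is the open-orbit representative $u$ (for which $r_{M_u,M}(\sigma)=\sigma$ and $M_u^u=M^u$). Granting this, Proposition~\ref{Geometric lemma} gives an injection $\Hom_H(\Ind_P^G(\sigma),\C)\hookrightarrow\Hom_{M^u}(\sigma,\C)$, so $\Hom_H(\Ind_P^G(\sigma),\C)\neq 0$ forces $\Hom_{M^u}(\sigma,\C)\neq 0$; conversely, if $\Hom_{M^u}(\sigma,\C)\neq 0$ then Proposition~\ref{open-orbit-contribution+} applies and produces an isomorphism $L\mapsto J_\sigma(w_t,\cdot,\underline{0},L)$ from $\Hom_{M^u}(\sigma,\C)$ onto $\Hom_H(\Ind_P^G(\sigma),\C)$, which both proves the remaining implication and, combined with $\dim\Hom_{M^u}(\sigma,\C)\le 1$ (it decomposes, by Remark~\ref{pure tensor} applied to $u$, as a tensor product of spaces of dimension at most one — Schur's lemma for the paired factors $\Hom_{G_{m_i}}(\d_{t+1-i},(\d_i^\theta)^\vee)$ and Proposition~\ref{mult1} for the possible middle factor $\Hom_{H_{m_{(t+1)/2}}}(\d_{(t+1)/2},\C)$ when $t$ is odd), yields one-dimensionality of $\Hom_H(\Ind_P^G(\sigma),\C)$. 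I may assume throughout that $\rho\simeq(\rho^\theta)^\vee$, since otherwise the matching conditions below cannot be met by any $u_a$, so all $\Hom_{M_{u_a}^{u_a}}$ vanish, as does $\Hom_{M^u}(\sigma,\C)$, and the equivalence is trivially $0=0$.

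To reduce to combinatorics I would use that $r_{M_{u_a},M}(\sigma)=\bigotimes_{i=1}^t r_{M_{u_a}\cap G_{m_i},G_{m_i}}(\d_i)$, and that, by Remark~\ref{pure tensor} (via \cite[Proposition~7.16]{MS14}), each factor is either zero or a pure tensor of discrete series attached to consecutive sub-segments of $\D_i$ (rows of $a$ which are not $\rho$-adapted give a zero Jacquet module and are discarded). Hence, when $r_{M_{u_a},M}(\sigma)\neq 0$ it is a pure tensor $\bigotimes_{i,j}\sigma_{i,j}$ with $\sigma_{i,j}=L(\D_{i,j})$, $\D_{i,j}$ a non-empty consecutive sub-segment of $\D_i$, and Remark~\ref{pure tensor} gives
\[\Hom_{M_{u_a}^{u_a}}(r_{M_{u_a},M}(\sigma),\C)\simeq\bigotimes_{i=1}^t\Hom_{H_{m_{i,i}}}(\sigma_{i,i},\C)\otimes\bigotimes_{1\le i<j\le t}\Hom_{G_{m_{i,j}}}(\sigma_{j,i},(\sigma_{i,j}^\theta)^\vee).\]

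The key geometric input is that the hypotheses force the segments $\D_1,\dots,\D_t$ to have pairwise disjoint supports, with $\D_{t+1-i}=\D_i^{\vee\theta}$. Indeed the anti-ladder condition reads $a_1<\dots<a_t$ and $b_1<\dots<b_t$, and "$\D_i,\D_{i+1}$ juxtaposed whenever linked" forces $a_{i+1}\ge b_i+1$ for all $i$ (if $a_{i+1}\le b_i$ the two are linked but not juxtaposed); together with the $a_i$ increasing this gives $\max\D_i<\min\D_j$ for $i<j$ and disjointness, while $a_{t+1-i}=-b_i$ (hence $b_{t+1-i}=-a_i$) gives $\D_{t+1-i}=\D_i^{\vee\theta}$. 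Now suppose $\Hom_{M_{u_a}^{u_a}}(r_{M_{u_a},M}(\sigma),\C)\neq 0$. For an off-diagonal position $(i,j)$, $i<j$, with $m_{i,j}\neq 0$, non-vanishing of $\Hom_{G_{m_{i,j}}}(\sigma_{j,i},(\sigma_{i,j}^\theta)^\vee)$ forces $\D_{j,i}=\D_{i,j}^{\vee\theta}$; as $\D_{i,j}\subseteq\D_i$ this lies in $\D_i^{\vee\theta}=\D_{t+1-i}$ but also in $\D_j$, and being non-empty, disjointness gives $j=t+1-i$. For a diagonal position $(i,i)$ with $m_{i,i}\neq 0$, non-vanishing of $\Hom_{H_{m_{i,i}}}(\sigma_{i,i},\C)$ forces $\sigma_{i,i}$ conjugate self-dual (Corollary~\ref{distinguished conjugate self-dual}), i.e. $\D_{i,i}=\D_{i,i}^{\vee\theta}$, so $0\in\D_{i,i}\subseteq\D_i$; then $0\in\D_i^{\vee\theta}=\D_{t+1-i}$ too, and disjointness gives $i=t+1-i$. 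Therefore every row $i$ of $a$ has all its non-zero entries in the single column $t+1-i$, so $a$ is the anti-diagonal matrix and $u_a=u$ — which is precisely the hypothesis of Proposition~\ref{open-orbit-contribution+}, and the first paragraph then concludes.

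The step requiring the most care is this last combinatorial one: one must be scrupulous about the indexing conventions of Remark~\ref{pure tensor} and \cite[Proposition~7.16]{MS14} so that "$\sigma_{i,j}$ is a discrete series whose segment is a sub-segment of $\D_i$" and the pairing $\sigma_{i,j}\leftrightarrow\sigma_{j,i}$ are literally correct, and one must justify discarding non-$\rho$-adapted rows so that every $\D_{i,j}$ occurring is genuinely non-empty — which is exactly what makes the disjointness argument bite. Everything else is bookkeeping from the results already quoted.
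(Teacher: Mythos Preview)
Your proof is correct and follows the same overall strategy as the paper: both reduce to verifying the hypothesis of Proposition~\ref{open-orbit-contribution+} (that only the open-orbit $u_a=u$ can give a distinguished Jacquet module) via the geometric lemma and Remark~\ref{pure tensor}. The only difference is in the combinatorics of that verification: the paper argues by induction on $t$, peeling off the outer pair $(\d_1,\d_t)$ at each step using the endpoint conditions, whereas you first extract the global consequence that the juxtaposed-if-linked hypothesis forces the intervals $[a_i,b_i]$ to be pairwise disjoint, and then dispatch every $u_a$ at once by noting that any required match $\D_{j,i}=(\D_{i,j})^{\vee\theta}$ lands simultaneously in $\D_j$ and in $\D_{t+1-i}$; this is a cleaner packaging of the same idea.
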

\begin{proof}
We do an induction on $t$, the case $t=1$ being trivial. If $t\geq 2$, we only need to consider the Jacquet modules 
$r_{M_a,M}(\d_1\otimes \dots \otimes \d_t)$ with $a\in \Rep$, and $\m_a$ a $\rho$-adapted sub-partition (see Remark \ref{pure tensor} and the conventions there for the notations here) of $\m$.  By Proposition \ref{Geometric lemma}, it is enough to show by induction that if $r_{M_a,M}(\d_1\otimes \dots \otimes \d_t)$ is $M_a^{u_a}$-distinguished, then $u_a=u$. Hence we assume that $r_{M_a,M}(\d_1\otimes \dots \otimes \d_t)$ is $M_a^{u_a}$-distinguished. We write 
$\d_j=[\d_{j,1},\dots,\d_{j,t}]$, with each $\d_{j,l}$ corresponding to a possibly empty segment (i.e. not appearing in 
$\d_j$) of length $m_{j,l}$ where $a=(m_{j,l})$. Let $i_0$ be the smallest integer $\geq 1$ such that $\d_{1,i_0}$ is non empty, so that $\d_1=[\d_{1,i_0},\dots]$. The Jacquet module 
\[r_{M_a,M}(\d_1\otimes \dots \otimes \d_t)=(\d_{1,i_0}\otimes \dots \otimes \d_{1,t}) \otimes \dots \otimes (\d_{i_0,1} \otimes \dots \otimes \d_{i_0,t}) \otimes \dots \otimes (\d_{t,1} \otimes \dots \otimes \d_{t,t})\] being $M_a^{u_a}$-distinguished forces
$\d_{i_0,1}=(\d_{1,i_0}^\theta)^\vee$. In particular $\rho=(\rho^\theta)^\vee$ because $\rho$ is unitary. Then because of the condition on the left and right ends of the segments, the representation $(\d_{i_0}^\theta)^\vee$ must appear as a $\d_j$, and $j$ must in fact be equal to 
$t+1-i_0$. This is possible only if $\d_j=\d_1$ otherwise the segment attached to $\d_1$ would have to precede that attached to $\d_j$ because of the anti-ladder condition, but it would not be juxtaposed to it because $\d_{1,i_0}=(\d_{i_0,1}^\theta)^\vee$, contradicting one of the hypothesis. Hence $t+1-i_0=1\Longleftrightarrow i_0=t$. All in all 
$\d_t=\d_{t,1}=(\d_{1,t}^\theta)^\vee=(\d_1^\theta)^\vee$. Writing $b=(a_{i,j})_{2\leq i\leq t-1,2\leq j\leq t-1}$, and $M'=M_{(m_2,\dots,m_{t-1})}$, then $r_{M_b,M'}(\d_2\otimes \dots \otimes \d_{t-1})$ is $M_b^{u_b}$-distinguished, and we conclude by induction. The space $\Hom_H(\Ind_{P}^G(\sigma),\C)$ has dimension 
one because $\Hom_{M^u}(\sigma,\C)$ has dimension 
one.
\end{proof}

\begin{prop}\label{holomorphic extra-case}
Let $\d_1,\dots,\d_t$ be a ladder of essentially square-integrable representations, with $t=2r$ a positive even integer. Set $\sigma=\d_1\otimes \dots \otimes \d_t$. Suppose moreover that 
$\d_r$ and $\d_{r+1}$ are juxtaposed and set $\tau_r=(r\ r+1)$. If 
$\Hom_{M^u}(\sigma,\C)\neq 0$, then Proposition \ref{open-orbit-contribution+} applies to $\Ind_P^G(\tau_{r}(\sigma))$ (and 
$\Hom_H(\Ind_P^G(\tau_r(\sigma)),\C)$ is of dimension $1$).
\end{prop}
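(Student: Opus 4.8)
The plan is to mimic the proof of Proposition \ref{holomorphic anti-standard} and show that among all $u_a\in\Rep$, the representation $\tau_r(\sigma)$ has a distinguished Jacquet module $r_{M_{u_a}^{u_a},M}(\tau_r(\sigma))$ only for $u_a=u$; then Proposition \ref{open-orbit-contribution+} applies verbatim. Write $\tau_r(\sigma)=\d_1\otimes\dots\otimes\d_{r-1}\otimes\d_{r+1}\otimes\d_r\otimes\d_{r+2}\otimes\dots\otimes\d_t$, and recall $\d_i=\La([a_i,b_i]_\rho)$ with $a_1>\dots>a_t$, $b_1>\dots>b_t$ (ladder condition), and that since $\Ind_P^G(\sigma)$ is distinguished, Proposition \ref{standard} forces the existence of $\e\in\S_t$ with $\d_{\e(i)}=(\d_i^\theta)^\vee$; because the $a_i$ (resp. $b_i$) are strictly decreasing and $(\ .\ ^\theta)^\vee$ reverses segments, one checks $\e=w_t$, so $\d_{t+1-i}=(\d_i^\theta)^\vee$ for all $i$ and in particular $a_{t+1-i}=-b_i$ and the segments $\D_i$, $\D_{t+1-i}$ are "centrally symmetric". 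The juxtaposition hypothesis on $\d_r,\d_{r+1}$ means $a_r=b_{r+1}+1$ (they are linked because consecutive in a ladder, unless they were already unlinked, but then $\D_r$ and $\D_{r+1}$ juxtaposed is vacuous and one argues similarly).

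The main computation is the following: by Proposition \ref{Geometric lemma} and Remark \ref{pure tensor}, we only consider $a\in I(\m)$ with $\m_a$ a $\rho$-adapted sub-partition of $\m$, and for such $a$, $r_{M_{u_a},M}(\tau_r(\sigma))$ is a pure tensor of the essentially square-integrable pieces $\d_{j,l}=\La(\D_{j,l})$ obtained by cutting each segment $\D_{\tau_r^{-1}(j)}$ into consecutive sub-segments $\D_{j,1},\dots,\D_{j,t}$. Distinction of this Jacquet module forces, reading off the "corner" conditions as in the proof of Proposition \ref{holomorphic anti-standard}: for $i<j$ one needs $\d_{j,i}=(\d_{i,j}^\theta)^\vee$ (a Rankin–Selberg type pairing), and for $i=i$ one needs $\d_{i,i}$ itself $\theta$-distinguished, hence in particular $\D_{i,i}$ centrally symmetric. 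I would start from the topmost row: letting $i_0$ be the largest index with $\D_{1,i_0}$ nonempty (this row encodes $\D_1=[a_1,b_1]_\rho$, the longest and "highest" segment), the end-point constraints force $i_0=t$ and then $\D_{t,1}=(\D_{1,t}^\theta)^\vee=(\D_1^\theta)^\vee=\D_t$, using $a_1=-b_t$. Since $\D_t=[a_t,b_t]_\rho$ must then also appear as an uncut block, and an induction on $t$ (stripping off $\d_1$ and $\d_t$ and replacing $M$ by $M_{(m_2,\dots,m_{t-1})}$, which still carries $\tau_{r-1}$ applied to a proper ladder with the juxtaposed middle pair) finishes the argument; the role of the proper-ladder and juxtaposition hypotheses is exactly to rule out any intermediate cutting that would be compatible with the segment-end constraints, just as in Proposition \ref{holomorphic anti-standard}.

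The subtle point — and the main obstacle — is the middle of the ladder, i.e. the interaction between $\d_{r+1}$ and $\d_r$ which have been swapped by $\tau_r$: after $\tau_r$ the partition is no longer that of a ladder in the naive left-to-right order, so one must verify that the juxtaposition $a_r=b_{r+1}+1$ is precisely what prevents $\D_{r+1}$ (now in position $r$) and $\D_r$ (now in position $r+1$) from being cut in a nontrivial distinguished way; concretely, if one tried to split one of them, the resulting sub-segment end-points, when paired under $(\ .\ ^\theta)^\vee$ against a sub-segment of the mirror position, would violate either the strict decrease of the $a_i,b_i$ or the juxtaposition equality. This is the step I would write out most carefully, tracing the inequalities between the $a$'s and $b$'s exactly as in the display chains of the proofs of Lemmas \ref{BB} and \ref{AB} and Proposition \ref{holomorphic anti-standard}. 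Once that combinatorial bookkeeping is done, the dimension statement $\dim\Hom_H(\Ind_P^G(\tau_r(\sigma)),\C)=1$ is immediate: it equals $\dim\Hom_{M^u}(\tau_r(\sigma),\C)=\prod_i\dim\Hom_{H_{m_i}}(\text{pairings})$ which is $1$ by Proposition \ref{mult1} applied to each factor together with the fact that each $\Hom$ of a discrete series against the contragredient of its conjugate is at most one-dimensional, and nonzero by hypothesis $\Hom_{M^u}(\sigma,\C)\neq 0$ (which is equivalent to $\Hom_{M^u}(\tau_r(\sigma),\C)\neq 0$ since $\tau_r$ commutes with $w$ and merely permutes the central-symmetry conditions).
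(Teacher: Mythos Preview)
Your overall plan---show that among all $u_a\in\Rep$ only $u_a=u$ gives a distinguished Jacquet module, by peeling off $\d_1$ and $\d_t$ and inducting---is exactly the paper's approach. Two points deserve correction, however.

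First, a bookkeeping slip: in the ladder case you should take $i_0$ to be the \emph{smallest} index with $\d_{1,i_0}$ non-empty, not the largest. With the smallest index, $\d_{1,i_0}$ is the top sub-segment of $\d_1$, with right end $b_1$; the pairing $\d_{i_0,1}=(\d_{1,i_0}^\theta)^\vee$ then forces the right end of $\d_{i_0,1}$ (which equals $b_{\tau_r(i_0)}$) to be $\leq -a_1=b_t$, hence $\tau_r(i_0)=t$, i.e.\ $i_0=t$. Since $i_0$ was the smallest such index, this immediately gives $\d_1=\d_{1,t}$, and then $\d_{t,1}=(\d_1^\theta)^\vee=\d_t$ shows $\d_t$ is uncut too. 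With your choice of largest index, the conclusion $i_0=t$ only tells you $\d_{1,t}$ is non-empty; your displayed chain $(\D_{1,t}^\theta)^\vee=(\D_1^\theta)^\vee$ already assumes $\D_{1,t}=\D_1$, which is what needs proving. (It can be patched by running the symmetric argument on row $t$, but the paper's choice avoids this.)

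Second, and more substantively: you misplace where the juxtaposition hypothesis is used. For $r\geq 2$ one has $\tau_r(1)=1$ and $\tau_r(t)=t$, so the outer factors of $\tau_r(\sigma)$ are the untouched $\d_1$ and $\d_t$; the peeling argument above uses only the ladder inequalities and the relation $\d_t=(\d_1^\theta)^\vee$, \emph{not} the juxtaposition of $\d_r,\d_{r+1}$. The juxtaposition is needed only at the base of the induction $r=1$, where $\tau_1(\sigma)=\d_2\otimes\d_1$ is an anti-ladder with $\D_2,\D_1$ juxtaposed, and one invokes Proposition~\ref{holomorphic anti-standard} directly. So there is no ``main obstacle'' in the middle of the inductive step to trace through, and the references to Lemmas~\ref{BB} and~\ref{AB} (which concern Weyl-group combinatorics for the unramified computation) are not relevant here.
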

\begin{proof}
We do an induction on $r$. If $r=1$, then we are in the anti-ladder situation of Proposition \ref{holomorphic anti-standard}. If $r\geq 2$, we have $\tau_r(1)=1$ and $\tau_r(t)=t$. There is a cuspidal representation $\rho$ such that each $\d_i$ is of the form $\La([c_i,d_i]_\rho)$, and we only need to consider the Jacquet modules 
$r_{M_a,M}(\d_{\tau_r(1)}\otimes \dots \otimes \d_{\tau_r(t)})$ with $a\in \Rep$, and $\m_a$ a $\rho$-adapted sub-partition. Suppose that $r_{M_a,M}(\d_{\tau_r(1)}\otimes \dots \otimes \d_{\tau_r(t)})$ is $M_a^{u_a}$-distinguished. We proceed as in the proof above, and write again 
$\d_j=[\d_{j,1},\dots,\d_{j,t}]$. Let $i_0$ be the smallest integer $\geq 1$ such that $\d_{1,i_0}$ appears in $\d_1$, so that $\d_1=[\d_{1,i_0},\dots]$. Then 
$\d_{i_0,1}=(\d_{1,i_0}^\theta)^\vee$, this implies that $i_0=t$ because of the ladder condition and because 
$\d_t=(\d_1^\theta)^\vee$. Hence $\d_t=[\d_{t,1},\dots]$ with $\d_{t,1}=(\d_{1,i_0}^\theta)^\vee$. As $\d_t=(\d_1^\theta)^\vee$, this in turn implies that 
$\d_t=\d_{t,1}$ and that $\d_1=\d_{1,t}$. Writing $b=(a_{i,j})_{2\leq i\leq t-1,2\leq j\leq t-1}$, and $M'=M_{(m_2,\dots,m_{t-1})}$, then $r_{M_b,M'}(\d_{\tau_r(2)}\otimes \dots \otimes \d_{\tau_r(t-1)})$ is $M_b^{u_b}$-distinguished, and we conclude by induction. 
The multiplicity one statement is proved as in Proposition \ref{holomorphic anti-standard}.
\end{proof}

\subsection{Properties of p-adic open periods}\label{basic open}

The aim of this section is to prove Proposition \ref{compatibility intertwining periods parabolic induction} which relates  transitivity of parabolic induction, intertwining operators and intertwining periods.

Let $m_1,\dots,m_r$ be positive integers, with $m_r=n+n'$ for $n$ and $n'\in \N$. 
We set $m=2(m_1+\dots+m_r)$, $\m=(m_1,\dots,m_r,m_r,\dots,m_1)$, $\m'=(m_1,\dots,m_{r-1},n,n',n',n,m_{r-1},\dots,m_1)$, and 
$\m''=(m_1,\dots,m_{r-1},n,n',n,n',m_{r-1},\dots,m_1)$. We then introduce the following self-dual parabolic subgroups of 
$G=G_{2(m_1+\dots+m_r)}$ with their standard Levi decompositions: 
\[P=P_{\m}=MN, \ Q=P_{\m'}=LU,\] as well as the non self-dual parabolic subgroup 
$Q'=P_{\m''}=L'U'$.

We consider a representation of $L$ parabolically induced from a finite length representation of the form:
\[\tau=\tau_1\otimes \dots \otimes \tau_{r-1}\otimes \mu\otimes \mu' \otimes ({\mu'}^\vee)^\theta\otimes({\mu}^\vee)^\theta \otimes (\tau_{r-1}^\vee)^\theta\otimes \dots \otimes (\tau_1^\vee)^\theta.\]
We define the linear form $\ell$ on $\tau$ by:
\[\ell(v_1\otimes\otimes \dots \otimes v_{r-1}\otimes v \otimes v'\otimes {v'}^\vee\otimes v^\vee \otimes v_{r-1}^\vee \otimes \dots \otimes v_1^\vee)=  <v,v^\vee> <v',{v'}^\vee>\prod_{i=1}^{r-1} <v_i,v_i^\vee>\]

We set $t=2r$, $w=w_{t+2}\in \S_{t+2}$ and \[\s(u)=(s_1,\dots,s_{r-1},s_r+u,s_r,-s_r, -u-s_r,-s_{r-1},\dots,-s_1)\in 
\C^{t+2}(w,-1).\] Seeing $\S_t$ inside $\S_{t+2}$ thanks to the inclusion $Q\subset P$ and setting $w'=w_t\in \S_t$, we can 
see $\C^t(w',-1)$ inside $\C^{t+2}(w,-1)$ and $\s(0)$ belongs to $\C^t(w',-1)$ and identifies with $\s$. We then set $\tau[\s,u]=\tau[\s(u)]$, $t_r=(r+2,r+3)\in \S_{t+2}$ so that $w'=t_r\circ w\circ t_r^{-1}$, and define the representations of $L'$:
\[\tau'=t_r(\tau)=\tau_1\otimes \dots \otimes \tau_{r-1}\otimes \mu\otimes \mu' \otimes({\mu}^\vee)^\theta \otimes ({\mu'}^\vee)^\theta \otimes (\tau_{r-1}^\vee)^\theta\otimes \dots \otimes (\tau_1^\vee)^\theta\] and more generally 
\[\tau'[\s,u]=t_r(\tau[\s,u])=\tau'[t_r(\s(u))].\] For $u\in\C$, we also consider the finite length representations of $M$:
\[\tau_u=\tau_1\otimes \dots \otimes \tau_{r-1}\otimes (\mu[u]\times \mu' )\otimes (({\mu'}^\vee)^\theta\times({\mu}^\vee)^\theta[-u]) \otimes (\tau_{r-1}^\vee)^\theta\otimes \dots \otimes (\tau_1^\vee)^\theta\]
and 
\[\tau'_u=\tau_1\otimes \dots \otimes \tau_{r-1}\otimes (\mu[u]\times \mu' )\otimes (({\mu}^\vee)^\theta[-u]\times ({\mu'}^\vee)^\theta) \otimes (\tau_{r-1}^\vee)^\theta\otimes \dots \otimes (\tau_1^\vee)^\theta.\] We denote by $F:f\mapsto F_f$ the inverse of the canonical isomorphism 
$F\mapsto F(\ . \ )(I_{m_r},I_{m_r})$ corresponding to transitivity of parabolic induction from 
$\Ind_P^G(\tau_u[\s])$ to \[\pi_{\s,u}:=\Ind_Q^G(\tau[\s,u]).\] Similarly we denote by $G:g\mapsto G_g$ the inverse of the canonical isomorphism 
$G\mapsto G(\ . \ )(I_{m_r},I_{m_r})$ corresponding to transitivity of parabolic induction from 
$\Ind_P^G(\tau'_u[\s])$ to \[\pi'_{\s,u}:=\Ind_{Q'}^G(\tau'[\s,u]).\]

Now introduce \[A(u)=
Id\otimes\dots \otimes Id\otimes  A_{({\mu'}^\vee)^\theta\otimes({\mu}^\vee)^\theta}(w_2,(0,-u))\otimes Id\otimes\dots \otimes Id\] the standard intertwining operator from $\tau_u$ to 
$\tau'_u$ and  
\[\mathcal{A}(u)=A_{\tau}(t_r,(0,\dots,0,0,0,-u,\dots,0))\] the standard intertwining operator from $\pi_{\s,u}$ to $\pi'_{\s,u}$. We denote by $R(A)$ the subset of $\C$ where the intertwining operator $A$ is holomorphic. For any $u\in \C$ and 
$\s\in \C^t(w',-1)$, one defines
the linear form \[\ell':v_1\otimes\otimes \dots \otimes v_{r-1}\otimes v \otimes v'\otimes v^\vee \otimes {v'}^\vee\otimes v_{r-1}^\vee \otimes \dots \otimes v_1^\vee\mapsto  <v,v^\vee> <v',{v'}^\vee>\prod_{i=1}^{r-1} <v_i,v_i^\vee>\] in 
$\Hom_{L^{\theta_{w'}}}(\tau'[\s,u],\C)-\{0\}$ and the associated closed period 
\[L':
v_1\otimes\otimes \dots \otimes v_{r-1}\otimes f \otimes f' \otimes v_{r-1}^\vee \otimes \dots \otimes v_1^\vee\mapsto  <f,f'>\prod_{i=1}^{r-1} <v_i,v_i^\vee>\] in 
$\Hom_{M^{\theta_{w'}}}(\tau'_u,\C)-\{0\}$, where \[<f,f'>=\int_{P_{(n,n')}\backslash G_{m_r}} <f(g),f'(\theta(g))> dg.\]
For $u$ in $R(A)$, this gives birth to a linear form $L_u\in \Hom_{M^{\theta_{w'}}}(\tau_u,\C)-\{0\}$ defined by 
\[L_u=L'\circ A(u).\] 

The following lemma is a consequence of Proposition \ref{proposition compatibility of transitivity of parabolic induction and intertwing periods}.

\begin{LM}\label{lemma compatibility bis}
With notations as above, one has for fixed $u\in \C$ and $g_{t_r(\s(u))}$ a flat section of $\pi'_{\s,u}$, the equality of meromorphic functions in the variable $\s$: 
\[J_{\tau'_u}(w',G_{g_{t_r(\s(u))}},\s,L')=J_{\tau'}(w',g_{t_r(\s(u))},t_r(\s(u)),\ell').\] In particular 
$J_{\tau'_u}(w',G_{g_{t_r(\s(u))}},\s,L')$ depends meromorphically on $t_r(\s(u))$.
\end{LM}

We can now state a useful consequence of Lemma \ref{lemma compatibility bis} and Remark \ref{remark restatement of proposition JLR-key}.

\begin{prop}\label{compatibility intertwining periods parabolic induction}
Take $u\in R(A)$, and let $f_{\s(u)}$ be a flat section of $\pi_{\s,u}$, then the following identity of meromorphic functions in  $\s\in \C^t(w,-1)$ is satisfied: 
\[J_{\tau_u}(w',F_{f_{\s(u)}},\s,L_u)=J_\tau(w,f_{\s(u)},\s(u),\ell).\]
\end{prop}
\begin{proof}
First we set \[z=\begin{pmatrix} & I_{n'} \\ I_n & \end{pmatrix},\] 
\[b(z)=\diag(I_{m_1},\dots,I_{m_{r-1}},I_{m_r}, z,I_{m_{r-1}},\dots,I_{m_1}),\] 
\[a(x)= \begin{pmatrix} I_n & x \\ & I_{n'} \end{pmatrix}\] and \[c(x)= 
\diag(I_{m_1},\dots,I_{m_{r-1}},I_{m_r},a(x),I_{m_{r-1}},\dots,I_{m_1})\] for $x\in \mathcal{M}_{n,n'}$.
By Remark \ref{remark restatement of proposition JLR-key} we have 
\[J_\tau(w,f_{\s(u)},\s(u),\ell)=J_{\tau'}(w',\mathcal{A}(u) f_{\s(u)},t_r(\s(u)),\ell').\] Then by Lemma \ref{lemma compatibility bis}, we deduce that 
\[J_\tau(w,f_{\s(u)},\s(u),\ell)=J_{\tau'_u}(w',\s,G_{\mathcal{A}(u) f_{\s(u)}},L').\]
The seeked equality will thus be true if we prove that \[L_u (F_{f_{\s(u)}}(h))=L'(G_{\mathcal{A}(u) f_{\s(u)}}(h))\] for $h\in H$, and this equality will itself follow if we prove 
\[A(u)(F_{f_{\s(u)}}(h))=G_{\mathcal{A}(u) f_{\s(u)}}(h).\] It then suffices to verify 
\[A(u)(F_{f_{\s(u)}}(h))(I_{m_r},I_{m_r})=G_{\mathcal{A}(u) f_{\s(u)}}(h)(I_{m_r},I_{m_r})\] for any $\s \in \C^t(w,-1)$. Now for fixed $\s$,  both sides of the identity above are meromorphic in $u$, so it is enough to prove it when $\mathcal{A}(u)$ and 
$A(u)$ are given by an absolutely convergent integral. In this case we have 
\[ A(u)(F_{f_{\s(u)}}(h))(I_{m_r},I_{m_r})=\int_{x\in \mathcal{M}_{n,n'}}F_{f_{\s(u)}}(h)(I_{m_r},z a(x))dx= \int_{x\in \mathcal{M}_{n,n'}}f_{\s(u)}(b(z)c(x)h)dx\]
\[=(\mathcal{A}(u) f_{\s(u)})(h) =G_{\mathcal{A}(u) f_{\s(u)}}(h)(I_{m_r},I_{m_r})\] 
\end{proof}

\subsection{Poles of certain p-adic open periods}\label{pole open}

We send the reader back to Propositions \ref{closed-orbit-contribution} and \ref{open-orbit-contribution} for a better understanding of the following result, which is a consequence of multiplicity one. In what follows, if we write 
$\lim_{s\to a} L_s=L_a$ for some linear form $L_s$ on an induced representation $\pi_s$, it will mean that $L_a$ is a linear form 
on $\pi_a$, and $\lim_{s\to a} L_s(f_s)$ tends to $L_a(f_a)$ for any flat section $f_s$.

\begin{prop}\label{open-pole-easy}
Let $\pi$ be a distinguished representation of $G_{m}$, such that $\pi\times \pi$ is irreducible (in particular $\pi$ is irreducible hence $\pi^\theta=\pi^\vee$ by Corollary \ref{distinguished conjugate self-dual}). Take \[\l\in {\rm{\Hom}}_{M_{(m,m)}^{\theta}}(\pi\otimes \pi,\C)-\{0\},\] and 
\[L\in {\rm{\Hom}}_{M_{(m,m)}^{\theta_{w_2}}}(\pi\otimes \pi,\C)-\{0\}.\]
Then $J_{\pi\otimes \pi}(w_2,.,(-s,s),L)$ has a pole at zero, and if this pole has order $k\geq 1$, one has: 
\[\lim_{s\to 0} s^k J_{\pi\otimes \pi}(w_2,.,(-s,s),L)\sim J_{\pi\otimes \pi}(.,\l).\]
\end{prop}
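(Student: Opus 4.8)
The plan is to exploit multiplicity one for the pair $(G,H)$, which here forces all the relevant $H$-invariant functionals on $\Ind_P^G(\pi\otimes\pi)$ (whether coming from the closed orbit or the open orbit) to be proportional, and then to pin down the proportionality by looking at which terms survive in a suitable limit.

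First I would set $\sigma=\pi\otimes\pi$ with $P=P_{(m,m)}$, so that $\Ind_P^G(\sigma)=\pi\times\pi$ is irreducible by hypothesis, and observe that $\sigma[(-s,s)]$ is $M^u$-distinguished via $L$ for all $s$ (as in Section \ref{non-archi non-split}), and $\sigma$ is $M\cap H$-distinguished via $\l$ (Proposition \ref{closed-orbit-contribution}). By Theorem \ref{BD}, $J_{\sigma}(w_2,.,(-s,s),L)$ is meromorphic in $s$, and by Proposition \ref{open-orbit-contribution} together with Remark \ref{m=0} there is an integer $k\in\N$ such that $\Gamma_L:f\mapsto\lim_{s\to 0}s^k J_{\sigma}(w_2,f_{(-s,s)},(-s,s),L)$ is a nonzero $H$-invariant linear form on $\pi\times\pi$; I need to argue $k\ge 1$, i.e. that $J_{\sigma}(w_2,.,(-s,s),L)$ genuinely has a pole at $0$ and does not merely extend holomorphically. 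The key point is that the geometric lemma (Proposition \ref{Geometric lemma}) shows $\dim\Hom_H(\Ind_P^G(\sigma),\C)\le 2$ only if both the closed orbit $u=\mathrm{id}$ and the open orbit $u=u_{w_2}$ contribute, since the Jacquet module of $\sigma$ at the proper Levi $M_{(m,m)}$ is $\sigma$ itself and both $\Hom_{M\cap H}(\sigma,\C)$ and $\Hom_{M^{w_2}}(\sigma,\C)$ are nonzero and one-dimensional. But $\Hom_H(\pi\times\pi,\C)$ is one-dimensional by Proposition \ref{mult1} (irreducibility of $\pi\times\pi$), so the two potential contributions cannot both give rise to linearly independent functionals: the closed-orbit functional $J_\sigma(.,\l)$ must be a nonzero scalar multiple of $\Gamma_L$. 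If $J_\sigma(w_2,.,(-s,s),L)$ were holomorphic at $0$ (i.e. $k=0$), then $\Gamma_L=J_\sigma(w_2,.,(0,0),L)$ would already be the open-orbit functional and by Proposition \ref{open-orbit-contribution+}-type reasoning it would be detected by its restriction to $\mathcal{C}_c^\infty(P\backslash Pu_{w_2}H,\d_P^{1/2}\sigma)$, whereas the closed-orbit functional $J_\sigma(.,\l)$ vanishes on that subspace (its support is on the closed orbit) — contradicting proportionality and non-vanishing. Hence $k\ge 1$, which is the first assertion.

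For the proportionality with the explicit constant $1$, I would then compute both sides carefully. We know $\lim_{s\to 0}s^k J_\sigma(w_2,f_{(-s,s)},(-s,s),L)=\Gamma_L(f)$ is $H$-invariant and nonzero, hence $\Gamma_L\sim J_\sigma(.,\l)$ by multiplicity one; this already gives the displayed relation $\lim_{s\to 0}s^k J_{\pi\otimes\pi}(w_2,.,(-s,s),L)\sim J_{\pi\otimes\pi}(.,\l)$ up to a nonzero scalar, which is exactly what the $\sim$ in the statement records. To make this airtight I would check that the limit is not identically zero using Remark \ref{nonvanishing-of-global-periods}: there is a flat section $f_{\underline 0}\in\mathcal{C}_c^\infty(P\backslash Pu_{w_2}H,\d_P^{1/2}\sigma)$ with $J_\sigma(w_2,f_{\s},\s,L)\equiv 1$, so the order-$k$ coefficient of the Laurent expansion is a well-defined nonzero object on a suitable subspace; combined with the one-dimensionality of $\Hom_H$ this forces the match with $J_\sigma(.,\l)$.

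The main obstacle I anticipate is the lower bound $k\ge 1$: it is conceptually clear from the geometric lemma that "both orbits contribute and there's only room for one functional, so the open period must blow up," but turning this into a rigorous argument requires being careful about the injectivity in Proposition \ref{Geometric lemma} and about the precise relationship between the leading Laurent coefficient $\Gamma_L$ of the open period and the closed-orbit integral $J_\sigma(.,\l)$ — in particular ruling out that $\Gamma_L$ could itself be (a scalar multiple of) the open-orbit contribution at $\s=\underline 0$ rather than the closed-orbit one. The cleanest route is probably to observe that if $J_\sigma(w_2,.,(-s,s),L)$ were regular at $0$, then by Remark \ref{nonvanishing-of-global-periods} its value at $\underline 0$ would be a nonzero $H$-functional supported (in the sense of the geometric lemma filtration) on the open orbit, while $J_\sigma(.,\l)$ is supported on the closed orbit; two such functionals are linearly independent in $\Hom_H(\Ind_P^G(\sigma),\C)$, contradicting $\dim\le 1$. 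Everything else — meromorphy, the existence of the leading term, $H$-invariance of the limit — is supplied directly by Theorem \ref{BD}, Proposition \ref{open-orbit-contribution}, and Proposition \ref{mult1}.
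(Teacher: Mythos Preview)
Your proposal is correct and follows essentially the same approach as the paper's proof: use Proposition \ref{open-orbit-contribution} to get a nonzero leading term $\Gamma_L$, invoke multiplicity one (Proposition \ref{mult1}) on the irreducible $\pi\times\pi$ to force $\Gamma_L\sim J_\sigma(.,\l)$, and rule out $k=0$ by noting that $J_\sigma(w_2,.,\underline{0},L)$ is nonzero on $\mathcal{C}_c^\infty(P\backslash Pu_{w_2}H,\d_P^{1/2}\sigma)$ (Remark \ref{nonvanishing-of-global-periods}) whereas $J_\sigma(.,\l)$ vanishes there. The paper's argument is exactly this, just more tersely stated; your detour through the geometric lemma's two-term filtration is correct but unnecessary, since multiplicity one on the irreducible induced representation already supplies the one-dimensionality of $\Hom_H$ directly.
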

\begin{proof}
By proposition \ref{open-orbit-contribution}, there is $k\geq 0$ such that the linear form $\lim_{s\to 0} s^k J_{\pi\otimes \pi}(w_2,.,(-s,s),L)$ is nonzero. As it lives together with $J_{\pi\otimes \pi}(.,\l)$ in the line (Theorem \ref{mult1}) 
${\rm{\Hom}}_{H_{2m}}(\pi\times\pi,\C)$, the relation $\lim_{s\to 0} s^k J_{\pi\otimes \pi}(w_2,.,(-s,s),L)\sim J_{\pi\otimes \pi}(.,\l)$ follows. It remains to show that $k\geq 1$, but if $k$ was equal to $0$, this would imply that $J_{\pi\otimes \pi}(.,\l)$ 
is non vanishing on the space 
\[\mathcal{C}_c^\infty(P_{(m,m)}\backslash P_{(m,m)}u_{w_2}H_{2m},\d_{P_{m,m}}^{1/2}\pi\otimes \pi),\] 
a contradiction.
\end{proof}

We can extend the result above to the following useful situation. Let $\pi$ be distinguished representation of $G_{m_{r+1}}$, such that $\pi\times \pi$ is irreducible (in particular $\pi^\theta=\pi^\vee$). Let $\sigma_1,\dots,\sigma_r$ be finite length representations of $G_{m_1},\dots,G_{m_r}$. Set $m=2(\sum_{i=1}^{r+1} m_i)$, $t=2r$, and 
\[\overline{m}=(m_1,\dots,m_{r+1},m_{r+1},\dots,m_1).\] Set $w=w_{t+2}$, $\sigma= \sigma_1\otimes \dots \sigma_r \otimes 
\pi\otimes \pi\otimes  (\sigma_r^\theta)^\vee\otimes \dots \otimes (\sigma_1^\theta)^\vee,$
 and define $\ell\in {\rm{\Hom}}_{M_{\m}^{\theta_{w}}}(\sigma,\C)-\{0\},$ by
\[\ell:v_1\otimes \dots \otimes v_r\otimes v\otimes v^\vee\otimes v_r^\vee\otimes \dots \otimes v_1^\vee\mapsto 
<v, v^\vee>\prod_{i=1}^r<v_i,v_i^\vee>,\]
We set $P=P_{\m}$, $G=G_m$, $H=H_m$ and denote by $u_0$ the element of $R(P\backslash G/H)$ corresponding to $w$.

\begin{prop}\label{open-pole-2}
With notations as above, set \[\mathcal{H}=\{\s\in \C^{t+2}(w,-1), s_{r+1}=0\}.\] Then there is $f\in 
\Ind_{P}^G(\sigma)$ such that for any $\underline{a}\in \mathcal{H}$,  
\[\lim_{\s \to \underline{a}} J_{\sigma}(w,f_{\s},\underline{s},\ell)=\infty.\]
\end{prop}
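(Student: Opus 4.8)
The plan is to reduce the statement to the case $r=0$, i.e. to Proposition~\ref{open-pole-easy}, by exploiting the compatibility of open periods with the associativity of parabolic induction established in Proposition~\ref{compatibility intertwining periods parabolic induction}, together with the factorization of open periods in a product over "outer" and "inner" pieces. First I would choose a flat section $f_{\s}$ which is decomposable in the sense that its image under the canonical isomorphism $\Phi$ (associativity of parabolic induction) has the pure tensor form $v_1\otimes\dots\otimes v_r\otimes h_{\s}\otimes v_r^\vee\otimes\dots\otimes v_1^\vee$ with $h_{\s}$ a flat section for the "inner" induced representation $\pi[s_{r+1}]\times\pi[-s_{r+1}]$ of $G_{2m_{r+1}}$, and with the $v_i$'s chosen so that $\prod_i\langle v_i,v_i^\vee\rangle\neq 0$. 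For such $f_{\s}$, an unfolding of the intertwining period integral along the parabolic $P_{\m}\subset Q$ (exactly as in the chain of equalities in the proof of Proposition~\ref{compatibility intertwining periods parabolic induction}, or as in Proposition~\ref{parabolic} in the unramified setting) should give an identity of the shape
\[
J_{\sigma}(w,f_{\s},\s,\ell)=\Bigl(\prod_{i=1}^{r}\langle v_i,v_i^\vee\rangle\Bigr)\,J_{\pi\otimes\pi}(w_2,h_{(s_{r+1}',s_{r+1})}, (-s_{r+1},s_{r+1}),L),
\]
where $L$ is the natural $M^{\theta_{w_2}}$-invariant form on $\pi\otimes\pi$; the outer variables $s_1,\dots,s_r$ disappear because the outer data is paired generically. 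I expect the cleanest way to get this is to take the $v_i$'s inside $\mathcal{C}_c^\infty$-type sections supported near the open orbit so that the outer integrations collapse to the pairing constants, leaving only the inner period.

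Once this factorization is in hand, I would invoke Proposition~\ref{open-pole-easy}: since $\pi$ is distinguished and $\pi\times\pi$ is irreducible, the inner period $J_{\pi\otimes\pi}(w_2,\cdot,(-s,s),L)$ has a genuine pole at $s=0$ (of some order $k\geq 1$). Therefore, for $\underline{a}\in\mathcal{H}$, the limit as $\s\to\underline{a}$ of $J_{\sigma}(w,f_{\s},\s,\ell)$ equals $\bigl(\prod_i\langle v_i,v_i^\vee\rangle\bigr)$ times the limit of $J_{\pi\otimes\pi}(w_2,h_{(-s_{r+1},s_{r+1})},(-s_{r+1},s_{r+1}),L)$ as $s_{r+1}\to 0$ with the outer variables fixed at $(a_1,\dots,a_r)$; the latter blows up because the pole in the single variable $s_{r+1}$ persists (the choice of $h$ can be made, again via Proposition~\ref{open-pole-easy} and Remark~\ref{nonvanishing-of-global-periods}, so that $h$ sees the pole), and the nonzero constant $\prod_i\langle v_i,v_i^\vee\rangle$ does not kill it. This yields $\lim_{\s\to\underline{a}}J_{\sigma}(w,f_{\s},\s,\ell)=\infty$, as required.

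Two technical points need care. The first is checking that the unfolding that produces the factorization is legitimate: one must work inside a cone $D(w,q)$ where all the integrals converge absolutely (Theorem~\ref{BD} and the refined convergence statements of Section~\ref{basic open}), perform the computation there, and then invoke meromorphic continuation in $\s$ — this is precisely the kind of argument carried out in Proposition~\ref{compatibility intertwining periods parabolic induction}, so I would either cite it directly (taking $\mu=\pi$, $\mu'$ trivial, $u=0$, and noting that the relevant intertwining operator $A(0)$ is the identity since $\pi\times\pi$ is irreducible so no extra $\gamma$-factor appears) or reprove the short version needed here. The second, and I expect this to be the main obstacle, is verifying that the pole of the inner period at $s_{r+1}=0$ does not get cancelled after restriction to the slice $\mathcal{H}$ and choice of a specific flat section: one must exhibit an explicit $h_{\s}$ (hence an explicit $f_{\s}$) on which $J_{\pi\otimes\pi}(w_2,h_{(-s,s)},(-s,s),L)$ actually has the pole of maximal order $k$. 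This is available from the proof of Proposition~\ref{open-pole-easy}: there the comparison with $J_{\pi\otimes\pi}(\cdot,\lambda)$ shows $\lim_{s\to 0}s^kJ_{\pi\otimes\pi}(w_2,\cdot,(-s,s),L)\sim J_{\pi\otimes\pi}(\cdot,\lambda)\neq 0$, so some flat section realizes the pole; pulling such a section back through $\Phi$ and tensoring with generic outer data gives the desired $f$, completing the argument.
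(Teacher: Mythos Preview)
Your overall strategy---obtain a factorization of the shape $J_\sigma(w, f_{\s}, \s, \ell) \sim (\text{nonzero constant}) \cdot J_{\pi\otimes\pi}(w_2, h_{s_{r+1}}, (s_{r+1}, -s_{r+1}), \ell')$ and then invoke Proposition~\ref{open-pole-easy}---is exactly the paper's. However, your proposed mechanism for the factorization via Proposition~\ref{compatibility intertwining periods parabolic induction} with ``$\mu=\pi$, $\mu'$ trivial, $u=0$'' does not work: with $\mu'$ trivial one has $n'=0$, hence $Q=P$, $w'=w$, and that proposition collapses to a tautology. The paper instead carries out directly the ``short version'' you allude to. It introduces the intermediate parabolic $Q=P_{\m'}$ with $\m'=(m_1,\dots,m_r,2m_{r+1},m_r,\dots,m_1)$ (so $Q$ has $t+1=2r+1$ blocks, not $t$ or $t+2$), takes the representative $u_1\in R(Q\backslash G/H)$ attached to $w_{t+1}$, and defines $f_{\s}$ explicitly with support in $Qu_1H(k)$ for $k$ large, built from the data $v_i$, $v_i^\vee$, and an arbitrary flat section $h_{s_{r+1}}\in\nu^{s_{r+1}}\pi\times\nu^{-s_{r+1}}\pi$. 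Using the inclusion $P(u_0)\subset Q(u_1)$ and the factorization $u_0=u_0'u_1$ (with $u_0'$ sitting in the middle $G_{2m_{r+1}}$-block), the period unfolds as an outer integral over $Q(u_1)\backslash H$---which, by the support condition, reduces to a compact integral over $Q(u_1)\cap H(k)\backslash H(k)$ and contributes only $\prod_i\langle v_i,v_i^\vee\rangle$ up to a nonzero volume factor---times the inner period $J_{\pi\otimes\pi}(w_2,h_{s_{r+1}},(s_{r+1},-s_{r+1}),\ell')$. Since the result depends on $\s$ only through $s_{r+1}$, Proposition~\ref{open-pole-easy} applied to a suitable $h$ finishes the proof. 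In short: your plan is right, but the factorization comes from a compactly supported section relative to the \emph{odd}-length parabolic $Q$, not from Proposition~\ref{compatibility intertwining periods parabolic induction}.
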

\begin{proof}
We set $\m'=(m_1,\dots,m_r,2m_{r+1},m_r,\dots,m_1)$, $Q=P_{\m'}$, $w'=w_{t+1}$ and denote by $u_1$ the (admissible) element 
of $R(Q\backslash G/H)$ corresponding to $w'$. Let $LU$ be the standard Levi decomposition of $Q$, and $\ell'$ be the linear form 
\[\ell':v\otimes v^\vee\mapsto <v,v^\vee>\] on $\pi\otimes \pi$, and take any $h_{s_{r+1}}\in \nu^{s_{r+1}}\pi\times \nu^{-s_{r+1}}\pi$. For $i=1,\dots,r$, take $v_i$ and $v_i^\vee$ 
such that $<v_i,v_i^\vee>\neq 0$, and take $k$ large enough so that $L\cap K_m(k)$ fixes 
$v_1\otimes \dots \otimes v_r \otimes h_{r+1} \otimes v_r^\vee \otimes \dots \otimes v_1^\vee$. We set $H(k)=H\cap  u_1^{-1}K_m(k)u_1$, and define a map $f_{\s}$ on $G$ supported on $Qu_1H(k)=ULu_1 H(k)$ (which is open in $G$) by the formula 
\[f_{\s}(ulu_1 h)=\d_Q^{1/2}(l)\sigma_1(l_1)v_1\otimes \dots \otimes  \sigma_{r}(l_r)v_r\otimes 
h_{s_{r+1}}(l_{r+1}) \otimes (\sigma_r^\theta)^\vee (l_{r+2})v_r^\vee
\otimes \dots \otimes (\sigma_1^\theta)^\vee(l_{t+1})v_1^\vee.\] 
This map belongs to $\pi_{\s}$, and 
\[J_{\sigma}(w,f_{\s},\underline{s},\ell)\sim  J_{\pi\otimes \pi}(w_2,h_{s_{r+1}},(s_{r+1},-s_{r+1}),\ell').\]
Indeed, set $n=m_1+\dots+m_r$, \[G'=\{g'(g):=\diag(I_n,g,I_n),g\in G_{2m_{r+1}}\}< G,\] $H'=H\cap G'$, $P'=P\cap G'$, and $u'_0=g'\begin{pmatrix} I_{m_{r+1}} & -\d I_{m_{r+1}} \\ I_{m_{r+1}} & \d I_{m_{r+1}} \end{pmatrix}$. Then 
for $\s\in D_{\sigma}^J(w)$, one has 
\[J_{\sigma}(w,f_{\s},\s,\ell)= \int_{Q(u_1)\backslash H}\int_{P'(u'_0)\backslash H'}\ell(f_{\underline{s}}(u_0h'h))dh'dh
=\int_{Q(u_1)\backslash H}\int_{P'(u'_0)\backslash H'}\ell(f_{\underline{s}}(u'_0h'u_1h))dh'dh\]
\[=\int_{Q(u_1)\cap H(k)\backslash H(k)}\int_{P'(u'_0)\backslash H'}\ell(f_{\underline{s}}(u'_0h'u_1h))dh'dh \]
\[\sim \prod_{i=1}^r<v_i,v_i^\vee>\int_{P'(u'_0)\backslash H'}\ell'(h_{s_{r+1}}(u'_0h'))dh'\sim \int_{P'(u'_0)\backslash H'}\ell'(h_{s_{r+1}}(u'_0h'))dh'\]
\[= J_{\pi\otimes \pi}(w_2,h_{s_{r+1}},(s_{r+1},-s_{r+1}),\ell').\] The statement of the proposition now follows from 
Proposition \ref{open-pole-easy}.
\end{proof}

\begin{rem} 
 Keeping the notations of the proof of Proposition \ref{open-pole-2}, we denote by 
$\l$ the up to scalar unique element of $\Hom_{M_{(m_{r+1},m_{r+1})}^\theta}(\pi\otimes \pi,\C)$. Note that $\theta_{w'}$ stabilizes 
$M_{\m}$ and we define $\Lambda \in {\rm{\Hom}}_{M_{\m}^{\theta_{w'}}}(\sigma,\C)-\{0\}$ by
\[\Lambda:v_1\otimes \dots \otimes v_r\otimes v\otimes v^\vee\otimes v_r^\vee\otimes \dots \otimes v_1^\vee\mapsto 
\l(v\otimes v^\vee)\prod_{i=1}^r<v_i,v_i^\vee>.\]
For $\underline{s}\in \C^{t+2}(w,-1)$, we set $\s^-=(s_1,\dots,s_r,0,-s_r,\dots,-s_1)\in \C^{t+1}(w',-1)$ and we define $J_\sigma(w',.,\s^-,\L)$ to be the meromorphic continuation of the following linear form (given by convergent integrals for 
$\s^-\in D(w',r_\sigma)$ for $r_\sigma$ large according to \cite[Theorems 2.8 and 2.16]{BD08} again)
\[J_\sigma(w',.,\s^-,\L):f_{\s^-}\mapsto \int_{Q(u_1)\backslash H} \L(f_{\s^-} (u_1h))dh.\] 

For $k$ as in Proposition \ref{open-pole-easy}, we define \[F(\s)=s_{r+1}^k J_{\sigma}(w,.,\underline{s},\ell),\] it would be interesting to know if
it satisfies the relation 
\[F(\s)_{|\mathcal{H}}=J_{\sigma}(w',.,\underline{s}^-,\L).\] 
We notice that 
$P(u_0)\subset Q(u_1)$, and also that $P(u_0)\backslash Q(u_1)\simeq P'(u'_0)\backslash H'$. We formally have
\[J_{\sigma}(w,f_{\s},\underline{s},\ell)
=\int_{Q(u_1)\backslash H}\int_{P'(u'_0)\backslash H'}\ell(f_{\underline{s}}(u_0h'h))dh'dh.\]
By Proposition \ref{open-pole-easy}, we know that 
\[\lim_{s_{r+1}\to 0} s_{r+1}^k\int_{P'(u'_0)\backslash H'}\ell(f_{\underline{s}}(u_0h'h))dh'= \int_{P'\cap H'\backslash H'}
\L(f_{\underline{s}^-}(u_1h'h))dh',\] hence 
\[\lim_{s_{r+1}\to 0} s_{r+1}^k J_{\sigma}(w,f_{\s},\underline{s},\ell)
=\int_{Q(u_1)\backslash H}\int_{P'\cap H'\backslash H'}
\L(f_{\underline{s}^-}(u_1h'h))dh'dh\] 
\[= J_{\sigma}(w',f_{\underline{s}^-},\underline{s}^-,\L).\] 
But the intertwining periods are defined by integrals only in some cone, and the inversion of the limit and of the integral does not make sense as there is no reason that the cone of convergence should intersect $\mathcal{H}$...
\end{rem}

Now, as a consequence of the results above and those of Section \ref{basic open}, we state and prove the main result of the section (and one of the main results of the paper as well).

\begin{thm}\label{sufficient pole open periods ladders}
Let $t=2r$ be a positive even integer, $(\d_1=\La(\D_1),\dots,\d_t=\La(\D_t))$ be a ladder of essentially square-integrable representations such that $\D_r$ and $\D_{r+1}$ are linked, write $\sigma=\d_1\otimes \dots \otimes \d_t$, and suppose that 
\[\Hom_{M_{(m_1,\dots,m_t)}^{\theta_w}}(\d_1\otimes \dots \otimes \d_t,\C )\] is nonzero for $w=w_t$. Take
$L\in \Hom_{M_{(m_1,\dots,m_t)}^{\theta_w}}(\d_1\otimes \dots \otimes \d_t,\C )-\{0\}$, and write 
$\d_r=\d[s_r]$ with $\d^\vee=\d^\theta$, so that $\d_{r+1}=\d[-s_r]$. More generally write $\sigma[s]$ for 
$\sigma[\s]$, with \[\s=(0,\dots,0,s-s_r,s_r-s,0,\dots,0),\] i.e. 
\[\sigma[s]=\d_1\otimes \dots \otimes \d_{r-1}\otimes \d[s] \otimes \d[-s] \otimes \d_{r+2}\dots \otimes \d_t.\]
If $\D_r$ and $\D_{r+1}$ are juxtaposed, then the open period 
\[J_{\sigma[s]}(w, . ,s,L):=J_{\sigma[\s]}(w, . ,\s,L)\] is holomorphic at $s=-s_r$, whereas if $\D_r$ and $\D_{r+1}$ are non juxtaposed and if $\La(\D_r\cap \D_{r+1})$ is 
distinguished, then the open period $J_{\sigma[s]}(w,.,s,L)$ has a pole at $s=-s_r$.
\end{thm}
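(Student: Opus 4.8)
\emph{Reductions and the easy case.} The first step is to record three simple facts. First, $\sigma[-s_r]$ is exactly $\tau_r(\sigma):=\d_1\otimes\dots\otimes\d_{r-1}\otimes\d_{r+1}\otimes\d_r\otimes\d_{r+2}\otimes\dots\otimes\d_t$. Second, the linear form $L$ still lies in $\Hom_{M^{\theta_w}}(\tau_r(\sigma),\C)-\{0\}$, because $\d_r^\theta\simeq\d_{r+1}^\vee$ (take contragredients in $\d_{r+1}^\theta\simeq\d_r^\vee$). Third, after the evident translation $\s\mapsto\s-(0,\dots,0,-2s_r,2s_r,0,\dots,0)$ of the twisting variable, the family $s\mapsto J_{\sigma[s]}(w,\cdot,s,L)$ near $s=-s_r$ is precisely the restriction to the line $\{(0,\dots,0,u,-u,0,\dots,0)\}$, $u=s+s_r$, of the open period $\s\mapsto J_{\tau_r(\sigma)}(w,\cdot,\s,L)$ near $\s=\underline 0$. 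With this, Case 1 is immediate: since $\Hom_{M^{\theta_w}}(\sigma,\C)\neq 0$ by hypothesis and $\d_r,\d_{r+1}$ are juxtaposed, Proposition \ref{holomorphic extra-case} tells us that Proposition \ref{open-orbit-contribution+} applies to $\Ind_P^G(\tau_r(\sigma))$; in particular $J_{\tau_r(\sigma)}(w,\cdot,\s,L)$ is holomorphic at $\s=\underline 0$, hence so is its restriction to the above line, i.e.\ $J_{\sigma[s]}(w,\cdot,s,L)$ is holomorphic at $s=-s_r$.

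\emph{Set-up for the pole case.} Write $\d=\St_k(\rho)$ for the conjugate self-dual discrete series with $\d[s_r]=\d_r$; then $\D_r=[a_r,b_r]_\rho$ and, by conjugate-duality, $\D_{r+1}=[-b_r,-a_r]_\rho$, while linkedness together with non-juxtaposition force $a_r\le 0$, so that $\D'':=\D_r\cap\D_{r+1}=[a_r,-a_r]_\rho$ is \emph{centered}; put $\d''=\La(\D'')$, a conjugate self-dual discrete series which is distinguished by hypothesis, and note that $\d''\times\d''$ is irreducible (two equal segments are not linked). The combinatorial heart of the argument is that $\D''$ is the left part of $\D_r$ and the right part of $\D_{r+1}$: consequently, writing $\d[s]=\La([\alpha+ls,\beta+ls]_\rho)$ with $\alpha=-\beta$, $l=l_\rho$, for every $s$ near $-s_r$ there are embeddings of discrete series
\[\d[s]\hookrightarrow \mathfrak{a}[s]\times\d''[s+s_r],\qquad \d[-s]\hookrightarrow\d''[-s-s_r]\times\mathfrak{b}[-s],\]
where $\mathfrak{a}[s]$ and $\mathfrak{b}[-s]$ are the discrete series attached to the remaining parts of $\D[s]$ and $\D[-s]$ (which are \emph{nonempty} precisely because $\D_r,\D_{r+1}$ are non-juxtaposed), $\mathfrak{b}[-s]$ is conjugate-dual to $\mathfrak{a}[s]$, and at $s=-s_r$ these reduce to the usual embeddings $\d_{r+1}\hookrightarrow\La(\D_{r+1}\smallsetminus\D'')\times\d''$, $\d_r\hookrightarrow\d''\times\La(\D_r\smallsetminus\D'')$ coming from the linkedness of $\D_{r+1}$ and $\D_r$. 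Inducing, this realizes $\Ind_P^G(\sigma[s])$, for $s$ near $-s_r$, as a subrepresentation of a representation parabolically induced from a larger self-dual Levi whose two innermost factors are $\d''[s+s_r]\otimes\d''[-s-s_r]$, in such a way that $L$ corresponds to the linear form $L_{u_0}$ of Section \ref{basic open} for a suitable fixed $u_0$; here one uses the non-juxtaposition hypothesis again to see that the segment of $\d''$ and the shifted segment of $\mathfrak{a}_0$ (the centered twist of $\mathfrak{a}[-s_r]$) are unlinked with a genuine gap, so that the auxiliary standard operator $A(u_0)$ entering the definition of $L_{u_0}$ is holomorphic and bijective there.

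\emph{Conclusion of the pole case and the main obstacle.} I would then invoke Proposition \ref{compatibility intertwining periods parabolic induction} (compatibility of open periods with associativity of $\times$), together with the analytic control of Proposition \ref{convergence-holomorphy}, to identify $J_{\sigma[s]}(w,\cdot,s,L)$ near $s=-s_r$ first with an open period $J_{\tau_{u_0}}(w,\cdot,\s,L_{u_0})$ specialized along the line $\s=(0,\dots,0,s+s_r,-s-s_r,0,\dots,0)$, and then with the open period $J_{\tau}(w_{t+2},\cdot,\s(u_0),\ell)$ of the larger Levi, whose two innermost twists are again $\pm(s+s_r)$. By Proposition \ref{open-pole-2}, applied with $\pi=\d''$ — which is distinguished and has $\d''\times\d''$ irreducible — this last open period blows up along the hyperplane where the innermost twist vanishes; that hyperplane meets our line exactly at $s=-s_r$, so for a suitable flat section the pole is transported back to $J_{\sigma[s]}(w,\cdot,s,L)$, and we are done. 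The main difficulty is the bookkeeping behind this chain of identifications: one must keep simultaneous track of all the twisting parameters so that the hyperplane of Proposition \ref{open-pole-2} meets the line $s\mapsto\sigma[s]$ at the single point $s=-s_r$; one must check, via Proposition \ref{intertwining-poles} and non-juxtaposition, that no auxiliary standard intertwining operator (in particular $A(u_0)$) vanishes there, so that the pole is not cancelled; and, most delicately, one must verify that the reduction is \emph{faithful} at the level of the leading Laurent coefficient — equivalently, that the blowing-up flat section produced by Proposition \ref{open-pole-2} can be chosen, through the embeddings above, to come from a flat section of $\Ind_P^G(\sigma[s])$ — which is where the explicit construction in the proof of Proposition \ref{open-pole-2} (a section supported on the open orbit, reduced by iterated integration to an elementary period $J_{\d''[s+s_r]\otimes\d''[-s-s_r]}(w_2,\cdot,(s+s_r,-s-s_r),\ell')$ governed by Proposition \ref{open-pole-easy}) has to be revisited in the present ladder-theoretic situation.
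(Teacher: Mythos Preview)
Your handling of the juxtaposed case is correct and is exactly the paper's argument via Proposition~\ref{holomorphic extra-case}.

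In the pole case there is a concrete error. Your ``usual embeddings'' $\d_{r+1}\hookrightarrow\La(\D_{r+1}\smallsetminus\D'')\times\d''$ and $\d_r\hookrightarrow\d''\times\La(\D_r\smallsetminus\D'')$ go the wrong way: with the paper's convention (Section~\ref{local JL}), $\La(\D)$ is the unique irreducible \emph{quotient} of the product in increasing order of exponents, and since $\D''$ is the upper part of $\D_{r+1}$ and the lower part of $\D_r$, these two products \emph{surject onto} $\d_{r+1}$ and $\d_r$ rather than contain them. The genuine embeddings are into the products with the two factors swapped --- but then the innermost slots of the refined Levi carry $\mathfrak a,\mathfrak b$ rather than $\d''$, and Proposition~\ref{open-pole-2} no longer applies. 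This is also why the ``faithfulness'' obstacle you flag cannot be removed in your framework: with an embedding $\Ind_P^G(\sigma[s])\hookrightarrow\Ind_Q^G(\gamma[s])$, the pole-producing flat section supplied by Proposition~\ref{open-pole-2} has no reason to lie in the subrepresentation.

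The paper's route is precisely to use the surjective direction. Setting $\d_a=\La(\D'')[s_r]$ and $\d_b=\La(\D_{r+1}\smallsetminus\D'')[s_r]$, the two surjections above assemble into a surjection $C:\gamma[s]\to\sigma[s]$ at the Levi level, where $\gamma[s]$ has middle blocks $(\d_b[s]\times\d_a[s])\otimes((\d_a^\vee)^\theta[-s]\times(\d_b^\vee)^\theta[-s])$. Then $C$ carries flat sections to flat sections, every flat section of $\Ind_P^G(\sigma[s])$ arises as $C(F_s)$, and by definition $J_{\sigma[s]}(w,C(F_s),s,L)=J_{\gamma[s]}(w,F_s,s,L\circ C)$. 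One identifies $L\circ C$ with the linear form $L_u$ of Section~\ref{basic open} at $u=0$ (the auxiliary operator $A(0)$ is holomorphic because $\D''$ and $(\D(b)^\vee)^\theta$ are \emph{juxtaposed}, not ``unlinked with a gap'' as you write). Proposition~\ref{compatibility intertwining periods parabolic induction} then rewrites $J_{\gamma[s]}$ as the refined period $J_{\gamma'[s]}(w_{t+2},\cdot,s,\ell)$, whose innermost factors at $s=-s_r$ are $\La(\D'')\otimes\La(\D'')$; Proposition~\ref{open-pole-2} produces a pole on some flat section, and the surjectivity of $C$ transports it back to $J_{\sigma[s]}$ with no further check required.
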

\begin{proof}
First, according to Proposition \ref{holomorphic standard}, the intertwining period $J_{\sigma[s]}(w,.,s,L)$ is well defined and holomorphic at $s=s_r$, in particular it defines a meromorphic function of $s$. Notice that Proposition \ref{holomorphic extra-case} gives the first part of the statement, i.e. that $J_{\sigma[s]}(w,.,s,L)$ is holomorphic at $s=-s_r$ if 
$\D_r$ and $\D_{r+1}$ are juxtaposed. 
Suppose now that $\D_r$ and $\D_{r+1}$ are linked non juxtaposed. Write $\D(a)=\D_r\cap \D_{r+1}$, it is a non empty centered segment. 
Write $\D(b)=\D_{r+1}-\D(a)$, it is a possibly empty segment. Write moreover $\d_a=\La(\D(a))[s_r]$, and $\d_b=\La(\D(b))[s_r]$, so that 
$\d$ is the unique irreducible quotient of $\d_b\times \d_a$. Notice that 
$\d_a[-s_r]=\La(\D(a))=(\La(\D(a))^\vee)^\theta=(\d_a^\vee)^\theta[s_r]$. Call $A:\d_b[s]\times \d_a[s]\rightarrow \d[s]$ the (unique up to scaling) nonzero intertwining operator, which we can choose independant of $s$, and pick similarly $B:(\d_a^{\vee})^\theta[-s]\times (\d_b^{\vee})^\theta[-s]\rightarrow \d[-s]$.
 Set \[\gamma[s]=\d_1\otimes \dots \otimes \d_{r-1}\otimes ( \d_b[s]\times \d_a[s])\otimes 
((\d_a^{\vee})^\theta[-s]\times (\d_b^{\vee})^\theta[-s])\otimes \d_{r+2} \otimes \dots \otimes \d_t.\] 
 Then $A$ and $B$ define a surjective intertwining operator 
 \[C=Id_{\d_1}\otimes \dots \otimes Id_{\d_{r-1}}\otimes A\otimes B\otimes Id_{\d_{r+2}}\dots \otimes Id_{\d_r}
 :\gamma[s]\rightarrow \sigma[s].\] 
 Notice that if $F_s$ is a flat section in $\Ind_P^G(\gamma[s])$, then $C(F_s)$ is a flat section in 
 $\pi_s=\Ind_P^G(\sigma[s])$, and any flat section in $\pi_s$ is of that form. 
Setting $L_0=L\circ C$, and taking a flat section $F_s\in \Ind_P^G(\gamma[s])$, then by definition, with obvious notations, one has 
\[J_{\sigma[s]}(w,C(F_s),s,L)=J_{\gamma[s]}(w,F_s,s,L_0).\] 
Hence we just need to prove that $J_{\gamma[s]}(w,F_s,s,L_0)$ has a pole at $s=-s_r$ for some $F$. Now notice that 
\[\gamma[-s_r]=\d_1\otimes \dots \otimes \d_{r-1}\otimes (\d_b[-s_r]\times \La(\D(a)))\otimes (\La(\D(a))\times (\d_b^{\vee})^\theta[s_r])\otimes \d_{r+2} \otimes \dots \otimes \d_t.\] 
To compare with the notations of Section \ref{basic open}, one has $\d_i=\tau_i$ for $i\leq r-1$, $\La(\D(b))[s_r]=\d_b=\mu$, and 
$\La(\D(a))[s_r]=\d_a=\mu'$, a flat section $F_s$ is of the form $F_{\s,u}$, for 
\[\s=(0,\dots,0,s,-s,0,\dots,0),\] and $u=0$, $\gamma=\tau_0$ (i.e. $\tau_u$ for $u=0$) and finally $L_0$ is in fact $L_u$ for $u=0$. In this case, according to Proposition \ref{intertwining-poles}, the standard intertwining operator from $\La(\D(a))[-u]\times (\La(\D(b))^{\vee})^\theta$ to 
$(\La(\D(b))^{\vee})^\theta\times \La(\D(a))[-u]$ is holomorphic at $u=0$ because 
$\D(a)$ and $(\D(b)^{\vee})^\theta$ are juxtaposed segments.  Then we set 
\[\gamma'[s]=\d_1\otimes \dots \otimes \d_{r-1}\otimes \d_b[s]\otimes \d_a[s] \otimes (\d_a^\vee)^\theta[-s]\otimes 
(\d_b^\vee)^\theta[-s]\otimes \d_{r+2} \otimes \dots \otimes \d_t\] and take $f_s$ to be a flat section 
in \[\pi_s=\d_1\times \dots \times \d_{r-1}\times \d_b[s]\times \d_a[s] \times (\d_a^\vee)^\theta[-s]\times 
(\d_b^\vee)^\theta[-s]\times \d_{r+2} \times \dots \times \d_t,\] so with the notations of Section \ref{basic open} is 
$\tau[\s,u]$ for $u=0$ and $\s$ as above. We define \[\ell\in \Hom_{M_{(m_1,\dots,m_t)}^{\theta_{w_{t+2}}}}(\gamma',\C)\] as in Section 
\ref{basic open}. According to 
Proposition \ref{compatibility intertwining periods parabolic induction}, one has:
\[J_{\gamma[s]}(w,F_{f_s},s,L_0)=J_{\gamma'[s]}(w_{t+2},f_s,s,\ell)\] for any 
flat section $f_s\in \pi_s$. However, as $\La(\D(a))$ is distinguished, the local open period 
$J_{\gamma'[s]}(w_{t+2},f_s,s,\ell)$ has a pole at $s=-s_r$ for some flat section $f_s\in \pi_s$ according to Proposition \ref{open-pole-2}. This implies that $J_{\gamma[s]}(w,F_{f_s},s,L_0)$, i.e. that 
$J_{\sigma[s]}(w,C(F_{f_s}),s,L)$ has a pole at $s=-s_r$ and as $F_{f_s}$ (hence $C(F_{f_s})$) is also a flat section, this ends the proof.
\end{proof}

\begin{rem}
It can be shown that the sufficient condition above is also necessary when the standard module above is induced from 
two essentially square-integrable representations.
\end{rem}

\subsection{Distinction of essentially square-integrable representations}\label{reduction discrete cuspidal}

We keep the notations of Section \ref{p-adic}. We first show that our method reduces the use of 
\cite[Theorem 1]{BP18} for essentially square-integrable representations of $G$ to its cuspidal case, which we thus assume, i.e. 
we assume that a cuspidal representation $\rho$ of $G$ is distinguished if and only if $\JL(\rho)$ is.\\

We first start with a simple case of our main result to come on ladder representations. Its proof already contains the main idea of the general proof, which is to use the functional equation of the corresponding intertwining period. If $\D=[a,b]_\rho$, we set 
$\Ze(\D)=\La(\nu^{l_\rho b}\rho,\dots,\nu^{l_\rho a}\rho)$, it is a ladder representation, and it is also the unique irreducible submodule of $\nu^{l_\rho a}\rho\times \dots \times \nu^{l_\rho b}\rho$. 

\begin{prop}\label{distinguished ladder toy}
Let $\rho$ be a cuspidal representation of $G$ and set $l=l_\rho$, then $\Ze([-1/2,1/2]_\rho)$ is distinguished if and only if 
$\rho$ is $\eta^{l-1}$-distinguished.
\end{prop}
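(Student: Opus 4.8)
My plan is first to reduce to the conjugate self-dual case. If $\Ze([-1/2,1/2]_\rho)$ is distinguished it is conjugate self-dual by Corollary \ref{distinguished conjugate self-dual}, which forces $\rho^\vee\simeq\rho^\theta$; and since $l=l_\rho$ divides the odd integer $d$ it is odd, so $\eta^{l-1}=\1$ and ``$\rho$ is $\eta^{l-1}$-distinguished'' simply means ``$\rho$ is distinguished'', which again forces $\rho^\vee\simeq\rho^\theta$ by Proposition \ref{distinction of discrete series}(2) (applied with $r=1$). Hence if $\rho$ is not conjugate self-dual both sides fail and there is nothing to prove; I may thus assume $\rho^\vee\simeq\rho^\theta$, and then, $\rho$ being cuspidal, it is necessarily unitary, and I write $\JL(\rho)=\St_l(\rho')$ with $\rho'$ a unitary conjugate self-dual cuspidal representation of the split group. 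As $\eta^{l-1}=\1$, $\eta^l=\eta$, $\eta^{l+1}=\1$, the statement to prove becomes: $Z:=\Ze([-1/2,1/2]_\rho)$ is distinguished if and only if $\rho$ is distinguished. I set $\Pi_+=\nu^{l/2}\rho\times\nu^{-l/2}\rho$ and $\Pi_-=\nu^{-l/2}\rho\times\nu^{l/2}\rho$; both have length $2$ with constituents $Z$ and $\St_2(\rho)$, and, the extensions being non-split, $\Pi_+$ has socle $\St_2(\rho)$ and cosocle $Z$, while $\Pi_-$ has socle $Z$ and cosocle $\St_2(\rho)$.

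Next I work with the open intertwining period in the case $t=2$, $r=1$, $w=w_2$, and $\sigma=\rho\otimes\rho$ (which satisfies the conjugate self-duality condition $\sigma_2\simeq(\sigma_1^\theta)^\vee$), with $L$ the natural $M^{\theta_w}$-invariant form. For $s\in\C$ I put $\ell_0(s)=J_\sigma(w,\cdot,(s,-s),L)$ and $\ell_0'(s)=J_\sigma(w,\cdot,(-s,s),L)$, meromorphic families of $H$-invariant forms on $\nu^s\rho\times\nu^{-s}\rho$ and $\nu^{-s}\rho\times\nu^s\rho$. At $s=l/2$ the first representation is the standard module $\Pi_+$, with distinct discrete series factors and with $\Hom_{M^{\theta_w}}(\nu^{l/2}\rho\otimes\nu^{-l/2}\rho,\C)\ne0$ (as $\rho^\vee\simeq\rho^\theta$), so Propositions \ref{holomorphic standard} and \ref{open-orbit-contribution+} give that $\ell_0$ is holomorphic at $s=l/2$ and that $\ell_0(l/2)$ spans the one-dimensional space $\Hom_H(\Pi_+,\C)$. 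Likewise, at $s=l/2$ the second representation is $\Pi_-$, which is in the proper anti-ladder situation of Proposition \ref{holomorphic anti-standard} (its two juxtaposed segments being $[-1/2,-1/2]_\rho$ and $[1/2,1/2]_\rho$), so $\ell_0'(l/2)$ is holomorphic and spans $\Hom_H(\Pi_-,\C)$. Applying left-exactness of $\Hom_H(-,\C)$ to $0\to\St_2(\rho)\to\Pi_+\to Z\to0$ together with multiplicity one (Proposition \ref{mult1}), I obtain the criterion: $Z$ is distinguished if and only if $\ell_0(l/2)$ vanishes on the submodule $\St_2(\rho)\subset\Pi_+$.

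The third step is to decide when this vanishing occurs, via the functional equation. By Proposition \ref{non-archi-functional-equation}, $\ell_0'(l/2)\circ A=\alpha(l/2)\,\ell_0(l/2)$ as forms on $\Pi_+$, where $A=A_\sigma(w,(l/2,-l/2))\colon\Pi_+\to\Pi_-$ and $\alpha(s)$ denotes the proportionality constant attached to $\sigma=\rho\otimes\rho$ at $(s,-s)$. Picking a non-trivial character $\psi$ of $E$ trivial on $F$, Theorem \ref{alpha} together with Proposition \ref{asai gamma poles} (using that Asai factors are $\theta$-invariant, and $\eta^l=\eta$, $\eta^{l+1}=\1$) gives, up to a unit of $\C[q_F^{\pm s}]$,
\[\alpha(s)\sim\frac{L^+(2s,\eta\rho')}{L^+(2s+l,\rho')}\cdot\frac{L^+(-2s,\rho')}{L^+(-2s+l,\eta\rho')}.\]
Near $s=l/2$ each of $L^+(2s,\eta\rho')$, $L^+(2s+l,\rho')$, $L^+(-2s,\rho')$ is finite and non-zero (Proposition \ref{poles Asai L factor}: $\rho'$ is unitary and the arguments are real and non-zero), while $L^+(-2s+l,\eta\rho')$ specializes to $L^+(0,\eta\rho')$, which by Proposition \ref{poles Asai L factor} has a simple pole exactly when $\rho'$ is $\eta$-distinguished. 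By the assumed cuspidal case of \cite[Theorem 1]{BP17} and \cite[Corollary 4.2]{M09}, $\rho$ is distinguished if and only if $\rho'$ is distinguished, and since $\rho'$ is conjugate self-dual it is distinguished if and only if it is not $\eta$-distinguished (\cite[Corollary 1.6]{AKT04}). Hence $\alpha(l/2)$ is finite and non-zero when $\rho$ is distinguished, and $\alpha$ has a simple zero at $s=l/2$ when $\rho$ is not distinguished. Moreover $A$ is holomorphic at $(l/2,-l/2)$, its defining integral converging there since $\Re(l/2-(-l/2))=l>0$ (\cite[Proposition IV.2.1]{W03}), and as $\Pi_+\not\simeq\Pi_-$ one has $\ker A=\St_2(\rho)$ and $\mathrm{im}\,A=Z\subset\Pi_-$.

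Finally I conclude. If $\rho$ is distinguished then $\alpha(l/2)\ne0$, so $\ell_0(l/2)=\alpha(l/2)^{-1}\,\ell_0'(l/2)\circ A$ factors through $\mathrm{im}\,A=Z$ and therefore vanishes on $\St_2(\rho)=\ker A$; by the criterion above, $Z$ is distinguished. If $\rho$ is not distinguished then $\alpha$ has a simple zero at $s=l/2$, so $\ell_0'(l/2)\circ A=0$, i.e.\ $\ell_0'(l/2)$ vanishes on $\mathrm{im}\,A=Z$ and descends to a non-zero form $\overline{\ell_0'}$ on $\Pi_-/Z\simeq\St_2(\rho)$. Dividing the functional equation by the simple zero of $\alpha$ (and using $\ell_0(l/2)\ne0$, Remark \ref{nonvanishing-of-global-periods}) I get $\ell_0(l/2)\sim\dot\ell_0'(l/2)\circ A+\ell_0'(l/2)\circ\dot A$, where $\dot\ell_0'(l/2)$ and $\dot A$ are the first Taylor coefficients of $\ell_0'(s)$ and $A_\sigma(w,(s,-s))$ at $s=l/2$. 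Restricted to $\St_2(\rho)=\ker A$ the first term vanishes, while $v\mapsto\ell_0'(l/2)(\dot A v)=\overline{\ell_0'}(\dot A v\bmod Z)$ is non-zero, because the simplicity of the zero of $\alpha$ forces $v\mapsto \dot A v\bmod Z$ to be an isomorphism $\St_2(\rho)\xrightarrow{\ \sim\ }\Pi_-/Z\simeq\St_2(\rho)$ and $\overline{\ell_0'}\ne0$. Thus $\ell_0(l/2)$ does not vanish on $\St_2(\rho)$ and $Z$ is not distinguished. The main obstacle in this plan is precisely this last point: justifying rigorously that the leading Taylor coefficient of the family $A_\sigma(w,(s,-s))$ at the reducibility point $s=l/2$ restricts on $\ker A$ to an isomorphism onto the cokernel — equivalently, that the simple zero of $\alpha$ is ``carried by'' the $\St_2(\rho)$-direction and not the $Z$-direction — which is a statement about the fine structure of standard intertwining operators at points of linked reducibility.
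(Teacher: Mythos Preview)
Your setup is essentially the paper's: reduce to $\rho^\vee\simeq\rho^\theta$, realize that the unique $H$-invariant form on $\Pi_+=\nu^{l/2}\rho\times\nu^{-l/2}\rho$ is $J_\sigma(w_2,\cdot,l/2,L)$, and test whether it vanishes on the socle $\St_2(\rho)$. The divergence is in how you use the functional equation: you specialise at $s=l/2$ and work with $A=A_\sigma(w_2,(l/2,-l/2)):\Pi_+\to\Pi_-$, whose \emph{kernel} is $\St_2(\rho)$. This gives the implication ``$\alpha(l/2)\neq0\Rightarrow\ell_0(l/2)|_{\St_2(\rho)}=0$'' for free, but when $\alpha(l/2)=0$ you only learn that $\ell_0'(l/2)$ vanishes on $\mathrm{im}\,A=Z\subset\Pi_-$, which says nothing about $\ell_0(l/2)|_{\St_2(\rho)}$. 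Your Taylor-coefficient patch is not justified: the simple zero of $\alpha$ is a statement about Asai $L$-factors and does not by itself force $\dot A|_{\ker A}$ to induce an isomorphism onto $\Pi_-/Z$. What you would actually need is that the scalar $c(s)$ defined by $A_\sigma(w_2,(-s,s))\circ A_\sigma(w_2,(s,-s))=c(s)\,\mathrm{Id}$ has a \emph{simple} zero at $s=l/2$; this is a Rankin--Selberg computation (via the normalizing factors in the proof of Proposition~\ref{intertwining-poles}), and even granted that, turning the derivative of a family of intertwiners into a genuine $G$-map $\ker A\to\mathrm{coker}\,A$ needs care.

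The paper avoids all this by specialising at $s=-l/2$ instead: the operator $A_\sigma(w_2,(-l/2,l/2)):\Pi_-\to\Pi_+$ is holomorphic and nonzero (Proposition~\ref{intertwining-poles}, the two singleton segments being juxtaposed) and its \emph{image} is exactly $\St_2(\rho)\subset\Pi_+$. The functional equation then reads $\ell_0(l/2)\circ A_\sigma(w_2,-l/2)=\alpha(-l/2)\,\ell_0'(l/2)$ on $\Pi_-$, and since $\ell_0'(l/2)\neq0$ (Proposition~\ref{holomorphic anti-standard}) one gets directly
\[
\ell_0(l/2)|_{\St_2(\rho)}=0\ \Longleftrightarrow\ \alpha(-l/2)=0\ \Longleftrightarrow\ L^+(0,\rho')=\infty\ \Longleftrightarrow\ \rho'\ \text{distinguished}\ \Longleftrightarrow\ \rho\ \text{is}\ \eta^{l-1}\text{-distinguished}.
\]
So the fix is a one-line change: evaluate the functional equation at $s=-l/2$ rather than $s=l/2$, so that the relevant submodule $\St_2(\rho)$ appears as an image rather than a kernel.
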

\begin{proof}
If $G=G_m$, we denote by $P=MN$ the standard parabolic subgroup of $G_{2m}$ attached to the partition $(m,m)$. We have $\JL(\rho)=\St_l(\rho')$. First, we notice that if $\Ze([-1/2,1/2]_\rho)$ is distinguished, then $\nu^{l/2}\rho\times \nu^{-l/2}\rho$ is distinguished. It then follows from Proposition \ref{standard} that $\rho^\vee=\rho^\theta$. Hence we can assume that $\rho$ is conjugate 
self-dual while proving the statement (we recall that by the cuspidal case of \cite[Theorem 1]{BP18}, if $\rho$ is distinguished, it is indeed conjugate self-dual as this is true in the split case). Set $\sigma=\rho\otimes \rho$, by Proposition \ref{open-orbit-contribution+}, the intertwining period
 $J_\sigma(w_2, . , l/2, \ell)$ is (up to scaling) the only nonzero invariant linear form on $\nu^{l/2}\rho \times \nu^{-l/2}\rho$, and $J_\sigma(w_2, . , -l/2, \ell)$ is (up to scaling) the only nonzero invariant linear form on $\nu^{-l/2}\rho \times \nu^{l/2}\rho$. 
 By Proposition \ref{intertwining-poles}, the intertwining operator $A_\sigma(w_2,-l/2)$ is well defined and nonzero. As the space of intertwining operators between $\nu^{-l/2}\rho\times \nu^{l/2}\rho$ and $\nu^{l/2}\rho\times \nu^{-l/2}\rho$ is one dimensional by adjunction and an easy Jacquet module computation, its image must be the unique proper submodule of $\nu^{l/2}\rho\times \nu^{-l/2}\rho$ as this submodule is a quotient of $\nu^{-l/2}\rho\times \nu^{l/2}\rho$ (see \cite[Proposition 2.7]{T90}), so we deduce that $\Ze([-1/2,1/2]_\rho)$ is distinguished if and only if
$J_\sigma(w_2, . ,l/2, \ell)$ vanishes on the image $A_\sigma(w_2,-l/2)$. By the functional equation (Theorem \ref{proportionality}) of $J_\sigma(w_2, . , s, \ell)$ and 
Proposition \ref{asai gamma poles}, denoting by $\eta$ any extension of $\eta$ to $\GL(m/l,E)$, we have: 
\[J_{\sigma}(w_2,A_{\sigma}(w_2,s)f_s,-s,\ell)
\underset{\C[q^{\pm s}]^\times}{\sim}
\frac{L^+(2s,\eta^l\rho')}{L^+(2s+l,\rho')}\frac{L^+(-2s,\eta^{l+1}\rho')}{L^+(-2s+l,\eta\rho')}J_{\sigma}(w_2,f_s,s,\ell).\] 
However, thanks to Proposition \ref{poles Asai L factor}, the quotient \[\frac{L^+(2s,\eta^l\rho')}{L^+(2s+l,\rho')}\frac{L^+(-2s,\eta^{l+1}\rho')}{L^+(-2s+l,\eta\rho')}\] has a zero at $s=-l/2$ if and only if $\rho'$ is distinguished, i.e. 
if and only if $\St_l(\rho')$ is $\eta^{l-1}$-distinguished, which is the same as $\rho$ being $\eta^{l-1}$-distinguished. It thus follows that $J_\sigma(w_2, . ,l/2, \ell)$ vanishes on the image of $A_\sigma(w_2,-l/2)$ if and only if $\rho$ is $\eta^{l-1}$-distinguished.
\end{proof}

As a consequence, we have the following result.

\begin{LM}\label{antistandard}
Let $\rho$ be a conjugate self-dual cuspidal representation of $G_r$, and set $l=l_\rho$. The representation  \[\pi_j(\rho)=\nu^{\frac{l(1-k)}{2}}\rho\times \dots \times \nu^{\frac{l(j-3)}{2}}\rho \times 
\Ze([\frac{j-1}{2},\frac{j+1}{2}]_\rho)\times \nu^{\frac{l(j+3)}{2}}\rho\times \dots \times \nu^{l\frac{l(k-1)}{2}}\rho\]
(with $j\equiv k \mod 2$) is not distinguished if $j\neq 0$, or if $j=0$ (hence $k$ is even) and $\rho$ is $\eta^l$-distinguished.
\end{LM}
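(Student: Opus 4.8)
The plan is to analyze the representation $\pi_j(\rho)$ via the classification of distinguished standard modules (Proposition \ref{standard}) combined with the structural analysis of open periods for anti-ladders developed in the preceding subsections. First I would handle the case $j \neq 0$. Here the essentially square integrable support of $\pi_j(\rho)$ consists of the characters $\nu^{l(1-k)/2}\rho, \dots$ together with the discrete series $\Ze([\frac{j-1}{2},\frac{j+1}{2}]_\rho)$, but note that $\Ze([\frac{j-1}{2},\frac{j+1}{2}]_\rho)$ is \emph{not} itself a discrete series (it is a proper submodule of a length-two induced representation, hence reducible as a standard module of that Levi; to apply Proposition \ref{standard} directly I should instead present $\pi_j(\rho)$ as induced from the discrete series data $\nu^{l(1-k)/2}\rho, \dots, \nu^{l(j-3)/2}\rho, \nu^{l(j-1)/2}\rho, \nu^{l(j+1)/2}\rho, \nu^{l(j+3)/2}\rho, \dots, \nu^{l(k-1)/2}\rho$, i.e. as a product of $k$ twists of $\rho$). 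The point is that the exponents $\frac{l(1-k)}{2}, \dots, \frac{l(j-3)}{2}, \frac{l(j-1)}{2}, \frac{l(j+1)}{2}, \frac{l(j+3)}{2}, \dots, \frac{l(k-1)}{2}$ form a set which, for $j \neq 0$, is \emph{not} stable under $s \mapsto -s$: the exponent $\frac{l(j-1)}{2}$ and its would-be partner $-\frac{l(j-1)}{2} = \frac{l(1-j)}{2}$ are not both present (since $j-1$ and $1-j$ appear symmetrically among $\{1-k,\dots,k-1\}$ only through the full set, but the pair $(j-1,j+1)$ breaks the symmetry: its mirror image is $(1-j,-1-j) = (-(j-1)-2, \dots)$ which is a different pair of consecutive integers unless $j=0$). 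So by Proposition \ref{standard} there is no permutation $\e$ with $\d_{\e(i)} = (\d_i^\theta)^\vee$, whence $\pi_j(\rho)$ is not distinguished; this is the easy case and I would just spell out the counting of exponents carefully.

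For $j = 0$ (so $k$ is even), the exponent set \emph{is} symmetric under $s \mapsto -s$, and $\pi_0(\rho) = \nu^{l(1-k)/2}\rho \times \dots \times \nu^{l(k-1)/2}\rho$ with the central pair replaced by $\Ze([-1/2,1/2]_\rho)$. This is exactly an anti-ladder of discrete series in the sense of Proposition \ref{holomorphic anti-standard}: the segments $[a_i,b_i]_\rho$ are single points $\{\frac{l(2i-1-k)}{2}\}$ except the middle one is the centered segment of $\Ze([-1/2,1/2]_\rho)$, and consecutive segments that are linked are juxtaposed (they differ by $l$, matching the shift), with $a_{t+1-i} = -b_i$. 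So Proposition \ref{holomorphic anti-standard} applies: $\Hom_H(\pi_0(\rho),\C) \neq 0$ if and only if $\Hom_{M^u}(\sigma,\C) \neq 0$ where $\sigma$ is the inducing data. By Remark \ref{pure tensor}, the latter condition unwinds into $\Hom_{G_?}$-conditions pairing the outer characters $\nu^{\pm l c}\rho$ (which are automatically matched since $\rho^\vee = \rho^\theta$) together with the requirement that the \emph{middle} factor $\Ze([-1/2,1/2]_\rho)$ be $H$-distinguished — wait, more precisely the middle factor sits over a Levi $G_{2r}$ and the condition is that $\Ze([-1/2,1/2]_\rho)$ is distinguished by $G_{2r}(D_F)$. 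By Proposition \ref{distinguished ladder toy}, $\Ze([-1/2,1/2]_\rho)$ is distinguished precisely when $\rho$ is $\eta^{l-1}$-distinguished.

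So for $j=0$, $\pi_0(\rho)$ is distinguished if and only if $\rho$ is $\eta^{l-1}$-distinguished. The lemma asserts $\pi_0(\rho)$ is \emph{not} distinguished when $\rho$ is $\eta^l$-distinguished. I would finish by invoking part (2) of Proposition \ref{distinction of discrete series} (or its cuspidal input \cite[Corollary 1.6]{AKT04}): a conjugate self-dual cuspidal $\rho$ cannot simultaneously be $\eta^a$-distinguished and $\eta^{a+1}$-distinguished — equivalently, $\rho$ being $\eta^l$-distinguished forces it to \emph{not} be $\eta^{l-1}$-distinguished (since $\eta^{l-1}$ and $\eta^l$ differ by the nontrivial character $\eta$, and distinction with respect to two characters differing by $\eta$ is mutually exclusive for conjugate self-dual cuspidals). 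Hence when $\rho$ is $\eta^l$-distinguished, it is not $\eta^{l-1}$-distinguished, so $\pi_0(\rho)$ is not distinguished, as claimed.

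The main obstacle I anticipate is the bookkeeping in the $j=0$ anti-ladder case: I must verify precisely that the hypotheses of Proposition \ref{holomorphic anti-standard} (the juxtaposition condition, the symmetry $a_{t+1-i}=-b_i$) are met by $\pi_0(\rho)$ presented with the middle factor $\Ze([-1/2,1/2]_\rho)$, and then correctly identify, via Remark \ref{pure tensor}, that the only surviving $\Hom$-condition is distinction of that middle factor — checking that the outer single-character factors contribute no obstruction because $\rho^\theta = \rho^\vee$ is automatic. A secondary subtlety is making sure, in the $j \neq 0$ case, that the exponent-set asymmetry argument is airtight; I should present it concretely by listing the multiset of exponents and observing that reflecting through $0$ produces a different multiset (the block $\{j-1,j+1\}$ maps to $\{-j-1,-j+1\}$, which as a pair of consecutive integers coincides with the original configuration only if $j = 0$).
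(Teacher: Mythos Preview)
Your argument has genuine gaps in both cases.

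\textbf{Case $j\neq 0$.} You cannot invoke Proposition \ref{standard}: that proposition concerns representations parabolically induced from \emph{discrete series}, and $\Ze([\frac{j-1}{2},\frac{j+1}{2}]_\rho)$ is not a discrete series (it is the non-generic subquotient of a length-two induced representation). Your suggested fix of ``unfolding'' the $\Ze$-factor into the two cuspidal twists $\nu^{l(j\pm 1)/2}\rho$ does not help: $\pi_j(\rho)$ is then a \emph{proper} subrepresentation of $\nu^{l(1-k)/2}\rho\times\cdots\times\nu^{l(k-1)/2}\rho$, so Proposition \ref{standard} applied to the latter says nothing about $\pi_j(\rho)$. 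Moreover, your asymmetry claim is simply false: the multiset of exponents $\{\tfrac{l(1-k)}{2},\tfrac{l(3-k)}{2},\ldots,\tfrac{l(k-1)}{2}\}$ is the full arithmetic progression and is symmetric under $s\mapsto -s$ for every $j$.

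\textbf{Case $j=0$.} Proposition \ref{holomorphic anti-standard} is stated for an anti-ladder of \emph{discrete series} $\d_i=\La(\D_i)$; the middle factor $\Ze([-1/2,1/2]_\rho)$ again fails this hypothesis, so the proposition does not apply as written. Your final dichotomy step (via Proposition \ref{distinguished ladder toy} and the mutual exclusivity of $\eta^{l-1}$- and $\eta^{l}$-distinction) is correct, but the reduction to ``$\Ze([-1/2,1/2]_\rho)$ must be distinguished'' is not justified by the proposition you cite.

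The paper bypasses both issues by applying the geometric lemma (Proposition \ref{Geometric lemma}) directly to $\pi_j(\rho)=\Ind_P^G(\tau)$ with $P$ attached to $\overline m=(r,\ldots,r,2r,r,\ldots,r)$, the $2r$ in position $\tfrac{k+j}{2}$. Since $\Ze$ has exactly two relevant Jacquet modules (itself, or $\nu^{l(j\pm 1)/2}\rho\otimes\nu^{l(j\mp 1)/2}\rho$), only two sub-partitions $\overline m_a$ arise. For $\overline m_a=\overline m$, the lone $2r$-entry forces that block onto the diagonal of $a$, so $\Ze([\frac{j-1}{2},\frac{j+1}{2}]_\rho)$ itself must be $H_{2r}$-distinguished: impossible for $j\neq 0$ (non-unitary central character) and, for $j=0$ with $\rho$ $\eta^l$-distinguished, by Proposition \ref{distinguished ladder toy}. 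For $\overline m_a=(r,\ldots,r)$ one checks that no admissible symmetric matrix $a\in I(\overline m)$ with the prescribed row sums yields an $M_a^{u_a}$-distinguished Jacquet module. This single direct computation replaces both of your arguments.
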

\begin{proof}
Set $\pi=\pi_j(\rho)$ and \[\tau= \nu^{\frac{l(1-k)}{2}}\rho\otimes \dots \otimes \nu^{\frac{l(j-3)}{2}}\rho \otimes 
\Ze([\frac{j-1}{2},\frac{j+1}{2}]_\rho)\otimes \nu^{\frac{l(j+3)}{2}}\rho\otimes \dots \otimes \nu^{\frac{l(k-1)}{2}}\rho.\]
Let $m=kr$, and $\overline{m}=(r,\dots,r,2r,r,\dots,r)$  with $2r$ in position $\frac{k+j}{2}$. If $\pi$ is distinguished, then by Proposition \ref{Geometric lemma}, there is an element $a\in I(\overline{m})$ such that 
 $r_{M_a,M}(\tau)$ is $M_a^{u_a}$-distinguished. Notice that the only nonzero Jacquet modules of $\tau$ are obtained for $\overline{m}_a=\overline{m}$, in which case $r_{M_a,M}(\tau)=\tau$, and $\overline{m}_a=(r,\dots,r)$, in which case 
 \[r_{M_a,M}(\tau)=\nu^{l\frac{1-k}{2}}\rho\otimes \dots \otimes \nu^{l\frac{k-1}{2}}\rho.\]
In the first case, writing $a=(m_{i,j})$, for $r_{M_a,M}(\tau)$ to be $M_a^{u_a}$-distinguished one should have $m_i=m_{i,i}(=2r)$ for $i=\frac{k+j}{2}$. Hence in this case $\Ze([\frac{j-1}{2},\frac{j+1}{2}]_\rho)$ should be distinguished, but this is impossible if $j\neq 0$ because the central character of $\Ze([\frac{j-1}{2},\frac{j+1}{2}]_\rho)$ is not unitary, and it is also impossible if $j=0$ and $\rho$ is $\eta^l$-distinguished according to Proposition \ref{distinguished ladder toy}.
In the second case, the only possibility would be $m_i=m_{i,k+1-i}$, which is not the case.
\end{proof}

We now recover the distinction result concerning essentially square-integrable representations of $G$. We recall from the proof of \cite[Proposition 2.7]{T90} that the kernel of the surjective intertwining operator from $\nu^{\frac{l(1-k)}{2}}\rho\times \dots \times \nu^{\frac{l(k-1)}{2}}\rho$ to 
$\St_k(\rho)$ is equal to the sum of the representations 
$\pi_j(\rho)$ from $j=2-k$ to $k-2$ with $j\equiv k[2]$. 

\begin{prop}\label{Steinberg1}
Let $\rho$ be a cuspidal representation of $G_f$, and $l=l_\rho$.\\
If $k$ is odd: $\St_k(\rho)$ is distinguished if and only if $\rho$ is.\\
If $k$ is even: $\St_k(\rho)$ is distinguished if and only if $\JL(\rho)$ is $\eta^l$-distinguished, i.e 
if and only if $\rho$ is $\eta^l$-distinguished.\\
In both cases $\St_k(\rho)$ is distinguished if and only $\JL(\St_k(\rho))$ is distinguished.
\end{prop}
\begin{proof}
We set $M=M_{(f,\dots,f)}$. First we start with $k$ odd. If $\St_k(\rho)$ is distinguished, then by Proposition \ref{holomorphic anti-standard}, the 
representation $\nu^{\frac{l(1-k)}{2}}\rho\otimes \dots \otimes \nu^{\frac{l(k-1)}{2}}\rho$ is $M^u$-distinguished, hence $\rho$ is distinguished. Conversely, if $\rho$ is distinguished, as $\rho^\vee=\rho^\theta$, 
then $\nu^{\frac{l(1-k)}{2}}\rho\times \dots \times \nu^{\frac{l(k-1)}{2}}\rho$ is distinguished according to Proposition \ref{open-orbit-contribution}. However, no $\pi_j(\rho)$ is distinguished according to Lemma 
\ref{antistandard}, so $\St_k(\rho)$ is distinguished. When $k=2r$ is even, if $\St_k(\rho)$ is distinguished, then again by 
Proposition \ref{holomorphic anti-standard}, the Jacquet module $\nu^{\frac{l(1-k)}{2}}\rho\otimes \dots \otimes \nu^{\frac{l(k-1)}{2}}\rho$ is 
$M^u$-distinguished, hence $\rho^\theta\simeq \rho^\vee$. We can thus consider, for 
\[\sigma[s]=\rho[\frac{l(1-k)}{2}]\otimes \dots \otimes \rho[\frac{-3l}{2}]\otimes \rho[s]\otimes \rho[-s] \otimes 
\rho[\frac{3l}{2}]\dots \otimes \rho[\frac{l(k-1)}{2}],\] the intertwining period 
$J_{\sigma}(w,.,s,L)$ with $L\in \Hom_{M^u}(\sigma,\C)-\{0\}$. As we supposed that $\St_k(\rho)$ is distinguished, it in particular implies that the linear form $J_{\sigma}(w,.,-l/2,L)$ which is well defined according to Proposition \ref{holomorphic anti-standard}, must vanish on $\pi_0(\rho)=Im(A_{\sigma}(\tau_r,l/2))$ ($A_{\sigma}(\tau_r,l/2)$ is well defined by Proposition \ref{intertwining-poles}). However, by Theorem \ref{proportionality}, setting 
$\JL(\rho)=\St_l(\rho')$ and denoting again by $\eta$ any extension of $\eta$ to $\GL(f/l,E)$: 
\[J_{\sigma}(\tau_r,A_{\sigma}(\tau_r,s)f_s,-s,L)
\underset{\C[q^{\pm s}]^\times}{\sim}
\frac{L^+(2s,\eta^l\rho')}{L^+(2s+l,\rho')}\frac{L^+(-2s,\eta^{l+1}\rho')}{L^+(-2s+l,\eta\rho')}J_{\sigma}(\tau_r,f_s,s,L).\] 
Making $s$ tend to $l/2$ above, the only possibility is that the quotient 
\[\frac{L^+(2s,\eta^l\rho')}{L^+(2s+l,\rho')}\frac{L^+(-2s,\eta^{l+1}\rho')}{L^+(-2s+l,\eta\rho')}\] has a zero at $s=l/2$, i.e. that $\rho'$ is $\eta$-distinguished (and then also that 
$J_{\sigma}(\tau_r,.,s,L)$ is holomorphic at $l/2$, this is in fact automatic but we don't need to know it), i.e. 
that $\rho$ is $\eta^l$-distinguished. Conversely, if 
$\rho$ is $\eta^l$-distinguished, the linear form $J_{\sigma}(w,.,-l/2,L)$ vanishes on all 
$\pi_j(\rho)$ according to Lemma \ref{antistandard}, hence $\St_k(\rho)$ is distinguished.
\end{proof}

\begin{rem}
For $D=F$, Proposition \ref{Steinberg1}, which is \cite[Corollary 4.2]{M09}, was mainly proved in \cite{AR05}, as a consequence 
of their proof of the equality of the Flicker and Langlands-Shahidi Asai $L$-factor for essentially square-integrable representations. Here we give a different proof, even in the case $D=F$, which is still local/global as in \cite{AR05}. 
\end{rem}

\subsection{Distinction of proper ladders}\label{dist proper ladders}

Now we recall \cite[Theorem 1.1 (i)]{LM14}, which is valid for non split $G$ as noticed in the introduction of 
[ibid.].

\begin{thm}\label{subladder}
Let $\mathcal{S}$ be a standard module attached to a proper ladder 
$(\D_1,\dots,\D_t)$. Then 
the kernel of the nonzero intertwining operator from $\mathcal{S}$ to $\La=\La(\D_1,\dots,\D_t)$ is equal to 
the sum for $i$ between $1$ and $t-1$ of the standard modules 
\[\mathcal{S}_i=\La(\D_1)\times \dots \times \La(\D_{i-1})\times 
\La(\D_i\cup \D_{i+1})\times \La(\D_i\cap \D_{i+1})\times \La(\D_{i+2})\times \dots \times \La(\D_t).\]
\end{thm}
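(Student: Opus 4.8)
~\textbf{Proof plan for Theorem \ref{subladder}.}

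The statement to prove is the description of the kernel of the canonical surjection $\mathcal{S}=\La(\D_1)\times\dots\times\La(\D_t)\twoheadrightarrow \La(\D_1,\dots,\D_t)$ as $\sum_{i=1}^{t-1}\mathcal{S}_i$ for a proper ladder. Since the excerpt explicitly states that this is \cite[Theorem 1.1 (i)]{LM14} and that the proof of [ibid.] carries over verbatim to non-split $G$, the natural plan is simply to cite \cite{LM14} and explain, point by point, why each ingredient of their argument survives the passage from $\GL(n,E)$ to $\GL(m,\G_E)$. So the plan is to reproduce the skeleton of the Lapid--M\'{\i}nguez argument and flag the three or four places where split-ness is used, verifying each is available for inner forms. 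First I would recall that the module structure of $\La(\D_1)\times\dots\times\La(\D_t)$ for a ladder is governed entirely by the combinatorics of cuspidal segments attached to a single cuspidal $\rho$, via the Jacquet-module computation of \cite[Proposition 7.16]{MS14} (already quoted in Remark \ref{pure tensor}) which holds for inner forms; this is what replaces Zelevinsky's derivative computations in the split case.

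The key steps, in order, are as follows. First, establish that each $\mathcal{S}_i$ indeed maps into $\mathcal{S}$ and lies in the kernel: $\La(\D_i\cup\D_{i+1})\times\La(\D_i\cap\D_{i+1})$ maps to $\La(\D_i)\times\La(\D_{i+1})$ by the standard intertwining operator for two linked segments (the map $B$-type operator used e.g. in the proof of Theorem \ref{sufficient pole open periods ladders}), which exists and is unique up to scalar for all inner forms by \cite{T90} (cf. \cite[Proposition 2.7]{T90}), and its image is the unique proper submodule; tensoring with identities on the other factors and composing with the surjection to $\La(\D_1,\dots,\D_t)$ gives zero because the segments $\D_i\cup\D_{i+1}$ and $\D_i\cap\D_{i+1}$ are not in ``ladder position''. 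Second, show the sum $\sum_i \mathcal{S}_i$ exhausts the kernel: this is the heart of \cite{LM14} and proceeds by computing the image of the appropriate Jacquet module of $\mathcal{S}$ and comparing socles/cosocles; here one uses that parabolic induction from irreducible unitary (more generally, ladder-type) representations behaves well, the irreducibility criterion for products of ladder representations, and the fact that $\La(\D_1,\dots,\D_t)$ is the unique irreducible quotient of $\mathcal{S}$ (Langlands, \cite{S78}). Third, verify a dimension/length count: using the geometric lemma / Jacquet module semisimplification (valid for inner forms) one checks that $\mathcal{S}/\sum_i\mathcal{S}_i$ has the same Jacquet module as $\La(\D_1,\dots,\D_t)$, hence is irreducible, hence equals $\La(\D_1,\dots,\D_t)$.

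The main obstacle is the second step: proving that the $\mathcal{S}_i$ already generate the full kernel, rather than a proper submodule of it. In \cite{LM14} this rests on a careful analysis using Zelevinsky-type derivatives and the combinatorics of the Mœglin--Waldspurger algorithm; for inner forms one does not have derivatives, so I would instead invoke the transfer of module-theoretic statements through the Jacquet--Langlands correspondence together with the explicit Jacquet module formulas of \cite{MS14}, exactly as the paper does elsewhere (e.g.\ in Propositions \ref{holomorphic anti-standard} and \ref{holomorphic extra-case}). Concretely, one reduces to the length-two ladder case $t=2$ by an inductive argument on $t$ (peeling off $\D_1$ or $\D_t$, using that $\La(\D_1)$ is a ``corner'' segment), and for $t=2$ the statement $\ker(\La(\D_1)\times\La(\D_2)\to\La(\D_1,\D_2))=\La(\D_1\cup\D_2)\times\La(\D_1\cap\D_2)$ is precisely the two-segment computation recalled in the proof of Theorem \ref{sufficient pole open periods ladders}. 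I expect the write-up to consist mostly of citing \cite{LM14} and \cite{MS14}, \cite{T90}, and remarking that every step is insensitive to split-ness, following the blanket convention stated at the start of Section \ref{basic} that proofs written for split $G$ transfer without modification.
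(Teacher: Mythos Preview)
Your proposal is correct and aligns with the paper's approach: the paper does not give an independent proof of this theorem but simply recalls it as \cite[Theorem 1.1 (i)]{LM14}, noting that its validity for non-split $G$ is already observed in the introduction of \cite{LM14}. Your plan elaborates considerably more than the paper does on \emph{why} each ingredient transfers to inner forms (Jacquet modules via \cite{MS14}, the two-segment case via \cite{T90}, etc.), but this is compatible with---and indeed a careful expansion of---the one-line citation the paper gives.
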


We want to know when certain $H$-invariant linear forms on $\mathcal{S}$ descend to 
$\La$. Clearly, this is the case if and only if each $\mathcal{S}_i$ lies in the kernel of such a 
linear form. The following easy consequence of Proposition \ref{standard}, which is a special case of a part of the proof of 
\cite[Theorem 4.2]{G15}, will thus be useful. 

\begin{LM}\label{distinguished sub-standard}
Let $\D_1,\dots,\D_t$ be a proper ladder, such that $\La(\D_{t+1-i})=(\La(\D_i)^\theta)^\vee$ for all $i$. If $t\geq 2$ is odd, then no $\mathcal{S}_i$ is distinguished. If $t=2r$ is even, then no $\mathcal{S}_i$ is distinguished for $i\neq r$, and $\mathcal{S}_r$ is distinguished if and only 
$\La(\D_r\cup \D_{r+1})$ is distinguished. This latter condition is equivalent to $\La(\D_r\cap \D_{r+1})$ being distinguished 
if $\D_r$ and $\D_{r+1}$ are non juxtaposed.
\end{LM}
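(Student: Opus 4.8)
The plan is to derive everything from Gurevich's criterion (Proposition~\ref{standard}) applied to the standard modules $\mathcal{S}_i$ of Theorem~\ref{subladder}, after unwinding the combinatorics forced by the hypothesis. Writing $\D_j=[a_j,b_j]_\rho$, the relation $\La(\D_{t+1-j})=(\La(\D_j)^\theta)^\vee$ forces $\rho$ to be conjugate self-dual and $a_{t+1-j}=-b_j$, $b_{t+1-j}=-a_j$ for every $j$; in particular $\D_j$ and $\D_{t+1-j}$ are ``mirror'' segments of equal length. Using that $(\La([c,d]_\rho)^\theta)^\vee=\La([-d,-c]_\rho)$ for any segment, I would then compute
\[(\La(\D_i\cup\D_{i+1})^\theta)^\vee=\La(\D_{t-i}\cup\D_{t+1-i}),\qquad (\La(\D_i\cap\D_{i+1})^\theta)^\vee=\La(\D_{t-i}\cap\D_{t+1-i}),\]
the second being vacuous when $\D_i$ and $\D_{i+1}$ are juxtaposed (then $\D_i\cap\D_{i+1}=\emptyset$ and $\mathcal{S}_i$ has $t-1$ constituents).

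Next I would record that in a proper ladder the $\D_j$ are pairwise distinct, that $\D_i\cup\D_{i+1}=[a_{i+1},b_i]_\rho$ differs from every $\D_j$ and from $\D_i\cap\D_{i+1}$, and similarly that $\D_i\cap\D_{i+1}$ differs from every $\D_j$; consequently, among the constituents of $\mathcal{S}_i$ the only one that can be isomorphic to the conjugate-dual of $\La(\D_i\cup\D_{i+1})$ is $\La(\D_i\cup\D_{i+1})$ itself, and by the computation above this happens only if $\{t-i,t+1-i\}=\{i,i+1\}$, i.e.\ $t=2i$. Hence if $\mathcal{S}_i$ is distinguished, then by Proposition~\ref{standard} there is $\e\in\S_{t}$ (or $\S_{t-1}$ in the juxtaposed case) pairing each constituent with its conjugate-dual; such $\e$ must fix the slot carrying $\La(\D_i\cup\D_{i+1})$, which forces $t$ even and $i=t/2=:r$. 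This already settles the claim for $t$ odd and for $i\neq r$.

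It remains to treat $t=2r$, $i=r$. By the mirror relations $\D_r\cup\D_{r+1}=[-b_r,b_r]_\rho$ and $\D_r\cap\D_{r+1}=[a_r,-a_r]_\rho$ are centered segments, so $\La(\D_r\cup\D_{r+1})=\St_{k_1}(\rho)$ and $\La(\D_r\cap\D_{r+1})=\St_{k_2}(\rho)$ are conjugate self-dual discrete series; being non-isomorphic to one another and to every $\La(\D_j)$, each can only be paired by $\e$ with itself, while the remaining slots are paired without fixed points by $j\mapsto t+1-j$, which is legitimate precisely because of the hypothesis. So Proposition~\ref{standard} gives: $\mathcal{S}_r$ is distinguished if and only if $\La(\D_r\cup\D_{r+1})$ is $\theta$-distinguished and, when $\D_r,\D_{r+1}$ are not juxtaposed, $\La(\D_r\cap\D_{r+1})$ is $\theta$-distinguished as well. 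Finally, since $\D_r$ and $\D_{r+1}$ have the same length, $|\D_r\cup\D_{r+1}|$ and $|\D_r\cap\D_{r+1}|$ have the same parity, i.e.\ $k_1\equiv k_2\pmod 2$; hence $\eta^{l(k_1+1)}=\eta^{l(k_2+1)}$ with $l=l_\rho$, and Proposition~\ref{distinction of discrete series}(1) shows that $\La(\D_r\cup\D_{r+1})$ and $\La(\D_r\cap\D_{r+1})$ are distinguished under the same condition on $\rho$. This yields the asserted equivalence and collapses the condition to ``$\La(\D_r\cup\D_{r+1})$ distinguished''.

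The main obstacle, and the only point that requires care, is the disjointness bookkeeping of the second paragraph: one must check that no constituent of $\mathcal{S}_i$ other than the union (resp.\ intersection) slot can carry the relevant conjugate-dual segment. This is a routine but slightly tedious case analysis on endpoints of ladder segments, and it is exactly where properness of the ladder (distinctness of the $\D_j$, and the overlap structure of consecutive segments) is used.
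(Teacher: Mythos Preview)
Your proof is correct and rests on the same core idea as the paper's: apply Proposition~\ref{standard} to $\mathcal{S}_i$ and show that the segment combinatorics force any admissible involution $\e$ to fix the slot carrying $\La(\D_i\cup\D_{i+1})$, which is only possible when $t=2i$. The paper organizes this via an induction on $t$, peeling off the outermost pair $\La(\D_1),\La(\D_t)$ at each step (using that $\e(t)=1$ is forced when $i\notin\{1,t-1\}$), whereas you go directly to the union slot and argue from endpoint analysis that its conjugate-dual cannot match any $\La(\D_j)$ or $\La(\D_i\cap\D_{i+1})$; both routes are short once Proposition~\ref{standard} is in hand, and your direct argument has the minor advantage of making the role of properness (strict monotonicity of the $a_j,b_j$) completely explicit. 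For the final equivalence, your parity observation $k_1\equiv k_2\pmod 2$ (from $|\D_r|=|\D_{r+1}|$) is exactly the paper's $k_1=k_2+2b$, and the appeal to Proposition~\ref{distinction of discrete series} is identical.
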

\begin{proof}
Suppose that $\mathcal{S}_i$ is distinguished. We set $\D'_k=\D_k$ if $k\notin\{i,i+1\}$, $\D'_i=\D_i\cap \D_{i+1}$ and 
$\D'_{i+1}=\D_i\cup \D_{i+1}$. Suppose that $\mathcal{S}_i=\D'_1\times \dots \times \D'_t$ is distinguished, and let
 $\e$ be the associated involution of Proposition \ref{standard} (with the convention 
 that $\e(i)=i$ if $\D'_i$ is empty). We do an induction on $t\geq 2$. If $t=2$, then $\La(\D_1\cap \D_{2})\times \La(\D_1\cup \D_{2})$ is distinguished, and as the segments 
$\D_1\cup \D_{2}$ and $\D_1\cap \D_{2}$ have different lengths, the involution $\e$ must be the identity and we are done. 
Suppose that $t\geq 3$. First we consider the case $i\neq 1$.\\
 If $i< t-1$, then $\La(\D_{\e(t)})=(\La(\D_t)^\theta)^\vee$ must be equal to $\La(\D_1)$, so $\e(t)=1$. Hence if $i\neq t-1$, we conclude by induction applied to \[\La(\D_2)\times \dots \times \La(\D_{i-1})\times 
\La(\D_i\cup \D_{i+1})\times \La(\D_i\cap \D_{i+1})\times \La(\D_{i+2})\times \dots \times \La(\D_{t-1}).\]
If $i=t-1$, then $(\La(\D_{t-1}\cup \D_t)^\theta)^\vee$ should be equal to $\La(\D_1)$ considering the end of the segment associated to $(\La(\D_{t-1}\cup \D_t)^\theta)^\vee$, and this is impossible for length reasons. \\
Finally the remaining case $i=1$ is also impossible for the same argument as above, hence we are done with the first assertion. 
For the second, write $\La(\D_r\cap \D_{r+1})=\St_k(\rho)$ for $\rho^\theta=\rho^\vee$. Then $\La(\D_r\cup \D_{r+1})$ 
is of the form $\St_{k+2b}(\rho)$ for $b\in \N$, and the assertion follows from 
Proposition \ref{distinction of essentially square-integrable representations}.  
\end{proof}

Using Theorem \ref{subladder}, and Proposition \ref{standard}, we easily obtain the following result. It is stated and proved in \cite{G15} in different terms. 

\begin{prop}\label{odd ladder}
Let $\La=\La(\D_1,\dots,\D_t)$ be a proper ladder representation, with $t$ is odd. Then $\La$ is distinguished if and only if $\La^\vee=\La^\theta$ and the middle essentially square-integrable representations $\La(\D_{\frac{t+1}{2}})$ is distinguished.
\end{prop}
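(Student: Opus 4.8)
The plan is to combine Theorem \ref{subladder} with Proposition \ref{standard} and Lemma \ref{distinguished sub-standard} in exactly the way the preceding lemmas were set up. First I would let $\mathcal{S}=\d_1\times\dots\times\d_t$ be the standard module over $\La=\La(\D_1,\dots,\D_t)$, with $\d_i=\La(\D_i)$, and recall that since $\La$ is the unique irreducible quotient of $\mathcal{S}$, it is distinguished if and only if the unique (up to scalar) $H$-invariant linear form that exists on $\mathcal{S}$ when $\mathcal{S}$ is distinguished descends to $\La$, equivalently vanishes on the kernel $\mathcal{K}$ of the projection $\mathcal{S}\to\La$. By Theorem \ref{subladder}, $\mathcal{K}=\sum_{i=1}^{t-1}\mathcal{S}_i$.

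Next I would argue that if $\La$ is distinguished then $\mathcal{S}$ is distinguished, so by Proposition \ref{standard} there is an involution $\e\in\S_t$ with $\d_{\e(i)}=(\d_i^\theta)^\vee$ and $\d_{\e(i)}$ $\theta$-distinguished whenever $\e(i)=i$. Because $(\D_1,\dots,\D_t)$ is a proper ladder, the segments $\D_i$ are pairwise distinct and in fact pairwise non-isomorphic as $\La(\D_i)$, and the ladder ordering $a_1>\dots>a_t$, $b_1>\dots>b_t$ forces $\e$ to be the order-reversing involution $i\mapsto t+1-i$; this is the standard "ladder rigidity" computation and I would spell out that $\d_{\e(i)}=(\d_i^\theta)^\vee$ for the unique matching. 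Since $t$ is odd, $\e$ fixes $\frac{t+1}{2}$, so the middle term $\d_{\frac{t+1}{2}}=\La(\D_{\frac{t+1}{2}})$ must be $\theta$-distinguished; and $\d_{t+1-i}=(\d_i^\theta)^\vee$ for all $i$ gives $\La^\vee\simeq\La^\theta$ by taking the Langlands quotient of $\mathcal{S}$ and its conjugate-dual (or just citing Corollary \ref{distinguished conjugate self-dual}). This establishes the "only if" direction.

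For the converse, assume $\La^\vee=\La^\theta$ and $\La(\D_{\frac{t+1}{2}})$ is distinguished. The conjugate self-duality, together with the fact that the essentially square integrable support determines $\La$, forces $\d_{t+1-i}=(\d_i^\theta)^\vee$ for all $i$; in particular $\sigma=\d_1\otimes\dots\otimes\d_t$ is $M^u$-distinguished (for $w=w_t$) because the pairing $L:v_1\otimes\dots\otimes v_t\mapsto\prod_i\langle v_i,v_{t+1-i}\rangle$ uses the contragredient duality on the outer pairs and the nonzero $\theta$-invariant form on the fixed middle factor $\d_{\frac{t+1}{2}}$ (here is where distinction of the middle discrete series is used). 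By Proposition \ref{closed-orbit-contribution} (or directly Proposition \ref{open-orbit-contribution}) $\mathcal{S}=\Ind_P^G(\sigma)$ is then $H$-distinguished, and I would invoke Proposition \ref{holomorphic anti-standard} / Proposition \ref{holomorphic standard} — whichever applies, since the $\d_i$ are pairwise non-isomorphic — to get $\dim\Hom_H(\mathcal{S},\C)=1$ and that the invariant form is $J_\sigma(w,\cdot,\underline{0},L)$. It then remains to check that each $\mathcal{S}_i$ in Theorem \ref{subladder} lies in the kernel of this form, i.e. that no $\mathcal{S}_i$ contributes an $H$-invariant functional: by Lemma \ref{distinguished sub-standard}, for $t$ odd no $\mathcal{S}_i$ is distinguished at all. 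Hence the invariant form on $\mathcal{S}$ automatically descends to $\La$, so $\La$ is distinguished.

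The main obstacle I anticipate is the bookkeeping around multiplicity one on $\mathcal{S}$: to conclude "the invariant form on $\mathcal{S}$ descends iff it kills each $\mathcal{S}_i$" cleanly, I need to know that an $H$-invariant form on $\mathcal{S}$ is determined by its restriction to the image of each quotient and that $\Hom_H(\mathcal{S}_i,\C)=0$ really does imply $\Hom_H(\mathcal{S}/\mathcal{S}_i\,,\,\C)=\Hom_H(\mathcal{S},\C)$ via the long exact sequence — this is where Lemma \ref{distinguished sub-standard} does the real work, and it already uses Proposition \ref{standard} and the length/central-character rigidity of ladder segments. The rest (ladder rigidity of the involution $\e$, identifying $L$, citing Proposition \ref{holomorphic anti-standard}) is routine given the machinery set up above; the only delicate point is making sure the case-analysis in Lemma \ref{distinguished sub-standard} genuinely covers all $\mathcal{S}_i$ for odd $t$, which it does by the argument given there (the matching $\e(t)=1$ forced by segment endpoints, plus induction).
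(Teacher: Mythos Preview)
Your proposal is correct and follows essentially the same approach as the paper: use Proposition \ref{standard} (with ladder rigidity forcing $\e=w_t$) for the ``only if'' direction, and for the converse show $\mathcal{S}$ is distinguished while no $\mathcal{S}_i$ is (Lemma \ref{distinguished sub-standard}), so the invariant form descends via Theorem \ref{subladder}. One simplification: the multiplicity-one bookkeeping you worry about is unnecessary here --- \emph{any} nonzero $\Lambda\in\Hom_H(\mathcal{S},\C)$ restricts to an element of $\Hom_H(\mathcal{S}_i,\C)=0$ for each $i$, hence vanishes on $\mathcal{K}=\sum_i\mathcal{S}_i$ and factors through $\La$; so you don't need to pin down a specific $J_\sigma(w,\cdot,\underline{0},L)$ or invoke Proposition \ref{holomorphic standard} at all (and Proposition \ref{holomorphic anti-standard} is for anti-ladders, not the case here).
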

\begin{proof}
We set $\d_i=\La(\D_i)$, and $w=w_t$. If $\La$ is distinguished, then the conditions $L^\vee=L^\theta$ and $\La(\D_{\frac{t+1}{2}})$ distinguished are a consequence of Proposition \ref{standard}, where we notice that the ladder condition implies that $\e=w_t$. Conversely, let $M$ be the standard Levi subgroup of which $\sigma=\d_1\otimes \dots \otimes \d_t$ is a representation. If $\La^\vee=\La^\theta$ and $\La(\D_{\frac{t+1}{2}})$ is distinguished, i.e. if $\Hom_{M^{\theta_w}}(\sigma,\C)\neq 0$, then 
\[\mathcal{S}=\D_1\times \dots \times \D_t\] is distinguished by Proposition \ref{standard} again, but no $\mathcal{S}_i$ 
by Lemma \ref{distinguished sub-standard}. Hence $\La$ is distinguished.
\end{proof}

Finally we are now able to prove the main theorem of the paper.

\begin{thm}\label{even ladder}
Let $\La=\La(\D_1,\dots,\D_t)$ be a proper ladder representation, with $t=2r$ even. Then $\La$ is distinguished if and only if $\La^\vee=\La^\theta$ and $\La(\D_r\cup \D_{r+1})$ is $\eta$-distinguished.
\end{thm}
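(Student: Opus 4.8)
The plan is to reduce, via the Lapid--Mínguez structure theorem and a multiplicity-one argument, to deciding whether the canonical $H$-invariant form on the standard module $\mathcal S=\d_1\times\cdots\times\d_t$ kills one specific constituent of its kernel, and then to settle this using the functional equation of the open period together with the explicit proportionality constant.

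First I would fix $\sigma=\d_1\otimes\cdots\otimes\d_t$ and $w=w_t$. Necessity of $\La^\vee=\La^\theta$ is Corollary \ref{distinguished conjugate self-dual}, so in both directions I assume it. Since the ladder is proper, strict monotonicity of the segments forces the permutation of Proposition \ref{standard} to be $w_t$; hence $\d_{t+1-i}=(\d_i^\theta)^\vee$ for all $i$, in particular $\La(\D_r\cup\D_{r+1})$ and $\La(\D_r\cap\D_{r+1})$ are conjugate self-dual, and by Proposition \ref{standard} $\mathcal S$ is distinguished. As the $\d_i$ are pairwise non-isomorphic, Proposition \ref{holomorphic standard} gives $\Hom_H(\mathcal S,\C)=\C\,\Lambda$ with $\Lambda=J_\sigma(w,\cdot,\underline 0,L)$, holomorphic. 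Because $\La$ is the unique irreducible quotient of $\mathcal S$, it is distinguished iff $\Lambda$ vanishes on $\ker(\mathcal S\to\La)=\sum_{i=1}^{t-1}\mathcal S_i$ (Theorem \ref{subladder}); by Lemma \ref{distinguished sub-standard} the $\mathcal S_i$ with $i\neq r$ are not distinguished, so $\Lambda$ automatically restricts to $0$ on them. Thus \emph{$\La$ is distinguished iff $\Lambda|_{\mathcal S_r}=0$}; moreover, again by Lemma \ref{distinguished sub-standard}, $\mathcal S_r$ is distinguished iff $\La(\D_r\cup\D_{r+1})$ is (equivalently, in the non-juxtaposed case, iff $\La(\D_r\cap\D_{r+1})$ is).

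The key structural point is that $\mathcal S_r$ is the image of the standard intertwining operator $\mathcal A\colon\Ind_P^G(\tau_r(\sigma))\to\mathcal S$ obtained by interchanging $\d_r,\d_{r+1}$ (taking its leading term when it is singular); this is the operator $A(\tau_r,s)\colon\Ind_P^G(\sigma[s])\to\Ind_P^G(\sigma[-s])$ of the family of Theorem \ref{sufficient pole open periods ladders} evaluated at $s=-s_r$ (where $\sigma[s_r]=\sigma$ and $\sigma[-s_r]=\tau_r(\sigma)$), and identifying its image uses Theorem \ref{subladder} together with the analytic behaviour of intertwining operators on these modules. So $\La$ is distinguished iff $\Lambda\circ\mathcal A\equiv 0$. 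I then feed the functional equation of Proposition \ref{non-archi-functional-equation} for this family,
\[J_{\sigma[-s]}(w,A(\tau_r,s)f_s,\cdot,L)=\alpha(s)\,J_{\sigma[s]}(w,f_s,\cdot,L),\]
into a Laurent-expansion comparison at $s=-s_r$, where $J_{\sigma[-s]}$ is holomorphic with value $\Lambda$ (Proposition \ref{holomorphic standard}). Writing $p,q$ for the orders of the poles at $s=-s_r$ of $A(\tau_r,s)$ and of $J_{\sigma[s]}(w,\cdot,s,L)$ and $a$ for the order of $\alpha$ there, this gives $a\geq q-p$ in all cases together with the dichotomy: $\La$ is distinguished iff $a\geq q-p+1$, and $\La$ is not distinguished iff $a=q-p$. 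By Theorem \ref{alpha} and Proposition \ref{asai gamma poles}, $\alpha(s)\sim\dfrac{L^+(2s,\eta^{k'}\rho')}{L^+(2s+k',\rho')}\dfrac{L^+(-2s,\eta^{k'+1}\rho')}{L^+(-2s+k',\eta\rho')}$, where $\JL(\d)=\St_{k'}(\rho')$ and $\d$ is the (unitary, conjugate self-dual) discrete series with $\d_r=\d[s_r]$, $\d_{r+1}=\d[-s_r]$, $s_r>0$.

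It remains to evaluate $p,q,a$ at $s=-s_r$ in the two cases. If $\D_r,\D_{r+1}$ are juxtaposed: $p=0$ (second assertion of Proposition \ref{intertwining-poles}), $q=0$ (first assertion of Theorem \ref{sufficient pole open periods ladders}), and a short bookkeeping with Proposition \ref{poles Asai L factor} shows that at this point all the $L$-factors above are forced to be regular by the evident central-character obstruction ($\nu_F^{c}$-distinction with unitary central character forces $c=0$) except possibly $L^+(0,\rho')$; hence $a\in\{0,1\}$, with $a=1$ iff $\rho'$ is distinguished. So $\La$ is distinguished iff $\rho'$ is distinguished, which, via the cuspidal case of Proposition \ref{distinction of discrete series} and the twisting identities for Steinberg representations (using $l_\rho$ odd), is precisely the assertion that $\La(\D_r\cup\D_{r+1})$ is $\eta$-distinguished. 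If $\D_r,\D_{r+1}$ are linked but not juxtaposed: $p=1$ (first assertion of Proposition \ref{intertwining-poles}), and the same central-character obstruction now makes \emph{all four} $L$-factors regular at $s=-s_r$, so $a=0$; hence $\La$ is distinguished iff $q=0$. By Theorem \ref{sufficient pole open periods ladders}, if $\La(\D_r\cap\D_{r+1})$ is distinguished then $q\geq 1$, so $\La$ is not distinguished; and if $\La(\D_r\cap\D_{r+1})$ is not distinguished then $\mathcal S_r$ is not distinguished, so $\Lambda|_{\mathcal S_r}=0$ trivially and $\La$ is distinguished. Since $\La(\D_r\cup\D_{r+1})$ is conjugate self-dual and, by Lemma \ref{distinguished sub-standard}, distinguished exactly when $\La(\D_r\cap\D_{r+1})$ is, this says precisely that $\La$ is distinguished iff $\La(\D_r\cup\D_{r+1})$ is $\eta$-distinguished, which finishes the proof. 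The main obstacle is the identification of $\mathcal S_r$ with the image of the (leading term of the) intertwining operator and the careful matching of pole orders in the functional equation; once the central-character obstruction is used, the $\gamma$-factor computation itself is immediate.
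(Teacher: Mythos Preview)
Your proof is correct and follows essentially the same route as the paper's: reduce via Proposition \ref{holomorphic standard} and Lemma \ref{distinguished sub-standard} to whether $\Lambda$ kills $\mathcal S_r=\mathrm{Im}(M_r)$, then use the functional equation (Theorem \ref{alpha} combined with Proposition \ref{asai gamma poles}) together with Proposition \ref{intertwining-poles} and Theorem \ref{sufficient pole open periods ladders} to decide this in the juxtaposed and non-juxtaposed cases. Your packaging via the pole-order triple $(p,q,a)$ is a clean way to organise the Laurent comparison, but the content is the same; one small citation slip is that in the juxtaposed case $p=0$ comes from the \emph{first} assertion of Proposition \ref{intertwining-poles} (the ``otherwise holomorphic and nonzero'' clause), not the second.
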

\begin{proof}
We set \[\sigma[s]=\d_1\otimes \dots \otimes \d_{r-1} \otimes \d[s] \otimes \d[-s] \otimes \d_{r+2} \otimes \dots \otimes \d_t .\] By Proposition \ref{standard}, we can assume that $\sigma$ is $M^w$-distinguished. In this situation, thanks to Proposition \ref{holomorphic standard}, the ladder $\La$ is distinguished if and only if the $H$-invariant linear form $\Lambda$ on $\mathcal{S}$ descends to $\La$. According to Lemma \ref{antistandard}, this is equivalent to 
$\Lambda$ being zero on $\mathcal{S}_r=\mathrm{Im}(M_r)$, where $M_r$ is the regularized intertwining operator from 
$\Ind_P^G(\sigma[-s_r])$ to $\mathcal{S}=\Ind_P^G(\sigma[s_r])$. We set $\d=\St_k(\rho)$, and $\JL(\rho)=\St_l(\rho')$, hence 
$\JL(\d)=\St_{kl}(\rho')$. Let us write the functional equation in this case:
\[J_{\sigma}(w,A_{\sigma}(\tau_r,s)f_s,-s,L)
\underset{\C[q^{\pm s}]^\times}{\sim}
\frac{L^+(2s,\eta^{kl}\rho')}{L^+(2s+kl,\rho')}\frac{L^+(-2s,\eta^{kl+1}\rho')}{L^+(-2s+kl,\eta\rho')}J_{\sigma}(w,f_s,s,L).\] 
Notice that $J_{\sigma}(w,.,s_r,L)$ is always well defined. There are then two cases. Either $\D_r$ and $\D_{r+1}$ are juxtaposed, which amounts to saying that $s_r=lk/2$. In this case $J_{\sigma}(w,.,-s_r,L)$ and $A_{\sigma}(\tau_r,-s_r)$ are well defined thanks to Theorem \ref{sufficient pole open periods ladders} and Proposition \ref{intertwining-poles}. 
The functional equation thus tells us that $J_{\sigma}(w,A_{\sigma}(\tau_r,-s_r)f_s,s_r,L)$ will vanish for all 
$f$ if and only if $L^+(-2s_r+kl,\rho')=\infty$, i.e. if and only is $\rho'$ is distinguished. This is then equivalent to $\St_{2lk}(\rho')$ being $\eta$-distinguished, i.e. to $\JL(\St_{2lk}(\rho'))=\La(\D_r\cup \D_{r+1})$ being $\eta$-distinguished by Proposition \ref{Steinberg1}.\\
Now if $\D_r$ and $\D_{r+1}$ are linked but not juxtaposed, i.e. $s_r<lk/2$. Then one has \[\frac{L^+(2s_r,\eta^{kl}\rho')}{L^+(2s_r+kl,\rho')}\frac{L^+(-2s_r,\eta^{kl+1}\rho')}{L^+(-2s_r+kl,\eta\rho')}\neq 0.\] 
On the other hand $A_{\sigma}(\tau_r,s)$ has a simple pole at $-s_r$, i.e. 
$M_r=\lim_{s\to -s_r} (s+s_r)A_{\sigma}(\tau_r,s)$. Hence $J_{\sigma}(w,M_r f_s,s_r,L)$ is zero for all 
$f$ if and only if $\lim_{s\to -s_r} (s+s_r)J_{\sigma}(w,.,s,L)$ is zero. This is the case if and only if 
$J_{\sigma}(w,.,s,L)$ is holomorphic at $s=-s_r$. In particular according to Theorem \ref{sufficient pole open periods ladders} this implies that $\La(\D_r\cap \D_{r+1})$ is not distinguished, i.e. is $\eta$-distinguished. Conversely, if $\La(\D_r\cap \D_{r+1})$ is $\eta$-distinguished, in particular
 not distinguished according to Proposition \ref{distinction of essentially square-integrable representations}, this implies that 
 $\mathcal{S}_r$ is not distinguished thanks to Lemma \ref{distinguished sub-standard}, hence $\La$ is distinguished.
\end{proof}

\begin{rem}
Notice that the above proof implies that $J_{\sigma}(w,.,s,L)$ is holomorphic at $s=-s_r$ if and only if $\La(\D_r\cap \D_{r+1})$ is not distinguished, and moreover that if $J_{\sigma}(w,.,s,L)$ has a pole at $s=-s_r$, then it is simple.
\end{rem}

\subsection{Distinguished ladder and unitary representations}\label{unitary and ladders}

The notations are as above. If $\d$ is a square-integrable representation of $G$ and $k\geq 1$, we set $l=l_\d$ and denote 
by $u(\d,k)$ the ladder representation 
\[u(\d,k)=\La(\nu^{\frac{l(k-1)}{2}}\d,\dots,\nu^{\frac{l(1-k)}{2}}\d).\]

By \cite{T90}, \cite{BR04}, and \cite{S09}, any unitary representation of $G$ can be written in a unique manner as a commutative product of representations 
of the type:

\begin{itemize}
\item $u(\d,k)$ for $\d$ a square-integrable representation. We call $u(\d,k)$ a Speh representation.
\item $\nu^{-l_\d\alpha}u(\d,k)\times \nu^{l_\d\alpha}u(\d,k)$ for $\d$ a square-integrable representation, and $\alpha\in ]0,1/2[$.
\end{itemize}

If $\pi=\La_1 \times \dots \times \La_t$ is a commutative product of proper ladders, we say that $\pi$ is $\theta$-induced 
if there is an involution $\e\in \S_t$, such that $\La_{\e(i)}^\vee=\La_i^\theta$ for all $i$, and 
$L_i$ is distinguished whenever $\e(i)=i$. We say that proper ladder representations $\La_1,\dots,\La_t$ are (mutually) unlinked if 
no segment occurring in a $\La_i$ is linked with a segment occurring in $\La_j$ if $i\neq j$, and in this case 
their product is commutative. The following result, which is \cite[Proposition 7.3]{GMM17}, is true for non split $G$ with the same proof.  

\begin{prop}
Let $\La_1,\dots,\La_t$ be mutually unlinked ladders, then the product $\La_1\times \dots \times \La_t$ is distinguished if and only if it is $\theta$-induced.
\end{prop}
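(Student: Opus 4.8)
The plan is to prove the proposition by reducing it, via the geometric lemma for the pair $(G,H)$, to the already-established classification of distinguished proper ladders (Theorem \ref{even ladder} and Proposition \ref{odd ladder}), using in an essential way that the $\La_i$ are mutually unlinked. First I would observe that since the $\La_i$ are unlinked, the induced representation $\pi=\La_1\times\dots\times\La_t$ is irreducible (this is the analogue of the unlinked criterion, valid for inner forms of $\GL(n)$ by \cite{T90} and \cite{S09}), so $\Hom_H(\pi,\C)$ has dimension at most one by Proposition \ref{mult1}. The easy direction is immediate: if $\pi$ is $\theta$-induced, then grouping the $\La_i$ with $\e(i)\neq i$ into pairs $\La_i\otimes\La_{\e(i)}$ with $\La_{\e(i)}=(\La_i^\theta)^\vee$ and the fixed ones $\La_i$ (which are distinguished by hypothesis), one sees that $\La_1\otimes\dots\otimes\La_t$, viewed as a representation $\sigma$ of the appropriate self-dual Levi $M$ after reordering, satisfies $\Hom_{M^u}(\sigma,\C)\neq 0$; then Proposition \ref{open-orbit-contribution} (together with Remark \ref{nonvanishing-of-global-periods} so that the relevant limit is nonzero) produces a nonzero $H$-invariant form on $\Ind_P^G(\sigma)=\pi$.

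For the converse, suppose $\pi$ is distinguished. I would write $\sigma=\La_1\otimes\dots\otimes\La_t$ as a representation of $M=M_{\overline{m}}$ and apply Proposition \ref{Geometric lemma}: distinction of $\Ind_P^G(\sigma)$ forces the existence of some $u=u_a\in \Rep$ with $r_{M_a,M}(\sigma)$ being $M_a^{u_a}$-distinguished. The key point is to analyze, using the unlinkedness, which double cosets can contribute. Each $\La_i=\La(\D_{i,1},\dots,\D_{i,k_i})$ is a proper ladder in a fixed cuspidal line $\rho_i$, and by \cite[Proposition 7.16]{MS14} (extended as in Remark \ref{pure tensor}) the nonzero Jacquet modules of $\La_i$ are governed by $\rho_i$-adapted sub-partitions; since the lines $\rho_i$ attached to distinct $\La_i$ that could be linked are distinct (again by unlinkedness, no segment of $\La_i$ is linked to one of $\La_j$), the bilinear pairing conditions $\Hom_{G_{m_{i,j}}}(\sigma_{j,i},(\sigma_{i,j}^\theta)^\vee)\neq 0$ occurring in Remark \ref{pure tensor} can only match a block of $\La_i$ against a block of some $\La_j$ in the \emph{same} line $\rho_i\simeq(\rho_j^\theta)^\vee$, i.e. with $\La_j$ conjugate-dual to $\La_i$. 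Tracking the segments and their lengths exactly as in the proofs of Propositions \ref{holomorphic anti-standard} and \ref{holomorphic extra-case}, one shows that the only surviving possibility is that $u$ permutes the blocks so as to pair each $\La_i$ either with a conjugate-dual partner $\La_j$ (all its blocks matched against the blocks of $\La_j$) or with itself, and in the latter case the surviving Jacquet module is $\La_i$ itself, forcing $\Hom_H(\La_i,\C)\neq 0$. This yields exactly the involution $\e$ of the definition of $\theta$-induced, with $\La_i$ distinguished when $\e(i)=i$.

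The main obstacle is the combinatorial bookkeeping in the converse: one must rule out "mixed" double cosets in which a block of $\La_i$ is paired with a block of a different ladder $\La_j$ that is neither $(\La_i^\theta)^\vee$ nor equal to $\La_i$. Here is where unlinkedness does the real work: such a pairing would require a segment of $\La_i$ and a segment of $\La_j$ to lie in the same cuspidal line and be conjugate-dual, hence (because the segments within $\La_i$ and within $\La_j$ are comparable under $\prec$ in the ladder order) it would propagate to force a linkage between some segment of $\La_i$ and some segment of $\La_j$, contradicting the hypothesis. Once this is established the rest is a bipartite-matching argument: the conjugate-duality pairing on blocks must be "block-diagonal" with respect to the partition of $\{1,\dots,t\}$ into the individual ladders, which is precisely the statement that $\pi$ is $\theta$-induced. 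I would also note that the case distinction between $t$ odd and $t$ even internal to a single ladder is already handled by Proposition \ref{odd ladder} and Theorem \ref{even ladder}, so no new analytic input (no intertwining periods) is needed beyond what is cited; the proposition is a purely formal consequence once the geometric-lemma analysis is done.
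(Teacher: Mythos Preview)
The paper does not give a self-contained proof of this proposition: it simply cites \cite[Proposition 7.3]{GMM17} and asserts that the same argument works for non-split $G$. Your strategy---the geometric lemma applied to $\sigma=\La_1\otimes\dots\otimes\La_t$, followed by a combinatorial reduction exploiting unlinkedness---is indeed the approach of that reference (and is parallel to what the paper itself does in the unitary case in the next theorem). So at the level of method you are aligned with the intended proof.

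That said, your execution has two genuine gaps. First, the Jacquet modules of a \emph{proper ladder} are not single pure tensors: Remark~\ref{pure tensor} and \cite[Proposition 7.16]{MS14} cover only discrete series, whereas for ladders one must invoke \cite{KL12}, which gives $r_{M',G}(\La_i)$ as a \emph{direct sum} of tensor products of sub-ladders. Hence after applying Proposition~\ref{Geometric lemma} you must argue that some summand of this direct sum satisfies the $M_a^{u_a}$-distinguishedness conditions, and then carry out the combinatorics on that summand. Second, your central combinatorial claim is stated too quickly: having $\rho_i\simeq(\rho_j^\theta)^\vee$ does \emph{not} imply $\La_j=(\La_i^\theta)^\vee$; two distinct ladders can live on the same (or conjugate-dual) cuspidal line and still be unlinked (e.g.\ $\La_i=\La([0,3]_\rho)$ and $\La_j=\La([-5,-2]_\rho)$ with $\rho^\theta\simeq\rho^\vee$, where the sub-segment $[2,3]$ of $\La_i$ is even conjugate-dual to the sub-segment $[-3,-2]$ of $\La_j$). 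What actually rules such mixed pairings out is the rigid shape of the matrix $a$ together with an induction on extremal segments (the ``top-right corner'' of each ladder), exactly in the spirit of the paper's proof of the unitary theorem; your reference to the methods of Propositions~\ref{holomorphic anti-standard}--\ref{holomorphic extra-case} points in the right direction, but those propositions treat much more restricted configurations and the extension to arbitrary products of unlinked ladders is where the substance of \cite[Proposition 7.3]{GMM17} lies. With these two points filled in---Kret--Lapid for the Jacquet modules, and a careful extremal-segment induction for the combinatorics---your outline becomes a complete proof.
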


In particular, as a ladder representation is a product of unlinked proper ladder representations, this classifies 
distinguished ladder representations in terms of distinguished essentially square-integrable representations (and in fact cuspidal representations 
by Proposition \ref{Steinberg1}). It also reduces the classification of distinguished unitary representations to the following statement. 

\begin{thm}
Let $\d_1,\dots,\d_t$ be square-integrable representations with each $\d_i$ of the form $\La([-a_i,a_i]_{\rho})$ with 
$a_i\in \frac{1}{2}\N$, and $\rho$ a conjugate self-dual cuspidal representation. The unitary representation $\pi=u(\d_1,k_1)\times \dots \times u(\d_t,k_t)$ is distinguished if and only if it is $\theta$-induced.
\end{thm}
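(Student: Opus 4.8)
The plan is to reduce the classification of distinguished unitary representations to what has already been proved for proper ladders, by exploiting the structure of Speh representations $u(\d_i,k_i)$ and the complementary-series factors $\nu^{-l_\d\alpha}u(\d,k)\times\nu^{l_\d\alpha}u(\d,k)$. Since every $\d_i$ is of the form $\La([-a_i,a_i]_\rho)$ for a fixed conjugate self-dual cuspidal $\rho$, the segments occurring in $u(\d_i,k_i)$ all lie in a single cuspidal line attached to $\rho$; this is precisely the situation where the combinatorics of ladders is tractable. One direction is free: if $\pi$ is $\theta$-induced then it is distinguished, and this follows from Proposition \ref{closed-orbit-contribution} together with the observation that each $u(\d_i,k_i)^\vee\simeq u(\d_i^\vee,k_i)$ and $(u(\d_i,k_i)^\theta)^\vee$ is again a Speh representation, so that the permutation $\e$ pairing them up produces an $M\cap H$-invariant (resp.\ $M^u$-invariant) form on the relevant Levi.

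**Key steps in order.** First I would record the behaviour of contragredient and $\theta$ on the building blocks: $u(\d,k)^\vee=u(\d^\vee,k)$, $u(\d,k)^\theta=u(\d^\theta,k)$, and similarly for the complementary-series pieces, so that $\pi^\vee=\pi^\theta$ becomes a necessary condition by Corollary \ref{distinguished conjugate self-dual}. Second, I would realize $\pi$ as a Langlands quotient: by \cite{T90}, \cite{BR04}, \cite{S09} the representation $\pi$ is the irreducible quotient of a standard module $\mathcal{S}$ whose inducing data is an explicit tensor product of twisted copies of $\rho$, arranged according to a disjoint union of segments. Third, for the hard direction (distinguished $\Rightarrow$ $\theta$-induced) I would apply the geometric lemma (Proposition \ref{Geometric lemma}) to $\mathcal{S}$: distinction of $\pi$ forces distinction of $\mathcal{S}$, hence by Proposition \ref{standard} there is an involution $\varepsilon$ on the segment-multiset pairing each segment $\D$ with $(\D^\theta)^\vee$, and fixed segments must be $\theta$-distinguished discrete series. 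Fourth — and this is the crux — I would show that such a pairing on the level of segments can be organized so that it pairs the Speh/complementary-series \emph{blocks} as whole units, i.e.\ descends to an involution $\e$ on $\{1,\dots,t\}$ with $\La_{\e(i)}^\vee=\La_i^\theta$. This uses the rigidity of ladders: within one cuspidal line, the only way for the multiset of segments of $u(\d_i,k_i)$ to be matched with that of some $u(\d_j,k_j)^{\vee\theta}$ is block-by-block, because the segments of a Speh representation form a ladder with fixed increments and any "cross" pairing would violate the nesting/linking pattern. For the complementary-series factors, the twist by $\nu^{\pm l_\d\alpha}$ with $\alpha\in(0,1/2)$ forces them to pair internally (the two halves with opposite twists), since their central characters are non-unitary and only $\nu^{-l_\d\alpha}u(\d,k)$ can be conjugate-dual to $\nu^{l_\d\alpha}u(\d,k)$. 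Fifth, once the block structure of $\e$ is in hand, the fixed Speh factors $u(\d_i,k_i)$ with $\e(i)=i$ must themselves be distinguished; but distinction of $u(\d,k)=\La(\D_1,\dots,\D_k)$ with $\D_j=[-\tfrac{k+1}{2}+j,\tfrac{k-1}{2}+j]_\rho$ is a proper ladder, so Theorem \ref{even ladder} (for $k$ even) and Proposition \ref{odd ladder} (for $k$ odd) apply and translate the condition into a $\theta$- or $\eta$-distinction statement on the middle term, which is exactly the condition packaged in the definition of $\theta$-induced for ladders via the preceding Proposition on unlinked ladders.

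**The main obstacle.** The delicate point is the fourth step: showing that a segment-level involution $\varepsilon$ coming from Proposition \ref{standard} can be \emph{upgraded} to a block-level involution $\e$, rather than merely shuffling segments between different $u(\d_i,k_i)$'s. This is where one needs the combinatorial geometry of ladders on a single cuspidal line. I would argue by looking at the extreme segments: the segment with the largest right endpoint among all those appearing in $\pi$ belongs to a unique block, say $u(\d_{i_0},k_{i_0})$ (or its complementary-series partner), and its $\varepsilon$-partner must have the smallest left endpoint, which then pins down the partner block; peeling this block pair off and inducting on $t$ finishes the argument. One subtlety to handle carefully is multiplicity: if two blocks $u(\d_i,k_i)$ and $u(\d_j,k_j)$ happen to be isomorphic (or conjugate-dual to one another) there is a choice of pairing, but any valid choice yields the same conclusion, so this causes no harm. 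I would also need to invoke, for the fixed blocks, that the relevant $M^u$-distinction of the Speh inducing data is equivalent to distinction of the Speh representation itself — which for proper ladders is precisely the content of Propositions \ref{holomorphic standard}, \ref{holomorphic anti-standard} and \ref{holomorphic extra-case} feeding into Theorem \ref{even ladder} — so that no spurious intermediate Jacquet module contributes. With these pieces the equivalence follows, and the theorem is reduced entirely to results already established in Sections \ref{reduction discrete cuspidal}, \ref{dist proper ladders} and \ref{unitary and ladders}.
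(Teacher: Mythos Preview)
Your approach has a genuine gap at step~5, and it stems from the decision in step~3 to pass to the standard module $\mathcal{S}$ lying above $\pi$. The implication ``$\pi$ distinguished $\Rightarrow$ $\mathcal{S}$ distinguished'' is correct, and Proposition~\ref{standard} then yields a segment-level involution $\varepsilon$. But this step \emph{loses information}: you are now only using that $\mathcal{S}$ is distinguished, which is strictly weaker than $\pi$ being distinguished. When your block-level $\e$ fixes an index $i$ (say with $k_i$ even), the segment-level $\varepsilon$ simply pairs $\nu^{s}\d_i$ with $\nu^{-s}\d_i$ inside that block and produces no fixed segment, hence no condition at all beyond $\d_i^\vee\simeq\d_i^\theta$. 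You cannot conclude from this that $u(\d_i,k_i)$ is distinguished, yet that is exactly what ``$\theta$-induced'' requires at a fixed index. Your appeal to Propositions~\ref{holomorphic standard}--\ref{holomorphic extra-case} does not help: those results say that the $H$-invariant form on the standard module of a proper ladder is unique and comes from the open orbit, but they do \emph{not} assert that it descends to the Langlands quotient --- deciding when it does is precisely the content of Theorem~\ref{even ladder}, and the descent can fail.

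The paper avoids this by \emph{not} passing to the standard module. It applies the geometric lemma (Proposition~\ref{Geometric lemma}) directly to the induced representation $\La_1\times\dots\times\La_t$ with $\La_i=u(\d_i,k_i)$: distinction of $\pi$ forces some Jacquet module $r_{M_a,M}(\La_1\otimes\dots\otimes\La_t)$ to be $M_a^{u_a}$-distinguished. The key input you are missing is the description of Jacquet modules of ladder representations from \cite{KL12}: each $r_{M_a,M}(\La_i)$ is itself a tensor product of sub-ladders $\La_i^1\otimes\dots\otimes\La_i^t$, and the $M_a^{u_a}$-distinction condition then relates $\La_i^j$ to $(\La_j^i)^{\theta,\vee}$. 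The induction on $t$ proceeds by tracking the ``upper-right bit'' $\d_{1,1}^{i_0}$ of $\La_1$ (the segment with extremal right endpoint in the first block) and using that Speh representations are centered to force either $\La_1^1=\La_1$ (so $\La_1$ is conjugate self-dual and one peels it off) or $\La_{i_0}^1=(\La_1^{i_0})^{\theta,\vee}$ with both equal to the full blocks (so $\La_1$ and $\La_{i_0}$ pair off). Because this argument never leaves the level of the Speh representations, the conclusion ``fixed block $\Rightarrow$ that Speh is distinguished'' is never needed as a separate step --- it is built into the induction. Your extreme-segment idea in step~4 is the right instinct, but it needs to be run on the Jacquet modules of the $\La_i$ themselves rather than on the segment list of $\mathcal{S}$. (As a minor point: the complementary-series factors are irrelevant here, since the theorem concerns only products of Speh representations on a single cuspidal line.)
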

\begin{proof}
Each $\d_i$ is a representation of $G_{l_i}$, and we set $m_i=k_i l_i$ and $\overline{m}=(m_1,\dots,m_t)$. We can always switch the order in the product, so that $m_i\geq m_{i+1}$ for all $1\leq i \leq t-1$. If $\pi$ is $\theta$-induced, it is certainly distinguished as an application of Propositions
 \ref{closed-orbit-contribution} and \ref{open-orbit-contribution}. For the converse direction, according to Proposition \ref{Geometric lemma}, it is enough to prove the following statement:
\textit{"let $\d_1,\dots,\d_t$ be square-integrable representations which have cuspidal support on the same cuspidal line. If the unitary representation $\pi=u(\d_1,k_1)\times \dots \times u(\d_t,k_t)$ induced from the standard Levi $M=M_{\overline{m}}$ is such that 
$\mu=u(\d_1,k_1)\otimes \dots \otimes u(\d_t,k_t)$ has a Jacquet-module $r_{M_a,M}(\mu)$ which is $\theta_{w_a}$-distinguished for $a\in I(\overline{m})$, then it is $\theta$-induced."}\\
 So let's prove it by induction on $t$. It will be more convenient to write 
\[u(\d_i,k_i)=\La(\d_{i,1},\dots,\d_{i,k_i})=\La_i,\] with $\d_{i,j}\succ \d_{i,j+1}$. Thanks to our assumption, and the description of Jacquet modules of ladder representations given in \cite{KL12} (see the picture there for a visual description of ladders and their Jacquet modules as well), we can write each 
$\d_{i,j}$ as $\d_{i,j}=[\d_{i,j}^t,\dots,\d_{i,j}^1]$ with $(\d_{i,1}^k,\d_{i,2}^k,\dots,\d_{i,k_i-1}^k,\d_{i,k_i}^k)$ 
forming a ladder of all $k$, so that if we set \[\La_i^k=\La(\d_{i,1}^k,\d_{i,2}^k,\dots,\d_{i,k_i-1}^k,\d_{i,k_i}^k),\]
then $\La_i^k$ is a (possibly trivial) ladder, and the representation 
\[\La_1^1\otimes 
\La_1^2\otimes \dots \otimes \La_1^{t-1}\otimes \La_1^t \otimes \dots \otimes \La_t^1 \otimes \La_t^2\otimes \dots 
\otimes \La_t^{t-1}\otimes \La_t^t\]
is $\theta_{w_a}$-distinguished.
We select $i_0$ the smallest integer between $1$ and $t$, such that $\d_{1,1}^{i_0}$ is non trivial.
 If $i_0=1$, then $\La_1^1$ is conjugate selfdual. As $\d_{1,1}^{i_0}$ corresponds to the upper right bit of the ladder $\La_1^1$, then 
 $((\d_{1,1}^{i_0})^\theta)^\vee$ corresponds to its lower left bit. The representation $\d_{1,1}^{i_0}$ is also the upper right bit of $\La_1$, and  
 $((\d_{1,1}^{i_0})^\theta)^\vee$ its lower left bit because $\La_1$ is a Speh representation.  In particular $\La_1^1$  necessarily has $k_1$ floors, and using the intuitive notation for concatenation of ladder representations 
\[\La_1=[\La_1^1,\dots,\La_1^t],\] we see that it implies that $\La_1^1=\La_1$. \textit{Notice that the visual picture 
of the ladder from \cite{KL12} that we use to describe it does not match with the notation $\La_1=[\La_1^1,\dots,\La_1^t]$ which is more adapted to Jacquet modules, right and left should be reversed}. Hence if $i_0=1$, we can conclude by induction 
applied to \[\La_2\times \dots \times \La_t.\]
If $i_0>1$, then consider \[\La_{i_0}=[\La_{i_0}^1,\dots,\La_{i_0}^t].\] 
As $(\La_{i_0}^{1})^\theta=(\La_{1}^{i_0})^\vee$, then $((\d_{1,1}^{i_0})^\theta)^\vee$ is the lower left bit of the "right" sub-ladder $\La_{i_0}^1$ of $\La_{i_0}$. As $m_1\geq m_i$ for all $i$, and because all $\La_i$ are Speh representations, the beginning of the segment corresponding to the representation $((\d_{1,1}^{i_0})^\theta)^\vee$ is the smallest of all beginnings of all 
segments occurring in the $\La_i$'s. This implies that $\La_{i_0}=\La_{i_0,1}$, hence that $\La_1=\La_{1,i_0}$. We conclude by induction applied to \[\La_2\times \dots \times \La_{i_0-1}\times \La_{i_0+1} \times\dots \times \La_r.\]

\end{proof}

\end{document}